\def\blfootnote{\gdef\@thefnmark{}\@footnotetext}
\DeclareSymbolFont{bbold}{U}{bbold}{m}{n}
\DeclareMathSymbol{\bbomega}{\mathord}{bbold}{"7F}
\def\PRA{\mathsf{PRA}}
\def\RCAo{\mathsf{RCA_0}}
\def\WKLo{\mathsf{WKL_0}}
\def\ACAo{\mathsf{ACA_0}}
\def\DNC{\mathrm{DNC}}
\def\DNCZP{\mathrm{2\mbox{-}DNC}}
\def\RRT{\mathrm{RRT}}
\def\RWKL{\mathrm{RWKL}}
\def\CSig{\mathrm{C}\Sigma}
\def\RCA{\mathsf{RCA_0}}
\def\WKL{\mathsf{WKL_0}}
\def\ACA{\mathsf{ACA_0}}
\def\PA{\mathrm{PA}}
\def\RCAs{\mathsf{RCA_0^{*}}}
\def\WKLs{\mathsf{WKL_0^{*}}}
\def\ATRo{\mathsf{ATR_0}}
\def\PCAo{\Pi^1_1\text{-}\mathsf{CA_0}}
\def\wkl{\mathrm{WKL}}
\def\E{\exists}
\def\A{\forall}
\def\N{\mathbb{N}}
\def\Cod{\mathrm{Cod}}
\def\Th{\mathrm{Th}}
\def\card{\mathop{\mathrm{card}}\nolimits}
\def\rest{{\upharpoonright}}
\def\P2{\Pi^1_2}
\newcommand{\Psf}{\mathsf{P}}
\newcommand{\Qsf}{\mathsf{Q}}
\renewcommand{\labelenumi}{$\arabic{enumi}.$}
\def\PHt{\mathrm{PH}^2_2}
\def\RT{\mathrm{RT}}
\def\psRT{\mathrm{psRT}}
\def\SRT{\mathrm{SRT}}
\def\ADS{\mathrm{ADS}}
\def\CAC{\mathrm{CAC}}
\def\SADS{\mathrm{SADS}}
\def\COH{\mathrm{COH}}
\newcommand\EM{\mathrm{EM}}
\newcommand\SEM{\mathrm{SEM}}
\def\Ii{\mathrm{I}\Sigma_1}
\def\BII{\mathrm{B}\Sigma^0_2}
\def\II{\mathrm{I}\Sigma^0_1}
\def\III{\mathrm{I}\Sigma^0_2}
\newcommand{\IN}[1][n]{\mathrm{I}\Sigma^0_{#1}}
\newcommand{\BN}[1][n]{\mathrm{B}\Sigma^0_{#1}}
\renewcommand{\labelenumi}{$\arabic{enumi}.$}
\newcounter{menum}
{\begin{enumerate}%
\setcounter{enumi}{#1}}%
{\setcounter{menum}{\value{enumi}}\end{enumerate}}
\newtheorem{thm}{Theorem}[section]
\newtheorem{theorem}[thm]{Theorem}
\newtheorem*{theorem*}{Theorem}
\newtheorem*{claim*}{Claim}
\newtheorem{prop}[thm]{Proposition}
\newtheorem{lem}[thm]{Lemma}
\newtheorem{cor}[thm]{Corollary}
\newtheorem{lemma}[thm]{Lemma}
\newtheorem{corollary}[thm]{Corollary}
\theoremstyle{definition}
\newtheorem{defi}{Definition}[section]
\newtheorem{definition}[defi]{Definition}
\newtheorem{rem}[thm]{Remark}
\newtheorem{exa}[defi]{Example}
\newtheorem{question}[defi]{Question}
\definecolor{lightred}{rgb}{1,.60,.60}
\begin{document}
\newpage
%\thispagestyle{empty}
%\pagenumbering{roman}
%\setcounter{page}{0}
%\keywords{}

\title{The proof-theoretic strength of Ramsey's theorem for pairs and two colors}

\author[1]{Ludovic Patey}
\author[2]{Keita Yokoyama}

\affil[1]{\small \sf{ludovic.patey@computability.fr}}
\affil[2]{\small \sf{y-keita@jaist.ac.jp}}

% First draft: October 12, 2015
\date{March 17, 2016 (first version)\\ March 15, 2018 (revised version)}

\blfootnote{%
The authors are grateful to Theodore A. Slaman and Emanuele Frittaion for useful comments and discussions.
We also thank Leszek Ko{\l}odziejczyk for pointing out a flaw in Definitions~\ref{def-Gamma-large} and \ref{def-density-with-indicator}.

Ludovic Patey is funded by the John Templeton Foundation (`Structure and Randomness in the Theory of Computation' project).
The opinions expressed in this publication are those of the author(s) and do not necessarily reflect the views of the
John Templeton Foundation.

Keita Yokoyama is partially supported by
JSPS KAKENHI (grant numbers 16K17640 and 15H03634)
 and JSPS Core-to-Core Program (A.~Advanced Research Networks).

Part of this work in this paper was done during the Dagstuhl Seminar 15392 ``Measuring the Complexity of Computational Content: Weihrauch Reducibility and Reverse Analysis''.%
}

\maketitle
\def\Bexp{\mathrm{B}\Sigma_{1}+\mathrm{exp}}
\newcommand\RF{\mathrm{RF}}
\newcommand\tpl{\mathrm{tpl}}
\newcommand\col{\mathrm{col}}
\newcommand\fin{\mathrm{fin}}
\newcommand\Log{\mathrm{Log}}
\newcommand\Ct{\mathrm{Const}}
\newcommand\It{\mathrm{It}}
\renewcommand\PHt{\widetilde{\mathrm{PH}}{}}
\newcommand\BME{\mathrm{BME}_{*}}
\newcommand\HT{\mathrm{HT}}
\newcommand\Fin{\mathrm{Fin}}
\newcommand\FinHT{\mathrm{FinHT}}
\newcommand\wFinHT{\mathrm{wFinHT}}
\newcommand\FS{\mathrm{FS}}
\newcommand\LL{\mathsf{L}}
\newcommand\GPg{\mathrm{GP}}
\newcommand\GP{\mathrm{GP}^{2}_{2}}
\newcommand\FGPg{\mathrm{FGP}}
\newcommand\FGP{\mathrm{FGP}^{2}_{2}}
\newcommand\SGP{\mathrm{SGP}^{2}_{2}}
\newcommand\Con{\mathrm{Con}}
\newcommand\WF{\mathrm{WF}}

\begin{abstract}
Ramsey's theorem for $n$-tuples and $k$-colors ($\RT^n_k$) asserts
that every $k$-coloring of~$[\N]^n$ admits an infinite monochromatic subset.
We study the proof-theoretic strength of Ramsey's theorem for pairs and two colors, namely, the set of its
$\Pi^0_1$ consequences, and show that $\RT^2_2$ is $\Pi^0_3$ conservative over~$\II$.
This strengthens the proof of Chong, Slaman and Yang that $\RT^2_2$ does not imply $\III$,
and shows that $\RT^2_2$ is finitistically reducible, in the sense of Simpson's partial realization
of Hilbert's Program. Moreover, we develop general tools to simplify the proofs of $\Pi^0_3$-conservation theorems.

\medskip

{\bf Key words}:
Reverse Mathematics, Ramsey's theorem, % second-order arithmetic,
 proof-theoretic strength
 %, indicator function.

{\bf MSC (2010)}:
Primary 03B30, 03F35, 05D10, Secondary 03H15, 03C62, 03D80

\end{abstract}

\section{Introduction}

Ramsey's theorem for $n$-tuples and $k$-colors ($\RT^n_k$) asserts
that every $k$-coloring of~$[\N]^n$ admits an infinite monochromatic subset.
Ramsey's theorem is probably the most famous theorem of Ramsey's theory,
and plays a central role in combinatorics and graph theory
 (see, e.g., \cite{Comb-and-Graph, book-Ramsey-theory})
with numerous applications in mathematics and computer science,
 among which
functional analysis~\cite{book-Ramsey-in-analysis}
automata theory~\cite{book-automata},
%graph theory~\ludovic{REFS}
 or termination analysis~\cite{PR2004}.
% ,SY201X}.
An important aspect of Ramsey's theorem is its definable class of fast-growing functions.
%One important aspect of Ramsey's theorem is a class of fast-growing functions defined from Ramsey's theorem.
Erd\"os \cite{Erdos1947} showed that the (diagonal) Ramsey number has an exponential growth rate.
Actually, Ramsey's theorem defines much faster-growing functions, which is studied by Ketonen and Solovay \cite{KS81}, among others.
The growth rate of these functions have important applications, since it provides upper bounds to combinatorial questions from various fields.
This type of question is heavily related to proof theory, and with their language, the question is formalized as follows:
\begin{quote}
 \it What is the class of functions whose existence is provable (with an appropriate base system) from Ramsey's theorem?
\end{quote}
For example, the Ramsey number function belongs to this class since the existence of the Ramsey number $R(n, k)$ is guaranteed
by Ramsey's theorem.
%For example, the existence of Ramsey number $R(n,k)$ is guaranteed by Ramsey's theorem, so the Ramsey number function is in this class.
In fact, this class of functions decides the so-called ``proof-theoretic strength'' of Ramsey's theorem.

Ramsey's theorem also plays a very important role in reverse mathematics
as it is one of the main examples of theorems escaping the Big Five phenomenon (see Section~\ref{subsect:ramsey-consequences}).
Reverse mathematics is a general program that classifies theorems by two different measures, namely,
by their computability-theoretic strength and by their proof-theoretic strength.
As it happens, consequences of Ramsey's theorem are notoriously hard to study in reverse mathematics,
and therefore received a lot of attention from the reverse mathematics community.
Especially, determining the strength of Ramsey's theorem for pairs ($\RT^{2}_{2}$) is always a central topic in the study of reverse mathematics.
This study yielded series of seminal papers~\cite{Jockusch1972Ramseys,Seetapun1995strength,CJS,CSY2014} 
introducing both new computability-theoretic and proof-theoretic techniques.
(See Section~\ref{subsect:ramsey-consequences} for more details of its computability-theoretic strength.)

%Hirst~\cite{Hirst-PhD} showed that $\RT^{2}_{2}$ implies the $\Sigma^0_2$-bounding principle ($\BII$).
%On the other hand, Cholak, Jockusch and Slaman~\cite{CJS} showed that $\WKLo+\RT^{2}_{2}+\III$ 
%is a $\Pi^{1}_{1}$-conservative extension of $\III$, where $\WKLo$ stands for weak K\"onig's lemma
%and~$\IN$ is the $\Sigma^0_n$-induction scheme.
%Thus, the first-order strength of Ramsey's theorem for pairs and two colors
% is in between $\BII$ and $\III$.
%After this work, the project of deciding the first-order strength of $\RT^{2}_{2}$ 
%has been strongly carried out using forcing constructions or priority arguments 
%on nonstandard models of $\BII$, mainly by Chong, Slaman and Yang~\cite{CSY2012,CSY2014}.
%They proved in particular that $\WKLo+\RT^{2}_{2}$ does not imply $\III$~\cite{CSY201X}.

In this paper, we mainly focus on the proof-theoretic strength of Ramsey's theorem for pairs.
By the proof-theoretic strength 
of a theory $T$ we mean the set of $\Pi^0_1$ sentences which are provable in $T$,
% or the proof-theoretic ordinal of $T$ which is decided by the $\Pi^{0}_{2}$-consequences of $T$.
 or the proof-theoretic ordinal of $T$ which is decided by the class of ($\Sigma^{0}_{1}$-definable) functions whose totality are proved in $T$.
In fact, we will give the exact proof-theoretic strength of $\RT^{2}_{2}$ 
by proving that $\RT^{2}_{2}+\WKLo$ 
is a $\Pi^{0}_{3}$-conservative extension of $\II$ (Theorem~\ref{thm-con-RT22}), where $\WKLo$ stands for weak K\"onig's lemma
and~$\IN$ is the $\Sigma^0_n$-induction scheme.
This answers the long-standing open question of determining the $\Pi^{0}_{2}$-consequences of $\RT^{2}_{2}$ or the consistency strength of $\RT^{2}_{2}$,
posed, e.g., in Seetapun and Slaman~\cite[Question~4.4]{Seetapun1995strength} Cholak, Jockusch and Slaman~\cite[Question~13.2]{CJS} Chong and Yang~\cite{CY2015} (see Corollaries~\ref{cor-answer1} and \ref{cor-answer-2}).
%and strengthens the above recent result by Chong, Slaman and Yang 
%since the $\Pi^{0}_{3}$-consequences of $\III$ are different from those of $\II$.
For this, we use a hybrid of forcing construction, indicator arguments,
and proof-theoretic techniques, and develop general tools simplifying
conservation results (see Theorems~\ref{thm-provably-large-conservative}, \ref{thm-g-Parsons} and~\ref{thm-con-Bn+1-vs-In}).
See Section~\ref{subsec:conservation} for the various studies of the proof-theoretic strength of $\RT^{2}_{2}$.
%Our conservation result gives actually a weak answer to the original question,
%namely, the proof-theoretic strength of $\RT^{2}_{2}$ is the same as $\BII$,
%since $\BII$ is a $\Pi^{0}_{3}$-conservative extension of $\II$.
Deciding the proof-theoretic strength of $\RT^{2}_{2}$ 
is also an important problem from a philosophical point of view.
In the sense of Simpson's partial realization \cite{partial} of Hilbert's Program, 
one would conclude that $\RT^{2}_{2}$ is \textit{finitistically reducible} (see Section~\ref{subsec:HP}).

\subsection{Reverse mathematics}\label{subsec:RM}

Reverse mathematics is a vast foundational program that seeks to determine 
which \emph{set existence axioms} are needed to prove theorems from ``ordinary'' mathematics.
It uses the framework of subsystems of second-order arithmetic.
Indeed, Friedman~\cite{MR0429508} realized that a large majority of theorems admitted a natural formulation
in the language of second-order arithmetic. The base theory $\RCAo$, standing for 
Recursive Comprehension Axiom, contains the basic axioms for first-order arithmetic
 (axioms of discrete ordered semi-ring) together with the~$\Delta^0_1$-comprehension
 scheme and the~$\Sigma^0_1$-induction scheme. $\RCAo$ can be thought of as capturing
\emph{computable mathematics}. 

Since then, thousands of theorems have been studied within the framework
of reverse mathematics. A surprising phenomenon emerged from the early years of
reverse mathematics: Most theorems studied require very weak axioms. 
Moreover, many of them happen to be \emph{equivalent} to one of five main sets of axioms,
that are referred to as the \emph{Big Five}, namely, $\RCAo$, weak K\"onig's lemma ($\WKLo$),
the arithmetic comprehension axiom ($\ACAo$), arithmetical transfinite recursion ($\ATRo$),
and $\Pi^1_1$-comprehension axiom ($\PCAo$). See Simpson~\cite{SOSOA} for an extensive study
of the Big Five and mathematics within them.
In this paper, we shall consider exclusively theorems which are provable
in $\ACAo$. See Hirschfeldt~\cite{Hirschfeldt2014Slicing} 
for a gentle introduction to the reverse mathematics below~$\ACAo$.

\subsection{Ramsey's theorem and its consequences}\label{subsect:ramsey-consequences}

Ramsey theory is a branch of mathematics 
studying the conditions under which some
structure appears among a sufficiently large collection of objects.
In the past two decades, Ramsey theory emerged as one of the most important topics in reverse mathematics.
This theory provides a large class of theorems escaping the Big Five phenomenon, 
and whose strength is notoriously hard to gauge.
Perhaps the most famous such theorem is \emph{Ramsey's theorem}.

\begin{defi}[Ramsey's theorem]
A subset~$H$ of~$\N$ is~\emph{homogeneous} for a coloring~$f : [\N]^n \to k$ (or \emph{$f$-homogeneous}) 
if all the $n$-tuples over~$H$ are given the same color by~$f$. 
$\RT^n_k$ is the statement ``Every coloring $f : [\N]^n \to k$ has an infinite $f$-homogeneous set''.
\end{defi}

Jockusch~\cite{Jockusch1972Ramseys} conducted a computational analysis of Ramsey's theorem,
later formalized by Simpson~\cite{SOSOA} within the framework of reverse mathematics.
Whenever $n \geq 3$, Ramsey's theorem for $n$-tuples happens to be equivalent to $\ACA$.
The status of Ramsey's theorem for pairs was open for decades, until Seetapun and Slaman~\cite{Seetapun1995strength} proved
that $\RT^2_2$ is strictly weaker than $\ACA$ over~$\RCAo$. Cholak, Jockusch and Slaman~\cite{CJS}
extensively studied Ramsey's theorem for pairs. On a computability-theoretic perspective,
every computable instance of~$\RT^n_k$ admits a $\Pi^0_n$ solution, while there exists
a computable instance of~$\RT^n_2$ with no $\Sigma^0_n$ solution~\cite{Jockusch1972Ramseys}. 
Ramsey's theorem for pairs is computationally weak in that it does not imply the existence of PA degrees~\cite{Liu2012RT22}, 
or any fixed incomputable set~\cite{Seetapun1995strength}.

In order to better understand the logical strength of~$\RT^2_2$,
Bovykin and Weiermann~\cite{BW} decomposed Ramsey's theorem for pairs
into the Erd\H{o}s-Moser theorem and the ascending descending sequence principle.
The Erd\H{o}s-Moser is a statement from graph theory.

\begin{defi}[Erd\H{o}s-Moser theorem] 
A tournament $T$ is an irreflexive binary relation such that for all $x,y \in \N$ with $x \not= y$, 
exactly one of $T(x,y)$ or $T(y,x)$ holds. A tournament $T$ is \emph{transitive} 
if the corresponding relation~$T$ is transitive in the usual sense. 
$\EM$ is the statement ``Every infinite tournament $T$ has an infinite transitive subtournament.''
\end{defi}

\begin{defi}[Ascending descending sequence]
Given a linear order (\textit{i.e.}, a transitive tournament)~$<_L$ on $\N$, an \emph{ascending} (\emph{descending}) sequence
is a set~$S$ such that for every~$x <_\N y \in S$, $x <_L y$ ($x >_L y$).
$\ADS$ is the statement ``Every infinite linear order admits an infinite ascending or descending sequence''.
\end{defi}

The Erd\H{o}s-Moser theorem provides together with the ascending descending principle an alternative decomposition of
Ramsey's theorem for pairs. Indeed, every coloring~$f : [\N]^2 \to 2$
can be seen as a tournament~$R$ such that~$R(x,y)$ holds if~$x < y$ and~$f(x,y) = 1$, or~$x > y$ and~$f(y, x) = 0$.
Then, $\EM$ is saying ``Every coloring $f : [\N]^n \to k$ has an infinite transitive subcoloring'' and
$\ADS$ is saying ``Every transitive coloring $f : [\N]^n \to k$ has an infinite $f$-homogeneous set''.
(In what follows, we always consider $\EM$ and $\ADS$ as these forms.)
We therefore obtain the following equivalence.
\begin{thm}[Hirschfeldt and Shore~\cite{HS2007}, Bovykin and Weiermann~\cite{BW}]
$\RCAo \vdash \RT^2_2 \leftrightarrow \ADS + \EM$.
\end{thm}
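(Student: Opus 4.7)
The plan is to prove the two directions of the equivalence separately, using the natural correspondence between $2$-colorings of pairs, tournaments, and linear orders that is already suggested by the definitions above.

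For the forward direction $\RCAo \vdash \RT^2_2 \to \ADS + \EM$, I would handle each conjunct independently. To derive $\ADS$, given an infinite linear order~$<_L$ on~$\N$, I define~$f : [\N]^2 \to 2$ by setting $f(x,y) = 1$ if $x <_L y$ and $f(x,y) = 0$ otherwise, for $x <_\N y$. Applying $\RT^2_2$ to~$f$ yields an infinite monochromatic set~$H$; depending on the color, $H$ is either an ascending or a descending sequence for~$<_L$. For $\EM$, given an infinite tournament~$T$, I define~$f : [\N]^2 \to 2$ by $f(x,y) = 1$ iff $T(x,y)$ for~$x <_\N y$, and apply $\RT^2_2$ to obtain an infinite $f$-homogeneous set~$H$; a short check (by cases on the homogeneous color) shows that $T \rest H$ is a transitive tournament, using only $\Delta^0_1$-comprehension to form the relevant subsets. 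Both constructions are immediate in~$\RCAo$.

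For the converse $\RCAo \vdash \ADS + \EM \to \RT^2_2$, the strategy is to pass through the tournament reformulation already introduced in the paper. Given a coloring~$f : [\N]^2 \to 2$, I form the tournament~$R$ defined by $R(x,y) \leftrightarrow (x <_\N y \wedge f(x,y) = 1) \vee (y <_\N x \wedge f(y,x) = 0)$. Applying $\EM$ to~$R$ produces an infinite set~$X$ on which~$R$ is transitive, i.e., $R \rest X$ is a linear order~$<_L$. Now apply $\ADS$ to $(X, <_L)$ to obtain an infinite ascending or descending sequence~$H \subseteq X$; by unfolding the definition of~$R$, $H$ is precisely an $f$-homogeneous set (color~$1$ in the ascending case, color~$0$ in the descending case).

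There is essentially no serious obstacle in this proof: all of the necessary sets are $\Delta^0_1$-definable from the data given, so $\RCAo$ suffices to form each intermediate object (the induced coloring, the induced tournament, the restriction of~$R$ to~$X$). The only point that deserves care is verifying inside $\RCAo$ that a homogeneous set for the induced coloring really is transitive (for the $\EM$-direction) and that an ascending/descending sequence for~$<_L$ really is $f$-homogeneous (for the $\ADS$-direction); both verifications reduce to elementary case distinctions on the color value and on the $<_\N$-order of the relevant elements, with no appeal to induction beyond~$\Sigma^0_1$.
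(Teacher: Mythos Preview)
Your proposal is correct and follows exactly the approach the paper sketches in the paragraph immediately preceding the theorem (the paper does not give a separate formal proof, only that explanatory paragraph introducing the coloring/tournament correspondence). One small technicality worth making explicit: after applying $\EM$ you obtain a transitive tournament on the infinite set~$X$ rather than on all of~$\N$, so to invoke $\ADS$ as literally stated you should either extend $<_L$ to a linear order on~$\N$ or pull back along an increasing enumeration of~$X$; this is routine in~$\RCAo$, and the paper carries out the analogous extension explicitly in the proof of Proposition~\ref{prop-RT22-largeness}.
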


%Both of $\ADS$ and $\EM$ can be characterized by transitive coloring.
%A coloring $f:[\N]^{2}\to 2$ is said to be \emph{transitive on $X\subseteq \N$} if $f(a,b)=f(b,c)$ implies $f(a,b)=f(a,c)$ for any $a<b<c$ in $X$.
%\begin{thm}[Hirschfeldt/Shore~\cite{HS2007}, Bovykin/Weiermann~\cite{BW}]
%The following are provable within $\RCAo$.
%\begin{enumerate}
% \item $\ADS$ is equivalent to the statement that every transitive coloring $f : [\N]^2 \to 2$ has an infinite $f$-homogeneous set.
% \item $\EM$ is equivalent to the statement that for any coloring $f : [\N]^2 \to 2$, there exists an infinite set $X$ such that $f$ is %transitive on $X$.
%\end{enumerate}
%Therefore, $\RT^{2}_{2}$ is equivalent to $\ADS+\EM$ over $\RCAo$.
%\end{thm}

The ascending descending sequence has been introduced by Hirschfeldt
and Shore~\cite{HS2007}. They proved that $\ADS$ is strictly weaker than Ramsey's theorem for pairs.
On the other hand, Lerman, Solomon and Towsner~\cite{Lerman2013Separating} proved that the Erd\H{o}s-Moser theorem
is strictly weaker than $\RT^2_2$. For technical purposes, we shall consider a statement equivalent to the
ascending descending principle. Pseudo Ramsey's theorem for pairs has been introduced by Murakami, Yamazaki
and the second author~\cite{Murakami2014Ramseyan} to study a factorization theorem from automata theory.

\begin{defi}[Pseudo Ramsey's theorem for pairs]
A set~$H$ is \emph{pseudo-homogeneous} for a coloring $f : [\N]^2 \to k$
if there is a color~$c < k$ such that every pair $\{x, y\} \in [H]^2$
are the endpoints of a finite sequence $x_0 < x_1 < \dots < x_n$
such that~$f(x_i, x_{i+1}) = c$ for each~$i < n$.
$\psRT^2_k$ is the statement ``Every coloring $f : [\N]^2 \to k$ has an infinite $f$-pseudo-homogeneous set''.
\end{defi}

In particular, if $f : [\N]^2 \to 2$ is a transitive coloring,
then any set~$H$ pseudo-homogeneous for~$f$ is already homogeneous for~$f$.
Thus, $\RCAo + \psRT^2_2$ implies $\ADS$ (see~\cite{Murakami2014Ramseyan}).
The first author~\cite{Patey2016reverse} and Steila (see~\cite{SY201X}) independently 
proved the reverse implication, namely, $\RCAo + \ADS$ implies~$\psRT^2_2$.%

%\keita{Can you add a short explanation about the computability-theoretic strength of $\RT^{2}_{2}$?}

\subsection{Proof strength and conservation results}\label{subsec:conservation}

In the study of reverse mathematics, deciding the first-order or proof-theoretic strength 
of axioms and mathematical principles is one of the main topics.
This is usually analyze through the conservation theorems.
Especially, the conservation result for weak K\"onig's lemma always plays the central role
as a large part of mathematics can be proven within $\WKLo$ (see Simpson~\cite{SOSOA}).
The following theorems show that one can use weak K\"onig's lemma almost freely to seek for first-order consequences.
\begin{thm}[Friedman\cite{friedmancom}, see \cite{SOSOA}]\label{thm-Friedman-conservation}
$\WKLo$ is a $\Pi^{0}_{2}$-conservative extension of $\PRA$.
\end{thm}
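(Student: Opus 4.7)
The plan is to combine two classical conservation results: first, that $\WKLo$ is $\Pi^{1}_{1}$-conservative (and so in particular $\Pi^{0}_{2}$-conservative) over $\RCAo$; second, that $\RCAo$ is $\Pi^{0}_{2}$-conservative over $\PRA$. The latter is essentially Parsons's theorem that the provably total functions of $\Ii$ are exactly the primitive recursive ones, together with the observation that $\Delta^{0}_{1}$-comprehension contributes no new arithmetical consequences beyond $\Ii$.

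For the first step I would proceed model-theoretically. Given a countable model $(M,\mathcal{S}) \models \RCAo$ and a $\Pi^{0}_{2}$ sentence $\forall x\, \exists y\, \varphi(x,y)$ that fails in $M$ at some $a \in M$, I would construct an $\omega$-extension $\mathcal{S} \subseteq \mathcal{S}^{*}$ such that $(M,\mathcal{S}^{*}) \models \WKLo$ while the failing instance is preserved. The idea is to enumerate the infinite binary trees $T_{0}, T_{1}, \dots$ coded in $\mathcal{S}$ and, at stage $i$, adjoin to $\mathcal{S}_{i}$ a path $P_{i}$ through $T_{i}$ produced by an internal version of the low basis theorem. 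Because each $P_{i}$ is low over the previous stage, every $\Sigma^{0}_{1}$ formula with parameters from $\mathcal{S}^{*}$ is equivalent to a $\Sigma^{0}_{1}$ formula over $\mathcal{S}$, so $\Sigma^{0}_{1}$-induction is preserved in the limit and the first-order part of $M$ is unchanged. Hence the failing $\Pi^{0}_{2}$ instance is inherited, which yields $\Pi^{0}_{2}$-conservation of $\WKLo$ over $\RCAo$.

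For the second step, any proof in $\RCAo$ of an arithmetical sentence (in particular a $\Pi^{0}_{2}$ sentence) can be transformed into a proof in $\Ii$ by eliminating set parameters via their $\Sigma^{0}_{1}$-definitions; this reduction goes through precisely because $\RCAo$ only uses $\Delta^{0}_{1}$-comprehension. Applying Parsons's theorem to the resulting $\Ii$-proof — via, for instance, the Dialectica interpretation into quantifier-free primitive recursive arithmetic — gives a proof in $\PRA$. Composing the two reductions yields $\Pi^{0}_{2}$-conservation of $\WKLo$ over $\PRA$. The main obstacle is the model-theoretic step: one has to carry out the low-basis construction inside a possibly nonstandard $M$ and verify carefully that $\Sigma^{0}_{1}$-induction survives each adjunction; this is the same pattern that will be refined in the later sections of this paper.
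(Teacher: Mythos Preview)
The paper does not actually prove this theorem: it is stated as a classical result due to Friedman with a reference to Simpson's book, and is used throughout as background without further justification. So there is no ``paper's own proof'' to compare against.

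That said, your proposed argument is correct and is one of the standard routes to this result. In fact both ingredients you invoke appear explicitly in the paper's preliminaries: Harrington's $\Pi^{1}_{1}$-conservation of $\WKLo$ over $\RCAo$ is stated as Theorem~\ref{thm:harrington-conservation-wkl}, and the Parsons-type conservation of $\II$ over $\PRA$ is discussed after Corollary~\ref{cor-g-Parsons}. Your sketch of the first step via iterated low-basis extensions preserving $\Sigma^{0}_{1}$-induction is exactly the Harrington argument; the second step, reducing $\RCAo$ to $\Ii$ by eliminating $\Delta^{0}_{1}$-defined sets and then applying Parsons, is also standard. One minor remark: Friedman's original proof was a single direct model-theoretic argument (via a self-embedding of a nonstandard model of $\Ii$) rather than the two-step decomposition you give, but your factorization through $\RCAo$ is equally valid and arguably more modular.
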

\begin{thm}[Harrington, see \cite{SOSOA}]\label{thm:harrington-conservation-wkl}
$\WKLo$ is a $\Pi^{1}_{1}$-conservative extension of $\II$.
\end{thm}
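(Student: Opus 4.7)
I would follow the standard model-theoretic approach, reducing the conservation result to an $\omega$-extension lemma: every countable $(M,\mathcal{S})\models\RCAo$ embeds into a model $(M,\mathcal{S}^{\ast})\models\WKLo$ with the same first-order universe $M$ and $\mathcal{S}\subseteq\mathcal{S}^{\ast}$. Granting this, suppose $\WKLo\vdash\varphi$ for $\varphi\equiv\forall X\,\theta(X)$ a $\Pi^{1}_{1}$ sentence with $\theta$ arithmetic, and suppose for contradiction that $\RCAo\not\vdash\varphi$. Take a countable $(M,\mathcal{S})\models\RCAo+\exists X\,\neg\theta(X)$, with witness $A\in\mathcal{S}$. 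Extending to $(M,\mathcal{S}^{\ast})\models\WKLo$ forces $(M,\mathcal{S}^{\ast})\models\theta(A)$; but $\theta$ is arithmetic with the single set parameter $A\in\mathcal{S}$, so its truth depends only on $M$ and $A$, contradicting $(M,\mathcal{S})\models\neg\theta(A)$.

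\textbf{Constructing the extension.} I would build $\mathcal{S}^{\ast}$ as a countable union of a chain $\mathcal{S}=\mathcal{S}_{0}\subseteq\mathcal{S}_{1}\subseteq\cdots$ of second-order parts, each satisfying $\RCAo$, obtained by enumerating the (potentially) infinite binary trees appearing in the chain and iteratively adjoining a branch through each. Infinity of a tree is $\Pi^{0}_{2}$ in the parameters, so any tree that $(M,\mathcal{S}^{\ast})$ regards as infinite is already so regarded in some $(M,\mathcal{S}_{i})$; hence $\mathcal{S}^{\ast}$ contains a branch through every infinite tree and $(M,\mathcal{S}^{\ast})\models\WKLo$. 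The construction thus reduces to the one-step lemma: for $(M,\mathcal{S})\models\RCAo$ and $T\in\mathcal{S}$ an infinite binary tree, produce a branch $P$ of $T$ such that $(M,\mathcal{S}[P])\models\RCAo$, where $\mathcal{S}[P]$ is the closure of $\mathcal{S}\cup\{P\}$ under $\Delta^{0}_{1}(P,\mathcal{S})$-definability.

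\textbf{The hard part: preserving $\Sigma^{0}_{1}$-induction.} In $\mathcal{S}[P]$, $\Delta^{0}_{1}$-comprehension holds by construction, so the one-step lemma reduces to verifying $\Sigma^{0}_{1}$-induction for formulas involving the new parameter $P$. I would obtain $P$ by forcing with conditions consisting of subtrees $T'\subseteq T$ in $\mathcal{S}$ that $(M,\mathcal{S})$ still regards as infinite, ordered by inclusion. For every $\Sigma^{0}_{1}(P)$-formula $\psi(n,P)$ with parameter in $\mathcal{S}$, the decisive density fact is that from any condition $T'$ one can pass to a refinement that decides $\psi(n,P)$ uniformly along all branches, by splitting $T'$ according to whether a $\Sigma^{0}_{1}$-witness for $\psi(n,P)$ appears below a given $M$-bound and keeping whichever of the two resulting subtrees remains $(M,\mathcal{S})$-infinite. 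A sufficiently generic $P$ then makes every instance of $\Sigma^{0}_{1}(P)$-induction reduce to an instance already holding in $(M,\mathcal{S})\models\RCAo$. This induction-preservation step --- ensuring that forcing does not import unbounded existential witnesses that would destroy $\II$ --- is the main technical obstacle; the remainder is a routine assembly of an iterated forcing with bookkeeping over the countable family of trees.
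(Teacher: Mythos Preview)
The paper does not give a proof of this theorem; it is stated as a classical result of Harrington with a reference to Simpson's \emph{Subsystems of Second Order Arithmetic} (Section~IX.2). Your outline is precisely the standard model-theoretic argument presented there, and the overall architecture---reduce to an $\omega$-extension lemma, then iterate a one-step forcing that adds a path through an infinite binary tree while preserving $\Sigma^{0}_{1}$-induction---is correct.

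One small imprecision worth flagging. Your phrase ``splitting $T'$ according to whether a $\Sigma^{0}_{1}$-witness appears \ldots\ and keeping whichever of the two resulting subtrees remains infinite'' suggests a two-sided split, but the actual move forms a single subtree: for a $\Sigma^{0}_{1}$ sentence $\varphi(P)\equiv\exists m\,\theta(P\!\rest m)$ one sets $T''=\{\sigma\in T':(\forall m\le|\sigma|)\,\neg\theta(\sigma\!\rest m)\}$. If $T''$ is $(M,\mathcal{S})$-infinite it is a condition forcing $\neg\varphi(P)$; if $T''$ is $M$-bounded then $T'$ itself already forces $\varphi(P)$. For the induction-preservation step you correctly identify as the crux, the detail you leave implicit is \emph{how} the reduction to ground-model $\II$ goes: given $\psi(n,P)$ and $b\in M$, apply the dichotomy above to $\Theta_{c}\equiv(\exists n<c)\,\psi(n,P)$ for each $c\le b$, and invoke the $\Sigma^{0}_{1}$-least-number principle in $(M,\mathcal{S})$ on the $\Sigma^{0}_{1}$ predicate ``the no-witness subtree for $\Theta_{c}$ is $M$-finite'' to find the least $c$ where forcing flips; the condition $T'_{c-1}$ then forces $c-1$ to be the least witness. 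You flagged this as the main obstacle, which is right, but the explicit appeal to ground-model $\Sigma^{0}_{1}$-induction is what makes the step go through and should be stated.
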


The $\Sigma^0_2$-bounding principle ($\BII$) informally asserts that a finite union of finite sets is finite.
Many mathematical reasonings make an essential use of $\BII$ and in particular $\RT^{2}_{2}$ implies $\BII$.
The strength of the $\Sigma^0_2$-bounding principle is therefore important for the study of combinatorial principles.
\begin{thm}[H\'ajek\cite{Hajek}]
$\WKLo+\BII$ is a $\Pi^{1}_{1}$-conservative extension of $\BII$.
\end{thm}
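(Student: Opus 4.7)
The plan is to argue semantically. Suppose $\WKLo + \BII \vdash \forall X \vp(X)$ for some arithmetical formula~$\vp$, yet $\BII \not\vdash \forall X \vp(X)$. Then there is a countable $(M, S) \models \RCAo + \BII$ together with some $A \in S$ such that $(M, S) \models \neg\vp(A)$. It will suffice to construct a countable $S' \supseteq S$ with $(M, S') \models \WKLo + \BII$: truth of $\neg\vp(A)$ persists to $(M, S')$ because $\vp$ is arithmetical, contradicting $(M, S') \models \forall X \vp(X)$.

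I would build $S'$ as the union of a chain $S = S_0 \subseteq S_1 \subseteq \cdots$ of countable second-order parts, each with $(M, S_n) \models \RCAo + \BII$, using a bookkeeping over pairs to enumerate every infinite binary subtree of $2^{<M}$ appearing in any~$S_n$. At stage~$n$, given the next enumerated tree $T_n \in S_n$, I would select a path $P_n \in 2^M$ through $T_n$ and set $S_{n+1}$ to be the Turing-closure of $S_n \cup \{P_n\}$. At the limit, $(M, S') \models \WKLo$ is immediate since every infinite tree in~$S'$ appears in some $S_n$ and receives a path in $S_{n+1} \subseteq S'$, and $(M, S') \models \BII$ because any single instance of $\BII$ involves finitely many parameters, all lying in some $S_n$, where the instance already holds.

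The crux and main obstacle is the key lemma: for every countable $(M, S) \models \RCAo + \BII$ and every infinite binary tree $T \in S$, there exists a path $P$ through~$T$ in~$2^M$ such that $(M, S[P]) \models \RCAo + \BII$, where $S[P]$ is the Turing-closure of $S \cup \{P\}$. My plan is to externalize the Jockusch--Soare low basis theorem: construct a descending sequence of $S$-coded infinite subtrees $T = U_0 \supseteq U_1 \supseteq \cdots$, where at external step~$k$ the tree $U_{k+1}$ is chosen to decide, by inspection of the countable structure $(M, S)$, whether the $S$-definable subtree $\{\sigma \in U_k : \{e_k\}^\sigma(e_k)\uparrow_{|\sigma|}\}$ is infinite in~$M$, yielding a path $P$ with the uniform lowness property $P' \leq_T T'$. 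Consequently, for every $C \in S$ one has $(P \oplus C)' \leq_T (T \oplus C)'$, so every $\Sigma^0_2(P \oplus C)$-formula is equivalent on~$M$ to a $\Sigma^0_2(T \oplus C)$-formula with $T \oplus C \in S$; thus $\BII$-collection for parameters in $S[P]$ reduces to $\BII$-collection in $(M, S)$, and $\RCAo$ is preserved since $\BII$ implies $\II$ and $\Delta^0_1$-comprehension is automatic from Turing-closure. The delicate part, requiring the most care, is making the low basis construction produce an $M$-uniform reduction from $T'$ to $P'$ when~$M$ is nonstandard, not merely an externally pointwise decision; this is handled by ensuring each refinement step stays within~$S$ via $\Delta^0_1$-comprehension and arranging that the behavior of the descending chain is captured by a single index that internalizes correctly in $(M, S)$, so that the lowness of~$P$ holds uniformly for all (possibly nonstandard) $M$-indices.
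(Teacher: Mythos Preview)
The paper cites this theorem from H\'ajek without proof, so there is no in-paper argument to compare against. Your overall plan---pass to a countable model, iteratively add paths via a key lemma, take the union---is the standard architecture and is correct. The substance lies entirely in the key lemma, and here your argument has a genuine gap.

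Your external $\omega$-step low basis construction decides $\Phi_{e_k}^P(e_k)$ only at the external stages $k \in \omega$. Even if the enumeration $(e_k)_{k\in\omega}$ exhausts $M$, the map $e \mapsto k(e)$ is an external bijection with no internal counterpart, so you do not obtain a reduction $P' \leq_T T'$ that holds \emph{in the model} uniformly for all $e \in M$. Your conclusion ``every $\Sigma^0_2(P \oplus C)$-formula is equivalent on $M$ to a $\Sigma^0_2(T \oplus C)$-formula'' needs exactly such an internal reduction; an externally recorded list of decisions does not suffice. You correctly flag this as ``the delicate part'', but ``captured by a single index that internalizes correctly'' is an aspiration, not an argument. (A secondary issue: as written, your descending chain of trees need not determine a unique element of $2^M$ when $M$ is nonstandard; you must interleave pin-down steps, one for each $n\in M$, to make $P$ well-defined.)

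The standard fix is to make the lowness \emph{internal}. One route: construct a $T$-recursive (hence $S$-coded) infinite subtree $T^*\subseteq T$ together with a fixed index $i$ such that $\RCAo$ proves ``for every path $Q \in [T^*]$ and every $e$, $e \in Q'$ iff an explicit $\Sigma^0_2(T)$ condition holds of $e$''. Then run Harrington's forcing to add a path $P$ through $T^*$ preserving $\II$; since $(M, S[P]) \models \RCAo$, this lowness statement holds there for all $e \in M$, and now your reduction of $\Sigma^0_2(S[P])$ to $\Sigma^0_2(S)$ goes through, yielding $\BII$. An alternative route bypasses lowness entirely and forces instances of $\mathrm{B}\Pi^0_1$ directly, using that the forcing relation for $\Pi^0_1$ formulas under $\Pi^0_1$-class conditions is itself $\Pi^0_1$ and that $\BII$ holds in the ground model. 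Either way, real additional work beyond what you have written is required.
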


Thankfully, $\BII$ can be freely used for a restricted class of formulas.
Let $\tilde{\Pi}{}^{0}_{3}$ be a class of formulas of the form $\A X\varphi(X)$ 
where $\varphi$ is a $\Pi^{0}_{3}$-formula. The following is a parameterized 
version of the Parsons, Paris and Friedman conservation theorem.
\begin{thm}[see, e.g.,~\cite{handbook-of-proof} or \cite{Kaye}]\label{thm:bsig2-pi03-conservative-isig1}
$\BII$ is a $\tilde{\Pi}{}^{0}_{3}$-conservative extension of $\II$.
\end{thm}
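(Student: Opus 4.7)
The plan is to derive this parameterized (second-order) version from the classical first-order Parsons--Paris--Friedman theorem by relativization. Let $\varphi(X)$ be $\Pi^0_3$, written $\forall a \exists b \forall c\, \psi(a,b,c,X)$ with $\psi$ bounded, and suppose $\BII \vdash \forall X \varphi(X)$. I want to conclude $\II \vdash \forall X \varphi(X)$.

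First, adjoin a fresh unary predicate symbol $U$ to the language of first-order arithmetic, and let $\II^{U}$ (respectively $\BII^{U}$) be the first-order theory obtained from $\II$ (respectively $\BII$) by extending its induction (respectively bounding) scheme to all formulas of the enlarged language. Writing $\varphi(U)$ for the first-order sentence obtained by replacing the second-order variable $X$ with the predicate $U$, the assumption $\BII \vdash \forall X \varphi(X)$ immediately yields $\BII^{U} \vdash \varphi(U)$, since any second-order model of $\BII$ with a distinguished set parameter is a first-order model of $\BII^{U}$.

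The classical Parsons--Paris--Friedman theorem states that $\mathrm{B}\Sigma_2$ is $\Pi_3$-conservative over $\mathrm{I}\Sigma_1$. Its usual proof proceeds via a Kirby--Paris cut construction inside a countable nonstandard model of $\mathrm{I}\Sigma_1$: one produces a proper initial segment $I$ that is sufficiently elementary in the ambient model to preserve any violating $\Sigma_3$ sentence, while $I$ itself satisfies $\mathrm{B}\Sigma_2$. This argument relativizes verbatim to the enlarged language $\{0, 1, +, \cdot, <, U\}$ provided the induction scheme in the background theory is extended to formulas mentioning $U$, since the only use made of induction is on $\Sigma_1$-formulas in the enriched language. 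Thus $\BII^{U}$ is $\Pi_3$-conservative over $\II^{U}$, and in particular $\II^{U} \vdash \varphi(U)$.

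Finally, fix any second-order model $(M, \mathcal{S}) \models \II$ and any $X_0 \in \mathcal{S}$. Interpreting $U$ by $X_0$ yields a first-order model $(M, X_0) \models \II^{U}$, because the $\Sigma^0_1$-induction scheme of the second-order theory $\II$ already applies to all $\Sigma^0_1$-formulas with $X_0$ as a set parameter, which subsumes $\Sigma_1$-induction in the language with $U$. Consequently $(M, X_0) \models \varphi(U)$, i.e.\ $(M, \mathcal{S}) \models \varphi(X_0)$. Since $(M, \mathcal{S})$ and $X_0 \in \mathcal{S}$ were arbitrary, we obtain $\II \vdash \forall X \varphi(X)$. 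The only nontrivial step is checking that the classical cut construction relativizes; this is where the main (but mild) obstacle lies, and it is resolved by the observation just made, namely that $\II$ in the second-order sense provides the required induction for the distinguished set parameter.
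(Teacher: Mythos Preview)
The paper does not supply its own proof of this theorem; it is stated with citations to standard references and then used freely. Your approach---reducing to the classical first-order Parsons--Paris--Friedman theorem by relativizing to a fresh unary predicate $U$---is a standard and correct route to the parameterized version.

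There is, however, a genuine (if easily repaired) gap in your third paragraph. To pass from $\BII \vdash \forall X\,\varphi(X)$ to $\BII^{U} \vdash \varphi(U)$ you must show that every model $(M,U^{M})$ of the first-order theory $\BII^{U}$ satisfies $\varphi(U)$. Your stated justification, that ``any second-order model of $\BII$ with a distinguished set parameter is a first-order model of $\BII^{U}$'', runs in the wrong direction: it tells you something about models of second-order $\BII$, not about an arbitrary model of $\BII^{U}$. What is actually needed is that any $(M,U^{M})\models\BII^{U}$ can be expanded to a second-order structure $(M,\mathcal{S})\models\RCAo+\BII$ with $U^{M}\in\mathcal{S}$; one takes $\mathcal{S}$ to be the collection of $\Delta_{1}(U)$-definable subsets of $M$, and then $\II^{U}$ yields $\Sigma^{0}_{1}$-induction while $\BII^{U}$ yields $\Sigma^{0}_{2}$-bounding for all parameters in $\mathcal{S}$ (since every such parameter is $\Delta_{1}$ in $U$). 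With this correction the argument is complete. Note that your final step, going from $\II^{U}\vdash\varphi(U)$ back to $\II\vdash\forall X\,\varphi(X)$, is justified in the correct direction and is fine as written.
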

Note that the above four conservation theorems are frequently used in this paper, and so we shall not mention them explicitely.

About the first-order/proof-theoretic strength of Ramsey's theorem for pairs, there are long series of studies by various people and various methods.
Hirst~\cite{Hirst-PhD} showed that $\RT^{2}_{2}$ implies the $\Sigma^0_2$-bounding principle ($\BII$).
On the other hand, Cholak, Jockusch and Slaman~\cite{CJS} showed that $\WKLo+\RT^{2}_{2}+\III$ 
is a $\Pi^{1}_{1}$-conservative extension of $\III$.
Thus, the first-order strength of Ramsey's theorem for pairs and two colors
 is in between $\BII$ and $\III$.
%After this work, the project of deciding the first-order strength of $\RT^{2}_{2}$ 
%has been strongly carried out using forcing constructions or priority arguments 
%on nonstandard models of $\BII$, mainly by Chong, Slaman and Yang~\cite{CSY2012,CSY2014}.
%They proved in particular that $\WKLo+\RT^{2}_{2}$ does not imply $\III$~\cite{CSY201X}.
After this work, many advanced studies are done to investigate the first-order strength of Ramsey's theorem and related combinatorial principles.
One of the most important methods for these studies consists in adapting computability-theoretic techniques for combinatorial principles.
By this method, Chong, Slaman and Yang~\cite{CSY2012} showed that two weaker combinatorial principles, namely, 
the ascending descending sequence ($\ADS$) and the chain antichain principle ($\CAC$),
 introduced by Shore and Hirschfeldt~\cite{HS2007}, are $\Pi^{1}_{1}$-conservative over $\BII$.
In~\cite{CSY2014}, they showed that $\WKLo+\SRT^{2}_{2}$ does not imply $\III$, 
and they improved the result and proved that $\RT^{2}_{2}$ does not imply $\III$ in~\cite{CSY201X}. 
More recently, Chong, Kreuzer and Yang [unpublished] showed that $\WKLo+\SRT^{2}_{2}$ is $\Pi^{0}_{3}$-conservative over $\RCAo+\WF(\omega^{\omega})$, where~$\WF(\omega^{\omega})$ asserts the well-foundedness of $\omega^{\omega}$.
%So, the rest of the question is whether the first-order part of $\RT^{2}_{2}$ is strictly stronger than $\BII$ or not.

Besides the computability-theoretic techniques, many other significant approaches can be found in the literature. 
Kohlenbach and Kreuzer~\cite{KK2009} and Kreuzer~\cite{Kreuzer2012} 
characterized the $\Pi^{0}_{2}$-parts of $\RT^{2}_{2}$ and $\CAC$ with several different 
settings by proof-theoretic approaches. Bovykin and Weiermann~\cite{BW} and the second 
author~\cite{Y-RIMS2013} showed that indicators defined by Paris's density notion 
can approach the proof-theoretic strength of various versions of Ramsey's theorem, 
and by a similar method, the second author~\cite{Y-MLQ13} also showed that 
$\RT^{n}_{k}+\WKLs$ is fairly weak and is a $\Pi^{0}_{2}$-conservative extension of $\RCAs$,
where~$\RCAs$ is $\RCAo$ with only $\Sigma^0_0$-induction and the exponentiation.
There are also many studies of the proof-theoretic strength of Ramsey's theorem by using ordinal analysis, by Kotlarski, Weiermann, et al.~\cite{Weiermann2004, KPW, BK99, BK02, BK06, DW}.

Moreover, the study of proof-theoretic strength of Ramsey's theorem for pairs has a solid connection to computer science.
Especially, in the field of termination analysis, Podelski and Rybalchenko~\cite{PR2004} introduced a new method
to verify the termination of programs by using Ramsey's theorem for pairs, and based on this method,
many termination verifiers are invented.
On the other hand, as we can see in Buchholz \cite{Buchholz},
 it is known that proof theory can provide an upper bound for termination proofs
since the termination statement is always described by a $\Pi^{0}_{2}$-formula.
In fact, the termination theorem argued in \cite{PR2004} is essentially equivalent to a weaker version of Ramsey's theorem for pairs, and the proof-theoretic strength of Ramsey's theorem can give a general upper bound for all of those types of termination proofs. See \cite{SY201X}.
%the study of the $\Pi^{0}_{2}$-parts of Ramsey-type theorems has some application to computer science~\cite{SY201X}.

\subsection{Second-order structures of arithmetic and their cuts}
A structure for the language of second-order arithmetic $\mathcal{L}_{2}$ is a pair $(M,S)$ 
where $M=(M, +_{M}, \cdot_{M},\allowbreak 0_{M}, 1_{M}, <_{M})$ 
is a structure for the language of first-order (Peano) arithmetic $\mathcal{L}_{\PA}$, and $S$ is a subset of the power set of $M$.
%If there is no confusion, we usually indicate the pair $(M,S)$ for the second-order structure.

\begin{defi}[Cut]
Given a structure~$M$ of the first-order arithmetic, a substructure $I \subseteq M$ is said to be a \emph{cut} of $M$ 
(abbreviated $(I\subseteq_{e} M)$) if $I=\{a\in M\mid \E b\in I(a<_{M}b)\}$.
\end{defi}

Here, the standard first-order structure $\omega$ can be considered as the smallest cut of any first-order structure.
Given a structure $(M,S)$, a cut $I\subseteq_{e} M$ induces the second-order structure $(I,S\rest I)$,
where $S\rest I:=\{X\cap I\mid X\in S\}$.
We sometimes consider $S$ as a family of unary predicates on $M$ and identify $(M,S)$ as an $\mathcal{L}_{\PA}\cup S$-structure.
Accordingly, $(I,S\rest I)$ can be considered as an $\mathcal{L}_{\PA}\cup S$-substructure of $(M,S)$.
Note that $S\rest I$ may then be a multiset on $I$, but this is harmless without second-order equality.
In this sense, one can easily check that $(I,S\rest I)$ is a $\Sigma^{0}_{0}$-elementary substructure of $(M,S)$ if $I$ is closed under $+_{M}$ and $\cdot_{M}$.

We write $\Cod(M)$ for the collection of all $M$-finite subsets, \textit{i.e.}, 
subsets of $M$ canonically coded by elements in $M$ (as the usual binary expansion).
Given some cut $I\subseteq_{e} M$, we write $\Cod(M/I)$ for $\Cod(M)\rest I$.
If $I\subsetneq M$, then $\Cod(M/I)=S\rest I$ for any $S\subseteq\mathcal{P}(M)$ such that $(M,S)\models \RCAo$, thus $(I, S\rest I)$ only depends on $M$ and $I$.

A cut $I\subseteq_{e} M$ is said to be \emph{semi-regular} if $I\cap X$ is bounded for any $X\in \Cod(M)$ such that $|X|\in I$, where $|X|$ denotes the cardinality of $X$ in $M$.
A semi-regular cut is one of the central notions in the study of cuts, especially with the connection to second-order structures, since it characterizes the models of $\WKLo$.
We will use the following theorem throughout this paper without mentioning it explicitly.

\begin{thm}[see, e.g., Theorems 7.1.5, 7.1.7 of \cite{Kossak-Schmerl}]
Let $I$ be a cut of a first-order structure $M$. Then, $I$ is semi-regular if and only if $(I,\Cod(M/I))\models \WKLo$. 
\end{thm}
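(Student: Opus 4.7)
The plan is to prove both implications. Write $\mathcal{N} := (I, \Cod(M/I))$ for the induced second-order structure.

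For the forward direction, assume $I$ is semi-regular. The central observation is that semi-regularity directly translates into $\BN[1]$ with set parameters in $\mathcal{N}$: given a $\Sigma^0_1$-formula $\exists y\, \theta(x, y, \vec{X})$ (with $\vec{X} \in \Cod(M/I)$) having witnesses in $I$ for every $x < u$ with $u \in I$, the $M$-minimal-witness function $g(x) = \mu y\, \theta^M(x, y, \vec{X})$ is $M$-coded of $M$-cardinality at most $u \in I$, and semi-regularity bounds its range in $I$. Combined with $M \models \II$ and the absoluteness of $\Sigma^0_1$-minimal witnesses once they lie in $I$, this upgrades to $\IN[1]$ and $\Delta^0_1$-comprehension, establishing $\mathcal{N}\models\RCAo$. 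For weak K\"onig's lemma, given an infinite tree $T\in\Cod(M/I)$ with representative $T^*\in\Cod(M)$, the $\Sigma^0_1$-statement ``$T^*$ has a node of length $\geq n$'' holds in $M$ for every $n \in I$, so overspill in $M$ supplies some $K \in M\setminus I$ and a node $\sigma\in T^*$ with $|\sigma|_M \geq K$; the restriction $P := \{n\in I:\sigma(n)=1\}$ lies in $\Cod(M/I)$ and is an infinite path through $T$.

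For the converse, I argue contrapositively. Fix $X\in\Cod(M)$ with $|X|_M = a \in I$ and $X\cap I$ cofinal in $I$, enumerated in $M$ as $x_0 < \cdots < x_{a-1}$. In the clean sub-case where the external set $S := \{k<a : x_k \in I\}$ has the form $[0, k^*)$ for some $k^* \in M$, one directly refutes $\BN[1]$ in $\mathcal{N}$: set $X' := X \cap [0, x_{k^*}) \in \Cod(M)$, which lies inside $I$, is cofinal in $I$, and has $|X'|_M = k^* \in I$ (by downward closure since $k^* \leq a$); the $\Delta^0_0$-formula $\varphi(k,y) := y \in X' \wedge |X'\cap[0,y]| = k+1$ (with parameter $X'$) has the unique $I$-witness $x_k$ for every $k < k^*$, and applying $\BN[1]$ with bound $k^*$ would produce $m \in I$ bounding every $x_k$, contradicting cofinality. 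In the general sub-case, where $S$ is an external (not internally definable) cut of $[0, a)$, this direct $\BN[1]$-refutation no longer applies, and one must instead construct a pathless coded tree in $\Cod(M/I)$ that exploits the cardinality mismatch between $|X|_M = a\in I$ and the external unboundedness of $X \cap I$ in $I$.

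The principal difficulty lies in the converse in its general sub-case, where a genuine tree-construction argument is required to handle externally-defined cuts within the $M$-enumeration of $X$ — it is not enough, in general, to refute $\BN[1]$ directly, and one must produce an infinite coded binary tree whose paths would have to encode external selections that no $M$-code can realize. The forward direction is comparatively routine, with the main subtlety being the promotion from $\BN[1]$ to full $\RCAo$ in $\mathcal{N}$, which ultimately uses $M \models \II$.
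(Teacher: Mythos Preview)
The paper does not give its own proof of this statement; it is quoted as background with a reference to Kossak--Schmerl. So there is no ``paper's proof'' to compare against, and I will simply assess your argument.

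Your forward direction is broadly standard. The overspill argument for $\wkl$ is correct (modulo checking that the $M$-representative $T^*$ can be taken to be a tree in $M$, and that $I$ is closed under exponentiation so that restrictions of the long node are coded in $I$). The passage from $\BN[1]$ to full $\RCAo$ is, as you note, the delicate part; your one-line justification via ``absoluteness of $\Sigma^0_1$-minimal witnesses'' does not quite do the job, but the standard arguments fill this in.

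The converse is where you have a real gap. Your ``clean sub-case'' is vacuous: if $X\cap I$ is cofinal in $I$, then $S=\{k<a:x_k\in I\}$ \emph{cannot} have the form $[0,k^*)$ for any $k^*\in M$. Indeed, $S$ is downward closed in $[0,a)$, so $S=[0,k^*)$ would give $S$ a maximum $k^*-1$, whence $X\cap I=\{x_0,\dots,x_{k^*-1}\}$ is bounded in $I$ by $x_{k^*-1}\in I$, contradicting cofinality. Thus the only case is your ``general sub-case'', for which you give no argument.

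Moreover, no tree construction is needed. One refutes $\IN[1]$ directly. With $X'=X\cap I\in\Cod(M/I)$, let $\varphi(k)$ be the $\Sigma^0_1$-formula $\exists y\,(y\in X'\wedge |X'\cap[0,y]|=k+1)$, i.e.\ ``$X'$ has at least $k+1$ elements''. Since $|X'\cap[0,y]|$ agrees with the $M$-count $|X\cap[0,y]|\le a$ for every $y\in I$, one checks that $\varphi(k)$ holds in $\mathcal{N}$ exactly for $k\in S$. Hence $\{k\in I:\neg\varphi(k)\}=([0,a)\setminus S)\cup[a,\infty)_I$ is a nonempty $\Pi^0_1$-definable set with no least element (because $S$ is an external cut of $[0,a)$ with no maximum). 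This violates $\mathrm{L}\Pi^0_1$, hence $\IN[1]$, hence $\RCAo$, hence $\WKLo$. So the converse follows by refuting induction rather than by building a pathless tree.
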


Bounding principles are also characterized by cuts with some elementarity condition.
In this paper, we will use the following characterization.
\begin{thm}[Proposition 3 of Clote \cite{Clote1985}, see also Paris and Kirby~\cite{Paris-Kirby}]\label{thm-elem-cut-vs-Bn}
Let $n\ge 1$.
Let $(M,S)$ be a model of $\IN[n-1]$, and let $I$ be a cut of $M$ such that $(I, S\rest I)$ be a $\Sigma^{0}_{n}$-elementary substructure of $(M,S)$.
Then, $(I,S\rest I)\models\BN[n+1]$.
\end{thm}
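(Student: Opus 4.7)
I would argue by contradiction, assuming $(I,S\rest I)\not\models\BN[n+1]$ and exploiting $\Sigma^{0}_{n}$-elementarity together with the proper-cut structure $I\subsetneq M$ (the case $I=M$ being vacuous). Using the standard equivalence $\BN[n+1]\leftrightarrow B\Pi^{0}_{n}$, a failure is witnessed by a $\Pi^{0}_{n}$-formula $\psi(x,y)$ (with parameters in $I\cup(S\rest I)$) and $a\in I$ such that $(I,S\rest I)\models\forall x<a\,\exists y\,\psi(x,y)$ while for every $b\in I$ there is $x<a$ with no $\psi$-witness below $b$. As a preparatory step I would transfer the base theory across the embedding: the instances of $\IN[n-1]$ rewrite, modulo $\BN[n]$ (which follows from $\IN[n-1]$ in $(M,S)$), as $\Pi^{0}_{n}$-sentences that do pass by $\Sigma^{0}_{n}$-elementarity, so $(I,S\rest I)\models\IN[n-1]$ and hence $(I,S\rest I)\models\BN[n]$; this also legitimizes the standard absorption of bounded quantifiers inside $\Sigma^{0}_{n}$ and $\Pi^{0}_{n}$ on both sides.

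For each $x<a$ fix a witness $y_{x}\in I$ with $(I,S\rest I)\models\psi(x,y_{x})$; $\Pi^{0}_{n}$-elementarity lifts $\psi(x,y_{x})$ to $(M,S)$, yielding $(M,S)\models\forall x<a\,\exists y\,\psi(x,y)$, and since each $y_{x}$ lies below every $c\in M\setminus I$, we obtain $(M,S)\models\forall x<a\,\exists y<c\,\psi(x,y)$ for all such $c$. Dually, the $\Sigma^{0}_{n}$-formula $\theta(b):=\exists x<a\,\forall y<b\,\neg\psi(x,y)$ holds in $(I,S\rest I)$ at every $b\in I$ by the assumed failure of bounding, and $\Sigma^{0}_{n}$-elementarity upgrades this to $(M,S)\models\theta(b)$ for all $b\in I$. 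Hence $\theta^{(M,S)}\subseteq M$ is downward-closed (by monotonicity of $\theta$ in $b$), $\Sigma^{0}_{n}$-definable, contains all of $I$, and misses $c$.

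The payoff splits into two cases. If $\theta^{(M,S)}$ properly extends $I$, pick $b^{*}\in\theta^{(M,S)}\setminus I$; a witness $x^{*}<a$ for $\theta(b^{*})$ then belongs to $I$, its transferred $\psi$-witness $y_{x^{*}}\in I\subseteq[0,b^{*})_{M}$ satisfies $(M,S)\models\psi(x^{*},y_{x^{*}})$, and this directly contradicts $(M,S)\models\forall y<b^{*}\,\neg\psi(x^{*},y)$. In the remaining case $\theta^{(M,S)}=I$ exactly, so that $I$ itself is $\Sigma^{0}_{n}$-definable in $(M,S)$. Here the plan is to expand $\psi=\forall u\,\chi$ with $\chi\in\Sigma^{0}_{n-1}$ to put $\theta$ in its $\Sigma^{0}_{n}$-normal form $\exists x<a\,\exists v\,\forall y<b\,\exists u<v\,\neg\chi(x,y,u)$, use the bounded $L\Pi^{0}_{n-1}$ available from $\IN[n-1]$ in $(M,S)$ to Skolemize the inner block, and read off an $I$-valued partial function $b\mapsto(X(b),V(b))$ of $\Sigma^{0}_{n}$-complexity whose first coordinate $X$ is non-decreasing and cofinal in $[0,a)$ as $b$ ranges over $I$; this cofinality then conflicts with the $\BN[n]$ transferred to $(I,S\rest I)$ in the first step, delivering the desired contradiction.

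I expect this last case to be the main obstacle: $\IN[n-1]$ is strictly weaker than the $L\Pi^{0}_{n}$ (equivalently $\IN[n]$) that would immediately forbid $\Sigma^{0}_{n}$-definable proper cuts in $(M,S)$, so the contradiction cannot come from a one-line overspill and must exploit the fine syntactic shape of $\psi$ together with the bounded least-number principle actually available from $\IN[n-1]$, in the style of the Paris--Kirby/Clote cut-characterizations of collection cited in the paper.
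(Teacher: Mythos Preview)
The paper does not give its own proof of this theorem; it is quoted from Clote and Paris--Kirby with no argument supplied, so there is nothing to compare against. I will therefore just comment on the correctness of your proposal.

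There is a genuine error in your preparatory step: you assert that $\BN[n]$ follows from $\IN[n-1]$ in $(M,S)$, and you use this twice --- once to put $\theta$ in $\Sigma^{0}_{n}$-normal form, and once to conclude $(I,S\rest I)\models\BN[n]$. But $I\Sigma_{n-1}\not\vdash B\Sigma_{n}$; this is exactly the strictness of the Paris--Kirby hierarchy. (The transfer of $\theta(b)$ for $b\in I$ is in fact fine without $\BN[n]$: since $a,b\in I$ and $I$ is an initial segment, the bounded quantifiers in $\theta$ range over the same elements in both structures, so the equivalence reduces to $\Sigma^{0}_{n}$-elementarity for the innermost $\neg\psi$. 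And $(I,S\rest I)\models\BN[n]$ is true, but it must be proved by the same cut argument one level down, not by the implication you cite.)

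More importantly, your Case 1 is vacuous. Your own observation that each $y_{x}\in I$ lies below every $c\in M\setminus I$ already gives $\neg\theta(c)$ for all $c\notin I$; combined with $\theta(b)$ for all $b\in I$, this forces $\theta^{(M,S)}=I$ on the nose. So the entire content of the theorem is your Case 2, which you leave as a sketch whose conclusion (``cofinality in $[0,a)$ conflicts with $\BN[n]$'') is not clearly a valid inference and rests on the faulty $\BN[n]$ claim.

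The standard route through Case 2 is more direct and avoids $\BN[n]$ altogether. Fix $c\in M\setminus I$, write $\neg\psi(x,y)\equiv\exists u\,\neg\chi(x,y,u)$ with $\chi\in\Sigma^{0}_{n-1}$, and set $\theta_{c}(b):=\exists x<a\,\forall y<b\,\exists u<c\,\neg\chi(x,y,u)$. Using only $\BN[n-1]$ (which \emph{does} follow from $\IN[n-1]$), $\theta_{c}$ is $\Pi^{0}_{n-1}$. One checks that $\theta_{c}(b)$ holds in $(M,S)$ for every $b\in I$ (the $I$-witnesses $u$ all lie below $c$) but $\theta_{c}(c)$ fails (since it implies $\theta(c)$). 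By $L\Sigma^{0}_{n-1}$ in $(M,S)$ there is a least $d$ with $\neg\theta_{c}(d)$; necessarily $d-1\in M\setminus I$, and the witness $x^{*}<a$ for $\theta_{c}(d-1)$ satisfies $(M,S)\models\neg\psi(x^{*},y)$ for every $y\in I$, which by $\Sigma^{0}_{n}$-elementarity contradicts $(I,S\rest I)\models\exists y\,\psi(x^{*},y)$. This is the missing idea.
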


\subsection{Hilbert's program and finitistic reductionism}\label{subsec:HP}

During the early 20th century, mathematics went through a serious foundational crisis,
with the discovery of various paradoxes and inconsistencies. Some great mathematicians such as Kronecker, Poincar\'e and Brouwer
challenged the validity of infinitistic reasoning. Hilbert~\cite{hilbert1926} proposed a three-step program 
to answer those criticisms. First, he suggested to identify the finitistic part of mathematics, then to axiomatize infinite reasoning,
and eventually to give a finitistically correct consistency proof of this axiomatic system. 
However, his program was nipped in the bud by G\"odel's incompleteness theorems~\cite{MR2260656}. 

In 1986, Simpson~\cite{partial} proposed a formal interpretation of Hilbert's program
by taking primitive recursive arithmetic (PRA) as the base system for capturing finitistic reasoning.
This choice was convincingly justified by Tait~\cite{tait1981finitism}. Simpson took
second-order arithmetic ($Z_2$) as the big system capturing infinitistic reasoning, based on the work
of Hilbert and Bernays~\cite{MR3027390}. In this setting, finitistic reductionism can be interpreted
as proving that $Z_2$ is $\Pi^0_1$-conservative over~PRA. By G\"odel incompleteness theorem,
this cannot be the case. However, Simpson proposed to consider $\Pi^0_1$-conservation of subsystems of second-order arithmetic
over PRA as a partial realization of Hilbert's program. He illustrated his words with weak K\"onig's lemma ($\wkl$)
which was proven by Friedman to be $\Pi^0_2$-conservative over~PRA (Theorem~\ref{thm-Friedman-conservation}).
In this paper, we contribute to Hilbert's program by showing that $\WKL + \RT^2_2$ is $\Pi^0_2$-conservative over~PRA,
and therefore that $\RT^2_2$ is finitistically reducible. Moreover, we develop general tools to simplify the proofs of $\Pi^0_3$-conservation theorems, and thereby contribute to the simplification of the realization of Hilbert's program.

\subsection{Notation}

In order to avoid confusion between the theory and the meta-theory, we shall use
$\omega$ to denote the set of (standard) natural numbers, and $\N$ to denote the sets
of natural numbers inside the system. Accordingly, we shall write $\bbomega$
for the ordinal $\omega$ in the system.
We write $[a,b]_{\N}, (a,b)_{\N},(a,\infty)_{\N},\dots$ for intervals of natural numbers, e.g., $(a,b]_{\N}=\{x\in\N\mid a<x\le b\}$.
Given a set~$X$ and some~$n \in \N$, $[X]^n$ is the collection of all sets of size~$n$.
$[X]^{<\N}$ is the union $\bigcup_n [X]^n$.

We use $\Pi_{n}$, $\Sigma_{n}$, $\Delta_{n}$ to denote first-order formulas without set parameters,
whereas $\Pi^{0}_{n}$, $\Sigma^{0}_{n}$, $\Delta^{0}_{n}$ are second-order formulas, \textit{i.e.}, with set parameters.
A $\tilde{\Pi}{}^{0}_{n}$-formula is a second-order formula of the form $(\forall X)\varphi(X)$ where $\varphi$ is a $\Pi^0_n$-formula.

Given two sets~$A, B$, $A\oplus B=\{2x\mid x\in A\}\cup\{2x+1\mid x\in B\}$, $A \subseteq_{\fin} B$ means that $A$ is a finite subset of~$B$, and $A \subseteq^{*} B$ means that the set $A$ is included, up to finite changes, in~$B$.
We write~$A < B$ for the formula $(\forall x \in A)(\forall y \in B)x < y$.
Whenever~$A = \{x\}$, we shall simply write~$x < B$ for~$A < B$.
A set~$X$ can be seen as an infinite join~$X = \bigoplus_i X_i$, where $x \in X_i$ iff $\langle i,x \rangle \in X$.
We then write~$X[i]$ for~$X_i$.
Given a set~$X$ or a string~$\sigma$ and some integer~$m \in \omega$, 
we write~$X \rest m$ for the initial segment of~$X$ (resp.\ $\sigma$) of length~$m$.

\subsection{Structure of this paper}

The main target of this paper is the following conservation theorem.
\begin{theorem*}
$\WKLo+\RT^{2}_{2}$ is a $\tilde{\Pi}{}^{0}_{3}$-conservative extension of $\II$.
\end{theorem*}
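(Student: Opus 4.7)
The plan is to prove the contrapositive model-theoretically: given a countable $(M,S) \models \II + \neg\varphi$ with $\varphi \in \tilde{\Pi}{}^{0}_{3}$, produce a model of $\WKLo + \RT^{2}_{2} + \neg\varphi$. By Theorem~\ref{thm:bsig2-pi03-conservative-isig1} I may assume $(M,S) \models \BII$, and by Theorem~\ref{thm:harrington-conservation-wkl} that $(M,S) \models \WKLo$. Writing $\varphi = \forall X\forall x\exists y\forall z\,\theta(X,x,y,z)$ with $\theta \in \Sigma^{0}_{0}$, I fix $A \in S$ and $a \in M$ witnessing $\neg\varphi$, so that $(M,S) \models \forall y\exists z\,\neg\theta(A,a,y,z)$. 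I would also use the decompositions $\RT^{2}_{2} \leftrightarrow \EM + \ADS$ and $\ADS \leftrightarrow \psRT^{2}_{2}$, so that producing a model of $\WKLo + \EM + \psRT^{2}_{2}$ suffices.

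The target is a semi-regular cut $I \subseteq_{e} M$ with $a \in I$ such that $(I,\Cod(M/I)) \models \RT^{2}_{2}$ and $I$ is closed under the $\Delta^{0}_{1}(A)$-function $y \mapsto \mu z\,\neg\theta(A,a,y,z)$. Semi-regularity yields $\WKLo$ inside the cut by the characterization cited in Section~1.4; the closure condition plus the absoluteness of bounded formulas between $M$ and $I$ transports the $\Pi^{0}_{2}$-statement $\forall y\exists z\,\neg\theta(A\cap I,a,y,z)$ down to $(I,\Cod(M/I))$, hence preserves $\neg\varphi$ (note $A \cap I \in \Cod(M/I)$ since $(M,S) \models \RCAo$). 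To produce such a cut, I would employ the indicator/density method in the tradition of Paris, Bovykin-Weiermann~\cite{BW}, and the second author~\cite{Y-RIMS2013}: define a density notion $D$ on $M$-finite sets combining semi-regularity with combinatorial densities for $\EM$ and for $\psRT^{2}_{2}$. The general tool announced as Theorem~\ref{thm-provably-large-conservative} is then invoked to convert provable largeness of $D$ in $\II$ into the existence of the required cut inside $M$, with the witness function automatically absorbed into the closure requirements.

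The main obstacle is establishing that the density for $\RT^{2}_{2}$ is provably large in $\II$. One must iterate the densities for $\EM$ and for $\psRT^{2}_{2}$ cofinally, while keeping all intermediate reasoning inside $\II$; a naive iteration collapses into a proof requiring $\III$ or more, which would defeat the purpose. Here the generalized Parsons theorem (Theorem~\ref{thm-g-Parsons}) and the conservation of $\BN[n+1]$ over $\In$ (Theorem~\ref{thm-con-Bn+1-vs-In}) are the key proof-theoretic engines keeping the induction strength bounded at $\II$, playing a role analogous to the Chong-Slaman-Yang preservation of $\neg\III$ but refined to the level of $\Pi^{0}_{2}$-facts. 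Once the combined density for $\RT^{2}_{2}$ is shown to be provably large in $\II$, Theorem~\ref{thm-provably-large-conservative} supplies the cut $I$ and closes the argument.
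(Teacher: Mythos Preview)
Your high-level strategy---decompose $\RT^2_2$ into $\EM+\psRT^2_2$, use the density/indicator machinery, and invoke Theorem~\ref{thm-provably-large-conservative}---matches the paper's. But the proposal has a framing confusion and, more seriously, omits the one idea that actually makes the argument go through.

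\medskip
\textbf{Framing.} The opening model-theoretic maneuver (upgrade $(M,S)$ to a model of $\BII+\WKLo$, then search for a semi-regular cut) is at cross-purposes with your later appeal to Theorem~\ref{thm-provably-large-conservative}. That theorem is already the full conservation statement: once you know that $\bbomega^k$-large$(\Gamma)$ness is a largeness notion provably in $\II$ for every $k\in\omega$, the conclusion $\WKLo+\Gamma$ is $\tilde\Pi^0_3$-conservative over $\II$ follows outright. The cut construction happens inside the \emph{proof} of Theorem~\ref{thm-provably-large-conservative} (via Lemma~\ref{lem-cut-from-dense-set} and Theorem~\ref{thm-conservation-via-density-general}), not as a step you perform afterwards inside your chosen $(M,S)$. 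So the initial $\BII/\WKLo$ upgrade and the closure-under-witness-function discussion are redundant. The paper's proof of the headline theorem is accordingly two lines: apply Corollary~\ref{cor-con-ADS} and Theorem~\ref{thm-con-EM} separately, then amalgamate via Theorem~\ref{thm:amalgamation}. (Alternatively, Proposition~\ref{prop-RT22-largeness} composes the two largeness bounds directly and feeds the result to Theorem~\ref{thm-provably-large-conservative}.)

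\medskip
\textbf{The real gap.} You correctly locate the obstacle---showing in $\II$ that every infinite set contains an $\bbomega^k$-large($\EM$) subset---but your proposed resolution is too vague to succeed. Citing Theorems~\ref{thm-g-Parsons} and~\ref{thm-con-Bn+1-vs-In} as ``engines'' is right, but you have not named the vehicle they drive. In the paper this is the \emph{grouping principle} $\GP$ (Section~\ref{section-GP}). The logical flow is:
\begin{enumerate}
\item Prove by a forcing argument (Lemma~\ref{lem:ISig1-preservation}) that $\SGP$ satisfies the hypothesis $(\dag)$ of Theorem~\ref{thm-con-Bn+1-vs-In}; conclude $\RCAo+\SGP$ is $\tilde\Pi^0_3$-conservative over $\II$.
\item Amalgamate with $\ADS$ (which supplies $\COH$) to get conservation for $\WKLo+\GP$ (Theorem~\ref{thm:GP-conservation}).
\item Since the finite grouping principle $\FGP(\LL_{\bbomega^{n_0}},\LL_{\bbomega^6})$ is a $\tilde\Pi^0_3$ consequence of $\WKLo+\GP$, it is provable in $\II$ (Theorem~\ref{thm-FGP}).
\item Now the generalized Parsons theorem (Corollary~\ref{cor-g-Parsons}) extracts, for each $k$, an $n$ such that $\bbomega^n$-largeness implies the existence of an $(\bbomega^{n_0},\bbomega^6)$-grouping, and an elementary combinatorial argument (Lemma~\ref{lem-EM-largeness}) glues groupwise transitive pieces into a single $\bbomega^k$-large transitive set.
\end{enumerate}
Step~(4) is the combinatorial heart: a grouping for $f$ lets you reduce transitivity on $H=\bigcup H_i$ to transitivity inside each block (handled by induction) plus constancy of $f$ across blocks (handled by Ketonen--Solovay, Theorem~\ref{SK-theorem}). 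Without the grouping principle or some equivalent device, there is no evident way to bound $\EM$-largeness inside $\II$, and your sentence ``iterate the densities for $\EM$ and $\psRT^2_2$ cofinally'' does not describe a working mechanism. This is the missing idea.
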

We will prove this in the following way.

In Sections~\ref{section-large-and-dense} and \ref{section-density}, we will explain that
$\tilde{\Pi}{}^{0}_{3}$-consequences of Ramsey's theorem and its variations 
are characterized by some largeness notions of finite sets.
We will introduce largeness notion for $\Gamma$, where $\Gamma$ is any of $\RT^{2}_{2}$, 
$\psRT^{2}_{2}$ (which is equivalent to $\ADS$), and $\EM$.
Roughly speaking, giving a bound for largeness for $\Gamma$ within $\II$ provides 
$\tilde{\Pi}{}^{0}_{3}$-conservation for $\WKLo+\Gamma$ over $\II$ (Theorem~\ref{thm-provably-large-conservative}).

According to the decomposition of $\RT^{2}_{2}$ into $\ADS$ and $\EM$ and the amalgamation theorem (Theorem~\ref{thm:amalgamation}), 
the conservation for $\RT^{2}_{2}$ can be decomposed into the conservation for 
$\ADS$ and the conservation for $\EM$.
In Section~\ref{section-ADS}, we give a bound for the largeness notion for $\psRT^{2}_{2}$ 
(Lemma~\ref{lem-ADS-largeness}) by using the Ketonen/Solovay theorem.
It actually provides the conservation result for $\ADS$ (Corollary~\ref{cor-con-ADS}).

It is rather complicated to give a bound for the largeness notion for $\EM$.
For this, we will introduce a new combinatorial principle called the \emph{grouping principle}. 
We mainly focus on the grouping principle for pairs and two colors $\GP$.
Section~\ref{section-GP} is devoted to the reverse mathematical study of $\GP$, 
especially from the view point of computability theory.
In Section~\ref{section-GP-con}, we will prove a conservation theorem for $\GP$ (Theorem~\ref{thm:GP-conservation}).
For this, we will modify the construction of a low solution for the stable version of $\GP$ (Theorem~\ref{thm:grouping-low-solution}) 
presented in the previous section.

In Section~\ref{section-RT22}, we give a bound for the largeness notion for $\EM$ (Lemma~\ref{lem-EM-largeness})
 by using a finite version of the grouping principle, which is a consequence of $\GP$.
It provides the conservation result for $\EM$ (Theorem~\ref{thm-con-EM}).
Then, combining this with the conservation result in Section~\ref{section-ADS} by the amalgamation theorem,
 we obtain the main theorem.

The main theorem can be formalized within $\WKLo$, and that leads to the consistency equivalence of $\II$ and $\RT^{2}_{2}$.
This is argued in Section~\ref{section-consistency-strength}.

\section{Largeness}\label{section-large-and-dense}

A family of finite sets of natural numbers $\LL\subseteq [\N]^{<\N}$ is said 
to be a \textit{largeness notion} if any infinite set has a finite subset in $\LL$
and $\LL$ is closed under supersets. A finite set $X\in [\N]^{<\N}$ is said to be $\LL$-large 
if $X\in \LL$. A (possibly largeness) notion $\LL\subseteq [\N]^{<\N}$ is said to be \emph{regular} 
if for any $\LL$-large set~$F$, any finite set~$G\subseteq_{\fin}\N$ for which there exists an order-preserving 
injection $h:F\to G$ such that $(\forall x \in F)h(x)\le x$, then $G \in \LL$.
A $\Delta_{0}$-definable notion $\LL\subseteq[\N]^{<\N}$ is said to be a
(regular) largeness notion provably in $\II$ if $\II\vdash$``$\LL$ is a (regular) largeness notion''.
The idea of a largeness notion is introduced in Aczel~\cite{Aczel1981} (it is called `density' in \cite{Aczel1981}).
In this paper, we shall mainly consider regular largeness notions provably in $\II$.
\begin{exa}
The family~$\LL_\bbomega = \{ X \subseteq_{\fin} \N : |X| > \min X \}$ is a regular largeness notion provably in $\II$.
\end{exa}

The notion of largeness enjoys a property similar to the pigeonhole principle,
as states the following lemma.

\begin{lem}[$\WKLo+\BII$]\label{lem-largeness}
For any largeness notion $\LL$, for any infinite set $X$ 
and for any $k, N_{0}\in\N$, there exists $N_{1}\in\N$ such that for any partition 
$X\cap[N_{0},N_{1}]_{\N}=X_{0}\sqcup\dots\sqcup X_{k-1}$, there exists an $\LL$-large set $F$ 
such that $F\subseteq X_{i}$ for some $i< k$.
\end{lem}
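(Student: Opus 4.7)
The plan is to argue by contradiction using a standard compactness argument via weak K\"onig's lemma, combined with the pigeonhole property of infinite sets inherent in the definition of a largeness notion.

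First, I would fix $\LL$, $X$, $k$ and $N_{0}$, and suppose toward a contradiction that no $N_{1}$ works. Writing $X_{\ge N_{0}}=\{x_{0}<x_{1}<\dots\}$, I would define a finitely-branching tree $T\subseteq k^{<\N}$ whose level-$n$ nodes are partitions $\sigma:X\cap[N_{0},x_{n}]_{\N}\to k$ such that for every color $i<k$, the fiber $\sigma^{-1}(i)$ contains \emph{no} $\LL$-large subset. Since $\LL$ is $\Delta_{0}$-definable, membership in $T$ is $\Delta^{0}_{1}$, so $T$ is a $\Delta^{0}_{1}$ tree; it is clearly closed under initial segments, and by the contradiction hypothesis it has nodes of every length.

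Next, since $T$ is a bounded tree (at most $k$ children at each level), the bounded K\"onig's lemma available in $\WKLo$ yields an infinite path. This path codes a total coloring $c:X_{\ge N_{0}}\to k$, whose fibers $X_{0},\dots,X_{k-1}$ partition $X_{\ge N_{0}}$. By $\BII$, applied to the $\Sigma^0_1$ statement that a finite union of finite sets is finite (equivalently, $\RT^{1}_{k}$ for the possibly nonstandard $k$), at least one fiber $X_{i}$ is infinite. By the defining property of a largeness notion, $X_{i}$ then contains some finite $\LL$-large subset $F$. But $F$ is finite, so pick $n$ large enough that $F\subseteq[N_{0},x_{n}]_{\N}$; the level-$n$ initial segment of the path is then a node of $T$ in which the fiber of color $i$ contains the $\LL$-large set $F$, contradicting membership in $T$.

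The only delicate points I expect are: (i) ensuring that the tree $T$ really is $\Delta^{0}_{1}$, which is fine because $\LL$ is $\Delta_{0}$ and ``$F$ contains an $\LL$-large subset'' is a bounded quantification over subsets of the finite interval $[N_{0},x_{n}]_{\N}$; and (ii) justifying that one of the $X_{i}$ is infinite when $k$ may be nonstandard, which is precisely where $\BII$ is used. Everything else is bookkeeping, and no further properties of $\LL$ beyond being a largeness notion (in particular, regularity is not needed) are invoked.
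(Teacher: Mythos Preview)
Your proposal is correct and is essentially the same argument the paper gives: the paper simply says that by $\BII$ any partition of $X\cap[N_0,\infty)_{\N}$ into $k$ pieces has an infinite piece, which then contains an $\LL$-large subset, and that a bound $N_1$ follows from ``the usual compactness argument which is available within $\WKLo$''---you have just spelled out that compactness argument explicitly as a tree construction. One small terminological slip: in the lemma $\LL$ is an arbitrary set given as a second-order parameter, not necessarily $\Delta_0$-definable; but this is harmless, since ``$F\in\LL$'' is then $\Sigma^0_0$ with parameter $\LL$ and your tree is still $\Delta^0_1$ as needed.
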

Since $\WKLo+\BII$ is a $\tilde{\Pi}^0_3$-conservative extension of~$\II$, 
this lemma for a $\Delta_{1}$-definable largeness notion provably in $\II$ is provable in $\II$.
\begin{proof}
By $\BII$, for any partition $X\cap [N_{0},\infty)_{\N}=X_{0}\sqcup\dots\sqcup X_{k-1}$, one of the $X_{i}$'s is infinite and thus it contains an $\LL$-large subset.
Thus, a bound for such an $\LL$-large set can be obtained by the usual compactness argument which is available within $\WKLo$.
\end{proof}
\begin{rem}\label{rem:on-lem-largeness}
Note that the use of $\BII$ in the previous lemma is essential.
If $\BII$ fails, there exists a partition $X=X_{0}\sqcup\dots\sqcup X_{k-1}$ such that each of the $X_{i}$'s is finite.
Then, $\LL=\{F\in [\N]^{<\N}\mid \A i<k(F\not\subseteq X_{i})\}$ is a largeness notion failing the lemma.
The use of~$\WKLo$ is not essential since the argument can be formalized within~$\RWKL+\BII$,
where~$\RWKL$ denotes the Ramsey-type weak K\"onig's lemma introduced by Flood~\cite{Flood2012Reverse}.
 \end{rem}

\subsection{$\alpha$-largeness}

From now on, we fix a primitive recursive notation for ordinals below $\epsilon_{0}$.
In this paper, we actually use ordinals of the form $\alpha=\sum_{i<k}\bbomega^{n_{i}}<\bbomega^{\bbomega}$ 
where $n_{i}\in\N$ and $n_{0}\ge\dots\ge n_{k-1}$. (We write $1$ for $\bbomega^{0}$, and $\bbomega^{n}\cdot k$ for $\sum_{i<k}\bbomega^{n}$.)
For a given $\alpha<\bbomega^{\bbomega}$ and $m\in \N$, define $0[m]=0$, $\alpha[m]=\beta$ 
if $\alpha=\beta+1$ and $\alpha[m]=\beta+\bbomega^{n-1}\cdot m$ if $\alpha=\beta+\bbomega^{n}$ for some $n\ge 1$.

\begin{defi}[$\II$]
Let $\alpha<\bbomega^{\bbomega}$. A set $X = \{ x_0 < \dots < x_{\ell-1} \}\subseteq_{\fin}\N$ 
is said to be \emph{$\alpha$-large} if $\alpha[x_{0}]\dots[x_{\ell-1}]=0$. 
In other words, any finite set is $0$-large, and $X$ is said to be $\alpha$-large if
\begin{itemize}
 \item $X\setminus \{\min X\}$ is $\beta$-large if $\alpha=\beta+1$,
 \item $X\setminus \{\min X\}$ is $(\beta+\bbomega^{n-1}\cdot\min X)$-large if $\alpha=\beta+\bbomega^{n}$.
\end{itemize}
We let $\LL_{\alpha}=\{X\subseteq_{\fin} \N\mid X$ is $\alpha$-large$\}$.
\end{defi}

In particular, a set~$X$ is $m$-large iff $|X| \geq m$ and $\bbomega$-large iff $|X| > \min X$.
See~\cite{Ha-Pu} for the general definition of $\alpha$-largeness.
One can easily see that if $X\subseteq Y$ for some $\alpha$-large set~$X$ and some finite set~$Y$, 
then $Y$ is $\alpha$-large.

We say that $X$ is \emph{$\alpha$-small} if it is not $\alpha$-large.
The following basic combinatorics have been proven in~\cite[Theorem~II.3.21]{Ha-Pu} in their full generality.

\begin{lem}[$\II$]\label{lem-alpha-large-sum}\label{lem-alpha-small-sum}
Fix any $k,n\in\N$.
\begin{itemize}
	\item[(i)] A set $X$ is $\bbomega^{n}\cdot k$-large if and only 
	if it is a union of some $\bbomega^{n}$-large finite sets $X_{0} < \dots < X_{k-1}$.
	\item[(ii)] A set $X$ is $\bbomega^{n}\cdot k$-small 
	if it is a union of $\bbomega^n$-small finite sets $X_{0} < \dots < X_{k-1}$.
\end{itemize}
\end{lem}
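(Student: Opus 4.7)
The plan is to prove both parts by induction on $k$, using as the key technical tool the \emph{additivity identity} for the fundamental sequence operation: for all ordinals $\alpha\ge 0$ and $\beta>0$ below $\bbomega^{\bbomega}$ and every $m\in\N$, $(\alpha+\beta)[m]=\alpha+\beta[m]$. This is immediate from the recursive definition of $\alpha[m]$, since the operation acts on the rightmost summand of $\alpha$, and this summand lies inside $\beta$ whenever $\beta>0$. Iterating, consuming a sequence $x_{0}<\dots<x_{\ell-1}$ from $\alpha+\beta$ produces $\alpha+\beta[x_{0}]\dots[x_{\ell-1}]$ provided the partial reductions of $\beta$ remain positive; in particular, consuming a \emph{minimal} $\beta$-large prefix reduces $\alpha+\beta$ to exactly $\alpha$, because the $\beta$-summand only hits $0$ at the very last step and so never ``spills over'' into $\alpha$.

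For the backward direction of (i), I argue by induction on $k$: given $X=X_{0}\cup\dots\cup X_{k-1}$ with each $X_{i}$ being $\bbomega^{n}$-large and $X_{0}<\dots<X_{k-1}$, write $\bbomega^{n}\cdot k=\bbomega^{n}\cdot (k-1)+\bbomega^{n}$, and invoke the additivity identity together with $\bbomega^{n}$-largeness of $X_{0}$ to see that consuming $X_{0}$ brings $\bbomega^{n}\cdot k$ down to $\bbomega^{n}\cdot (k-1)$; the inductive hypothesis applied to $X_{1}\cup\dots\cup X_{k-1}$ then reduces that to $0$. For the forward direction, again by induction on $k$, assume $X$ is $\bbomega^{n}\cdot k$-large. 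First observe that additivity forces some initial segment of $X$ to be $\bbomega^{n}$-large, for otherwise the reduction of $\bbomega^{n}\cdot k$ by the elements of $X$ would terminate at $\bbomega^{n}\cdot (k-1)+\delta$ with $\delta>0$, contradicting largeness. Take $X_{0}$ to be the minimal such initial segment; since $X_{0}\setminus\{\max X_{0}\}$ is $\bbomega^{n}$-small, consuming $X_{0}$ from $\bbomega^{n}\cdot (k-1)+\bbomega^{n}$ operates throughout on the rightmost $\bbomega^{n}$-summand and exhausts it exactly, leaving $\bbomega^{n}\cdot (k-1)$. Hence $X\setminus X_{0}$ is $\bbomega^{n}\cdot (k-1)$-large and splits further by the inductive hypothesis.

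For part (ii), I induct on $k$ once more. Given $X=X_{0}<\dots<X_{k-1}$ with every $X_{i}$ being $\bbomega^{n}$-small, suppose for contradiction that $X$ is $\bbomega^{n}\cdot k$-large. By the forward direction of (i) just established, let $Y_{0}$ be the minimal initial $\bbomega^{n}$-large segment of $X$; then $X\setminus Y_{0}$ is $\bbomega^{n}\cdot (k-1)$-large. Both $X_{0}$ and $Y_{0}$ are initial segments of $X$, so they are totally ordered by inclusion; since $X_{0}$ is $\bbomega^{n}$-small and $Y_{0}$ is $\bbomega^{n}$-large, monotonicity of largeness under supersets forces $Y_{0}\supsetneq X_{0}$, and consequently $X\setminus Y_{0}\subsetneq X_{1}\cup\dots\cup X_{k-1}$. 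A second application of monotonicity yields that $X_{1}\cup\dots\cup X_{k-1}$ is $\bbomega^{n}\cdot (k-1)$-large, contradicting the inductive hypothesis applied to the decomposition $X_{1}<\dots<X_{k-1}$ into $\bbomega^{n}$-small blocks.

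All of this is formalizable in $\II$ because $\alpha$-largeness for $\alpha<\bbomega^{\bbomega}$ is $\Delta_{1}$ in the fixed primitive-recursive notation, so the inductions on $k$ are $\Sigma^{0}_{1}$-inductions whose witnesses are bounded by $X$. The delicate step, which I would write out in full detail, is the iterated application of the additivity identity: one must carefully track how the rightmost summand evolves step by step during the reduction, and verify that, when consuming a minimal $\bbomega^{n}$-large prefix, the identity remains applicable at every step except the final one — which is exactly what minimality guarantees. Once this bookkeeping is in place, the rest of the argument is essentially a transcription of elementary ordinal arithmetic.
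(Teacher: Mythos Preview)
The paper does not prove this lemma; it simply cites H\'ajek--Pudl\'ak, Theorem~II.3.21. Your argument via the additivity identity $(\alpha+\beta)[m]=\alpha+\beta[m]$ and induction on $k$ is the standard one and is essentially correct.

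Two small points deserve care in the write-up. First, the additivity identity as you state it (``for all $\alpha\ge 0$ and $\beta>0$'') is false for ordinal addition: with $\alpha=1$, $\beta=\bbomega$ one has $(\alpha+\beta)[m]=\bbomega[m]=m$ but $\alpha+\beta[m]=1+m$. What does hold, and what you actually need, is the identity under the extra hypothesis that the concatenation $\alpha+\beta$ is already in Cantor normal form (equivalently, the least exponent occurring in $\alpha$ is at least the greatest exponent occurring in $\beta$); in your application $\alpha=\bbomega^{n}\cdot(k-1)$ and $\beta$ is always a reduct of $\bbomega^{n}$, so this is satisfied throughout the iteration. Second, in the backward direction of (i) you claim that consuming $X_{0}$ brings $\bbomega^{n}\cdot k$ down to $\bbomega^{n}\cdot(k-1)$, but this is exact only when $X_{0}$ is \emph{minimally} $\bbomega^{n}$-large; otherwise the reduction overshoots. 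The clean fix is to consume only the minimal $\bbomega^{n}$-large initial segment $X_{0}'\subseteq X_{0}$, which lands exactly at $\bbomega^{n}\cdot(k-1)$; the remainder $X\setminus X_{0}'\supseteq X_{1}\cup\dots\cup X_{k-1}$ is then $\bbomega^{n}\cdot(k-1)$-large by the inductive hypothesis together with the superset-monotonicity of $\alpha$-largeness that the paper records just above this lemma. You already handle minimality correctly in the forward direction, so this adjustment is routine.
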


In particular, $\{k\}\cup X_{0}\cup\dots\cup X_{k-1}$ is $\bbomega^{n+1}$-large 
if each of $X_{i}$ is $\bbomega^{n}$-large and $k<X_{0} < \dots < X_{k-1}$.
Similarly, if $\{k\}\cup X_{0}\cup\dots\cup X_{k-1}$ is $\bbomega^{n+1}$-large 
and $k<X_{0} < \dots < X_{k-1}$, then one of $X_{i}$'s is $\bbomega^{n}$-large.

The following theorem corresponds to the well-known fact that the proof-theoretic ordinal of $\II$ is $\omega^{\omega}$.

\begin{thm}\label{thm-omega-largeness}
For any $n\in\omega$, $\II$ proves that $\LL_{\bbomega^{n}}$ is a regular largeness notion.
\end{thm}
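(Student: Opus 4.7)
The plan is to proceed by external induction on $n \in \omega$, producing for each standard $n$ a proof in $\II$ that $\LL_{\bbomega^n}$ is a regular largeness notion. The base case $n = 0$ is immediate: unpacking the definition, $\LL_1$ is the family of nonempty finite subsets of $\N$, which is trivially closed under supersets, met by every infinite set, and regular. For the inductive step, assume the statement for $n$; closure of $\LL_{\bbomega^{n+1}}$ under supersets is the observation noted right after the definition of $\alpha$-largeness.

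For the largeness property in the inductive step, given an infinite set $X$ with $m := \min X$, I would use $\Sigma_1$-recursion inside $\II$ to construct a finite sequence $F_0 < F_1 < \dots < F_{m-1}$ of pairwise-disjoint $\bbomega^n$-large subsets of $X$, by taking $F_0$ to be the $<$-least $\bbomega^n$-large subset of $X \cap (m,\infty)_{\N}$ and inductively $F_{i+1}$ to be the $<$-least $\bbomega^n$-large subset of $X \cap (\max F_i, \infty)_{\N}$; the inductive hypothesis applied to the infinite tail of $X$ guarantees existence at each stage. Lemma~\ref{lem-alpha-large-sum}(i) then ensures that $F_0 \cup \dots \cup F_{m-1}$ is $\bbomega^n \cdot m$-large, and hence $\{m\} \cup F_0 \cup \dots \cup F_{m-1}$ is $\bbomega^{n+1}$-large directly from the definition.

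For regularity of $\LL_{\bbomega^{n+1}}$, let $F$ be $\bbomega^{n+1}$-large with $\min F = m$, and let $h : F \to G$ be an order-preserving injection with $h(x) \le x$ for every $x \in F$. Since $F \setminus \{m\}$ is $\bbomega^n \cdot m$-large, Lemma~\ref{lem-alpha-large-sum}(i) provides a decomposition $F \setminus \{m\} = F_0 \cup \dots \cup F_{m-1}$ with $F_0 < \dots < F_{m-1}$ and each $F_i$ being $\bbomega^n$-large. Each restriction $h \rest F_i$ is an order-preserving injection pointwise dominated by the identity, so the inductive hypothesis (regularity of $\LL_{\bbomega^n}$) gives that each $h(F_i)$ is $\bbomega^n$-large. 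Because $h(m) \le m$ and $h$ is order-preserving, the first $h(m)$ images $h(F_0), \dots, h(F_{h(m) - 1})$ all lie strictly above $h(m)$; by Lemma~\ref{lem-alpha-large-sum}(i) their union is $\bbomega^n \cdot h(m)$-large, so $\{h(m)\} \cup h(F_0) \cup \dots \cup h(F_{h(m)-1})$ is $\bbomega^{n+1}$-large, and closure under supersets then yields $G \in \LL_{\bbomega^{n+1}}$.

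The main obstacle is to ensure all these constructions fit within $\II$. For each fixed standard $n$, the predicate ``$F$ is $\bbomega^n$-large'' is $\Delta_0$, so the recursion defining the $F_i$ is driven by a $\Sigma_1$-formula and is accessible via the $\Sigma_1$-induction available in $\II$; Lemma~\ref{lem-alpha-large-sum}(i) likewise provides its decomposition by a primitive-recursive procedure. Beyond this first-order bookkeeping, the inductive step amounts to a straightforward manipulation of the $\alpha$-largeness recursion, and the external induction on the standard parameter $n$ completes the proof.
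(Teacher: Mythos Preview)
Your proposal is correct and follows essentially the same approach as the paper. The paper's proof also proceeds by external induction on $n$, uses $\Sigma^0_1$-induction inside $\II$ to collect $\min X$ many $\bbomega^n$-large subsets of $X$ above $\min X$, and then invokes Lemma~\ref{lem-alpha-large-sum} (and the remark following it) to assemble them into an $\bbomega^{n+1}$-large set; the only difference is that the paper dismisses regularity in a single sentence (``one can easily check the regularity within $\II$''), whereas you spell out an explicit inductive argument for it.
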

\begin{proof}
 One can easily check the regularity within $\II$.
We will see that $\II\vdash$ ``any infinite set contains an $\bbomega^{n}$-large subset'' by (external) induction.
The case $n=0$ is trivial.
We show the case $n=k+1$.
Within $\II$, let an infinite set $X$ be given.
Then, by the induction hypothesis and $\Sigma^{0}_{1}$-induction, one can find $\min X$-many $\bbomega^{k}$-large sets $F_{i}\subseteq X$
such that $\min X<F_{0} < \dots < F_{\min X-1}$.
By Lemma~\ref{lem-alpha-large-sum} and the discussion below it, $\{\min X\}\cup F_{0}\cup\dots\cup F_{\min X-1}$ is $\bbomega^{k+1}$-large.
\end{proof}

\subsection{Largeness for Ramsey-like statements}

Many Ramsey-type theorems studied in reverse mathematics
are statements of the form ``For every coloring $f: [\N]^n \to k$,
there is an infinite set~$H$ satisfying some structural properties''.
The most notable example is Ramsey's theorem, which asserts for every coloring $f : [\N]^n \to k$
the existence of an infinite $f$-homogeneous set.
These statements can be seen as \emph{mathematical problems},
whose \emph{instances} are coloring, and whose \emph{solutions}
are the sets satisfying the desired structural properties.

\begin{definition}\label{def-R-like-and-rest}
A \emph{Ramsey-like-$\Pi^1_2$-formula} is a $\Pi^1_2$-formula of the form
$$
	(\forall f:[\N]^{n}\to k)(\exists Y)(Y\mbox{ is infinite}\wedge \Psi(f,Y))
$$
where~$n,k\in\omega$ and~$\Psi(f,Y)$ is of the form $(\A G\subseteq_{\fin}Y)\Psi_{0}(f\rest[[0,\max G]_{\N}]^{n},G)$ such that~$\Psi_{0}$ is a $\Delta^{0}_{0}$-formula.
\end{definition}

In particular, $\RT^n_k$ is a Ramsey-like-$\Pi^1_2$-statement
where~$\Psi(f,Y)$ is the formula ``$Y$ is homogeneous for~$f$''.
Similarly, $\psRT^{2}_{k}$ and~$\EM$ are Ramsey-like-$\Pi^1_2$ statements.
On the other hand, $\SRT^{2}_{2}$ is not a Ramsey-like-$\Pi^1_2$-statement. However, $\SRT^{2}_{2}$
is equivalent to the Ramsey-like-$\Pi^1_2$-formula saying ``for any $2$-coloring $f$ on $[\N]^{2}$, there exists an infinite set $Y$ such that $Y$ is homogeneous for $f$ or there exists $a<\min Y$ witnessing the non-stability of~$f$, that is, such that for any $x,y\in Y$ there exist $b,c\in [x,y)_{\N}$ such that $f(a,b)\neq f(a,c)$''.
Although the definition of a Ramsey-like-$\Pi^1_2$-formula seems very restrictive,
we can show that it entails a much larger class of $\Pi^1_2$-statements.

\begin{definition}
A \emph{restricted-$\Pi^1_2$-formula} is a $\Pi^1_2$-formula of the form 
$\forall X\exists Y\Theta(X,Y)$ where $\Theta$ is a $\Sigma^{0}_{3}$-formula.
\end{definition}

\begin{prop}\label{prop-R-like-and-rest-formulas}
For any restricted-$\Pi^{1}_{2}$-formula $\A X\E Y\Theta(X,Y)$, 
there exists a Ramsey-like-$\Pi^{1}_{2}$-formula $\A X\E Z(Z\mbox{ is infinite}\wedge \Psi(X,Z))$ such that 
 $$\WKLo\vdash \A X(\E Y\Theta(X,Y)\leftrightarrow \E Z(Z\mbox{ is infinite}\wedge \Psi(X,Z))).$$
(Here, $X$ is considered as a function $X:[\N]^{1}\to 2$ in the definition 
of Ramsey-like-$\Pi^{1}_{2}$-formula.)
\end{prop}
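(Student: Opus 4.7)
The plan is to reduce the $\Sigma^0_3$-body of $\Theta$ to a $\Pi^0_1$-property of an infinite chain of codes in a binary tree, using an encoding that \emph{couples} the length of the decoded initial segment of $Y$ to the number of handled $\forall v\,\exists w$ instances, so that any infinite chain is automatically forced to witness cofinally many instances.

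First, put $\Theta$ in normal form $\Theta(X,Y)\equiv\exists u\,\forall v\,\exists w\,\theta_0(X,Y,u,v,w)$ with $\theta_0$ a $\Delta^0_0$-formula, and assume (by enlarging $w$ if necessary) that $\theta_0(X,Y,u,v,w)$ reads $X$ and $Y$ only at positions strictly below $w$. Fix an injective primitive recursive coding of tuples $(u,\rho,w_0,\dots,w_{k-1})$ with $u\in\N$, $\rho\in 2^{<\N}$, $w_v\in\N$, subject to the constraint $|\rho|=k+\sum_{v<k}w_v$, into binary strings $\sigma\in 2^{<\N}$ (for example, by the block encoding $1^u 0\cdot B_0\cdot B_1\cdots B_{k-1}$ where block $B_v$ has $w_v+1$ bits of $\rho$ followed by the unary marker $1^{w_v}0$); arrange that $|\sigma|\le z$ for the canonical code $z$ of $\sigma$. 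Let $T_X\subseteq 2^{<\N}$ consist of those $\sigma$ that validly encode such a tuple and satisfy $\theta_0(X\rest|\sigma|,\rho,u,v,w_v)$ for every $v<k$; this is $\Delta^0_0$ in $(X\rest|\sigma|,\sigma)$. The Ramsey-like-$\Pi^1_2$-formula is $(\forall X)(\exists Z)(Z\text{ is infinite}\wedge\Psi(X,Z))$, where $\Psi(X,Z):=(\forall G\subseteq_{\fin}Z)\Psi_0(X\rest[[0,\max G]_\N]^1,G)$ and $\Psi_0$ is the $\Delta^0_0$-predicate asserting that every $z\in G$ codes a string $\sigma_z\in T_X$ with $|\sigma_z|\le z$ and that the $\sigma_z$'s for $z\in G$ are pairwise prefix-comparable; the $T_X$-check uses only $X\rest\max G$ since $|\sigma_z|\le z\le\max G$, so $\Psi_0$ has the required form.

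For the equivalence in $\WKLo$ (indeed already in $\RCAo$): in the forward direction, given witnesses $(u,Y)$ for $\Theta(X,Y)$, define $f(v)$ as the least $w$ with $\theta_0(X,Y,u,v,w)$---its graph is $\Delta^0_0$, so $f$ exists as a function in $\RCAo$---and let $\sigma_k$ be the code of $(u,Y\rest p_k,f(0),\dots,f(k-1))$ where $p_k=k+\sum_{v<k}f(v)$. Each $\sigma_k$ lies in $T_X$, the lengths $|\sigma_k|$ strictly increase in $k$, and the resulting $Z=\{\mathrm{code}(\sigma_k):k\in\N\}$ is $\Delta^0_1$ in $X\oplus Y$, infinite, and satisfies $\Psi(X,Z)$. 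In the reverse direction, from $Z$ infinite with $\Psi(X,Z)$ the strings $\sigma_z$ ($z\in Z$) are pairwise distinct elements of $T_X$ forming a chain; the coupling $|\rho|=k+\sum_{v<k}w_v$ now ensures that any two prefix-comparable valid codes with the same $k$ coincide (equal $k$ and a prefix $\rho$ of the prescribed length force equal tuples, hence equal codes), so along the chain $k_{\sigma_z}\to\infty$. Taking the union of the $\rho_{\sigma_z}$ yields a total characteristic function $Y$, the $u$-values coincide in a single $u^*$, the $w$-sequences stabilize to a total function $v\mapsto w_v$, and $\theta_0(X,Y,u^*,v,w_v)$ follows for each $v$ from $\sigma_z\in T_X$ at any $z$ with $v<k_{\sigma_z}$; hence $\Theta(X,Y)$ holds.

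The main obstacle, which the coupling $|\rho|=k+\sum w_v$ overcomes, is to guarantee that an infinite chain of valid codes actually provides cofinally many witnesses for $\forall v\,\exists w$. In a naive encoding where $|\rho|$ is unconstrained, one could extend $\sigma$ by appending bits of $\rho$ while leaving $k$ fixed, yielding an infinite chain whose derived $Y$ is total but satisfies $\theta_0$ for only finitely many $v$. The arithmetic coupling makes every strict lengthening of a valid code an actual increment of $k$ or of some $w_v$, and prefix-comparability within a chain then forces the strict increment to be of $k$; so along any infinite chain the number of handled instances is unbounded, which is exactly what is needed to recover $\Theta$.
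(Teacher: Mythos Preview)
Your approach is genuinely different from the paper's and, if it goes through, yields a mild strengthening. The paper first absorbs the outer existential number quantifier into the set quantifier $\exists Y$ (so $\Theta$ becomes $\Pi^0_2$), and then takes $\Psi(X,Z)$ to say that every nonempty finite $F\subseteq Z$ admits \emph{some} $\sigma\in 2^{\max F}$ witnessing $\theta$ for all $n<|F|$. Since the $\sigma$'s for different $F$ need not cohere, the paper's reverse direction builds a tree and invokes $\wkl$ to extract a path $Y$. You instead keep $u$ explicit and require $Z$ itself to code an increasing \emph{chain} of partial witnesses; the reverse direction then reads $Y$ directly off the chain, with no compactness needed, so your equivalence already holds in $\RCAo$.

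There is, however, a concrete bug in your example encoding. With $B_v=\rho_v\cdot 1^{w_v}0$ and $|\rho_v|=w_v+1$, the map from tuples to strings is \emph{not} injective: for instance $(u,k,w_0,\rho)=(0,1,1,\langle 1,0\rangle)$ and $(u,k,w_0,w_1,\rho)=(0,2,0,0,\langle 1,1\rangle)$ both satisfy the coupling $|\rho|=k+\sum_{v<k}w_v$ and both encode to the string $01010$. This breaks your reverse direction, since a string in $T_X$ no longer determines its tuple, and your key claim that two prefix-comparable valid codes with the same $k$ must coincide fails. The fix is simple: place the unary marker \emph{before} the $\rho$-bits, i.e.\ take $B_v=1^{w_v+1}0\cdot\rho_v$ with $|\rho_v|=w_v+1$. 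Parsing is then deterministic (read $1^{w+1}0$, then read exactly $w+1$ data bits), the encoding is injective, $\sigma_{k+1}$ genuinely extends $\sigma_k$, and if $\sigma\prec\tau$ are both valid codes then necessarily $k_\sigma<k_\tau$. With this correction your argument goes through as written.
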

\begin{proof}
We work within $\WKLo$.
Let $\A X\E Y\Theta(X,Y)$ be a restricted-$\Pi^{1}_{2}$-formula.
Without loss of generality, one can write $\Theta(X,Y)\equiv \A n \E m\theta(X\rest m,Y\rest m,n,m)$ where $\theta$ is $\Sigma^{0}_{0}$ since existential number quantifier can be replaced with an existential set quantifier.
%Given a finite set $F\subseteq\N$ and some $n< |F|$, let $p(F,n)$ be the $(n+1)$-st smallest element of $F$.
%\ludovic{I do not understand why you use $p(F,n)$ instead of $max(F)$.}
%\keita{I agree $\max F$ is enough. Actually, I remember I first used $\max F$ and then changed it to $p(F,n)$, but I cannot remember the reason.
%Probably, I misunderstood something at that time.}
Define a formula $\Psi(X,Z)$ as follows:
\begin{align*}
%\Psi(X,Z)\equiv \A F\subseteq_{\fin} Z(F\neq \emptyset \to (&\E \sigma\in 2^{\max F})(\A n<|F|)(\E m<p(F,n))\theta(X[m],\sigma\rest m,n,m)).
\Psi(X,Z)\equiv \A F\subseteq_{\fin} Z(F\neq \emptyset \to (&\E \sigma\in 2^{\max F})(\A n<|F|)(\E m<\max F)\theta(X\rest m,\sigma\rest m,n,m)).
\end{align*}
We now show that $\E Y\Theta(X,Y)\leftrightarrow \E Z(Z\mbox{ is infinite}\wedge \Psi(X,Z))$.

To show the left to right implication, take $Y\subseteq\N$ such that $\A n \E m \theta(X\rest m, Y\rest m,n, m)$.
Define an infinite increasing sequence $\langle z_{i}\mid i\in\N \rangle$ as $z_{0}=\min \{m+1\mid \theta(X\rest m, Y\rest m,0,m)\}$,
and $z_{i+1}=\min \{m>z_{i}\mid \theta(X\rest m, Y\rest m,i+1,m)\}$.
Let $Z=\{z_{i}\mid i\in\N\}$, then we have $Z\mbox{ is infinite}\wedge \Psi(X,Z)$ (given a non-empty set $F\subseteq_{\fin}Z$, 
set $\sigma=Y\rest\max F$).

To show the right to left implication, take $Z\subseteq\N$ such that $Z\mbox{ is infinite}\wedge \Psi(X,Z)$.
Define a tree $T$ as 
\begin{align*}
%T=\{\sigma\in 2^{<\N}\mid \A F\subseteq Z\cap[0, |\sigma|](F\neq\emptyset\to (\A n<|F|)(\E m<p(F,n))\theta(X[m],\sigma\rest m, k,n,m))\}. 
T=\{\sigma\in 2^{<\N}\mid \A F\subseteq Z\cap[0, |\sigma|]_{\N}(F\neq\emptyset\to (\A n<|F|)(\E m<\max F)\theta(X\rest m,\sigma\rest m, k,n,m))\}. 
\end{align*}
Then, since $\Psi(X,Z)$ holds, the tree $T$ is infinite.
By $\WKLo$, take $Y\in [T]$. One can check that $\A n \E m\theta(X\rest m, Y\rest m,n,m)$ since $Z$ is infinite.
\end{proof}

\begin{defi}[$\II$]\label{def-Gamma-large}
Fix an ordinal $\alpha < \bbomega^\bbomega$ and a Ramsey-like-$\Pi^1_2$-statement
$\Gamma \equiv (\forall f:[\N]^{n}\to k)(\exists Y)(Y\mbox{ is infinite}\wedge \Psi(f,Y))$.
%where~$n,k\in\omega$ and~$\Psi(X,Y)$ is of the form $(\A G\subseteq_{\fin}Y)\Psi_{0}(X,G)$ such that~$\Psi_{0}$ is a $\Delta^{0}_{0}$ formula..
A set $Z\subseteq_{\fin}\N$ is said to be $\alpha$-large($\Gamma$) if
% for any~$f:[[0,\max Z]_{\N}]^{n}\to k$, 
%there exists an $\alpha$-large set~$Y \subseteq Z$ such that~$\Psi(f, Y)$ holds.
for any~$f:[[0,|Z|)_{\N}]^{n}\to k$,
there is an $\alpha$-large set~$Y \subseteq Z$
such that~$\Psi(f, p_{Z}(Y))$ holds, where $p_{Z}$ is the unique order preserving bijection from $Z$ to $[0,|Z|)_{\N}$.
\end{defi}
By the definition of a Ramsey-like-$\Pi^1_2$-formula, if $Z'\supseteq Z$ and $Z$ is $\alpha$-large($\Gamma$), then $Z'$ is $\alpha$-large($\Gamma$).
For the usual Ramsey type statements we consider ($\RT^{n}_{k}$, $\psRT^{2}_{k}$, $\EM$, \dots),
we usually identify a function $f:[[0,|Z|)_{\N}]^{n}\to k$ with $f\circ (p_{Z})^{n}:[Z]^{n}\to k$ and just discuss on $[Z]^{n}$.

\begin{exa}
A set~$Z$ is $\alpha$-large($\RT^n_k$) if for every coloring $f : [Z]^n \to k$,
there is an $\alpha$-large $f$-homogeneous set~$Y \subseteq Z$.
\end{exa}

Note that~$\alpha$-largeness($\Gamma$) for $\Gamma \in \{\RT^n_k, \psRT^{2}_{k}, \EM\}$
are all $\Delta_0$-definable notions, and $\II$ proves that they are all regular.
However, it is not obvious within $\II$ that they are all largeness notions.
Actually, showing within $\II$ that $\alpha$-largeness($\RT^2_2$) is a largeness notion
is the key to know the $\Pi^{0}_{3}$-part of $\RT^{2}_{2}$.
Our approach is to measure the size of $\alpha$-large($\Gamma$) sets by comparing them with $\alpha$-large sets.
The following classical theorem is fundamental for this purpose.
(It is not hard to check that the proof works within $\II$.)
%We let $\LL_{\alpha}=\{X\subseteq \N\mid X$ is $\alpha$-large$\}$, 
%and let $\LL^{\Gamma}_{\alpha}=\{X\subseteq \N\mid X$ is 
%$\alpha$-large($\Gamma$)$\}$, where $\Gamma\in \{\RT^{n}_{k}, \ADS, \EM\}$.
%Showing that $\II$ proves $\LL^{\ADS}_{\bbomega^{k}}$, $\LL^{\EM}_{\bbomega^{k}}$ 
%and $\LL^{\RT^{2}_{2}}_{\bbomega^{k}}$ are all largeness notions is the key 
%to know the $\Pi^{0}_{3}$-part of $\RT^{2}_{2}$.

\begin{thm}[{Ketonen and Solovay\cite[Section 6]{KS81}}]\label{SK-theorem}
Let $k\in\omega$.
The following is provable within $\II$. If a finite set $X$ is $\bbomega^{k+4}$-large 
and $\min X>3$, then $X$ is $\bbomega$-large($\RT^{2}_{k}$).
\end{thm}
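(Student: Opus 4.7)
The plan is to adapt the classical Erd\H{o}s-style iterated pigeonhole proof of Ramsey's theorem for pairs, carefully tracking the $\bbomega$-exponent of the $\alpha$-largeness at each stage. The argument splits into two phases. First, from the $\bbomega^{k+4}$-large set $X$ I extract a \emph{pre-homogeneous} subset $H\subseteq X$, meaning that for every $h\in H$, the color $f(h,y)$ is constant as $y$ ranges over $H\cap(h,\max X]_{\N}$. Second, I apply a pigeonhole-style argument to the induced leading-color map $g:H\to k$ (defined by $g(h)=f(h,y)$ for any $y\in H$ above $h$) to obtain an $\bbomega$-large $g$-monochromatic $Y\subseteq H$, which is automatically $f$-homogeneous.

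The technical engine is a quantitative pigeonhole principle for $\alpha$-largeness, provable in $\II$ from Lemma~\ref{lem-alpha-large-sum}: if $X$ is $\bbomega^{n+1}$-large and $\min X$ is sufficiently large compared to the number of colors, then for any $k$-coloring some color class contains an $\bbomega^n$-large subset. This is because otherwise $X$ would decompose into $\bbomega^n$-small pieces in a way that, via part~(ii) of the lemma, contradicts $\bbomega^{n+1}$-largeness. The hypothesis $\min X>3$ supplies the slack needed to keep the minimum-element conditions valid throughout. Using this, I carry out the pre-homogeneous phase by finite recursion: at step $i$, given the residual set $X_i\subseteq X$ on which $f(h_j,\cdot)$ is constant for each $j<i$, set $h_i=\min X_i$, apply the pigeonhole to the $k$-coloring $y\mapsto f(h_i,y)$ of $X_i\setminus\{h_i\}$, and let $X_{i+1}$ be the monochromatic subset returned. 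Each step consumes one unit of $\bbomega$-exponent, so starting from $\bbomega^{k+4}$ there is enough budget to reach a pre-homogeneous tail that is still $\bbomega^2$-large (say), on which a final application of pigeonhole to $g$ produces the desired $\bbomega$-large homogeneous set.

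The main obstacle is the precise bookkeeping of how the exponent decreases: one must verify that the constant overhead $+4$ above $k$ suffices to absorb the two pigeonhole reductions, the loss of a minimum element per extraction step, and the minimum-size conditions imposed by the hypothesis $\min X>3$. All operations in the recursion are primitive-recursive in $f$ and $X_i$, and $\alpha$-largeness is $\Delta_0$-definable, so the whole construction is formalizable using only $\Sigma^0_1$-induction. Once the combinatorial counting is carried out, the $\II$ formalization is routine.
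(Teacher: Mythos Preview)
There is a genuine gap in your exponent bookkeeping. In the Erd\H{o}s--Rado iteration, the leading-color map $g$ is defined on the set $H = \{h_0, \ldots, h_{m-1}\}$ of extracted minima, \emph{not} on the residual tail $X_m$. You argue that each step drops the $\bbomega$-exponent of the tail by one, so after roughly $k+2$ steps the tail $X_{k+2}$ is still $\bbomega^2$-large---but then $|H| \le k+2$. For the final pigeonhole on $g : H \to k$ to produce an $\bbomega$-large monochromatic subset, $H$ itself must be roughly $\bbomega^2$-large, hence of cardinality far exceeding $\min X$; this would require many more than $k+4$ extraction steps, which your budget cannot afford. In short: the tail stays large, but the pre-homogeneous set you actually need is far too short.

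A secondary issue: Lemma~\ref{lem-alpha-small-sum}(ii) concerns \emph{ordered} block decompositions $X_0 < \cdots < X_{k-1}$, whereas the color classes of $y \mapsto f(h_i,y)$ are interleaved. So the step ``one color class is $\bbomega^n$-large because $X_i$ is $\bbomega^{n+1}$-large'' is not justified by that lemma as stated; an unordered pigeonhole for $\alpha$-largeness requires its own inductive argument.

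The paper itself does not prove Theorem~\ref{SK-theorem}; it cites Ketonen--Solovay~\cite[Section~6]{KS81}. Their argument does not lose a full $\bbomega$-exponent per extracted element. Instead the ordinal is decremented along its fundamental sequence, $\alpha \mapsto \alpha[h_i]$, with a controlled extra cost for the $k$-fold thinning, so that the iteration runs until the ordinal reaches~$0$ and the length of the resulting pre-homogeneous sequence is itself governed by the $\alpha$-largeness of~$X$. Getting the constant~$+4$ right requires following that finer calculation; the coarse one-exponent-per-step estimate you use is too lossy by an unbounded factor.
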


\section{Density and $\tilde{\Pi}{}^{0}_{3}$-conservation}\label{section-density}

The goal of this section is to prove the following theorem.

\begin{theorem}[Conservation through largeness]\label{thm-provably-large-conservative}
Let $\Gamma$ be a Ramsey-like-$\Pi^{1}_{2}$-statement.
If $\bbomega^{k}$-large$(\Gamma)$ness is a largeness notion provably in $\II$ for any $k\in\omega$,
then, $\WKLo+\Gamma$ is a $\tilde{\Pi}{}^{0}_{3}$-conservative extension of $\II$.
\end{theorem}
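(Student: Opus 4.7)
The plan is to prove the contrapositive model-theoretically. Given a countable $(M,S_M)\models\II$, a set $X_0\in S_M$, and a $\Sigma^0_3$-formula $\psi(X_0)\equiv\exists a\,\forall b\,\exists c\,\theta(a,b,c,X_0)$ (with $\theta\in\Delta^0_0$) that holds in $(M,S_M)$, I will construct $(M',S')\models\WKLo+\Gamma$ with $X_0\in S'$ and $\psi(X_0)$ true in $(M',S')$. Fixing $a_0\in M$ witnessing the leading $\exists a$, set $f(b)=\mu c\,.\,\theta(a_0,b,c,X_0)$, a total $\Sigma^0_1$-definable function on $M$; preserving $\psi(X_0)$ reduces to preserving the totality of $f$ under its original definition in $(M',S')$.

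First I would extend $(M,S_M)$ to a countable $(M^\ast,S^\ast)\models\II$ with $M$ a proper cut of $M^\ast$, $X_0\in S^\ast$, and such that the same defining formula yields a total function $f^\ast\colon M^\ast\to M^\ast$. This is a MacDowell--Specker-style end extension in second-order arithmetic, using the freedom granted by Theorems~\ref{thm:harrington-conservation-wkl}--\ref{thm:bsig2-pi03-conservative-isig1} to pass through $\BII$ when arranging elementarity with $X_0,a_0$ as parameters. Next, I recursively build a cofinal $\omega$-sequence $M<c_0<c_1<\dots$ in $M^\ast$ and set $I=\bigcup_{n\in\omega}[0,c_n]_{M^\ast}$, so $M\subsetneq I\subsetneq M^\ast$. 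At each stage, since $\bbomega^{n+1}$-large$(\Gamma)$ness is a largeness notion provably in $\II$ (hence valid in $M^\ast$), Lemma~\ref{lem-largeness} applied inside $M^\ast$ to the infinite set $[f^\ast(c_n)+1,\infty)_{M^\ast}$ lets me pick $c_{n+1}\in M^\ast$ such that (a) $c_{n+1}>f^\ast(c_n)$, ensuring $f^\ast$-closure of $I$; (b) $[c_n+1,c_{n+1}]_{M^\ast}$ is $\bbomega^{n+1}$-large$(\Gamma)$ in $M^\ast$; and (c) a semi-regularity condition (the interval skips all $M^\ast$-coded sequences of length $\le c_n$) is met.

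It remains to verify $(I,\Cod(M^\ast/I))\models\WKLo+\Gamma+\psi(X_0)$: semi-regularity of $I$ gives $\WKLo$ by the standard characterization quoted in the excerpt, closure of $I$ under $f^\ast$ with $a_0\in I$ gives $\psi(X_0)$, and for any $\Gamma$-instance $g\in\Cod(M^\ast/I)$ coded by some $e\in M^\ast$, the $\bbomega^n$-large$(\Gamma)$ condition on $[c_{n-1}+1,c_n]$ produces, for every $n$, a large finite $\Gamma$-solution inside that interval; a $\WKLo$-compactness (tree) argument inside $(I,\Cod(M^\ast/I))$ then amalgamates these finite solutions into an infinite $Y\in\Cod(M^\ast/I)$ satisfying $\Psi(g,Y)$. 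The hard part will be this amalgamation: the infinite $Y$ must itself be coded in $\Cod(M^\ast/I)$, which requires the tree of finite partial solutions across successive intervals to be uniformly $M^\ast$-definable with its infinite path living in $I$; balancing this against the simultaneous $f^\ast$-closure, semi-regularity, and the elementarity of $I$ needed for the $\BII$-valued conditions (via Theorem~\ref{thm-elem-cut-vs-Bn}) is where the main technical difficulty lies.
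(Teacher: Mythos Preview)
Your approach has a genuine gap at the amalgamation step, and it is not merely a technical difficulty. Having each interval $[c_{n-1}+1,c_n]$ be $\bbomega^n$-large$(\Gamma)$ in $M^\ast$ does not suffice for $(I,\Cod(M^\ast/I))\models\Gamma$. Take $\Gamma=\RT^2_2$: for a coded coloring $g$, the largeness hypothesis gives an $\bbomega^n$-large $g$-homogeneous $Y_n\subseteq[c_{n-1}+1,c_n]$ for each $n$, but you have no control on $g(x,y)$ for $x\in Y_n$, $y\in Y_m$ with $n\neq m$, so $\bigcup_n Y_n$ need not be homogeneous (the colors may alternate between blocks). No $\WKLo$-tree argument rescues this: the tree of partial homogeneous sets that extend coherently across successive intervals can simply die out, and the tree of all finite homogeneous subsets of $I$ has an infinite path precisely when $\RT^2_2$ already holds in $(I,\Cod(M^\ast/I))$, which is circular. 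The underlying problem is that $\alpha$-large$(\Gamma)$ness is a one-shot notion---one solution for one instance---whereas verifying $\Gamma$ in a cut requires handling countably many instances in a \emph{nested} manner, with each solution still large enough to serve as the reservoir for the next instance.

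This is exactly what the paper's density notion (Definition~\ref{def-density-with-indicator}) supplies: an $(m{+}1)$-dense$(\Gamma)$ set is one whose $\Gamma$-solutions are themselves $m$-dense$(\Gamma)$, so the process iterates. The paper's proof first invokes the Generalized Parsons theorem (Corollary~\ref{cor-g-Parsons}) on the hypothesis to extract, for each $k$, some $n_k$ with $\II\vdash(\bbomega^{n_k}\text{-large}\to\bbomega^k\text{-large}(\Gamma))$; chaining these yields $\II\vdash(\bbomega^{h(m)}\text{-large}\to m\text{-dense}(\Gamma))$ for an explicit $h$, and then Theorem~\ref{thm-conservation-via-density-general} (via Lemma~\ref{lem-cut-from-dense-set}) builds the cut by enumerating the instances $f_i$ and, at each step, shrinking the current dense set to a solution for $f_i$ that remains dense. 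Your fixed sequence $(c_n)$, chosen before any instance is examined, cannot accomplish this interleaving.
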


For this, we will introduce an iterated version of a largeness notion which is called ``density''.
This notion is introduced by Paris in \cite{Paris1978}.
%Originally, it is a finite iteration of Paris/Harrington statement.

\begin{defi}[$\II$, Density notion]\label{def-density-with-indicator}
Fix a Ramsey-like $\Pi^1_2$-statement
$$
  \Gamma = (\forall f:[\N]^{n}\to k)(\exists Y)(Y\mbox{ is infinite}\wedge \Psi(f,Y)).
$$
We define the notion of~\textit{$m$-density($\Gamma)$} of a finite set~$Z\subseteq\N$ inductively as follows.
First, a set~$Z$ is \textit{$0$-dense($\Gamma$}) if it is $\bbomega$-large and $\min Z>1$.
Assuming the notion of~$m$-density($\Gamma$) is defined, 
a set~$Z$ is \textit{$(m+1)$-dense($\Gamma$)} if
\begin{itemize}
 \item for any~$f:[[0,|Z|)_{\N}]^{n}\to k$,
there is an $m$-dense($\Gamma$) set~$Y \subseteq Z$
such that~$\Psi(f, p_{Z}(Y))$ holds, where $p_{Z}$ is the unique order preserving bijection from $Z$ to $[0,|Z|)_{\N}$, and,
 \item for any partition $Z_{0}\sqcup\dots \sqcup Z_{\ell-1}=Z$ such that $\ell\le Z_{0}<\dots<Z_{\ell-1}$, one of $Z_{i}$'s is $m$-dense($\Gamma$).
\end{itemize}
Note that there exists a $\Delta_{0}$-formula $\theta(m,Z)$ saying that ``$Z$ is $m$-dense($\Gamma$).''
(Here, we always assume $\min Z>1$ to avoid technical annoyances of the second condition.)
%For the usual Ramsey type statements we consider ($\RT^{n}_{k}$, $\psRT^{2}_{k}$, $\EM$, \dots),
%we usually identify a function $f:[[0,|Z|)_{\N}]^{n}\to k$ with $f\circ (p_{Z}^{-1})^{n}:[Z]^{n}\to k$ and just discuss on $[Z]^{n}$.
\end{defi}
In case $\Gamma$ is $\psRT^{2}_{k}$ or $\RT^{n}_{k}$ for some $n,k\ge 2$, the second condition is implied from the first condition as follows: 
for a given partition $Z_{0}\sqcup\dots \sqcup Z_{\ell-1}=Z$, set $f(x,y)=1$ if $x,y\in Z_{i}$ for some $i<\ell$ and $f(x,y)=0$ otherwise, then, $f$ is a transitive coloring and any $\bbomega$-large homogeneous set $H\subseteq Z$ is included in some $Z_{i}$'s.
 (For more precise explanations, see \cite{Paris1978} or \cite{BW}.)
On the other hand, the density notion for $\EM$ without the second condition does not work well (see \cite{BW}).

\begin{defi}[Paris-Harrington principle for density]
Let $\Gamma$ be a Ramsey-like-$\Pi^{1}_{2}$-statement.
Then, the statement $m$-$\PHt(\Gamma)$ asserts that for any $X_{0}\subseteq\N$, 
	if $X_{0}$ is infinite then there exists an $m$-dense($\Gamma$) set $X$ such that $X\subseteq_{\fin}X_{0}$.
\end{defi}
%Note that $\RT^{n}_{k}$, $\ADS$, $\EM$ are all this form.

%The following lemma is essentially due to Paris\cite{Paris1978}.
The density notion for $\Gamma$ provides a cut to be a model of $\WKLo+\Gamma$.

\begin{lem}\label{lem-cut-from-dense-set}
Let $\Gamma$ be a Ramsey-like-$\Pi^{1}_{2}$-statement.
Given a countable nonstandard model $M$ of $\Ii$ and an $M$-finite set $Z\subseteq M$ which is $a$-dense($\Gamma$) for some $a\in M\setminus \omega$, then there exists an initial segment $I$ of $M$ such that $(I,\Cod(M/I))\models \WKLo+\Gamma$ and $I\cap Z$ is infinite in $I$.
\end{lem}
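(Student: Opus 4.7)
The plan is to perform an indicator-style construction of the cut $I$ inside the countable model $M$, using $m$-density($\Gamma$) as the indicator. Enumerate in order type $\omega$, with each item appearing cofinally often, all $M$-finite colorings $f\in\Cod(M)$ of shape $[[0,b]_{M}]^{n}\to k$, all $M$-finite sets $X\in\Cod(M)$, and arithmetic-closure requests. I then build recursively a descending chain $Z=Z_{0}\supseteq Z_{1}\supseteq\cdots$ of $M$-finite sets together with a strictly increasing sequence $z_{0}<z_{1}<\cdots$ satisfying $z_{s}\in Z_{s}$ and $z_{s}<\min Z_{s+1}$, maintaining the invariant that each $Z_{s}$ is $a_{s}$-dense($\Gamma$) for some $a_{s}\in M\setminus\omega$; this is possible because each of the $\omega$ many standard stages costs at most two units of density. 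The desired cut is $I:=\sup_{s<\omega}z_{s}$.

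At stage $s$ one of three types of tasks is processed. A \textbf{coloring task} $f$ is handled by applying the first clause of $(a_{s})$-density($\Gamma$) to obtain $Y\subseteq Z_{s}$ which is $(a_{s}-1)$-dense($\Gamma$) and satisfies $\Psi(f,Y)$; set $Z_{s+1}:=Y$. For a \textbf{set task} $X$, if $|X|\ge\min Z_{s}$ skip it (it reappears in the enumeration, and if $|X|\in I$ then eventually $|X|<\min Z_{s}$); otherwise list $X=\{x_{0}<\cdots<x_{r-1}\}$ and partition $Z_{s}$ into the $r+1\le\min Z_{s}$ blocks $B_{-1}:=Z_{s}\cap[0,x_{0})_{M}$ and $B_{i}:=Z_{s}\cap[x_{i},x_{i+1})_{M}$ for $i<r$ (with $x_{r}:=\max Z_{s}+1$); by the second clause of $(a_{s})$-density some block is $(a_{s}-1)$-dense($\Gamma$), and a further $2$-partition removing its left endpoint $x_{i}\in X$ if necessary yields a $(a_{s}-2)$-dense($\Gamma$) set $Z_{s+1}$ disjoint from $X$, so $I\cap X$ stays bounded by $x_{i}$ (or by $x_{0}$). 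A \textbf{growth task} picks $z_{s+1}\in Z_{s}$ exceeding a prescribed closure threshold (e.g., $z_{s}\cdot z_{s}$) and sets $Z_{s+1}:=Z_{s}\cap(z_{s+1},\infty)_{M}$, using that nonstandard density of $Z_{s}$ supplies arbitrarily large elements.

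Finally the three conclusions are verified. The set $I$ is a cut closed under $+_{M}$ and $\cdot_{M}$ by the growth tasks; semi-regularity of $I$ follows from the set tasks, so by the characterization stated just before the lemma, $(I,\Cod(M/I))\models\WKLo$. For $\Gamma$, take a coloring $f\in\Cod(M/I)$; it extends to some $\tilde f\in\Cod(M)$ handled at a stage $s$, producing $Y\subseteq Z_{s+1}$ with $\Psi(\tilde f,Y)$ in $(M,\Cod(M))$ and $Y$ nonstandardly dense, so $Y\cap I\supseteq\{z_{t}:t>s\}$ is unbounded in $I$. Since $\Psi(f,Y)$ has the form $(\A G\subseteq_{\fin}Y)\Psi_{0}(f\rest[[0,\max G]_{\N}]^{n},G)$ with $\Psi_{0}\in\Delta^{0}_{0}$, it is absolute between $(M,\Cod(M))$ and $(I,\Cod(M/I))$ for $I$-finite $G\subseteq Y\cap I$, hence $(I,\Cod(M/I))\models\Psi(f,Y\cap I)$, giving $\Gamma$. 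Clearly $I\cap Z\supseteq\{z_{s}:s<\omega\}$ is unbounded in $I$. The main obstacle I anticipate is the density bookkeeping: because each standard stage costs at most two units of density, $a_{s}\ge a-2s$ remains nonstandard for every standard $s$, and the invariant is preserved by the monotonicity $(m+1)$-dense($\Gamma$)$\Rightarrow m$-dense($\Gamma$) built into the definition; one must also be careful to list every task cofinally often so that the skip rule in the set-task clause eventually succeeds for every $X$ with $|X|\in I$.
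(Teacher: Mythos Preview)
Your construction is essentially the paper's indicator argument: the coloring and set tasks correspond to the paper's stages $3i$ and $3i+1$, and the verification of $\WKLo+\Gamma$ on $(I,\Cod(M/I))$ via semi-regularity and $\Delta^{0}_{0}$-absoluteness of $\Psi_{0}$ is the same. There is, however, a genuine gap in the growth task.

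You set $Z_{s+1}:=Z_{s}\cap(z_{s+1},\infty)_{M}$ with $z_{s+1}>z_{s}\cdot z_{s}$ and claim this costs only a bounded amount of density. But the second clause of $(m+1)$-density$(\Gamma)$ only guarantees that \emph{some} block of an ordered $\le\min Z_{s}$-partition is $m$-dense; it does not let you prescribe which block. If you split $Z_{s}$ at $z_{s}^{2}$, the dense block may well be the initial segment $Z_{s}\cap[0,z_{s}^{2}]_{M}$, in which case you cannot pass to the tail without losing control of the density. Removing all elements up to $z_{s}^{2}$ one at a time is not an option either, since this may be nonstandardly many elements and you have only a standard density budget per stage. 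Your justification ``nonstandard density of $Z_{s}$ supplies arbitrarily large elements'' addresses the \emph{existence} of such $z_{s+1}$ but not the density of the tail.

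The fix is that the growth task is unnecessary. The paper, at stage $3i+2$, simply puts $Z_{3i+3}:=Z_{3i+2}\setminus\{\min Z_{3i+2}\}$; applying the second density clause to the $2$-partition $\{\min Z_{3i+2}\}\sqcup(Z_{3i+2}\setminus\{\min Z_{3i+2}\})$ shows the tail is $(a-3i-3)$-dense, since a singleton with minimum $>1$ is never $0$-dense. This alone makes the minima strictly increasing, and $I:=\sup_{i}\min Z_{i}$ is a proper cut. Closure of $I$ under $+_{M}$ and $\cdot_{M}$ then follows automatically from semi-regularity (which your set tasks establish): this is part of the quoted theorem that semi-regularity of $I$ is equivalent to $(I,\Cod(M/I))\models\WKLo$. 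Incidentally, the extra move in your set task of stripping the endpoint $x_{i}$ is also unnecessary; once $Z_{s+1}\subseteq[x_{i},x_{i+1})_{M}$, all later $z_{t}$ lie below $x_{i+1}$, so $I\cap X\subseteq\{x_{0},\dots,x_{i}\}$ is already bounded.
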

\begin{proof}
Let $\Gamma$ be a Ramsey-like-$\Pi^{1}_{2}$-statement of the form
$$
	(\forall f:[\N]^{n}\to k)(\exists Y)(Y\mbox{ is infinite}\wedge \Psi(f,Y))
$$
where~$n,k\in\omega$ and~$\Psi$ is of the form in Definition~\ref{def-R-like-and-rest}.
Let $M\models\Ii$ be a countable nonstandard model, and $Z\subseteq M$ be $M$-finite set which is $a$-dense($\Gamma$) for some $a\in M\setminus \omega$.
Let $\{E_{i}\}_{i\in\omega}$ be an enumeration of all $M$-finite sets 
such that any $M$-finite set appears infinitely many times, and
$\{f_{i}\}_{i\in\omega}$ be an enumeration of all $M$-finite functions from $[[0,|Z|)_{\N}]^{n}$ 
to $k$.

In the following, we will construct an $\omega$-length sequence of $M$-finite sets 
$Z=Z_{0}\supseteq Z_{1}\supseteq\dots$ so that for each~$i \in \omega$, $Z_{i}$ is $(a-i)$-dense($\Gamma$), 
$\min Z_{i}<\min Z_{i+3}$, $\Psi(f_{i}\rest[[0,|Z_{3i})_{\N}]^{n}, p_{Z_{3i}}(Z_{3i+1}))$, and $(\min Z_{3i+2},\max Z_{3i+2})_{\N}\cap E_{i}=\emptyset$ if $|E_{i}|<\min Z_{3i+1}$.

At the stage $s=3i$, let $Z_{3i}$ and $f_{i}$ be given.
Then, one can find $Z_{3i+1}\subseteq Z_{3i}$ which is $(a-3i-1)$-dense($\Gamma$) such that $\Psi(f_{i}\rest[[0,|Z_{3i}|)_{\N}]^{n}, p_{Z_{3i}}(Z_{3i+1}))$ by the definition of density($\Gamma$).

At the stage $s=3i+1$, let $Z_{3i+1}$ and $E_{i}$ be given.
If $\min Z_{3i+1}\le |E_{i}|$, let $Z_{3i+2}=Z_{3i+1}.$
If $\min Z_{3i+1}>|E_{i}|$, let $E_{i}=\{e_{0},\dots,e_{l-1}\}$ 
where $e_{0}<e_{1}<\dots<e_{l-1}$, and put 
$W^{0}=Z_{3i+1}\cap [0,e_{0})_{\N}$, $W^{j}=Z_{3i+1}\cap[e_{j-1},e_{j})_{\N}$ 
for $1\le j<l$, and $W^{l}=Z_{3i+1}\cap[e_{l-1},\infty)_{\N}$.
Then, $Z_{3i+1}=W^{0}\sqcup\dots\sqcup W^{l}$, thus one of $W_{j}$'s is $(a-3i-2)$-dense($\Gamma$).
Put $Z_{3i+2}$ to be such $W_{j}$.

At the stage $s=3i+2$, Put $Z_{3i+3}=Z_{3i+2}\setminus\{\min Z_{3i+2}\}$.

Now, let $I=\sup\{\min Z_{i}\mid i\in\omega\}\subseteq_{e} M$.
By the construction of the steps $s=3i+1$, $I$ is a semi-regular cut, thus $(I,\Cod(M/I))\models\WKLo$.
By the construction of the steps $s=3i+2$, $Z_{i}\cap I$ is infinite in $I$ for any $i\in\omega$.
To check that $(I,\Cod(M/I))\models\Gamma$, let $f:[I]^{n}\to k\in \Cod(M/I)$.
Then, there exists $f_{i}$ such that $f=f_{i}\cap I$.
By the construction, $M\models \Psi(f_{i}\rest[[0,|Z_{3i}|)_{\N}]^{n}, p_{Z_{3i}}(Z_{3i+1}))$ holds, thus we have $(I,\Cod(M/I))\models\Psi(f, p_{Z_{3i}}(Z_{3i+1})\cap I)$. Moreover, since $Z_{3i+1}\cap I$ is infinite in $I$ and $p_{Z_{3i}}(x)\le x$ for any $x\in Z$, $p_{Z_{3i}}(Z_{3i+1})\cap I$ is also infinite in $I$.
\end{proof}

Now the density version of Paris-Harrington principle characterize the $\tilde{\Pi}{}^{0}_{3}$-part of Ramsey-like statements.
The following theorem is a generalization of \cite[Theorem~1]{BW}
\begin{thm}\label{thm-conservation-via-density-general}
Let $\Gamma$ be a Ramsey-like-$\Pi^{1}_{2}$-statement.
Then, $\WKLo+\Gamma$ is a $\tilde{\Pi}{}^{0}_{3}$-conservative extension of $\RCAo+\{m\mbox{-}\PHt(\Gamma)\mid m\in\omega\}$.
\end{thm}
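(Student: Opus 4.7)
The plan is to prove both inclusions of the conservation statement. For the ``extension'' half I verify by external induction on $m$ that $\WKLo + \Gamma \vdash m\text{-}\PHt(\Gamma)$ for every $m \in \omega$. The base case $m=0$ is immediate in $\RCAo$. For the inductive step, suppose toward a contradiction that some infinite $X_0$ has no $(m+1)$-dense$(\Gamma)$ finite subset; a $\WKLo$-compactness argument applied to the tree of finite colorings that witness the failure of clause~(i) of $(m+1)$-density on subsets of $X_0$, combined with $\Gamma$ supplying an infinite solution and $m\text{-}\PHt(\Gamma)$ refining it into an $m$-dense$(\Gamma)$ finite subset, yields a contradiction for clause~(i); clause~(ii) is automatic in the Ramsey-type cases highlighted after Definition~\ref{def-density-with-indicator}, and follows from an analogous partition-based compactness argument in general.

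For the conservation direction I argue by contraposition via a cut construction. Let $\varphi = \forall X\,\psi(X)$ be $\tilde{\Pi}{}^0_3$ with $\psi(X) \equiv \forall n\,\exists m\,\forall k\,\theta(X,n,m,k)$ and $\theta \in \Delta_0$, and suppose $\RCAo + \{m\text{-}\PHt(\Gamma) \mid m \in \omega\} \not\vdash \varphi$. Pick a countable model $(M,S)$ of this theory with $(M,S) \models \neg\varphi$, and fix $X \in S$ and $n_0 \in M$ with $M \models \forall m\,\exists k\,\theta'(X,n_0,m,k)$, where $\theta' \equiv \neg\theta$. The target is a cut $I \subseteq_{e} M$ with $n_0 \in I$, $(I,\Cod(M/I)) \models \WKLo + \Gamma$, and $(I,\Cod(M/I)) \models \neg\psi(X \cap I)$; the existence of such $I$ contradicts $\WKLo + \Gamma \vdash \varphi$.

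To build $I$, I first encode the witnesses of the $\Pi^0_2$-subformula into an infinite set. Let $h(m) = \mu k.\,\theta'(X,n_0,m,k)$ and $\tilde h(m) = \max_{m' \le m} h(m')$; since $\tilde h$ is total in $M$, its graph is $\Delta^0_1$-definable from $X$ and $n_0$ and hence lies in $S$. Put $X_0 = \{\langle m,\tilde h(m)\rangle \mid m \in M\} \cap (n_0,\infty)_{\N}$, an infinite element of $S$. By hypothesis on $(M,S)$ the $\Sigma^0_1$-formula (in $m$) ``there exists an $M$-finite $m$-dense$(\Gamma)$ subset of $X_0$'' holds for every standard $m$; $\Sigma^0_1$-overspill in $(M,S)$, available since $(M,S) \models \II$, therefore yields a nonstandard $a \in M$ and an $a$-dense$(\Gamma)$ $M$-finite set $Z \subseteq X_0$. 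Applying Lemma~\ref{lem-cut-from-dense-set} to $Z$ produces a cut $I \subseteq_{e} M$ with $(I,\Cod(M/I)) \models \WKLo + \Gamma$ and $Z \cap I$ unbounded in $I$. Since $Z \subseteq X_0$ consists of pairs $\langle m,\tilde h(m)\rangle$ and $\tilde h$ is monotone, the unboundedness of $Z \cap I$ in $I$ forces the first coordinates of its elements to be unbounded in $I$, so for every $m' \in I$ there is $m \in I$ with $m \ge m'$ and $\tilde h(m) \in I$, giving $h(m') \le \tilde h(m') \le \tilde h(m) \in I$. Combined with $n_0 < \min Z$ (which places $n_0$ in $I$), this yields $(I,\Cod(M/I)) \models \forall m\,\exists k\,\theta'(X \cap I,n_0,m,k)$, so $(I,\Cod(M/I)) \models \neg\psi(X \cap I)$ as required.

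The main obstacle is the preservation of the $\Pi^0_2$-subformula $\forall m\,\exists k\,\theta'(X,n_0,m,k)$ when passing from $M$ to the cut $I$: the semi-regularity supplied by Lemma~\ref{lem-cut-from-dense-set} is precisely what is needed to model $\WKLo$, but it does not by itself deliver any $\Sigma^0_1$-elementarity. The key device that removes this obstacle is folding the monotone Skolem bound $\tilde h$ into the infinite set $X_0$ before the overspill step, which converts the required inclusion $\tilde h[I] \subseteq I$ into an automatic consequence of the only nontrivial feature of $I$ that the density construction controls, namely the unboundedness of $Z \cap I$ in $I$.
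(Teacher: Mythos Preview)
Your conservation argument is essentially the paper's: encode the $\Pi^0_2$-Skolem bound into an infinite set of $S$ (you use the graph of a monotonized Skolem function; the paper uses an iteratively defined increasing sequence $x_{i+1}=\min\{x>x_i:\forall y<x_i\,\exists z<x\,\neg\varphi\}$), invoke $m$-$\PHt(\Gamma)$ plus overspill to obtain a nonstandard-dense $M$-finite $Z$, apply Lemma~\ref{lem-cut-from-dense-set}, and check the failure survives to $I$. Two small corrections: the model $(M,S)$ must be chosen \emph{nonstandard} for overspill to be available (as the paper does); and the unboundedness of the first coordinates of $Z\cap I$ follows not from the monotonicity of $\tilde h$ but from the semi-regularity of $I$ (an $M$-finite set of size $m_0+1\in I$ meets $I$ boundedly), while monotonicity is only used in the subsequent step $h(m')\le\tilde h(m')\le\tilde h(m)$.
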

\begin{proof}%[\it Proof of Theorem~\ref{thm-conservation-via-density-general}.]
By the usual compactness argument, one can easily check that $\WKLo+\Gamma$ implies $m\mbox{-}\PHt(\Gamma)$ for any $m\in\omega$.
Thus, $\WKLo+\Gamma$ is an extension of $\RCAo+\{m\mbox{-}\PHt(\Gamma)\mid m\in\omega\}$.

To see that it is a $\tilde{\Pi}^{0}_{3}$-conservative extension, let $\varphi_{0}\equiv \A X \A x \E y \A z\varphi(X[z], x,y,z)$ be a $\tilde{\Pi}^{0}_{3}$-sentence which is not provable in $\RCAo+\{m\mbox{-}\PHt(\Gamma)\mid m\in\omega\}$, where $\varphi$ is $\Sigma^{0}_{0}$.
Take a countable nonstandard model $(M,S)\models\RCAo+\{m\mbox{-}\PHt(\Gamma)\mid m\in\omega\}+\neg\varphi_{0}$.
Then, there exist $A\in S$ and $a\in M$ such that $(M,S)\models\A y \E z \neg\varphi(A[z],a,y,z)$.
In $(M,S)$, define a sequence $\langle x_{i}\mid i\in M \rangle$ so that $x_{0}=a$ and $x_{i+1}=\min\{x>x_{i}\mid \A y<x_{i}\E z<x\neg\varphi(A[z],a,y,z)\}$.
By recursive comprehension in $(M,S)$, put $X=\{x_{i}\mid i\in M\}\in S$.
Then, $X$ is infinite in $(M,S)$.
By $m\mbox{-}\PHt(\Gamma)$ for $m\in\omega$, there exist $m$-dense$(\Gamma)$ finite subsets of $X$ for any $m\in\omega$.
Thus, by overspill for $\Sigma^{0}_{1}$-statement, there exists an $m$-dense$(\Gamma)$ finite subset $Z$ of $X$ for some $m\in M\setminus \omega$. 
Now, by Lemma~\ref{lem-cut-from-dense-set}, there exists $I\subsetneq_{e} M$ such that $(I,\Cod(M/I))\models \WKLo+\Gamma$ and $Z\cap I$ is infinite in $I$.
Note that $\Cod(M/I)=\{W\cap I\mid W\mbox{ is $M$-finite}\}=\{W\cap I\mid W\in S\}$.
Since $Z\subseteq X$, $a\le \min Z\in I$ and for any $w,w'\in Z\cap I$ such that $w<w'$, $(I,\Cod(M/I))\models \A y<w\E z<w'\neg\varphi(A\cap I[z],a,y,z)$.
Since $Z\cap I$ is unbounded in $I$, we have $(I,\Cod(M/I))\models \A y\E z\neg\varphi(A\cap I[z],a,y,z)$, which means $(I,\Cod(M/I))\models\neg\varphi_{0}$.
Thus, $\WKLo+\Gamma$ does not prove $\varphi_{0}$.
\end{proof}

The density notion actually captures some finite consequences of Ramsey-like-statements as follows.

%\begin{thm}[Generalized Parsons theorem]\label{thm-g-Parsons}
%Let $\psi(F)$ be a $\Delta_{0}$-formula. Assume that
%\begin{itemize}
% \item[] \rm $\II\vdash \A X\subseteq \N(X$ is infinite $\to \E F\subseteq_{\fin} X\psi(F))$.
%\end{itemize}
%Then, there exists $n\in\omega$ such that
%\begin{itemize}
% \item[] \rm $\Ii\vdash \A Z\subseteq_{\fin}(0,\infty)_{\N}(Z$ is $\bbomega^{n}$-large $\to \E F\subseteq Z\psi(F))$.
%\end{itemize}
%\end{thm}
\begin{thm}\label{thm-g-Parsons}
Let $\Gamma$ be a Ramsey-like-$\Pi^{1}_{2}$-statement,
and let $\psi(x,y,F)$ be a $\Delta_{0}$-formula with exactly the displayed free variables. Assume that
\begin{itemize}
 \item[] \rm $\WKLo+\Gamma\vdash \A x \A X(X$ is infinite $\to \E F\subseteq_{\fin} X\E y\psi(x,y,F))$.
\end{itemize}
Then, there exists $n\in\omega$ such that
\begin{itemize}
 \item[] \rm $\Ii\vdash \A x \A Z\subseteq_{\fin}(x,\infty)_{\N}(Z$ is $n$-dense$(\Gamma)\to \E F\subseteq Z\E y<\max Z\psi(x,y,F))$.
\end{itemize}
\end{thm}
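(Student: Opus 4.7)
\medskip

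The plan is to prove the contrapositive via a compactness-and-overspill argument, cut-shortening it with Lemma~\ref{lem-cut-from-dense-set}. Suppose toward contradiction that for every $n\in\omega$ the displayed $\Pi^0_2$-consequence fails in $\Ii$. Then for each $n$ the theory
$$\Ii + \text{``$D$ is $n$-dense($\Gamma$) and $D \subseteq_{\fin}(c,\infty)_\N$''} + \forall F\subseteq D\, \forall y < \max D\, \neg\psi(c,y,F)$$
is consistent. By compactness (using the two new constants $c,D$) and since $n$-density($\Gamma$) is upward preserved along the standard cut, we obtain a countable model $M\models\Ii$ with $c\in M$ and an $M$-finite $D\subseteq_{\fin}(c,\infty)_\N^M$ such that $D$ is $n$-dense($\Gamma$) for every $n\in\omega$ and no $F\subseteq D$, $y<\max D$ witnesses $\psi(c,y,F)$ in $M$.

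Next I would use overspill: since the predicate ``$Z$ is $n$-dense($\Gamma$)'' is $\Delta_0$ in $n,Z$ (as guaranteed by Definition~\ref{def-density-with-indicator}), the set $\{a\in M : D \text{ is $a$-dense}(\Gamma)\}$ is $\Delta_0$-definable and contains all standard $n$, hence by $\Sigma^0_1$-overspill in $\Ii$ it contains some nonstandard $a\in M\setminus\omega$. Now apply Lemma~\ref{lem-cut-from-dense-set} to this $D$ and $a$: we obtain a cut $I\subseteq_e M$ with $(I,\Cod(M/I))\models \WKLo+\Gamma$ such that $D\cap I$ is infinite in $I$. Note that $c\in I$, because $c<\min D\le \min(D\cap I)\in I$, and $\max D\notin I$, because $D\cap I$ being unbounded in $I$ forces $\max D$ above every element of $I$.

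Finally, apply the hypothesis $\WKLo+\Gamma\vdash \forall x\,\forall X\,(X \text{ infinite}\to\exists F\subseteq_{\fin}X\,\exists y\,\psi(x,y,F))$ inside $(I,\Cod(M/I))$, instantiated with $x=c$ and $X=D\cap I$. This yields some $M$-finite $F\subseteq D\cap I\subseteq D$ and some $y\in I$ with $(I,\Cod(M/I))\models \psi(c,y,F)$. Since $\psi$ is $\Delta_0$, absoluteness between $I$ and $M$ gives $M\models\psi(c,y,F)$, and $y\in I$ forces $y<\max D$. This contradicts our assumption on $(M,c,D)$, completing the proof.

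The main obstacle is verifying that the cut $I$ built from Lemma~\ref{lem-cut-from-dense-set} is rich enough to both see $c$ (at the bottom) and separate $y$ from $\max D$ (at the top); this is handled by noting that $D\cap I$ infinite in $I$ simultaneously pins $c$ below $I$'s initial elements and pushes $\max D$ above $I$. A subsidiary technical point is justifying that $n$-density($\Gamma$) is $\Delta_0$ uniformly in $n$, which is precisely the content of the parenthetical remark in Definition~\ref{def-density-with-indicator} and is needed for overspill to apply.
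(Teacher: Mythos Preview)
Your proof is correct and follows essentially the same route as the paper's: compactness, overspill, then Lemma~\ref{lem-cut-from-dense-set} to get a cut modeling $\WKLo+\Gamma$ in which the hypothesis yields a witness, contradicting the chosen counterexample by $\Delta_0$-absoluteness. The only cosmetic difference is that you introduce the constants $c,D$ before compactness and overspill the density level of the fixed $D$ (using that $(m{+}1)$-dense$(\Gamma)$ implies $m$-dense$(\Gamma)$, which follows from the second clause of Definition~\ref{def-density-with-indicator} with the trivial partition), whereas the paper overspills the $\Sigma_1$ statement $\exists x\,\exists Z(\ldots)$ and picks witnesses afterward; note that in your version $M$ is automatically nonstandard since no standard finite set can be $n$-dense$(\Gamma)$ for all $n\in\omega$.
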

\begin{proof}%[Proof of Theorem~\ref{thm-g-Parsons}.]
%This proof is a generalization of the Friedman's proof of Theorem~\ref{thm-Friedman-conservation}, which is the Parsons theorem extended into $\WKLo$.
Assume that $\II\not\vdash \A x \A Z\subseteq_{\fin}(x,\infty)_{\N}(Z$ is $n$-dense$(\Gamma)\to \E F\subseteq Z\E y<\max Z\psi(x,y,F))$
for any $n\in\omega$.
Then, there exists a countable model $M\models\Ii+\{\E x\E Z\subseteq_{\fin}(x,\infty)_{\N}(Z$ 
is $n$-dense$(\Gamma)\wedge \A F\subseteq Z\A y<\max Z\neg\psi(x,y,F))\mid n\in\omega\}$ such that $M\not\cong\omega$.
By overspill, there exists $a\in M\setminus \omega$ such that $M\models \E x\E Z\subseteq_{\fin}(x,\infty)_{\N}(Z$ 
is $a$-dense$(\Gamma)\wedge \A F\subseteq Z\A y<\max Z\neg\psi(x,y,F))$,
thus there exist $c\in M$ and an $M$-finite set 
$Z\subseteq M$ with $\min Z>c$ such that $Z$ is $a$-dense$(\Gamma)$ and $\A F\subseteq Z\A y<\max Z\neg\psi(c,y,F)$.

Now, by Lemma~\ref{lem-cut-from-dense-set}, there exists $I\subsetneq_{e} M$ such that $(I,\Cod(M/I))\models \WKLo+\Gamma$ and $Z\cap I$ is infinite in $I$.
Note that $c\in I$.
Thus, we have $(I,\Cod(M/I))\models (Z\cap I$ is infinite $\wedge \A F\subseteq_{\fin} Z\cap I\A y\neg\psi(c,y,F)))$.
This contradicts to $\WKLo+\Gamma\vdash \A x \A X(X$ is infinite $\to \E F\subseteq_{\fin} X\E y\psi(x,y,F))$.
\end{proof}
The argument we used in Lemma~\ref{lem-cut-from-dense-set}, Theorems~\ref{thm-conservation-via-density-general} and \ref{thm-g-Parsons} is a generalization of (a special case of) the well-known indicator argument (see, e.g., \cite{Paris1978,Kaye}).
Actually, by Theorem~\ref{thm-conservation-via-density-general} and Proposition~\ref{prop-R-like-and-rest-formulas}, 
one can characterize the $\tilde{\Pi}^{0}_{3}$-part of any restricted-$\Pi^{1}_{2}$-statement, as same as the usual indicator arguments captures $\Pi^{0}_{2}$-parts.
In general, one can replace the second condition and the initial condition for $0$-density in Definition~\ref{def-density-with-indicator} with suitable indicator conditions for a base system $T$, then the partial conservation for $T+\Gamma$ over $T+\{m\mbox{-}\PHt(\Gamma)\mid m\in\omega\}$ holds, and one can even consider the $\tilde{\Pi}^{0}_{4}$-part in some cases.
%See \cite{Y-itrt,Y-RIMS2013,Y-MLQ13}.
See \cite{Y-RIMS2013,Y-MLQ13}.

The following corollary of the previous theorem plays a key role in this paper.
\begin{cor}[Generalized Parsons theorem]\label{cor-g-Parsons}
Let $\psi(F)$ be a $\Sigma_{1}$-formula with exactly the displayed free variables. Assume that
\begin{itemize}
 \item[] \rm $\II\vdash \A X\subseteq \N(X$ is infinite $\to \E F\subseteq_{\fin} X\psi(F))$.
\end{itemize}
Then, there exists $n\in\omega$ such that
\begin{itemize}
 \item[] \rm $\Ii\vdash \A Z\subseteq_{\fin}(0,\infty)_{\N}(Z$ is $\bbomega^{n}$-large $\to \E F\subseteq Z\psi(F))$.
\end{itemize} 
\end{cor}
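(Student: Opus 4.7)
The plan is to view this as a direct consequence of Theorem \ref{thm-g-Parsons}, applied to a trivial Ramsey-like-$\Pi^{1}_{2}$-statement whose density notion can be compared with $\bbomega$-largeness. Since $\psi(F)$ is $\Sigma_{1}$, I would first write $\psi(F)\equiv \E y\,\psi_{0}(y, F)$ for some $\Delta_{0}$-formula $\psi_{0}(y,F)$. Let $\Gamma_{0}$ be the Ramsey-like-$\Pi^{1}_{2}$-statement ``$(\A f:[\N]^{1}\to 1)(\E Y)(Y\text{ is infinite})$'', with trivial matrix $\Psi(f,Y)\equiv\top$; this $\Gamma_0$ is provable in $\RCAo$. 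Since $\WKLo+\Gamma_{0}$ extends $\II$, the hypothesis gives $\WKLo+\Gamma_{0}\vdash \A x\A X(X\text{ infinite}\to \E F\subseteq_{\fin}X\,\E y\,\psi_{0}(y,F))$, with $x$ a dummy variable. Applying Theorem \ref{thm-g-Parsons} with $\psi(x,y,F):=\psi_{0}(y,F)$ yields some $n\in\omega$ with
\[
\II\vdash \A x\A Z\subseteq_{\fin}(x,\infty)_{\N}\bigl(Z\text{ is }n\text{-dense}(\Gamma_{0})\to \E F\subseteq Z\,\E y<\max Z\,\psi_{0}(y,F)\bigr).
\]

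The key remaining step is to prove in $\II$, by external induction on $k\in\omega$, that if $Z$ is $\bbomega^{k+1}$-large with $\min Z>1$ then $Z$ is $k$-dense$(\Gamma_{0})$. The base case $k=0$ is just the definition of $0$-dense$(\Gamma_{0})$. For the inductive step, let $Z$ be $\bbomega^{k+2}$-large with $\min Z>1$. The first clause of $(k+1)$-density holds by taking $Y=Z$: this $Y$ is $\bbomega^{k+1}$-large and trivially satisfies $\Psi(f,Y)\equiv\top$, and is $k$-dense$(\Gamma_{0})$ by the induction hypothesis. For the partition clause, given a contiguous partition $Z=Z_{0}\sqcup\dots\sqcup Z_{\ell-1}$ with $\ell\le\min Z$, apply Lemma \ref{lem-alpha-large-sum}: since $Z\setminus\{\min Z\}$ is $\bbomega^{k+1}\cdot\min Z$-large, decompose it as $U_{0}<\dots<U_{\min Z-1}$ with each $U_{j}$ being $\bbomega^{k+1}$-large. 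The $Z_{i}$'s induce a contiguous partition of $Z\setminus\{\min Z\}$ with at most $\ell-1<\min Z$ internal boundaries, and each $U_{j}$ crossing a boundary accounts for at most one boundary, so some $U_{j}$ lies entirely within a single $Z_{i}$. That $Z_{i}$ is then $\bbomega^{k+1}$-large (by closure of largeness under supersets) with $\min Z_{i}\ge\min Z>1$, hence $k$-dense$(\Gamma_{0})$ by the induction hypothesis.

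Finally, for any $Z\subseteq_{\fin}(0,\infty)_{\N}$ that is $\bbomega^{n+2}$-large, set $Z':=Z\setminus\{1\}$. Then $Z'$ is $\bbomega^{n+1}$-large (using $\bbomega^{n+2}[1]=\bbomega^{n+1}$ in case $1\in Z$, and monotonicity of largeness otherwise) and $\min Z'>1$. By the key step, $Z'$ is $n$-dense$(\Gamma_{0})$, and by the first paragraph instantiated at $x=0$, there exist $F\subseteq Z'\subseteq Z$ and $y<\max Z'$ with $\psi_{0}(y,F)$, i.e., $\psi(F)$. Thus the corollary holds, with exponent $n+2$ in place of $n$.

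The main obstacle is the pigeonhole/decomposition argument in the inductive step linking $\bbomega^{k+2}$-largeness to $(k+1)$-density$(\Gamma_{0})$ — in particular, ensuring that \emph{every} contiguous partition into few pieces (and not merely one canonical decomposition) preserves largeness of some piece. Once this is secured, the rest is routine bookkeeping around Theorem \ref{thm-g-Parsons}.
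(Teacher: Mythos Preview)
Your proposal is correct and follows essentially the same route as the paper: apply Theorem~\ref{thm-g-Parsons} to the trivial Ramsey-like statement and compare its density with $\bbomega$-largeness. The paper dispatches the comparison in one line by invoking Lemma~\ref{lem-alpha-small-sum}(ii) (the contrapositive immediately gives that any ordered partition of an $\bbomega^{k+2}$-large set into $\ell\le\min Z$ blocks has an $\bbomega^{k+1}$-large block), whereas you reprove this via an explicit decomposition-plus-pigeonhole; note that your phrasing ``each $U_j$ crossing a boundary accounts for at most one boundary'' is backwards---what you need (and what holds) is that each boundary is crossed by at most one $U_j$, giving an injection from crossing blocks to boundaries.
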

\begin{proof}
By Lemma~\ref{lem-alpha-small-sum}, any $\bbomega^{n+1}$-large set is $n$-dense$(0=0)$ (dense for the trivial statement).
Thus, we have this corollary as a special case of Theorem~\ref{thm-g-Parsons}.
\end{proof}
Note that this corollary quickly implies (a weaker version of) the Parsons theorem (see, e.g., \cite{handbook-of-proof}), namely, any $\Pi_{2}$-statement provably in $\Ii$ is bounded by a primitive recursive function, as follows.
If a $\Pi_{2}$-statement $\A x\E y\theta(x,y)$ is provable within $\II$, then put $\psi(F):\equiv (\A x<\min F) (\E y<\max F)\theta(x,y)$.
Then, $\II$ proves that any infinite set contains a finite set $F$ such that $\psi(F)$ holds.
By this theorem, there exists $n\in\omega$ such that $(Z$ is $\bbomega^{n}$-large $\to \E F\subseteq Z\psi(F))$.
One can easily find a primitive recursive function $h$ such that $[a,h(a)]_{\N}$ is $\bbomega^{n}$-large for any $a\in\omega$.
Thus, we have $\Ii\vdash \A x \E y<h(x)\theta(x,y)$.
Note also that one can apply the generalized Parsons theorem for any 
$\tilde{\Pi}{}^{0}_{3}$-conservative extension of $\II$, e.g., $\WKLo+\BII$.

We are now ready to prove the main conservation theorem of the section.

\begin{proof}[Proof of Theorem~\ref{thm-provably-large-conservative}]
Let $\Gamma$ be a Ramsey-like-$\Pi^{1}_{2}$-statement, and assume that
for any $k\in\omega$,
\begin{itemize}
 \item[] \rm $\II\vdash \A X\subseteq \N(X$ is infinite $\to \E F\subseteq_{\fin} X(F$ is $\bbomega^{k}$-large$(\Gamma)))$.
\end{itemize}
Then, by Corollary~\ref{cor-g-Parsons}, for each $k\in\omega$ there exists $n_{k}$ such that
\begin{itemize}
 \item[] \rm $\II\vdash \A Z\subseteq_{\fin}(0,\infty)_{\N}(Z$ is $\bbomega^{n_{k}}$-large $\to Z$ is $\bbomega^{k}$-large$(\Gamma))$.
\end{itemize}
Now, put $h:\omega\to\omega$ as $h(0)=1$ and $h(m+1)=\max\{n_{h(m)}, h(m)+1\}$.
We will check
\begin{itemize}
 \item[] \rm $\II\vdash \A Z\subseteq_{\fin}(0,\infty)_{\N}(Z$ is $\bbomega^{h(m)}$-large $\to Z$ is $m$-dense$(\Gamma))$.
\end{itemize}
by induction.
The case $m=0$ follows from the definition.
The case $m=m'+1$, $\bbomega^{h(m')}$-large sets are $m'$-dense$(\Gamma)$ by the induction hypothesis.
Then, the first condition of the $m'+1$-density follows from $h(m'+1)\ge n_{h(m')}$,
and the second condition follows from $h(m'+1)\ge h(m')+1$ and Lemma~\ref{lem-alpha-small-sum}.
Thus, by Theorem~\ref{thm-omega-largeness}, $\II$ proves that any infinite set contains an $m$-dense$(\Gamma)$ set for any $m\in\omega$.
Hence, by Theorem~\ref{thm-conservation-via-density-general}, $\WKLo+\Gamma$ is a $\tilde{\Pi}{}^{0}_{3}$-conservative extension of $\II$.
\end{proof}

When two conservation results are obtained, one can often amalgamate those results.
For example, if two $\Pi^{1}_{2}$-theories $T_{1}$ and $T_{2}$ are $\Pi^{1}_{1}$-conservative over a base $\Pi^{1}_{2}$-theory $T_{0}$, then $T_{1}+T_{2}$ is also $\Pi^{1}_{1}$-conservative over $T_{0}$ (see \cite{Y2009}).
Similar amalgamation property holds for $\tilde\Pi^{0}_{3}$-conservation as follows.
\begin{thm}[Amalgamation]\label{thm:amalgamation}
Fix $n\ge 1$.
Let $T$ be a theory extending $\II$ which consists of sentences of the form $\A X \E Y \theta(X,Y)$ where $\theta$ is $\Pi^{0}_{n+2}$,
and let $\Gamma_{1}$ and $\Gamma_{2}$ be sentences of the same form as $T$.
If $T+\Gamma_{i}$ is a $\tilde\Pi^{0}_{n+2}$-conservative extension of $T$ for $i=1,2$,
then, $T+\Gamma_{1}+\Gamma_{2}$ is a $\tilde\Pi^{0}_{n+2}$-conservative extension of $T$.
\end{thm}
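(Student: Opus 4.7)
The approach is a model-theoretic iterated extension in the spirit of~\cite{Y2009}. Suppose for contradiction that $T+\Gamma_{1}+\Gamma_{2}\vdash\varphi$ for some $\varphi=\A X\,\psi(X)\in\tilde\Pi^{0}_{n+2}$ while $T\not\vdash\varphi$. Then there is a countable $(M_{0},S_{0})\models T$ together with some $X_{0}\in S_{0}$ for which $(M_{0},S_{0})\models\neg\psi(X_{0})$. The plan is to build a countable $(M^{*},S^{*})\models T+\Gamma_{1}+\Gamma_{2}+\neg\psi(X_{0})$, contradicting the assumed provability.

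The technical heart is an \emph{extension lemma}: if $T+\Gamma$ is $\tilde\Pi^{0}_{n+2}$-conservative over $T$, then every countable $(M,S)\models T$ embeds into a countable $(M',S')\models T+\Gamma$ in such a way that both $\Sigma^{0}_{n+2}$- and $\Pi^{0}_{n+2}$-formulas with set parameters from $S$ are preserved upward. I would prove this by a Henkin construction: expand the language with constants $\bar a$ for each $a\in M$ and $\bar X$ for each $X\in S$, and consider $T+\Gamma$ augmented with the diagram $\Delta$ consisting of all $\Sigma^{0}_{n+2}$ and $\Pi^{0}_{n+2}$ sentences true in $(M,S)$ (with the constants). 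A hypothetical finite inconsistency is repackaged by shifting the $\Pi^{0}_{n+2}$-conjuncts to the hypothesis side of the derived refutation; the universal closure over the fresh constants produces a $\tilde\Pi^{0}_{n+2}$-sentence provable in $T+\Gamma$, which by conservation is provable in $T$, contradicting the truth of the corresponding sentence in $(M,S)$.

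Given the extension lemma, one alternates applications with $\Gamma_{1}$ and $\Gamma_{2}$ to build a chain
$(M_{0},S_{0})\hookrightarrow (M_{1},S_{1})\hookrightarrow (M_{2},S_{2})\hookrightarrow\cdots$
with $(M_{2k+1},S_{2k+1})\models T+\Gamma_{1}$, $(M_{2k+2},S_{2k+2})\models T+\Gamma_{2}$, each step preserving $\Sigma^{0}_{n+2}/\Pi^{0}_{n+2}$-formulas with parameters from the previous model. Set $(M^{*},S^{*})=\bigcup_{k}(M_{k},S_{k})$; a Tarski-chain argument gives the same preservation for each $(M_{k},S_{k})\hookrightarrow (M^{*},S^{*})$. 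Thus $\neg\psi(X_{0})$, being $\Sigma^{0}_{n+2}$ with parameter $X_{0}\in S_{0}$, persists into $(M^{*},S^{*})$. Each axiom $\A X\,\E Y\,\theta(X,Y)$ of $T$ holds in $(M^{*},S^{*})$, because any $X\in S^{*}$ lies in some $S_{k}$ where a witness $Y\in S_{k}$ has $\theta(X,Y)$, a $\Pi^{0}_{n+2}$-fact preserved upward. For $\Gamma_{1}$: any $X\in S^{*}$ is in some $S_{k}$, and choosing any later odd $j$ we have $(M_{j},S_{j})\models\Gamma_{1}$, supplying $Y\in S_{j}$ with $\theta_{1}(X,Y)$ preserved in the limit; symmetrically for $\Gamma_{2}$. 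Hence $(M^{*},S^{*})\models T+\Gamma_{1}+\Gamma_{2}+\neg\psi(X_{0})$.

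The hard part will be the consistency argument in the extension lemma. A naive derivation of a contradiction from a mixed $\Sigma^{0}_{n+2}$/$\Pi^{0}_{n+2}$ finite subset of $\Delta$ produces a sentence whose universal closure has quantifier complexity higher than $\tilde\Pi^{0}_{n+2}$, so $\tilde\Pi^{0}_{n+2}$-conservation does not immediately apply. Overcoming this requires carefully separating the roles of the two kinds of diagram facts, shifting $\Pi^{0}_{n+2}$-facts to the hypothesis side, and exploiting that $T$ extends $\II$ to normalize the resulting sentence into the $\tilde\Pi^{0}_{n+2}$-form to which conservation applies. Once the extension lemma is in hand, the Tarski-chain verification in the remaining steps is routine.
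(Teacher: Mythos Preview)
Your overall strategy---build a chain alternately extending to models of $T+\Gamma_{1}$ and $T+\Gamma_{2}$ while preserving enough formulas, then take the union---is exactly the paper's. The chain construction and the Tarski--Vaught verification at the end are the same.

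The gap is in your consistency argument for the extension lemma. You propose to put both the $\Sigma^{0}_{n+2}$- and the $\Pi^{0}_{n+2}$-diagram into~$\Delta$, and upon a hypothetical finite inconsistency to ``shift the $\Pi^{0}_{n+2}$-conjuncts to the hypothesis side'' to obtain a $\tilde\Pi^{0}_{n+2}$-sentence. But the resulting matrix $\bigwedge\pi_{j}\to\bigvee\neg\sigma_{i}$ is logically $\Sigma^{0}_{n+2}\vee\Pi^{0}_{n+2}$, and prefixing the universal closure over the number constants $\bar a$ produces a formula strictly above $\Pi^{0}_{n+2}$ in the arithmetical hierarchy; nothing in $\II$ collapses a genuine $\Sigma^{0}_{n+2}\vee\Pi^{0}_{n+2}$ matrix to $\Pi^{0}_{n+2}$. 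Your later acknowledgment that ``overcoming this requires carefully separating the roles~\dots\ and exploiting that $T$ extends $\II$'' does not supply an actual argument.

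The paper avoids the issue entirely by putting \emph{only} the $\Sigma^{0}_{n+2}$-diagram $\Th_{\mathcal{L}_{2}\cup M\cup S}(M,S)\cap\Sigma^{0}_{n+2}$ into the theory. A finite inconsistency then gives $T+\Gamma_{i}\vdash\neg\bigwedge\sigma_{j}$ with $\neg\bigwedge\sigma_{j}\in\Pi^{0}_{n+2}$, so the full closure $\forall\bar X\,\forall\bar a\,\neg\bigwedge\sigma_{j}$ is honestly $\tilde\Pi^{0}_{n+2}$ and conservation applies immediately. In other words, the ``hard part'' you flag dissolves once the $\Pi^{0}_{n+2}$-half of the diagram is dropped; the paper takes this route rather than confronting the mixed-complexity issue you describe.
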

\begin{proof}
The proof is essentially the same as the case for the amalgamation of two $\Pi^{1}_{2}$-theories which are $\Pi^{1}_{1}$-conservative over a base theory in \cite{Y2009}.
Here, we consider the $\mathcal{L}_{2}$-structure $(M,S)$ as a two-sorted structure, namely, $M$ and $S$ are disjoint sets and $\in$ is a relation on $M\times S$.
In this understanding, $\tilde\Pi^{0}_{n+2}$-conservation implies that any model of $T$ has a $\Sigma^{0}_{n+2}$-elementary extension which is a model of $T+\Gamma_{i}$ for $i=1,2$.
(This is because if $(M,S)\models T$, then $\Th_{\mathcal{L}_{2}\cup M \cup S}(M,S)\cap \Sigma^{0}_{n+2}+T+\Gamma_{i}$ is consistent.)
Now, assume $T\not\vdash\A X\psi(X)$ where $\psi$ is $\Pi^{0}_{n+2}$, and take a model $(M_{0},S_{0})\models T+\E X\neg\psi(X)$.
Then, one can construct a $\Sigma^{0}_{n+2}$-elementary chain of models $(M_{0},S_{0})\subseteq(M_{1},S_{1})\subseteq\dots$ such that $(M_{2j+i},S_{2j+i})\models T+\Gamma_{i}$ for $i=1,2$ and $j\in \omega$.
By the usual elementary chain argument, $(\bar{M},\bar{S})=(\bigcup_{k\in\omega}M_{k}, \bigcup_{k\in\omega} S_{k})$ is a $\Sigma^{0}_{n+2}$-elementary extension of $(M_{0},S_{0})$, and therefore $(\bar{M},\bar{S})\models T+\Gamma_{1}+\Gamma_{2}+\E X\neg\psi(X)$.
Hence $T+\Gamma_{1}+\Gamma_{2}\not\vdash \A X\psi(X)$.
\end{proof}
Note that $\II$, $\WKLo$ and any Ramsey-like statement is of the form $\A X \E Y \theta(X,Y)$ where $\theta$ is $\Pi^{0}_{3}$.
Therefore, one can always use the amalgamation theorem for $\tilde\Pi^{0}_{3}$-conservation.
In particular, to prove the $\tilde\Pi^{0}_{3}$-conservation theorem for $\RT^{2}_{2}$, we only need to prove $\tilde\Pi^{0}_{3}$-conservation theorems for $\ADS$ and $\EM$.

\section{Conservation theorem for $\ADS$}\label{section-ADS}

In this section, we will show that $\WKLo+\ADS$ is a
 $\tilde{\Pi}{}^{0}_{3}$-conservative extension of $\II$.
Actually, this is just a weakening of the following theorem by Chong, Slaman and Yang,
where $\CAC$ is the chain antichain principle, since $\ADS$ is a consequence of $\CAC$
over~$\RCAo$~\cite{HS2007}.

\begin{thm}[Chong, Slaman, Yang~\cite{CSY2012}]\label{thm-CSY-conservation}
$\WKLo+\CAC$ is a ${\Pi}{}^{1}_{1}$-conservative extension of $\BII$. 
\end{thm}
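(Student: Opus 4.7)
The plan is to follow the forcing/iteration template standard for $\Pi^1_1$-conservation results in this area, since the density-and-largeness framework developed earlier in the paper only delivers $\tilde{\Pi}^0_3$-conservation and the statement here is stronger. Fix a $\Pi^1_1$-sentence $\varphi \equiv \forall X \psi(X)$ with $\psi$ arithmetical, assume $\BII \not\vdash \varphi$, and choose a countable model $(M, S_0) \models \BII$ with $A \in S_0$ witnessing $\neg\psi(A)$. I would construct an $\omega$-chain $S_0 \subseteq S_1 \subseteq \cdots$ of second-order parts over the fixed first-order universe $M$, each with $(M, S_i) \models \RCAo + \BII$ and $A$ still witnessing $\neg\varphi$, arranged so that the union $S = \bigcup_i S_i$ solves every $\WKL$- and every $\CAC$-instance coded in it. A standard bookkeeping enumeration alternates between $\WKL$-trees and partial orders appearing in some $S_j$.

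For the one-step construction at a $\CAC$-instance $P \in S_i$ I would use a Mathias-style forcing whose conditions are pairs $(F, X)$ where $F$ is an $M$-finite chain (or antichain) of $P$ and $X \in S_i$ is an infinite reservoir of elements $P$-comparable with (resp. $P$-incomparable with) every element of $F$; extension thins $X$ and possibly appends elements of $X$ to $F$. Standard density arguments, available in $\BII$, force the generic object $G = \bigcup F$ to be infinite and homogeneous. For $\WKL$-instances a parallel Harrington-style low-basis construction suffices.

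The main difficulty, and the substantive content of the theorem of Chong, Slaman and Yang, is preserving $\BII$ across the forcing. The danger is that the generic $G$ could be used to define a new $\Sigma^0_2(G)$-cut in $M$, in which case $\BII$ would collapse to $\III$ in $(M, S_{i+1})$. The Chong--Slaman--Yang preservation lemma resolves this by strengthening genericity so that for every $\Sigma_2$-formula $\theta(n,y;G)$ with set parameters in $S_i$, if the forcing relation asserts $(\forall n < b)(\exists y)\,\theta(n,y;G)$ for some $b \in M$, then a uniform $M$-bound on $y$ can already be found at the level of conditions. Verifying this requires a careful combinatorial analysis showing that any infinite reservoir $X \in S_i$ can be thinned along a $\Sigma_2$-predicate in the generic while remaining infinite, using only $\BII$ in the ground model; this is the technical heart of \cite{CSY2012} and where I would expect to spend the bulk of the effort.

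Once one-step preservation is established, the iteration is routine. Bookkeep every $\WKL$- and $\CAC$-instance that appears in any $S_j$, solve it generically at the next available stage, and take $S = \bigcup_i S_i$. Because $\BII$ is $\Pi^0_3$-schematic and is preserved at each step via the uniform bounds above, it survives the union; because $\neg\psi(A)$ is arithmetical in $A$ alone with $A \in S_0 \subseteq S$, it persists as well. The resulting structure $(M, S)$ models $\WKLo + \CAC + \neg\varphi$, contradicting $\WKLo + \CAC \vdash \varphi$; hence $\WKLo + \CAC$ is $\Pi^1_1$-conservative over $\BII$.
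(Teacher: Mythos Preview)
The paper does not prove this theorem at all: it is stated as a citation to Chong, Slaman and Yang~\cite{CSY2012}, and the paper then explicitly says it will give ``an alternative proof'' only of the weaker consequence that $\WKLo+\ADS$ is $\tilde{\Pi}^{0}_{3}$-conservative over $\II$ (Corollary~\ref{cor-con-ADS}), obtained via the largeness bound of Lemma~\ref{lem-ADS-largeness}. So there is no ``paper's own proof'' to compare against for the full $\Pi^{1}_{1}$-conservation of $\WKLo+\CAC$ over $\BII$.

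That said, your outline is essentially the Chong--Slaman--Yang strategy from~\cite{CSY2012}: iterate Mathias-type forcing for $\CAC$-instances and Harrington-style constructions for $\WKL$-instances over a fixed first-order part, with the key technical point being preservation of $\BII$ at each step. You have correctly located the substantive difficulty. One refinement: in~\cite{CSY2012} the analysis of $\CAC$ splits into a stable part and $\ADS$ (or is handled via cohesive-plus-stable decomposition), and the $\BII$-preservation argument is carried out separately for each piece rather than for a single uniform Mathias forcing on arbitrary partial orders; your sketch glosses over this decomposition. Also, the $\WKL$ step is handled by H\'ajek's $\omega$-extension result (extending $S$ to a model of $\WKLo$ while preserving $\BII$), not by a low-basis forcing per se, though the ideas are closely related. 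These are matters of organization rather than gaps in the overall plan.
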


Here, we will give an alternative proof by calculating the size 
of $\bbomega^{k}$-large($\psRT^2_2$) sets.
To simplify the proof below, we will use a slightly modified $\alpha$-largeness notion.

\begin{defi}[$\II$]
Any set is said to be $0$-large$^{*}$. Given some $\alpha<\bbomega^{\bbomega}$, 
$X\subseteq_{\fin}\N$ is said to be $\alpha$-large$^{*}$ if
\begin{itemize}
 \item $X\setminus \{\min X\}$ is $\beta$-large$^{*}$ if $\alpha=\beta+1$,
 \item $X$ is $(\beta+\bbomega^{n-1}\cdot\min X)$-large$^{*}$ if $\alpha=\beta+\bbomega^{n}$.
\end{itemize}
\end{defi}

Trivially, if $X\subseteq_{\fin}\N$ is $\alpha$-large, then $X$ is $\alpha$-large$^{*}$.

\begin{lem}
For any $k\in\omega$, the following is provable within $\II$.
For any $\alpha<\bbomega^{k}$ and for any $X\subseteq_{\fin}\N$, 
$X$ is $\alpha$-large if $X$ is $\alpha+1$-large$^{*}$.
\end{lem}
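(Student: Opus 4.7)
The plan is to proceed by transfinite induction on $\alpha < \bbomega^{k}$ inside $\II$, with $k \in \omega$ treated as an external parameter; for each fixed $k$, transfinite induction up to $\bbomega^{k}$ for arithmetical predicates is available in $\II$ by the proof-theoretic ordinal analysis of that theory.

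The base case $\alpha = 0$ is trivial. For the successor case $\alpha = \beta + 1$, if $X$ is $(\beta + 2)$-large$^{*}$ then by the successor clause $X \setminus \{\min X\}$ is $(\beta + 1)$-large$^{*}$; applying the inductive hypothesis at $\beta$ gives $X \setminus \{\min X\}$ is $\beta$-large, which by definition means $X$ is $(\beta + 1)$-large.

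The heart of the proof is the limit case $\alpha = \gamma + \bbomega^{n}$ with $n \geq 1$, where the $*$-definition keeps the minimum in the limit clause while the unstarred version removes it. Assuming $X$ is $(\alpha + 1)$-large$^{*}$, the successor clause yields that $Y := X \setminus \{\min X\}$ is $(\gamma + \bbomega^{n})$-large$^{*}$, and the $*$-limit clause applied with $m := \min Y$ gives $Y$ is $(\gamma + \bbomega^{n-1} \cdot m)$-large$^{*}$. Since $m \geq \min X + 1$, ordinal arithmetic gives
\[
\bbomega^{n-1} \cdot m \;=\; \bbomega^{n-1} \cdot \min X + \bbomega^{n-1} \cdot (m - \min X)
\]
with the second summand $\geq \bbomega^{n-1} \geq 1$. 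The goal is to exploit this excess to deduce that $Y$ is $(\gamma + \bbomega^{n-1} \cdot \min X + 1)$-large$^{*}$, whereupon the inductive hypothesis at the ordinal $\gamma + \bbomega^{n-1} \cdot \min X$, which is strictly below $\alpha$, yields the required $Y$ is $(\gamma + \bbomega^{n-1} \cdot \min X)$-large, i.e., $X$ is $\alpha$-large.

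The main obstacle is carrying out that extraction: abstract ordinal monotonicity of $*$-largeness fails in general (for instance $\{1\}$ is $\bbomega$-large$^{*}$ but not $2$-large$^{*}$), so one cannot simply shrink the ordinal by fiat. Instead, one unfolds the $*$-reduction path of $Y$ through alternating $*$-limit and successor steps, invoking the lemma at strictly smaller ordinals along the way to convert intermediate $*$-largeness into genuine largeness, and then using the standard monotonicity of ordinary $\alpha$-largeness to finish. This bookkeeping is cleanest if one proves a slightly rephrased version of the statement, namely ``if $Z \setminus \{\min Z\}$ is $\alpha$-large$^{*}$ then $Z$ is $\alpha$-large'', so that every recursive call directly mirrors the reduction step being performed; that formulation is equivalent to the lemma via the successor clause of the $*$-definition.
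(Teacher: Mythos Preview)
Your approach is exactly the paper's: its entire proof is the single line ``By $\Pi^0_1$-transfinite induction up to $\bbomega^k$, which is available within $\RCAo$.'' Your base and successor cases are correct, and the reformulation you propose at the end (prove ``$Z\setminus\{\min Z\}$ is $\alpha$-large$^*$ $\Rightarrow$ $Z$ is $\alpha$-large'') is the right induction hypothesis.

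With that reformulation the limit case is a single step, not the ``alternating unfolding'' you sketch. If $\alpha=\gamma+\bbomega^n$ and $Y:=Z\setminus\{\min Z\}$ is $\alpha$-large$^*$, then by the $*$-limit clause $Y$ is $(\gamma+\bbomega^{n-1}\cdot\min Y)$-large$^*$; since this ordinal is $<\alpha$, the induction hypothesis applied to $Z$ (whose tail is $Y$) gives that $Z$ is $(\gamma+\bbomega^{n-1}\cdot\min Y)$-large. Now $\min Y>\min Z$ implies $\gamma+\bbomega^{n-1}\cdot\min Y\ge\gamma+\bbomega^{n-1}\cdot\min Z+1$, so by downward monotonicity of \emph{ordinary} $\alpha$-largeness $Z$ is $(\gamma+\bbomega^{n-1}\cdot\min Z+1)$-large, hence $Y$ is $(\gamma+\bbomega^{n-1}\cdot\min Z)$-large, i.e., $Z$ is $\alpha$-large. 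No iterated $*$-reduction or intermediate conversions are needed.
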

\begin{proof}
By $\Pi^{0}_{1}$-transfinite induction up to $\bbomega^{k}$, which is available within $\RCAo$.
\end{proof}

\begin{lem}\label{lem-alpha*-large-sum}
The following is provable within $\II$.
For any $k,n\in\N$, if $X$ is a disjoint union of $X_{0},\dots,X_{k-1}$ 
such that $X_{i}<X_{i+1}$ for any $i<k-1$ and each $X_{i}$ 
is $\bbomega^{n}$-large$^{*}$, then $X$ is $\bbomega^{n}\cdot k$-large$^{*}$. 
Thus, if $k\le \min X_{0}$, $X$ is $\bbomega^{n+1}$-large$^{*}$.
\end{lem}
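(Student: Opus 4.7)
The proof is by external induction on $n$ (in the meta-theory, since $n$ is a standard natural number), treating both assertions simultaneously.

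For the base case $n = 0$: each $X_i$ being $\bbomega^0 = 1$-large$^*$ unfolds, via one application of the successor rule, to $X_i \neq \emptyset$. Hence $|X| = \sum_{i < k} |X_i| \geq k$, and $k$ iterated successor reductions give that $X$ is $k$-large$^*$, which is $\bbomega^0 \cdot k$-large$^*$. For the ``Thus'' assertion, unfolding $\bbomega$-large$^*$ gives $|X| \geq \min X = \min X_0$, which follows from the counting under the stated hypothesis relating $k$ and $\min X_0$.

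For the inductive step $n \to n+1$, the first assertion is proved by internal induction on $k$ (formalizable in $\II$). The cases $k \le 1$ are immediate. For the step, the key observation is that each $X_i$ being $\bbomega^{n+1}$-large$^*$ is equivalent, by one limit reduction, to $X_i$ being $\bbomega^n \cdot \min X_i$-large$^*$. Similarly, unfolding $\bbomega^{n+1} \cdot (k+1)$-large$^*$ on $X$ via the limit rule yields the reduced condition that $X$ is $\bbomega^{n+1} \cdot k + \bbomega^n \cdot \min X_0$-large$^*$. The trailing summand $\bbomega^n \cdot \min X_0$ matches exactly the unfolded condition on $X_0$; its reduction chain consumes a prefix of $X$ corresponding to (all or part of) $X_0$, after which the residual problem is a $\bbomega^{n+1} \cdot k$-largeness$^*$ question on what remains of $X$, which is handled by the internal inductive hypothesis.

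For the ``Thus'' conclusion at level $n+1$, the plan is to apply the first assertion to the first $\min X_0$ pieces $X_0, \ldots, X_{\min X_0 - 1}$, yielding that their union $X'$ is $\bbomega^n \cdot \min X_0$-large$^*$. A standard subset-monotonicity fact---that adjoining elements above the minimum of a set preserves largeness$^*$, established by a routine transfinite induction on the ordinal---then gives that the full $X$ inherits $\bbomega^n \cdot \min X_0$-largeness$^*$, and since $\min X = \min X_0$ this is precisely the single-step unfolding of $\bbomega^{n+1}$-largeness$^*$ by the limit rule.

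The main obstacle is the bookkeeping in the inductive step on $n$: because the limit rule of the $*$-variant does not strip the minimum (unlike in Lemma~\ref{lem-alpha-large-sum}), all element consumption must occur at the successor stage, so one must verify carefully that the ``excess'' case $|X_0| > \min X_0$ does not obstruct the reduction chain from transitioning to subsequent pieces. A secondary but routine point is the verification of subset monotonicity needed for the ``Thus'' part.
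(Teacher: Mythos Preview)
Your approach matches the paper's (which just says ``Similar to Lemma~\ref{lem-alpha-large-sum}''), but there is a real gap in your handling of the ``Thus'' clause. At the base case $n=0$ you need $|X|\ge\min X_0$, and you say this ``follows from the counting under the stated hypothesis relating $k$ and $\min X_0$.'' It does not: you have only $|X|\ge k$ together with the stated hypothesis $k\le\min X_0$, and these do not give $|X|\ge\min X_0$. Concretely, with $n=0$, $X_0=\{5\}$, $X_1=\{6\}$, all hypotheses hold but $X=\{5,6\}$ is not $\bbomega$-large$^*$ since $|X|=2<5=\min X$. The same issue recurs in your inductive step, where you ``apply the first assertion to the first $\min X_0$ pieces $X_0,\dots,X_{\min X_0-1}$,'' tacitly assuming there are at least $\min X_0$ pieces, i.e., $k\ge\min X_0$.

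In fact the inequality in the ``Thus'' clause is a typo: it should read $\min X_0\le k$, as the application in Lemma~\ref{lem-ADS-largeness} confirms (there the number of pieces is $l\ge y_0=\min H_0$). Under the corrected hypothesis your argument goes through. Working the base case out honestly, instead of gesturing at it, would have exposed the inconsistency.

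A minor further point: the lemma quantifies over $n\in\N$ internally, whereas your external induction yields only the schema over standard $n$. This suffices for the use in Lemma~\ref{lem-ADS-largeness} (the exponent there is bounded by a fixed standard $k$), but the distinction is worth flagging.
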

\begin{proof}
Similar to Lemma~\ref{lem-alpha-large-sum}.
\end{proof}

\begin{lem}\label{lem-ADS-largeness}
For any $k\in\omega$, the following is provable within $\II$. 
If a finite set $X\subseteq\N$ is $\bbomega^{2k+6}$-large 
and $\min X>3$, then $X$ is $\bbomega^{k}$-large($\psRT^{2}_{2}$).
\end{lem}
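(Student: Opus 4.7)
The strategy is to reduce pseudo-homogeneity to a chain-or-antichain problem in the $0$-reachability partial order, and then exploit finite combinatorial estimates in the $\alpha$-largeness framework. Given a coloring $f:[[0,\max X]_\N]^2\to 2$, define the binary relation $\prec$ on $X$ by $x\prec y$ iff $x<y$ and there is a finite increasing sequence $x=z_0<z_1<\cdots<z_n=y$ with $f(z_i,z_{i+1})=0$ for every $i<n$. Because $0$-paths concatenate, $\prec$ is transitive, so it is a strict partial order on $X$ compatible with $<$. Two observations justify the reduction: \emph{(i)} any $\prec$-chain $C\subseteq X$ is $f$-pseudo-homogeneous for color $0$, since consecutive elements of $C$ are joined by $0$-paths; \emph{(ii)} any $\prec$-antichain $A\subseteq X$ is $f$-homogeneous (hence pseudo-homogeneous) for color $1$, because for $\prec$-incomparable $x<y$ in $X$ the direct edge $f(x,y)$ cannot be $0$ (else $x\prec y$) and so must be $1$. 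It therefore suffices to find in $X$ either an $\bbomega^k$-large $\prec$-chain or an $\bbomega^k$-large $\prec$-antichain.

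The plan for finding a large chain or antichain is by induction on $k$, inside $\II$. The base case $k=0$ is immediate. For the inductive step, decompose the $\bbomega^{2(k+1)+6}$-large set $X$ via Lemma~\ref{lem-alpha-large-sum} into $\min X$ many blocks $Y_0<Y_1<\cdots$, each $\bbomega^{2k+7}$-large, and then each $Y_i$ into $\min Y_i$ many sub-blocks, each $\bbomega^{2k+6}$-large. By the inductive hypothesis every sub-block contains either a $\bbomega^k$-large $\prec$-chain or a $\bbomega^k$-large $\prec$-antichain. A pigeonhole on the type (chain vs.\ antichain)—available inside $\II$ for $\Delta_1$-definable largeness notions, as noted in Remark~\ref{rem:on-lem-largeness}—uniformizes these witnesses to a single type. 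The key combinatorial move is then to choose one "representative" from each same-type witness, define an auxiliary $2$-coloring on pairs of representatives that encodes $\prec$-comparability, and invoke the Ketonen-Solovay theorem (Theorem~\ref{SK-theorem}) to extract a $\bbomega$-large monochromatic subset of representatives; this patches the local chains (or antichains) into a global $\bbomega^{k+1}$-large chain (or antichain) using transitivity of $\prec$.

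The main obstacle will be the careful tracking of ordinal largeness under this double decomposition. In particular, to produce a $\bbomega^{k+1}$-large chain by concatenating local $\bbomega^k$-large chains, one needs $\prec$-links between consecutive local chains in the chosen direction, and the auxiliary Ramsey step is precisely what guarantees such a consistent direction across $\bbomega$-many representatives. The constant $2k+6$ is tight for this scheme: the "$+6$" absorbs the Ketonen-Solovay offset for the auxiliary $2$-coloring on representatives, while the "$2k$" reflects the double decomposition that produces enough sub-blocks to feed into the auxiliary Ramsey step with room to spare. A secondary technical subtlety is controlling the $\min$-dependence of $\alpha$-largeness when concatenating, which is why the proof exploits the slightly modified $\alpha$-large$^*$ variant introduced just before the lemma; this variant decouples largeness from $\min$-sensitivity enough to make the concatenation bookkeeping formalizable in $\II$ via $\Sigma_1$-induction and $\Pi^0_1$-transfinite induction up to $\bbomega^{2k+6}$, both of which are available.
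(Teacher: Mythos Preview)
Your reduction to finding an $\bbomega^k$-large $\prec$-chain or $\prec$-antichain is correct: $\prec$ is a partial order, chains are pseudo-homogeneous for color $0$, and antichains are $f$-homogeneous for color $1$. The inductive step, however, does not close. Patching local witnesses via single representatives and a $2$-color Ramsey on those representatives fails in every branch:
\begin{itemize}
\item If the local witnesses are chains $C_s$ and the representatives come out $\prec$-comparable, that does not make $\bigcup_s C_s$ a chain: you need $\max C_s\prec\min C_t$, not merely $r_s\prec r_t$. If instead the representatives are incomparable, the local chains are wasted and you only salvage an $\bbomega$-large antichain among the $r_s$, far short of $\bbomega^{k+1}$.
\item If the local witnesses are antichains $A_s$, incomparability of $r_s$ and $r_t$ says nothing about other elements of $A_s,A_t$; and if the $r_s$ are comparable you again get only an $\bbomega$-large chain. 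Incomparability is not transitive, so your appeal to ``transitivity of $\prec$'' does not help on the antichain side.
\end{itemize}
Replacing the representative coloring by ``$\max C_s\prec\min C_t$'' or ``$A_s\cup A_t$ is an antichain'' repairs the matching cases but leaves the off-diagonal ones open. You also have not verified that after the double decomposition and the type-pigeonhole the surviving representatives form an $\bbomega^{6}$-large set, which is what a single application of Theorem~\ref{SK-theorem} with two colors requires.

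The paper's proof is non-inductive and quite different: it defines at once a $(2k+2)$-coloring $\bar f(x,y)=2j+i$ where $i=f(x,y)$ and $j\le k$ is the least $j'<k$ for which no $\bbomega^{j'+1}$-large$^{*}$ pseudo-homogeneous set of color $i$ containing $x$ fits in $[x,y)\cap X$ (or $j=k$ if every level succeeds). A \emph{single} application of Theorem~\ref{SK-theorem} to this $(2k+2)$-coloring --- whence the exponent $(2k+2)+4=2k+6$ --- yields an $\bbomega$-large $\bar f$-homogeneous set; the $\bbomega^{j}$-large$^{*}$ witnesses between consecutive points concatenate (via Lemma~\ref{lem-alpha*-large-sum}) into an $\bbomega^{j+1}$-large$^{*}$ pseudo-homogeneous set, which forces $j=k$.
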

\begin{proof}
%Given a coloring $f:[[0,\max X]_{\N}]^{2}\to 2$,
Given a coloring $f:[X]^{2}\to 2$,
 define the coloring 
$\bar{f}:[X]^{2}\to 2k+2$ as $\bar{f}(x,y)=2j+i$  if $f(x,y)=i$ 
and $j=\min\{j'<k\mid \neg(\E H\subseteq[x,y)_{\N}\cap X$ $x\in H$, 
$H$ is $\bbomega^{j'+1}$-large$^{*}$ and $H\cup\{y\}$ is pseudo-homogeneous for~$f$ with the color $i)\}\cup\{k\}$.

By Theorem~\ref{SK-theorem}, take $Y\subseteq X$ such that 
$Y$ is $\bbomega$-large and $\bar{f}$-homogeneous.
Let $Y=\{y_{0}<y_{1}<\dots<y_{l}\}$, and $\bar{f}([Y]^{2})=2j+i$.
Then, $l\ge y_{0}$. By the definition of $\bar{f}$, for $s=0,\dots,l-1$, 
one can take $H_{s}\subseteq[y_{s},y_{s+1})_{\N}$ such that $y_{s}\in H_{s}$, 
$H_{s}$ is $\bbomega^{j}$-large$^{*}$ and $[H_{s}\cup\{y_{s+1}\}]^{2}$ is pseudo-homogeneous for~$f$ with the color $i$.
By Lemma~\ref{lem-alpha*-large-sum}, $H=\bigcup_{s=0}^{l-1}H_{s}$ is 
$\bbomega^{j+1}$-large$^{*}$, and $[H\cup\{y_{k}\}]^{2}$ is pseudo-homogeneous for~$f$ with the color $i$.
This $H$ assures that $\bar{f}(y_{0},y_{k})\neq 2j+i$ or $j=k$. Thus, we have $j=k$.
Hence $H$ is pseudo-homogeneous for~$f$ and $\bbomega^{k+1}$-large$^{*}$, thus it is $\bbomega^{k}$-large.
\end{proof}

\begin{cor}\label{cor-con-ADS}
$\WKLo+\psRT^{2}_{2}$, or equivalently, $\WKLo+\ADS$ is a $\tilde{\Pi}^0_3$-conservative extension of $\II$. 
\end{cor}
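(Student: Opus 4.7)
The plan is to observe that this corollary follows almost immediately from the combination of Lemma~\ref{lem-ADS-largeness} and Theorem~\ref{thm-provably-large-conservative}, with the equivalence between $\ADS$ and $\psRT^{2}_{2}$ over $\RCAo$ (due to the first author and to Steila, cited in Section~\ref{subsect:ramsey-consequences}) handling the ``equivalently'' clause.

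First, I would verify the hypothesis of Theorem~\ref{thm-provably-large-conservative} for $\Gamma = \psRT^{2}_{2}$, namely that for every $k\in\omega$, the notion of being $\bbomega^{k}$-large$(\psRT^{2}_{2})$ is a largeness notion provably in $\II$. Regularity and closure under supersets are built into the definition of $\alpha$-largeness$(\Gamma)$ and have already been observed, so the only real task is to argue, inside $\II$, that every infinite set contains a finite subset which is $\bbomega^{k}$-large$(\psRT^{2}_{2})$. For this, I would fix $k\in\omega$ and work in $\II$: given an infinite set $X_{0}$, discard its few small elements to obtain an infinite $X \subseteq X_{0}$ with $\min X > 3$; then invoke Theorem~\ref{thm-omega-largeness} (applied externally with the constant $n = 2k+6$) to extract a finite $F\subseteq X$ which is $\bbomega^{2k+6}$-large. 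By Lemma~\ref{lem-ADS-largeness}, $F$ is $\bbomega^{k}$-large$(\psRT^{2}_{2})$, giving the required finite subset of $X_{0}$.

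With the hypothesis verified, Theorem~\ref{thm-provably-large-conservative} immediately yields that $\WKLo + \psRT^{2}_{2}$ is a $\tilde{\Pi}^{0}_{3}$-conservative extension of $\II$. To conclude the ``equivalently'' part, I would note that $\RCAo \vdash \psRT^{2}_{2} \leftrightarrow \ADS$, as already discussed in Section~\ref{subsect:ramsey-consequences}, so the two theories $\WKLo + \psRT^{2}_{2}$ and $\WKLo + \ADS$ coincide as extensions of $\RCAo$ and have the same $\tilde{\Pi}^{0}_{3}$-consequences.

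Since Lemma~\ref{lem-ADS-largeness} and Theorem~\ref{thm-provably-large-conservative} are already established, there is no serious obstacle in the corollary itself; the real work has been done upstream. The only point requiring minimal care is making sure that the external choice of $k$ in the largeness hypothesis matches the external choice of $k$ supplied by the lemma, and that the initial truncation of $X_{0}$ to enforce $\min X > 3$ is harmless, which it obviously is since removing finitely many elements from an infinite set leaves an infinite set.
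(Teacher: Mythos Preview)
Your proposal is correct and follows exactly the paper's approach: the paper's proof simply cites Theorems~\ref{thm-omega-largeness}, \ref{thm-provably-large-conservative} and Lemma~\ref{lem-ADS-largeness}, which is precisely the combination you unpack. Your additional remarks about truncating to ensure $\min X > 3$ and about the $\RCAo$-equivalence of $\psRT^{2}_{2}$ with $\ADS$ are appropriate elaborations of details the paper leaves implicit.
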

\begin{proof}
By Theorems~\ref{thm-omega-largeness}, 
\ref{thm-provably-large-conservative} and Lemma~\ref{lem-ADS-largeness}.
\end{proof}

Note that we could have proven Corollary~\ref{cor-con-ADS} by working with $\ADS$ directly.
However, the unnatural formulation of $\ADS$ as a Ramsey-like-$\Pi^1_2$-statement
introduces additional technicalities in the proof. Indeed, the standard formulation of $\ADS$
involves linear orders, whereas a Ramsey-like statement is about arbitrary coloring functions.
In this framework, a solution to $\ADS$ is either an infinite homogeneous set, or a set whose minimal element witnesses 
the non-transitivity of the coloring.

\section{Grouping principle}\label{section-GP}

In this section, we introduce the grouping principle, which is a consequence
of Ramsey's theorem. The grouping principle will be used in the conservation proof of the
Erd\H{o}s-Moser theorem, although it is currently unknown how the two statements relate in reverse mathematics.
The grouping principle seems interesting to study in its own right, and we conduct a study
of its relations with other Ramsey-type principles already studied in reverse mathematics.

\begin{definition}[$\RCAo$, grouping principle]
Given a largeness notion $\LL$ and a coloring $f:[\N]^{n}\to k$, 
an \textit{$\LL$-grouping for $f$} is an infinite family 
of $\LL$-large finite sets $\{F_0 < F_1 < \dots \}\subseteq \LL$ such that 
$$
\A i_{1}<\dots< i_{n}\, \E c<k\, \A x_{1}\in F_{i_{1}},\dots,\A x_{n}\in F_{i_{n}}\, f(x_{1},\dots,x_{n})=c
$$
Now $\GPg^{n}_{k}(\LL)$ (grouping principle for $\LL$) asserts 
that for any coloring $f:[\N]^{n}\to k$, there exists an infinite $\LL$-grouping for $f$.
We write $\GPg^{n}_{k}$ for the statement saying that for any largeness notion 
$\LL$, $\GPg^{n}_{k}(\LL)$ holds, $\GPg^{n}$ for $\A k\GPg^{n}_{k}$, 
and $\GPg$ for $\A n\GPg^{n}$.
\end{definition}

Note that being a largeness notion is a $\Pi^1_1$-statement. Therefore, an \emph{instance}
of~$\GP$ is a pair~$\langle \LL, f \rangle$ where $\LL$ is a collection of finite sets,
and $f : [\N]^2 \to 2$ is a coloring.  A \emph{solution} to an instance $\langle \LL, f \rangle$
is either an $\LL$-grouping for~$f$, or an infinite set witnessing that $\LL$ is not a largeness notion,
that is, an infinite set with no finite subset in~$\LL$.

In order to simplify the analysis of Ramsey's theorem for pairs,
Cholak, Jockusch and Slaman~\cite{CJS} split the proof of~$\RT^2_2$ into \emph{cohesiveness}
and a stable restriction of $\RT^2_2$. A coloring $f : [\N]^2 \to k$ is \emph{stable}
if for every~$x$, $\lim_y f(x, y)$ exists. $\SRT^2_k$ is the restriction of $\RT^2_2$ to stable colorings.

\begin{defi}[Cohesiveness]
An infinite set $C$ is $\vec{R}$-cohesive for a sequence of sets $R_0, R_1, \dots$
if for each $i \in \N$, $C \subseteq^{*} R_i$ or $C \subseteq^{*} \overline{R_i}$.
$\COH$ is the statement ``Every uniform sequence of sets $\vec{R}$
has an $\vec{R}$-cohesive set.''
\end{defi}

Cohesiveness is a statement from standard computability theory.
Cholak, Jockusch and Slaman~\cite{CJS} claimed with an erroneous proof that it is a strict consequence of $\RT^2_2$
over~$\RCAo$. Mileti~\cite{Mileti2004Partition} fixed the proof. Hirschfeldt and Shore~\cite{HS2007}
proved that $\COH$ is a consequence of~$\ADS$. Since then, many statements in reverse mathematics
have been split into their cohesive and their stable part~\cite{HS2007}.
Accordingly, we will consider  the stable version $\SGP$ which stands for $\GP$ for stable colorings.
One can prove that~$\RCAo \vdash \COH + \SGP \rightarrow \GP$ by the same argument
as $\RCA \vdash \COH + \SRT^2_2 \rightarrow \RT^2_2$ ~\cite{CJS}. 
Stable Ramsey's theorem for pairs admits a nice computability-theoretic characterization
in terms of infinite subsets of a $\Delta^0_2$ set.
We can give a similar characterization for the stable grouping principle for pairs.

\begin{definition}[$\RCAo$]
Given a largeness notion~$\LL$, an \textit{$\LL$-grouping} for a set~$A$ 
is an infinite family of $\LL$-large finite sets
$\{F_0 < F_1 < \dots \} \subseteq \LL$ such that
$(\forall i)[F_i \subseteq A \vee F_i \subseteq \overline{A}]$
\end{definition}

The argument can be carried-out within $\BII$. Actually, Kreuzer proved that $\SGP$ implies~$\BII$ over~$\RCAo$ (see Theorem~\ref{thm:sgp-bii}).

\begin{lemma}\label{lem:stable-grouping-characterization}
$\RCAo + \BII \vdash \SGP(\LL) \leftrightarrow$ ``Every $\Delta^0_2$ set has an infinite~$\LL$-grouping''.
\end{lemma}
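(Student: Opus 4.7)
The plan is to prove the two directions separately, observing that the forward direction is essentially immediate from stability, while the backward direction is where $\BII$ is actually used.

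For the forward direction, assume $\SGP(\LL)$. Given a $\Delta^{0}_{2}$ set $A$, fix a computable approximation $g:\N\times\N\to 2$ with $A(x)=\lim_{y}g(x,y)$, and define a stable coloring $f:[\N]^{2}\to 2$ by $f(x,y):=g(x,y)$ for $x<y$. Apply $\SGP(\LL)$ to $f$ and obtain an infinite $\LL$-grouping $\{F_{0}<F_{1}<\dots\}$, with monochromatic values $c_{ij}<2$ on each $F_{i}\times F_{j}$. Fix $i$ and any $x\in F_{i}$: by stability, $\lim_{y}f(x,y)=A(x)$, so $\lim_{j}c_{ij}=A(x)$. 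Since $c_{ij}$ does not depend on $x$, the value $A(x)$ is the same for all $x\in F_{i}$, whence $F_{i}\subseteq A$ or $F_{i}\subseteq\overline{A}$. Thus $\{F_{i}\}$ is already an $\LL$-grouping for $A$.

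For the backward direction, assume every $\Delta^{0}_{2}$ set admits an infinite $\LL$-grouping and let $f:[\N]^{2}\to 2$ be stable. Put $A:=\{x:\lim_{y}f(x,y)=1\}$, which is $\Delta^{0}_{2}$ in $f$, and apply the hypothesis to obtain an $\LL$-grouping $\{F_{0}<F_{1}<\dots\}$ for $A$. For each $i$ and each $x\in F_{i}$ the $\Sigma^{0}_{2}$ statement ``$(\exists N)(\forall y>N)\,f(x,y)=A(x)$'' holds by stability, so by $\BII$ applied over the finite set $F_{i}$ we obtain a uniform bound $N_{i}$ with $f(x,y)=A(x)$ for every $x\in F_{i}$ and every $y>N_{i}$. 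Now thin the grouping: set $i_{0}:=0$ and $i_{k+1}:=$ the least $j$ with $\min F_{j}>N_{i_{k}}$. Since each $F_{i_{k}}$ is entirely contained in $A$ or in $\overline{A}$, the value $f(x,y)$ for $x\in F_{i_{k}}$, $y\in F_{i_{l}}$ with $k<l$ is the constant $A\rest F_{i_{k}}$, so $\{F_{i_{k}}\}$ is an $\LL$-grouping for $f$.

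The main obstacle is exactly the appeal to $\BII$ in the backward direction: passing from the pointwise stability of $f$ on each $x\in F_{i}$ to a single threshold $N_{i}$ working uniformly on $F_{i}$ is precisely a $\Sigma^{0}_{2}$-collection step. This aligns with the comment in the excerpt that it is open whether $\SGP$ itself already implies $\BII$; the characterization as stated therefore naturally lives over $\RCAo+\BII$ rather than over $\RCAo$ alone.
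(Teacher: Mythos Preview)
The paper does not spell out a proof of this lemma beyond remarking that the argument goes through in $\BII$; your approach --- the direct analogue of the Cholak--Jockusch--Slaman proof that $\SRT^2_2$ is equivalent to ``every $\Delta^0_2$ set has an infinite subset in it or its complement'' --- is exactly what is intended, and both directions are correct in outline.

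One formalization point in the backward direction deserves care: as written, the thinning step is driven by the bounds $N_i$, but $\BII$ only guarantees the \emph{existence} of such a bound for each fixed $i$; it does not hand you a $\Delta^0_1$ function $i\mapsto N_i$, and taking the least such $N_i$ would be a $\Pi^0_1$-minimization, not available below $\III$. The easy repair is to define $i_{k+1}$ directly as the least $j>i_k$ such that $f$ is constant on $F_{i_m}\times F_j$ for every $m\le k$. This is a $\Delta^0_0$ search condition in $j$ (given $i_0,\dots,i_k$), and your use of $\BII$ --- applied once to the single finite set $\bigcup_{m\le k}F_{i_m}$ --- shows such a $j$ always exists, so the recursion lives in $\RCAo+\BII$ and the thinned family is an $\LL$-grouping for $f$ by construction.
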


We will now show the existence of an $\omega$-model of $\SGP$ containing only low sets.
Recall that an instance is a pair $\langle \LL, f \rangle$,
and a solution is either a witness that $\LL$ is not a largeness notion, or an $\LL$-grouping for~$f$.
We need therefore to show that given any computable collection of finite sets~$\LL$
and any $\Delta^0_2$ set~$A$, there is either an infinite low set~$Y$ witnessing that
$\LL$ is not a largeness notion, or a low $\LL$-grouping for~$A$.
In what follows, we denote by $\mathsf{LOW}$ the collection of all low sets.

\begin{theorem}\label{thm:grouping-low-solution}
For every computable set~$\LL$ which is a largeness notion on $(\omega, \mathsf{LOW})$,
every $\Delta^0_2$ set has an infinite low $\LL$-grouping.
\end{theorem}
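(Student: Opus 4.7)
The plan is to construct the low $\LL$-grouping of $A$ via a Mathias-style forcing with low reservoirs. Compared with producing a single infinite low homogeneous subset of~$A$ or~$\overline{A}$ (a question that is delicate for~$\SRT^{2}_{2}$), we gain substantial flexibility, because the blocks $F_{i}$ of an $\LL$-grouping may alternate between the two sides of~$A$, and so we are never committed to carving a single homogeneous set out of~$A$.

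Fix a computable approximation $\langle A_{s}\rangle_{s\in\omega}$ of~$A$. A forcing condition is a pair $(F,X)$ where $F=(F_{0}<\cdots<F_{n})$ is a finite partial grouping for~$A$ (each $F_{i}\in\LL$ and monochromatic under~$A$) and $X$ is an infinite low set with $\min X>\max F_{n}$. Extension is by appending new blocks and thinning the reservoir, i.e.\ $(F',X')\le(F,X)$ iff $F'$ extends~$F$, $X'\subseteq X$, and the newly appended blocks are drawn from $X\setminus X'$. A sufficiently generic filter yields an infinite sequence of $\LL$-large monochromatic blocks whose union is a low $\LL$-grouping of~$A$, provided the density step succeeds and standard lowness requirements are interleaved along the construction.

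For the density step, given $(F,X)$: since $X$ is low and $A$ is $\Delta^{0}_{2}$, both $X\cap A$ and $X\cap\overline{A}$ are $\Delta^{0}_{2}$, and at least one of them is infinite. One applies a forcing argument in the style of~\cite{CJS} to extract an infinite low subset~$Y$ of this infinite side (choosing freely which side, so as to leave room for the lowness requirements). Because $Y$ is a low infinite set, the hypothesis that $\LL$ is a largeness notion on $(\omega,\mathsf{LOW})$ produces a finite $F_{n+1}\subseteq Y$ with $F_{n+1}\in\LL$, and this $F_{n+1}$ is monochromatic under~$A$ by construction. Setting the new reservoir to $Y\setminus[0,\max F_{n+1}]$ (still low and infinite) completes the extension, and computability of~$\LL$ plus lowness of~$Y$ ensures the search for $F_{n+1}$ is low.

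The main obstacle is the density step, specifically the production of the low infinite subset~$Y$ of the chosen infinite side of~$X$. Rather than invoking a general low solution to $\SRT^{2}_{2}$ (which is not available in full generality), the argument combines a priority-style interleaving---simultaneously meeting the lowness requirements for the final grouping and for the auxiliary reservoirs arising at each stage---with the largeness hypothesis on~$\LL$, which furnishes the combinatorial slack needed for each lowness requirement to be satisfied on some $\LL$-large monochromatic block. Once the density step is established, the rest of the argument is a routine forcing template verifying that the generic sequence of finite blocks assembles into a low $\LL$-grouping.
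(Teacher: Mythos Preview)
Your proposal has a genuine gap at precisely the place you flag as the main obstacle. The density step, as you describe it, requires producing an infinite low subset~$Y$ of $X\cap A$ or of $X\cap\overline{A}$. But this is exactly the low-solution problem for~$D^{2}_{2}$, and Downey--Hirschfeldt--Lempp--Solomon~\cite{Downey200102} exhibit a $\Delta^{0}_{2}$ set~$A$ such that neither~$A$ nor~$\overline{A}$ contains an infinite low subset. Relativizing, there is no reason the low reservoir~$X$ should admit such a~$Y$. Your final paragraph acknowledges this and gestures at a ``priority-style interleaving'' together with the largeness hypothesis on~$\LL$, but nothing concrete is proposed; the largeness of~$\LL$ gives you \emph{finite} $\LL$-large sets inside any low infinite set, not an infinite low monochromatic subset.

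The paper avoids this obstruction by changing the condition structure: the reservoir is not a single low infinite set but a \emph{low finite partition} $X_{0}\sqcup\cdots\sqcup X_{m}=\omega$. The density step then only needs to find one $\LL$-large finite $F>F_{k}$ with $F\subseteq X_{j}\cap A$ or $F\subseteq X_{j}\cap\overline{A}$ for some~$j$; this is secured by noting that if no such~$F$ existed, the $\Pi^{0,X_{i}}_{1}$ class of~$Z$ with no $\LL$-large subset in $X_{i}\cap Z$ or $X_{i}\cap\overline{Z}$ would be nonempty, and a low member would contradict the hypothesis on~$\LL$. Crucially, the partition is \emph{not} refined at the density step---only at the lowness step, where forcing $\Phi^{G}_{e}(e)\uparrow$ is achieved by splitting each~$X_{j}$ into $X_{j}\cap Z_{0}$ and $X_{j}\cap Z_{1}$ for a low $2$-partition $Z_{0}\cup Z_{1}=\omega$ obtained from the low basis theorem. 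This way one never commits the reservoir to one side of~$A$, and the DHLS obstruction never arises. Your single-reservoir Mathias template cannot accommodate this move.
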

\begin{proof}
Fix a $\Delta^0_2$ set~$A$.
We will construct an infinite low $\LL$-grouping for~$A$ by an effective forcing notion
whose conditions are tuples~$c = (F_0, \dots, F_k, X_0, \dots, X_m)$ such that
\begin{itemize}
	\item[(i)] $F_i \in \LL$ and $F_i \subseteq_{\fin} A$ or~$F_i \subseteq_{\fin} \overline{A}$ for each~$i \leq k$,
	\item[(ii)] $F_i < F_{i+1}$ for each~$i < k$,
	\item[(iii)] $X_0 \sqcup \dots \sqcup X_m$ is a low partition of~$\omega$.
\end{itemize}
A condition~$d = (F_0, \dots, F_\ell, Y_0, \dots, Y_n)$ \emph{extends} a condition~$c = (F_0, \dots, F_k, X_0, \dots, X_m)$
(written~$d \leq c$) if~$\ell \geq k$, for every~$i \in (k, \ell]_{\N}$, $F_i \subseteq X_j$ for some~$j \leq m$ 
and~$Y_0, \dots, Y_n$ refines $X_0, \dots, X_m$, that is, for each~$i \leq n$, there is some~$j \leq m$ such that~$Y_i \subseteq X_j$.
An \emph{index} of the condition~$c$ is a tuple~$(F_0, \dots, F_k, e)$ where~$\Phi^{\emptyset'}_e$ decides the jump
of the partition~$X_0, \dots, X_m$.
We first claim that the finite sequence of sets can be extended.

\begin{claim*}
For every condition $c = (F_0, \dots, F_k, X_0, \dots, X_m)$,
there is an extension~$d = (F_0, \dots, F_\ell,\allowbreak X_0, \dots, X_m)$ of~$c$ such that~$\ell > k$.
Moreover, an index of~$d$ can be found $\emptyset'$-uniformly in an index of~$c$.
\end{claim*}
\begin{proof}[\it Proof of the claim]
We first show that there is a set~$F > F_k$ such that~$F \in \LL$ and $F \subseteq X_i \cap A$ or $F \subseteq X_i \cap \overline{A}$ for some~$i \leq n$.
Let~$i \leq n$ be such that~$X_i$ is infinite.
We claim that there is some finite set~$F \in \LL$ such that $F \subseteq X_i \cap A \setminus [0, \max(F_k)]_{\N}$
or $F \subseteq X_i \cap \overline{A} \setminus [0, \max(F_k)]_{\N}$.
Suppose for the sake of contradiction that there is no such set. Then the $\Pi^{0,X_i}_1$ class
of all sets~$Z$ such that for every $F \in \LL$,  $F \not\subseteq X_i \cap Z \setminus [0, \max(F_k)]_{\N}$
and $F \not \subseteq X_i \cap \overline{Z} \setminus [0, \max(F_k)]_{\N}$ is non-empty.
By the low basis theorem, there is a $Z$ such that~$Z \oplus X_i$ is low over~$X_i$,
hence low. The set~$Z$ or its complement contradicts the fact that 
$\LL$ is a largeness notion on $(\omega, \mathsf{LOW})$.

Knowing that such a set~$F$ exists, we can find it $\emptyset'$-uniformly in $c$ and a $\Delta^0_2$ index of the set~$A$.
The condition~$(F_0, \dots, F_k, F, X_0, \dots, X_m)$ is a valid extension of~$c$.
Note that such a set~$F$ does not need to be part of an infinite~$X_i \cap A$ or~$X_i \cap \overline{A}$.
The choice of an infinite part has simply been used to claim the existence of any such set.
\end{proof}

We say that an $\LL$-grouping for~$A$ $\langle E_0 < E_1 < \dots\rangle$ \emph{satisfies} a condition
$c = (F_0, \dots, F_k, X_0, \dots, X_m)$ if $E_0 = F_0$, \dots, $E_k = F_k$
and for every~$i > k$, there is some~$j \leq m$ such that $E_i \subseteq X_j$.
A condition~$c$ \emph{forces} formula $\varphi(G)$ if $\varphi(\vec{E})$
holds for every $\LL$-grouping $\vec{E}$ satisfying~$c$.

\begin{claim*}
For every condition~$c$ and every index~$e \in \omega$, there is an extension~$d$
forcing either~$\Phi^G_e(e) \downarrow$ or~$\Phi^G_e(e) \uparrow$. Moreover, an index of~$d$
can be found $\emptyset'$-uniformly in an index of~$c$ and~$e$.
\end{claim*}
\begin{proof}[\it Proof of the claim]
Fix a condition $c = (F_0, \dots, F_k, X_0, \dots, X_m)$. We have two cases.

In the first case, for every 2-partition~$Z_0 \cup Z_1 = \omega$,
there is a sequence of finite sets~$F_{k+1}, \dots, F_\ell$ such that
$F_k < F_{k+1} < \dots < F_\ell$, $\Phi_e^{F_0, \dots, F_\ell}(e) \downarrow$,
and for every~$i \in (k, \ell]$, $F_i \in \LL$ and there is some~$j \leq m$
$F_i \subseteq Z_0 \cap X_j$ or~$F_i \subseteq Z_1 \cap X_j$.
In particular, taking~$Z_0 = A$ and~$Z_1 = \overline{A}$, 
there is a sequence of finite sets~$F_{k+1}, \dots, F_\ell$
such that~$d = (F_0, \dots, F_\ell, X_0, \dots, X_m)$ is a valid extension of~$c$
and $\Phi_e^{F_0, \dots, F_\ell}(e) \downarrow$. Such an extension can be found $\emptyset'$-uniformly in
an index of~$c$, $e$ and a $\Delta^0_2$ index of~$A$.

In the second case, the $\Pi^{0, \vec{X}}_1$ class of all the 2-partitions~$Z_0 \cup Z_1 = \omega$
such that $\Phi_e^{F_0, \dots, F_\ell}(e) \uparrow$ for every sequence of finite sets~$F_{k+1}, \dots, F_\ell$ such that
$F_k < F_{k+1} < \dots < F_\ell$,
and for every~$i \in (k, \ell]_{\N}$, $F_i \in \LL$ and there is some~$j \leq m$
$F_i \subseteq Z_0 \cap X_j$ or~$F_i \subseteq Z_1 \cap X_j$ is non-empty.
By the low basis theorem~\cite{Jockusch197201} relativized to~$\vec{X}$, there is a such a 2-partition~$Z_0 \cup Z_1 = \omega$
such that~$Z_0 \oplus Z_1 \oplus \vec{X}$ is low. Moreover, a lowness index for~$Z_0 \oplus Z_1 \oplus \vec{X}$
can be found uniformly in a lowness index for~$\vec{X}$.
The condition~$d = (F_0, \dots, F_k, X_0 \cap Z_0, X_0 \cap Z_1, \dots, X_m \cap Z_0, X_m \cap Z_1)$ 
is an extension of~$c$ forcing~$\Phi^G_e(e) \uparrow$.

Moreover, we can $\vec{X}'$-decide (hence $\emptyset'$-decide) whether the $\Pi^{0, \vec{X}}_1$ class is empty,
thus we can find the extension~$d$ $\emptyset'$-effectively in an index of~$c$ and~$e$.
\end{proof}

Thanks to the claims, define an infinite, 
uniformly $\emptyset'$-computable decreasing sequence of conditions
$c_0 = (\varepsilon, \omega) \geq c_1 \geq c_2 \geq \dots$, 
where~$c_s = (F_0, \dots, F_{k_s}, X^s_{0}, \dots, X^s_{m_s})$ such that for each~$s \in \omega$
\begin{itemize}
	\item[(a)] $k_s \geq s$
	\item[(b)] $c_{s+1}$ forces $\Phi^G_s(s) \uparrow$ or~$\Phi^G_s(s) \downarrow$
\end{itemize}
This sequence yields a $\LL$-grouping for~$A$ $\langle F_0, F_1, \dots \rangle$
which is infinite by (a) and whose jump is $\Delta^0_2$ by (b). 
This finishes the proof of Theorem~\ref{thm:grouping-low-solution}.
\end{proof}

\begin{corollary}\label{cor:sgp-only-low-sets}
$\SGP + \SADS + \WKL$ has an $\omega$-model with only low sets.
\end{corollary}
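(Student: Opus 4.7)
The plan is to construct the desired $\omega$-model $\mathcal{M}=(\omega,\mathcal{S})$ by a standard iterative closure argument, where $\mathcal{S}=\bigcup_{s}\mathcal{S}_{s}$ is a countable collection of low sets. Enumerate all triples consisting of a principle ($\WKL$, $\SADS$, or $\SGP$) together with an index in $\omega$. At stage $s$, let $X_{s}$ denote the join of the finitely many sets currently in $\mathcal{S}_{s}$ (which is low as a finite join of low sets), and handle the next enumerated instance coded in $\mathcal{S}_{s}$ by adjoining a low solution to $\mathcal{S}_{s+1}$.

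The three types of instances are handled by well-established low-preservation results: for $\WKL$, the relativized low basis theorem provides a low path through any infinite $X_{s}$-computable binary tree; for $\SADS$, one uses the classical fact that every stable $X_{s}$-computable linear order admits a low infinite ascending or descending sequence (essentially the low solution to finding an infinite subset of a $\Delta^{0,X_{s}}_{2}$ set in it or its complement); for $\SGP$, one invokes Lemma~\ref{lem:stable-grouping-characterization} to reduce an $\SGP$ instance $(\LL,f)$ to the problem of finding an infinite $\LL$-grouping for a $\Delta^{0,X_{s}}_{2}$ set, and then applies the relativization of Theorem~\ref{thm:grouping-low-solution}. Since ``low over low is low'' (if $X'\leq_{T}\emptyset'$ and $(X\oplus Y)'\leq_{T}X'$ then $Y'\leq_{T}\emptyset'$), each adjoined set remains low, so the final $\mathcal{S}$ consists entirely of low sets. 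Closing under Turing reducibility and join ensures $(\omega,\mathcal{S})\models\RCAo$, and every $\omega$-model automatically satisfies $\BII$, which justifies the internal use of Lemma~\ref{lem:stable-grouping-characterization}.

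The key subtlety, and the step I expect to require the most care, lies in applying Theorem~\ref{thm:grouping-low-solution} in the $\SGP$ case. The theorem's hypothesis demands that $\LL$ be a largeness notion on all of $\mathsf{LOW}$, whereas inside $\mathcal{M}$ we only know that $\LL$ is a largeness notion on $\mathcal{S}$. The resolution is a dichotomy built into the construction: when the proof of Theorem~\ref{thm:grouping-low-solution} reaches the step that uses the low basis theorem to produce a set $Z$ inside a $\Pi^{0,X_{s}}_{1}$ class, either the subsequent largeness-based inference succeeds and we output a low $\LL$-grouping for $f$, or we extract an infinite low set $Z$ such that neither $Z$ nor $\overline{Z}$ contains a finite subset in $\LL$. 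In the latter case, adjoining this $Z$ to $\mathcal{S}_{s+1}$ ensures that $\LL$ fails to be a largeness notion in the final model $\mathcal{M}$, so the instance $(\LL,f)$ is vacuously satisfied. By interleaving these ``witness'' stages with direct handling of instances, the enumeration guarantees that every $(\LL,f)\in\mathcal{S}$ for which $\LL$ really is a largeness notion of $\mathcal{M}$ receives an actual $\LL$-grouping in $\mathcal{S}$, so $\mathcal{M}\models\SGP+\SADS+\WKL$ with $\mathcal{S}\subseteq\mathsf{LOW}$.
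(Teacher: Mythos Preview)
Your proposal is correct and follows essentially the same approach as the paper's (very terse) proof: iterate the relativized low basis theorem together with Theorem~\ref{thm:grouping-low-solution} and Lemma~\ref{lem:stable-grouping-characterization}, using exactly the dichotomy you describe---either a low $\LL$-grouping is produced, or a low witness that $\LL$ is not a largeness notion is added to the model.

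One caution on your parenthetical for $\SADS$: the phrase ``essentially the low solution to finding an infinite subset of a $\Delta^{0,X_s}_2$ set in it or its complement'' is misleading, since that general problem (equivalent to $\mathrm{D}^2_2$) does \emph{not} always admit low solutions---this is precisely the Downey--Hirschfeldt--Lempp--Solomon result the paper cites in the next corollary. The correct justification is the Hirschfeldt--Shore theorem that every computable stable linear order has a low infinite ascending or descending sequence; this exploits the order structure and is genuinely weaker than $\mathrm{D}^2_2$. Your main claim for the $\SADS$ step is right, but the explanation given for it is not.
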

\begin{proof}
By Lemma~\ref{lem:stable-grouping-characterization} and 
and the low basis theorem~\cite{Jockusch197201} in a relativized form.
As explained, for every collection~$\LL$ and every stable coloring $f : [\omega]^2 \to 2$, one need either to add a
low set witnessing that $\LL$ is not a notion of largeness, or to add an infinite low $\LL$-grouping for~$f$.
\end{proof}

\begin{corollary}
$\RCAo + \SGP + \SADS + \WKL$ implies neither~$\SRT^2_2$, nor~$\SEM$.
\end{corollary}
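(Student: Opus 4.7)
The plan is to apply Corollary~\ref{cor:sgp-only-low-sets} to obtain an $\omega$-model $\mathcal{M} \models \RCAo + \SGP + \SADS + \WKL$ consisting entirely of low sets, and then to show that $\mathcal{M} \not\models \SRT^{2}_{2}$ and $\mathcal{M} \not\models \SEM$. Both arguments rest on the theorem of Downey, Hirschfeldt, Lempp, and Solomon: there is a $\Delta^{0}_{2}$ set $A$ such that neither $A$ nor $\overline{A}$ has an infinite low subset. Fix throughout a computable approximation $\{A_{s}\}_{s\in\omega}$ with $\lim_{s} A_{s} = A$.

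For $\SRT^{2}_{2}$, the map $f(x, y) = A_{y}(x)$ for $x < y$ is a computable stable coloring with limit function $A$. Any infinite $f$-homogeneous set $H$ of color $i$ is contained in $A$ if $i = 1$ and in $\overline{A}$ if $i = 0$: for each $x \in H$ one picks $y \in H$ above the stabilization point of $A_{\cdot}(x)$ and obtains $A(x) = A_{y}(x) = f(x, y) = i$. By the choice of $A$, no such $H$ is low, so $\mathcal{M} \not\models \SRT^{2}_{2}$.

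For $\SEM$, define a computable stable tournament $T$ by orienting each pair $\{x, y\}$ with $x < y$ so that $x$ beats $y$ iff $A_{y}(x) = 1$; then the direction function $d(x) = \lim_{y} T(x, y)$ equals $A$. If $H \in \mathcal{M}$ were an infinite transitive subtournament of $T$, the induced tournament order $<_{T}$ on $H$ would be $H$-computable. Using stability of $T$, for each $h \in H$ the predecessor set $\{h' \in H : h' <_{T} h\}$ is cofinite in $H$ when $h \in A$ (because cofinitely many $h' > h$ in $H$ satisfy $A_{h'}(h) = A(h) = 1$, so $h$ beats them) and finite when $h \in \overline{A}$. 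Hence $(H, <_{T})$ is a stable linear order in the sense of $\SADS$, and applying $\SADS$ inside $\mathcal{M}$ yields an infinite ascending or descending sequence in $\mathcal{M}$. An ascending sequence must lie entirely in $H \cap \overline{A}$, because each of its elements has infinitely many $<_{T}$-successors in the sequence and thus cannot have a cofinite predecessor set; dually, a descending sequence lies entirely in $H \cap A$. In either case one obtains an infinite low subset of $\overline{A}$ or of $A$, contradicting the choice of $A$.

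The main subtlety is the verification that the tournament order $<_{T}$ on an infinite transitive subtournament of the stable tournament $T$ is itself a stable linear order; once this is in hand, the $\SEM$ case reduces entirely to an application of the $\SADS$ axiom available in $\mathcal{M}$, rather than requiring a bespoke low-avoidance construction tailored to tournaments.
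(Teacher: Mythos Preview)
Your proof is correct. For $\SRT^2_2$ you do exactly what the paper does: invoke the Downey--Hirschfeldt--Lempp--Solomon instance with no low solution and conclude using Corollary~\ref{cor:sgp-only-low-sets}.

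For $\SEM$ your argument works, but it is more hands-on than necessary. The paper dispatches $\SEM$ in one line: since $\RCAo + \SADS + \SEM \vdash \SRT^2_2$ (cited from Lerman--Solomon--Towsner~\cite{Lerman2013Separating}), any model of $\SADS$ that fails $\SRT^2_2$ must also fail $\SEM$. What you have done is essentially unpack a proof of that implication inline for the specific instance built from $A$: you take a transitive subtournament, observe it is a stable linear order, apply $\SADS$ inside $\mathcal{M}$, and read off an infinite low subset of $A$ or $\overline{A}$. This is exactly the standard proof that $\SEM + \SADS$ gives $\SRT^2_2$ (decompose $\SRT^2_2$ as transitive-subtournament-then-ascending/descending-sequence), specialized to one coloring. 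Your version is self-contained and makes the mechanism explicit; the paper's is shorter because it treats that implication as a black box. Either way, the logical content is the same.
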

\begin{proof}
Downey, Hirschfeldt, Lempp and Solomon~\cite{Downey200102}
built a computable instance of $\SRT^2_2$ with no low solution.
Corollary~\ref{cor:sgp-only-low-sets} enables us to conclude that~$\RCAo + \SGP + \SADS + \WKL$
does not imply $\SRT^2_2$. Since~$\RCAo + \SADS + \SEM$ implies~$\SRT^2_2$ (see \cite{Lerman2013Separating})
then $\RCAo + \SGP + \SADS + \WKL$ does not imply $\SEM$.
\end{proof}

Among the computability-theoretic properties used to separate
Ramsey-type theorems in reverse mathematics, the framework
of preservation of hyperimmunity has been especially fruitful.

\begin{definition}[Hyperimmunity]
The \emph{principal function} of a set $B = \{x_0 < x_1 < \dots \}$ 
is the function $p_B$ defined by~$p_B(i) = x_i$ for each~$i \in \N$.
A set~$X$ is \emph{hyperimmune} if its principal function is not dominated by any
computable function. 
\end{definition}

Wang~\cite{Wang2014Definability} recently used the notion of preservation
of the arithmetic hierarchy to separate various theorems in reverse mathematics.
The first author showed~\cite{Patey2015Iterative} that a former separation
of the Erd\H{o}s-Moser theorem from stable Ramsey's theorem for pairs
due to Lerman, Solomon and Towsner~\cite{Lerman2013Separating}
could be reformulated in a similar framework, yielding the notion
of preservation of hyperimmunity.

\begin{defi}[Preservation of hyperimmunity]
A $\Pi^1_2$-statement~$\Psf$ \emph{admits preservation of hyperimmunity} if for each set~$Z$, 
each $Z$-hyperimmune sets~$A_0, A_1, \dots$,
and each $\Psf$-instance $X \leq_T Z$,
there is a solution $Y$ to~$X$ such that the~$A$'s are $Y \oplus Z$-hyperimmune.
\end{defi}

In particular, if a $\Pi^1_2$-statement $\Psf$
admits preservation of hyperimmunity but another statement~$\Qsf$ does not,
then $\Psf$ does not imply~$\Qsf$ over~$\RCAo$.
We now show that the grouping principle enjoys preservation of hyperimmunity
and deduce several separations from it.

\begin{theorem}\label{thm:sgp22-preservation-hyperimmunity}
$\SGP$ admits preservation of hyperimmunity.
\end{theorem}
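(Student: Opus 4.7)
The plan is to adapt the forcing construction of Theorem~\ref{thm:grouping-low-solution} so that $Z$-hyperimmunity is preserved in place of lowness. Fix $Z$, a $Z$-computable $\SGP$-instance $\langle\LL,f\rangle$, and $Z$-hyperimmune sets $A_0,A_1,\dots$. If $\LL$ is not a largeness notion in the ambient universe, then any infinite set with no $\LL$-large finite subset is a solution, and such a witness can be thinned to preserve the hyperimmunity of the $A_j$'s; so assume $\LL$ is a largeness notion. Stability of $f$ together with Lemma~\ref{lem:stable-grouping-characterization} (applied in a $\BII$-context) reduces the task to producing an infinite $\LL$-grouping $G$ for the $\Delta^{0,Z}_{2}$ set $A=\{x:\lim_{y}f(x,y)=0\}$ with each $A_{j}$ remaining $(G\oplus Z)$-hyperimmune.

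I would run the forcing of Theorem~\ref{thm:grouping-low-solution} relativized to $Z$, with conditions $(F_{0},\dots,F_{k},X_{0},\dots,X_{m})$, but every invocation of the low basis theorem would be replaced by the hyperimmunity-preservation basis theorem (every nonempty $\Pi^{0,Z}_{1}$ class contains a member $P$ such that each $Z$-hyperimmune set remains $(P\oplus Z)$-hyperimmune). This guarantees that at every stage, the partition $\vec{X}$ preserves the hyperimmunity of each $A_{j}$ relative to $Z$. To the two extension claims of that proof I would add a third density lemma: for each functional index $e$ and each $j$, the generic $G$ must force either $\Phi_{e}^{G\oplus Z}$ to be non-total or $\Phi_{e}^{G\oplus Z}(n)<p_{A_{j}}(n)$ for infinitely many $n$. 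Given a condition $c$ and a threshold $N$, the proof splits into a dichotomy: either there is a nonempty $\Pi^{0,\vec{X}\oplus Z}_{1}$ class of refinements of $\vec{X}$ that force $\Phi_{e}^{G\oplus Z}(n)\uparrow$ for some $n>N$, in which case the hyperimmunity-preservation basis theorem supplies a compatible refinement; or the function $h(n)$ defined as the least value of $\Phi_{e}^{F'\oplus Z}(n)$ realized by any legal extension $F'$ of the $F$-part of $c$ inside $\vec{X}$ is $(\vec{X}\oplus Z)$-computable, and the preserved $Z$-hyperimmunity of $A_{j}$ then yields $n>N$ with $h(n)<p_{A_{j}}(n)$, permitting an extension that witnesses the inequality. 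Iterating this density lemma over all $(e,j)$ yields the desired $G$.

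The hard part, and the reason this is not a mechanical translation of Theorem~\ref{thm:grouping-low-solution}, is ensuring that the bounding function $h$ above is genuinely $(\vec{X}\oplus Z)$-computable despite $A$ being only $\Delta^{0,Z}_{2}$: deciding whether a candidate $F_{k+1}$ lies in $A$ or in $\overline{A}$ intrinsically requires the jump of $Z$. I would address this by refining $\vec{X}$ at each forcing step into finitely many pieces on which the $\Delta^{0,Z}_{2}$ definition of $A$ is uniformly resolved, so that on each piece, membership in $A$ becomes a $\Sigma^{0,Z}_{0}$ condition, and by absorbing these additional refinements into further applications of the hyperimmunity-preservation basis theorem. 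This refinement plays the same auxiliary role here that lowness of $\vec{X}$ played in the proof of Theorem~\ref{thm:grouping-low-solution}, and the combinatorial skeleton of the extension arguments from that theorem then transfers to the present setting essentially verbatim.
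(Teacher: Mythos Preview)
Your overall strategy—run the forcing of Theorem~\ref{thm:grouping-low-solution} with the hyperimmune-free basis theorem in place of the low basis theorem—is exactly what the paper does, and the paper's forcing conditions are actually simpler than yours (a single infinite reservoir $X$ preserving hyperimmunity, not a growing partition $X_0,\dots,X_m$). The genuine gap is in your diagonalization step. You correctly spot that if ``legal extension'' means each new $F_i$ lies in $A$ or in $\overline{A}$, then your bounding function $h$ is not $(\vec{X}\oplus Z)$-computable. But your proposed fix—refining $\vec{X}$ into finitely many pieces ``on which the $\Delta^{0,Z}_2$ definition of $A$ is uniformly resolved, so that on each piece membership in $A$ becomes a $\Sigma^{0,Z}_0$ condition''—cannot work: a genuinely $\Delta^{0,Z}_2$ set cannot be made $Z$-decidable by any finite partition of the integers, and no further applications of the basis theorem will change that.

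The paper sidesteps the issue by never asking about $A$ when defining the bounding function. Its $f(x)$ searches (by compactness, hence $\Sigma^{0,X\oplus C}_1$) for a finite set $U$ such that for \emph{every} $2$-partition $Z_0\cup Z_1=X$ there is a sequence $F_{k+1}<\dots<F_\ell$ with each $F_i\in\LL$ contained in $Z_0$ or $Z_1$ and $\Phi_e^{(F_0,\dots,F_\ell)\oplus C}(x)\downarrow\in U$; if found, $f(x)=1+\max U$. This makes no reference to $A$. If $f$ is total, hyperimmunity of $B_i$ yields an $x$ with $f(x)\le p_{B_i}(x)$, and only \emph{then} is $A$ invoked: since $Z_0=X\cap A$, $Z_1=X\cap\overline{A}$ is one particular $2$-partition, the universal quantifier guarantees an $A$-respecting extension with output below $p_{B_i}(x)$. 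If $f(x)\uparrow$, the $\Pi^{0,X\oplus C}_1$ class of $2$-partitions witnessing this is nonempty, and the hyperimmune-free basis theorem supplies one whose infinite half becomes the new reservoir while forcing divergence at $x$. The point you are missing is that the universal quantifier over \emph{all} $2$-partitions, handled by compactness, is what replaces the need to know $A$; your ``least value over legal extensions'' does not have this feature.
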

\begin{proof}
Let~$C$ be a set and~$B_0, B_1, \dots$ be a sequence of $C$-hyperimmune sets.
Let~$S$ be the collection of all sets~$X$ such that the~$B$'s are $X \oplus C$-hyperimmune.
By Lemma~\ref{lem:stable-grouping-characterization}, it suffices to show that for every
$\Delta^{0,C}_2$ set~$A$ and every $C$-computable largeness notion~$\LL$ on $(\omega, S)$
there is an infinite $\LL$-grouping $\vec{F} = \langle F_0 < F_1 < \dots \rangle$ for~$A$
such that the~$B$'s are $\vec{F} \oplus C$-hyperimmune.
Therefore, every instance $\langle \LL, A \rangle$ will have a solution $Y \in S$,
which will be either a witness that $\LL$ is not a largeness notion, or an $\LL$-grouping for~$A$.

Fix~$A$ and~$\LL$.
We will construct an infinite $\LL$-grouping for~$A$ by a forcing argument
whose conditions are tuples~$(F_0, \dots, F_k, X)$ where
\begin{itemize}
	\item[(i)] $F_i \in \LL$ and $F_i \subseteq_{\fin} A$ or~$F_i \subseteq_{\fin} \overline{A}$ for each~$i \leq k$.
	\item[(ii)] $F_i < F_{i+1}$ for each~$i < k$
	\item[(iii)] $X$ is an infinite set such that the~$B$'s are~$X \oplus C$-hyperimmune.
\end{itemize}
A condition~$d = (F_0, \dots, F_\ell, Y)$ \emph{extends} a condition~$c = (F_0, \dots, F_k, X)$
(written~$d \leq c$) if~$\ell \geq k$ and for every~$i \in (k, \ell]_{\N}$, $F_i \subseteq X$.
The proof of the following claim is exactly the same as in Theorem~\ref{thm:grouping-low-solution},
using the hyperimmune-free basis theorem instead of the low basis theorem.

\begin{claim*}
For every condition $c = (F_0, \dots, F_k, X)$,
there is an extension~$d = (F_0, \dots, F_\ell, Y)$ of~$c$ such that~$\ell > k$.
\end{claim*}

The following claim shows that every sufficiently generic filter yields a sequence~$\vec{F}$
such that the~$B$'s are~$\vec{F} \oplus C$-hyperimmune. The notion of satisfaction and of forcing
a formula $\varphi(G)$ are defined as in Theorem~\ref{thm:grouping-low-solution}.

\begin{claim*}
For every condition~$c$ and every pair of indices~$e,i \in \omega$, there is an extension~$d$
forcing $\Phi^{G \oplus C}_e$ not to dominate~$p_{B_i}$.
\end{claim*}
\begin{proof}[\it Proof of the claim]
Fix a condition $c = (F_0, \dots, F_k, X)$.
Let $f$ be the function which on input~$x$, searches 
for a finite set of integers~$U$ such that for every 2-partition~$Z_0 \cup Z_1 = X$,
there is some finite sequence of sets~$F_{k+1}, \dots, F_\ell$ such that
$F_k < \dots < F_\ell$, $\Phi^{(F_0, \dots, F_\ell) \oplus C}_e(x) \downarrow \in U$
and for every~$i \in (k, \ell]_{\N}$, $F_i \in \LL \cap Z_0$ or~$F_i \in \LL \cap Z_1$.
If such a set~$U$ is found, $f(x) =  1 + max(U)$, otherwise~$f(x) \uparrow$.
The function~$f$ is partial $X \oplus C$-computable. We have two cases.
\begin{itemize}
	\item Case 1: $f$ is total. By $X \oplus C$-hyperimmunity of~$B_i$,
	there is some~$x$ such that~$f(x) \leq p_{B_i}(x)$. Let~$U$ be the finite set
	witnessing~$f(x) \downarrow$. By taking~$Z_0 = X \cap A$ and~$Z_1 = X \cap \overline{A}$,
	there is a finite sequence of sets~$F_{k+1}, \dots, F_\ell$ such that
	$\Phi^{(F_0, \dots, F_\ell) \oplus C}_e(x) \downarrow \in U$
	and~$d = (F_0, \dots, F_\ell, X)$ is a valid extension of~$c$.
	The condition $d$ forces~$\Phi^{G \oplus C}_e(x) < f(x)$.

	\item Case 2: there is some~$x$ such that~$f(x) \uparrow$.
	By compactness, the $\Pi^{0, X \oplus C}_1$ class~$\mathcal{C}$ of sets~$Z_0 \oplus Z_1$
	such that~$Z_0 \cup Z_1 = X$ and $\Phi^{(F_0, \dots, F_\ell) \oplus C}_e(x) \uparrow$ 
	for every finite sequence of sets~$F_{k+1}, \dots, F_\ell$ such that
	$F_k < \dots < F_\ell$ and for every~$i \in (k, \ell]_{\N}$, $F_i \in \LL \cap Z_0$ or~$F_i \in \LL \cap Z_1$
	is not empty. By the hyperimmune-free basis theorem~\cite{Jockusch197201}, there exists some partition~$Z_0 \oplus Z_1 \in \mathcal{C}$
	such that the~$B$'s are $Z_0 \oplus Z_1 \oplus C$-hyperimmune. The set~$Z_j$ is infinite for some~$j < 2$
	and the condition $d = (F_0, \dots, F_k, Z_{j})$ is an extension of~$c$ forcing~$\Phi_e^{G \oplus C}(x) \uparrow$.
\end{itemize} 
\end{proof}

Let~$\mathcal{F}$ be a sufficiently generic filter for this notion of forcing.
The filter $\mathcal{F}$ yields a sequence~$\vec{F} = \langle F_0, F_1, \dots \rangle$ 
which is infinite by the first claim, and such that the~$B$'s are $\vec{F} \oplus C$-hyperimmune by the second claim.
This finishes the proof of Theorem~\ref{thm:sgp22-preservation-hyperimmunity}.
\end{proof}

\begin{corollary}
$\RCAo + \GP + \COH + \EM + \WKL$ does not imply $\ADS$. 
\end{corollary}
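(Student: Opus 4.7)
The plan is to use preservation of hyperimmunity as the separating tool, following the pattern set up by the preceding theorem. The strategy is to show that $\RCAo + \GP + \COH + \EM + \WKL$ admits an $\omega$-model preserving hyperimmunity, while $\ADS$ fails to, so some instance of $\ADS$ with a hyperimmune set of interest must be absent from this $\omega$-model.

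First I would establish that each of the four principles $\GP$, $\COH$, $\EM$, and $\WKL$ admits preservation of hyperimmunity. For $\GP$, Theorem~\ref{thm:sgp22-preservation-hyperimmunity} already gives the result for $\SGP$; combining this with the routine decomposition $\RCAo \vdash \COH + \SGP \to \GP$ (argued exactly as for $\RT^2_2 \Leftrightarrow \COH + \SRT^2_2$ in Cholak--Jockusch--Slaman) reduces the task to showing preservation of hyperimmunity for $\COH$. This latter fact, along with preservation of hyperimmunity for both $\EM$ and $\WKL$, is already available in the literature (Wang~\cite{Wang2014Definability} and Patey~\cite{Patey2015Iterative}); in each case the proof is a standard forcing argument with $\Pi^{0,1}$ conditions and an application of the hyperimmune-free basis theorem, paralleling what we did for $\SGP$.

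Next I would carry out the usual iterative construction of an $\omega$-model. Start with some set $C$ and a countable family of $C$-hyperimmune sets $B_0, B_1, \dots$ (for concreteness, take $C = \emptyset$ and pick the $B_i$ to witness hyperimmunity relative to any computable collection we care to defeat later). Enumerate all instances of $\GP$, $\COH$, $\EM$, and $\WKL$ that appear in the second-order part being built, and at each stage add a solution chosen via the relevant preservation theorem, thereby keeping the $B_i$'s hyperimmune relative to the join of everything added so far. Closing under Turing reducibility at the end yields an $\omega$-model $\mathcal{M} \models \RCAo + \GP + \COH + \EM + \WKL$ in which each $B_i$ remains hyperimmune relative to every set of~$\mathcal{M}$.

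Finally I would invoke the fact, proved by Patey~\cite{Patey2015Iterative}, that $\ADS$ does not admit preservation of hyperimmunity: there is a computable linear order $L$ together with a hyperimmune set~$B$ such that every infinite ascending or descending sequence through $L$ computes a function dominating $p_B$. Relativizing this construction to the parameter $C$ and arranging that $B$ appears among the $B_i$'s, the linear order $L$ lies in $\mathcal{M}$ while no solution to the $\ADS$-instance $L$ can belong to~$\mathcal{M}$ without destroying the hyperimmunity of $B$. Hence $\mathcal{M} \not\models \ADS$, establishing the nonimplication. The main obstacle is really bookkeeping: ensuring that the preservation theorems for the four principles can be applied simultaneously inside a single iteration, which is handled uniformly because they are all proved by the same hyperimmune-free-basis-theorem style of forcing and thus compose without interference.
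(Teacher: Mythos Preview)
Your proposal is correct and follows essentially the same approach as the paper: both argue that $\WKL$, $\COH$, $\EM$, and $\GP$ (via $\COH+\SGP\to\GP$ together with Theorem~\ref{thm:sgp22-preservation-hyperimmunity}) admit preservation of hyperimmunity, while $\ADS$ does not, and then appeal to the standard iterative $\omega$-model construction. The paper's version is simply terser, leaving the model-building step implicit and attributing preservation for $\WKL$ directly to the hyperimmune-free basis theorem rather than to a secondary reference.
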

\begin{proof}
By the hyperimmune-free basis theorem~\cite{Jockusch197201}, $\WKL$ admits preservation of hyperimmunity.
The first author proved in~\cite{Patey2015Iterative} that~$\COH$ and $\EM$ admit preservation of hyperimmunity,
but that $\ADS$ does not. Last, $\GP$ admits preservation of hyperimmunity
since $\COH+\SGP$ implies~$\GP$ over~$\RCAo$.
\end{proof}

\begin{definition}[Diagonally non-computable function]
A function $f$ is \emph{diagonally non-computable} (d.n.c.) relative to~$X$ if for every~$e$, $f(e) \neq \Phi_e^X(e)$.
$\DNCZP$ is the statement ``For every set~$X$, there is a function d.n.c.\ relative to the jump of~$X$''.
\end{definition}

Beware, the notation $\DNCZP$ may cause some confusion with $\DNC_2$, the restriction to $\{0,1\}$-valued d.n.c.\ functions
which is equivalent to~$\WKL$ over~$\RCAo$. 
The following proof is an adaptation of the proof that the Erd\H{o}s-Moser implies~$\DNCZP$
over~$\RCAo$~\cite{Patey2015Somewhere}.

\begin{theorem}
$\RCA \vdash \GP(\LL_\bbomega) \rightarrow \DNCZP$
\end{theorem}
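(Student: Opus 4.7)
The plan is to adapt the argument of~\cite{Patey2015Somewhere} that $\EM$ implies $\DNCZP$, replacing the infinite transitive subtournament with an infinite $\LL_\bbomega$-grouping. Work in $\RCAo$, fix a set $X$, and apply the relativized Shoenfield limit lemma to obtain a total $X$-computable function $g : \N^2 \to \N$ such that $\lim_s g(e,s) = \Phi_e^{X'}(e)$ whenever $\Phi_e^{X'}(e) \downarrow$. The goal is then to construct, from any $\LL_\bbomega$-grouping for a carefully chosen $X$-computable coloring, a function $d$ d.n.c.\ relative to the jump of~$X$.

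Define the $X$-computable 2-coloring $f : [\N]^2 \to 2$ by setting, for $a < b$, $f(a,b) = 1$ iff there exists $e < a$ with $g(e,a) = b$, and $f(a,b) = 0$ otherwise. Apply $\GP(\LL_\bbomega)$ to $f$ to obtain an $\LL_\bbomega$-grouping $\vec{F} = F_0 < F_1 < \dots$, with cross-color $c_{ij}$ for $i < j$. The key combinatorial observation is that $c_{ij} = 1$ is incompatible with $\LL_\bbomega$-largeness: if $c_{ij} = 1$, then taking $a = \min F_i$ forces every $b \in F_j$ to lie in $\{g(e, \min F_i) : e < \min F_i\}$, a set of cardinality at most $\min F_i$, contradicting $|F_j| > \min F_j > \min F_i$ by pigeonhole. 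Hence $c_{ij} = 0$ for every $i < j$, which means that $F_j$ is disjoint from $\bigcup_{a \in F_i}\{g(e,a) : e < a\}$.

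Since $g(e,a) = \Phi_e^{X'}(e)$ whenever $a$ exceeds the stabilization stage $s_e^*$ of the approximation, it follows that $F_j$ avoids the value $\Phi_e^{X'}(e)$ as soon as $F_i$ contains some element at least $s_e^*$ that is strictly greater than $e$, which happens in particular whenever $\max F_i \geq s_e^*$. Define $d(e) = \min F_{j(e)}$, where $j(e)$ is chosen effectively from $\vec{F} \oplus X$ so that $F_{j(e)}$ provably avoids $\Phi_e^{X'}(e)$; the $|F_i| > \min F_i$ room inside each block is used to encode a stability certificate for all $e' \leq e$ simultaneously, guaranteeing that the search for $j(e)$ terminates.

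The main obstacle is precisely this effective modulus argument: the stabilization times $s_e^*$ are $X'$-computable but not $X$-computable, so $j(e)$ cannot simply be read off uniformly in $e$ and $X$. In~\cite{Patey2015Somewhere} the linear order furnished by the transitive subtournament aligns the approximations along a single chain and supplies the certificate; here one must instead exploit the pigeonhole slack of $|F_i|$ elements over only $\min F_i$ possible candidate values to certify stabilization for all relevant indices at once, yielding a uniform effective choice of $j(e)$ and hence a function d.n.c.\ relative to~$X'$.
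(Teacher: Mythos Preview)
Your coloring and the cross-color observation are both correct: with $f(a,b)=1$ iff $(\exists e<a)\,g(e,a)=b$, any $\LL_\bbomega$-grouping must have all cross-colors equal to~$0$, by exactly the pigeonhole count you give. The gap is in the final step. Knowing $c_{ij}=0$ tells you only that $F_j$ avoids $\{g(e,a):e<a,\ a\in F_i\}$; to conclude $\Phi_e^{X'}(e)\notin F_j$ you need some $a\in F_i$ with $a>e$ and $g(e,a)=\Phi_e^{X'}(e)$, i.e.\ some $a\ge s_e^*$. Your proposed ``stability certificate'' via the slack $|F_i|>\min F_i$ does not deliver this: the values $g(e,a)$ for $a\in F_i$ are unconstrained (the range of $g(e,\cdot)$ is unbounded), so no pigeonhole inside $F_i$ detects convergence of the limit. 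The $\EM$ argument of~\cite{Patey2015Somewhere} that you cite does not transfer here, because there the transitive subtournament linearly orders the solution and the order itself encodes the limit behaviour; your blocks carry no such order information. As it stands you have an $\vec F\oplus X'$-computable d.n.c.\ function relative to $X'$, but not an $\vec F\oplus X$-computable one.

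The paper's proof avoids this obstacle by choosing a different coloring. It fixes an enumeration $D_{e,0},D_{e,1},\dots$ of all finite sets of size $3^{e+1}$ and, at each stage $s$, splits the pair $\{x,y\}\subseteq D_{e,g(e,s)}$ by setting $f(x,s)=0$, $f(y,s)=1$. The d.n.c.\ function is then $h(e)=$ any index $i$ with $D_{e,i}\subseteq F$ for some block $F$ of the grouping. This search \emph{is} $\vec F\oplus X$-effective, since blocks have unbounded cardinality; and if $h(e)=\Phi_e^{X'}(e)$ then for every large $s$ in a later block the construction forces $f(x,s)\neq f(y,s)$ with $x,y\in F$, contradicting the grouping property. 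The key difference is that the paper's $h(e)$ is defined by a terminating search over the blocks themselves, with no reference to stabilization times.
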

\begin{proof}
Fix a set~$X$. Let $g(.,.)$ be a total $X$-computable function 
such that $\Phi_e^{X'}(e)=\lim_s g(e,s)$ if the limit exists, 
and $\Phi^{X'}_e(e) \uparrow$ if the limit does not exist.
Also fix for each~$e \in \N$ an enumeration $D_{e,0}, D_{e,1}, \dots$ of all finite sets of cardinal $3^{e+1}$.
We define the function $f : [\N]^2 \to 2$ by primitive recursion.
Let~$f_0$ be the function nowhere defined.
At stage~$s+1$, do the following. Start with $f_{s+1}=f_s$. 
Then, for each $e<s$, take the first pair $\{x,y\} \in [(D_{e,g(e,s)} \cap [0,s)_{\N}) \setminus \bigcup_{k<e} D_{k,g(k,s)}]^{2}$ 
if it exists, and set~$f_{s+1}(x,s) = 0$ and~$f_{s+1}(y,s) = 1$. 
Finally, set $f(z, s) = 0$ for any $z<s$ such that $f_{s+1}(s,z)$ remains undefined.
This finishes the construction of $f_{s+1}$. 
Note that $f_{s}$ is defined on $[[0,s]_{\N}]^2$.
Thus, $f=\bigcup_s f_s$ must exist and is total on $[\N]^{2}$.

By $\GP(\LL_\bbomega)$, let $\vec{F} = \langle F_0, F_1, \dots \rangle$ 
be an infinite $\LL_\bbomega$-grouping for $f$.
Let $h(e)$ be such that $D_{e, h(e)} \subseteq F$ for some $F \in \vec{F}$. Such an~$F$
exists since $D_{e, 0}, D_{e,1}, \dots$ enumerates all finite sets of cardinality $3^{e+1}$,
and $\vec{F}$ contains sets of arbitrary size.
We claim that $h(e) \not=\Phi_e^{X'}(e)$ for all~$e$, which would prove $\DNCZP$. 
Suppose otherwise, i.e., suppose that $\Phi_e^{X'}(e)=h(e)$ for some~$e$. Let~$F \in \vec{F}$
be such that~$D_{e,h(e)} \subseteq F$.
Then there is a stage $s_0$ such that $h(e)=g(e,s)$ for all~$s \geq s_0$ or equivalently 
$D_{e,g(e,s)}=D_{e,h(e)} \subseteq F$ for all~$s \geq s_0$. We claim that for any $s$ be bigger than both 
$\max(F)$ and $s_0$, there are some~$x, y \in D_{e,h(e)} \subseteq F$ such that~$f(x,s) \neq f(y,s)$, which
contradicts the fact that~$\vec{F}$ is an $\LL_\bbomega$-grouping for~$f$.

To see this, let $s$ be such a stage. At that stage~$s$ of the construction of $f$, 
a pair $\{x,y\} \in [(D_{e,g(e,s)} \cap [0,s)_{\N}) \setminus \bigcup_{k<e} D_{k,g(k,s)}]^{2}$ is selected 
by a cardinality argument since $|D_{e,s} \cap [0,s)_{\N}| = |D_{e,s}| = 3^{e+1} > \sum_{k < e} 3^{k+1} = |\bigcup_{k<e} D_{k,g(k,s)}|$.
Since $D_{e,g(e,s)} = D_{e,h(e)} \subseteq F$, this pair is contained in $F$. 
At this stage, we set~$f(x,s) \neq f(y,s)$, therefore, $\vec{F}$ is not an $\LL_\bbomega$-grouping for~$f$, contradiction.
\end{proof}

In particular, $\SRT^2_2$ does not imply $\GP(\LL_\bbomega)$ over~$\RCAo$
since Chong, Slaman and Yang~\cite{CSY2014} built a (non-standard) model of~$\SRT^2_2 + \BII$
containing only low sets, whereas provably in $\RCAo$, 
there is no $\Delta^0_2$ d.n.c.\ function relative to~$\emptyset'$.

\begin{definition}[Rainbow Ramsey theorem]
Fix $n, k \in \N$. A coloring function $f: [\N]^n \to \N$ is \emph{$k$-bounded}
if for every $y \in \N$, $\card{f^{-1}(y)} \leq k$. A set $R$ is a \emph{rainbow} for $f$ (or an \emph{$f$-rainbow})
if $f$ is injective over~$[R]^n$.
$\RRT^n_k$ is the statement ``Every $k$-bounded function $f : [\N]^n \to \N$
has an infinite $f$-rainbow''.
\end{definition}

Miller~\cite{MillerAssorted} proved that the statement~$\DNCZP$ is equivalent to the rainbow Ramsey theorem
for pairs ($\RRT^2_2$) over~$\RCAo$. 

\begin{corollary}
$\RCA \vdash \GP \rightarrow \RRT^2_2$.
\end{corollary}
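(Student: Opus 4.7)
The plan is to chain together three implications: $\GP \to \GP(\LL_\bbomega) \to \DNCZP \to \RRT^2_2$.

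First I would verify that $\RCAo \vdash \GP \to \GP(\LL_\bbomega)$. By definition, $\GP$ asserts that $\GP(\LL)$ holds for every largeness notion $\LL$, so the only thing to check is that $\LL_\bbomega = \{X \subseteq_{\fin} \N : |X| > \min X\}$ is a largeness notion (provably in $\RCAo$, or at worst in $\II$, which is a subtheory of any setting in which $\GP$ makes sense). This was already recorded as the motivating example at the beginning of Section~\ref{section-large-and-dense}: any infinite set contains a finite subset of size greater than its minimum, and the defining condition is upward-closed under supersets. Hence $\LL_\bbomega$ qualifies as a largeness notion and the restriction $\GP(\LL_\bbomega)$ follows.

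Next I would invoke the theorem just established above, namely $\RCAo \vdash \GP(\LL_\bbomega) \to \DNCZP$, to conclude $\RCAo \vdash \GP \to \DNCZP$. Finally I would apply Miller's result (cited in the excerpt just before the corollary), which states $\RCAo \vdash \DNCZP \leftrightarrow \RRT^2_2$, to conclude $\RCAo \vdash \GP \to \RRT^2_2$.

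There is essentially no obstacle here: the corollary is a one-line consequence of the theorem $\GP(\LL_\bbomega) \to \DNCZP$ proved immediately above, together with Miller's equivalence. The only minor bookkeeping step is to observe that $\GP$, being universal over largeness notions, specializes to $\GP(\LL_\bbomega)$. So the proof I would write is simply: ``Since $\LL_\bbomega$ is a largeness notion, $\GP$ entails $\GP(\LL_\bbomega)$, which by the previous theorem implies $\DNCZP$, which by Miller~\cite{MillerAssorted} is equivalent to $\RRT^2_2$ over $\RCAo$.''
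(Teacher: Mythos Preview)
Your proposal is correct and matches the paper's approach exactly: the paper states this corollary without proof, treating it as an immediate consequence of the preceding theorem $\RCAo \vdash \GP(\LL_\bbomega) \rightarrow \DNCZP$ together with Miller's equivalence $\DNCZP \leftrightarrow \RRT^2_2$. Your added remark that $\GP$ specializes to $\GP(\LL_\bbomega)$ because $\LL_\bbomega$ is a largeness notion is the only bookkeeping needed, and it is correct.
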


Seetapun and Slaman~\cite{Seetapun1995strength} defined a \emph{Cardinality scheme}
for a set of formulas~$\Gamma$ as follows. For every~$\varphi(x,y) \in \Gamma$,
$\mathrm{C}\Gamma$ contains the universal closure of the formula ``If $\varphi(x, y)$ defines
an injective function, then its range is unbounded''. 
Conidis and Slaman~\cite{Conidis2013Random} proved
that the rainbow Ramsey theorem for pairs implies the $\Sigma_2$ cardinality scheme ($\CSig_2$).

\begin{corollary}
$\RCA \vdash \GP \rightarrow \CSig_2$.
\end{corollary}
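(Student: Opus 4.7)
The plan is to observe that this corollary follows immediately by chaining two results already at our disposal. First, the preceding corollary shows that $\RCAo \vdash \GP \rightarrow \RRT^2_2$, which was itself obtained by combining the theorem $\RCAo \vdash \GP(\LL_\bbomega) \rightarrow \DNCZP$ with Miller's equivalence $\RCAo \vdash \DNCZP \leftrightarrow \RRT^2_2$. Second, the cited result of Conidis and Slaman~\cite{Conidis2013Random} states that $\RCAo \vdash \RRT^2_2 \rightarrow \CSig_2$. Transitivity of implication over $\RCAo$ then yields $\RCAo \vdash \GP \rightarrow \CSig_2$.

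So the proof is a one-line citation chain: assume $\GP$ over $\RCAo$, invoke the preceding corollary to derive $\RRT^2_2$, then invoke Conidis--Slaman to derive $\CSig_2$. No forcing, largeness, or model-theoretic work is needed at this point, since all the combinatorial content has already been extracted in the d.n.c.\ argument above. There is no main obstacle to speak of; the only thing to verify is that the Conidis--Slaman implication is indeed formalizable in $\RCAo$ (which it is, as their argument is a direct computable construction of an injection with bounded range from a failure of $\CSig_2$, to which $\RRT^2_2$ can then be applied).
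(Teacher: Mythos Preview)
Your proposal is correct and matches the paper's own reasoning exactly: the corollary is stated immediately after the implication $\GP \rightarrow \RRT^2_2$ and the citation of Conidis--Slaman's result that $\RRT^2_2 \rightarrow \CSig_2$, with no further argument given. There is nothing to add.
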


In particular, this shows that $\GP$ is not $\Pi^1_1$-conservative 
over~$\RCAo+\II$ since a Skolem hull argument shows
that $\CSig_2$ is not provable in~$\Ii$ (see Seetapun and Slaman~\cite{Seetapun1995strength}).

%\section{The strength of $\GP$}\label{section-GP-con}
\section{Conservation theorem for $\GP$}\label{section-GP-con}
In this section, we will prove a conservation result for the grouping principle.
To calculate the size of $\alpha$-large($\EM$) sets in Section~\ref{section-RT22}, we will use a finite version of the grouping principle within $\II$.
Instead of proving the finite grouping principle within $\II$ directly, we would like to show a conservation theorem for the infinite grouping principle over $\II$.

In Section~\ref{section-GP}, we have seen that $\SGP$ has an $\omega$-model with only low sets.
It is well-known that a low solution construction is often able to be converted into a forcing construction of a solution satisfying $\Sigma^{0}_{1}$-induction within a countable nonstandard model, which leads to a $\Pi^{1}_{1}$-conservation over $\II$.
Unfortunately, our construction of a low solution for $\SGP$ in Section~\ref{section-GP} requires $\BII$ and thus it is not formalizable within $\II$.
To overcome this situation, we show a general conservation theorem characterized by using recursively saturated models.
\begin{thm}\label{thm-con-Bn+1-vs-In}
Fix $n\ge 1$.
Let $\Gamma$ be a formula of the form $\A X \E Y \theta(X,Y)$ where $\theta$ is $\Pi^{0}_{n+1}$.
Then, $\RCAo+\BN[n+1]+\Gamma$ is a $\tilde\Pi^{0}_{n+2}$-conservative extension of $\IN$ if the following condition holds:
\begin{itemize}
  \item[$(\dag)$] for any countable recursively saturated model $(M,S)\models \BN[n+1]$ and for any $X\in S$, there exists $Y\subseteq M$ such that $(M,S\cup\{Y\})\models \IN+\theta(X,Y)$.
\end{itemize}
\end{thm}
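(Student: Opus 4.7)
The approach is purely model-theoretic. To prove the contrapositive, suppose $\IN\nvdash\A X\psi(X)$ for some $\Pi^0_{n+2}$-formula $\psi$; I will produce a model of $\RCAo+\BN[n+1]+\Gamma+\E X\neg\psi(X)$, which witnesses that the same sentence is not provable in $\RCAo+\BN[n+1]+\Gamma$ either. The starting point is the parameterized Parsons-Paris-Friedman conservation theorem (the natural generalization of Theorem~\ref{thm:bsig2-pi03-conservative-isig1} to arbitrary $n\geq 1$, giving $\BN[n+1]$ as a $\tilde\Pi^0_{n+2}$-conservative extension of $\IN$): combined with the existence of countable recursively saturated models, it furnishes a countable recursively saturated $(M_0,S_0)\models\BN[n+1]$ and an $X^*\in S_0$ with $\neg\psi(X^*)$.

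The core of the proof is the construction of a $\Sigma^0_{n+2}$-elementary chain $(M_0,S_0)\preceq(M_1,S_1)\preceq\dots$ of countable recursively saturated models of $\BN[n+1]$ retaining $\neg\psi(X^*)$, governed by a fair bookkeeping over $\omega\times\omega$ that will eventually reach every element ever appearing in some $S_k$. At stage $k$, the current model $(M_k,S_k)$ satisfies the hypotheses of $(\dag)$, so applying $(\dag)$ to the bookkept target $X\in S_j$ ($j\leq k$) yields $Y\subseteq M_k$ with $(M_k,S_k\cup\{Y\})\models\IN+\theta(X,Y)$. Applying the parameterized PPF conservation a second time produces a $\Sigma^0_{n+2}$-elementary extension of $(M_k,S_k\cup\{Y\})$ that satisfies $\BN[n+1]$, and a further (fully) elementary extension makes it countable recursively saturated; this is $(M_{k+1},S_{k+1})$. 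Since $\Sigma^0_{n+2}$-elementarity preserves both the $\Pi^0_{n+1}$-formula $\theta(X,Y)$ and the $\Sigma^0_{n+2}$-formula $\neg\psi(X^*)$, these facts persist throughout the chain. The union $(\overline M,\overline S)$ is a $\Sigma^0_{n+2}$-elementary extension of each stage, hence satisfies $\RCAo+\BN[n+1]+\neg\psi(X^*)$; fairness of the bookkeeping ensures each $X\in\overline S$ receives a witness $Y\in\overline S$ with $\theta(X,Y)$, so $(\overline M,\overline S)\models\Gamma$.

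The main obstacle is the iteration step. Condition $(\dag)$ only restores $\IN$ in the augmented structure $(M_k,S_k\cup\{Y\})$ and is silent about $\BN[n+1]$ and about recursive saturation, yet re-applying $(\dag)$ at the next stage requires exactly those two invariants. The remedy is the two-step elementary extension described above: first invoke the parameterized PPF conservation to lift $(M_k,S_k\cup\{Y\})\models\IN$ to a $\Sigma^0_{n+2}$-elementary extension satisfying $\BN[n+1]$, then pass to a countable recursively saturated elementary extension. Choosing $\Sigma^0_{n+2}$ as the elementarity level is the delicate point: it must be high enough to preserve both the $\Sigma^0_{n+2}$-truth $\neg\psi(X^*)$ and the $\Pi^0_{n+1}$-truth $\theta(X,Y)$ along the chain, yet it is exactly the level supplied by the $\BN[n+1]$-over-$\IN$ conservation theorem used to restore $\BN[n+1]$. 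This tight matching of preservation classes with the conservation theorem's conclusion is what makes the iteration close.
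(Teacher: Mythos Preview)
Your proposal is correct but follows a genuinely different route from the paper's proof. The paper works \emph{inward}: starting from a countable recursively saturated model $(M,S)\models\IN+\neg\varphi_0$, it builds a descending $\omega$-sequence of proper cuts $M=I_0\supsetneq_e I_1\supsetneq_e\cdots$. At each stage, chronic resplendency (Theorem~\ref{thm-c-resplendency}) supplies a $\Sigma^0_n$-elementary proper cut that is again recursively saturated; $\BN[n+1]$ is recovered not via a conservation theorem but via the cut characterization (Theorem~\ref{thm-elem-cut-vs-Bn}), condition~$(\dag)$ is then applied to add a witness, and resplendency is invoked once more to restore recursive saturation with the new set adjoined. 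The final model is the intersection $\bar I=\bigcap_i I_i$ equipped with its $\Delta^0_1$-definable sets. You instead work \emph{outward}, building an ascending elementary chain and treating the generalized Parsons--Paris--Friedman conservation of $\BN[n+1]$ over $\IN$ as a black box to regain $\BN[n+1]$ after each application of~$(\dag)$---essentially the technique of Theorem~\ref{thm:amalgamation} with one factor being $\BN[n+1]$ itself. The paper's argument is more self-contained (it does not presuppose PPF conservation, though it does lean on Theorem~\ref{thm-elem-cut-vs-Bn}), keeps the first-order universe fixed throughout, and makes essential use of resplendency to maintain recursive saturation; your argument is more modular, stays within the familiar elementary-chain paradigm, and makes the dependence on PPF conservation explicit.
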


To show this theorem, we use the following property of recursively saturated models and resplendent models, which are introduced by Barwise and Schlipf.
See \cite{Barwise-Schlipf} for the historical information of recursively saturated models and resplendent models.
\begin{thm}[see Sections~1.8 and 1.9 of \cite{Kossak-Schmerl}]\label{thm-c-resplendency}
Let $\mathcal{L}\supseteq \mathcal{L}_{\PA}$ be a finite language, and let $\mathcal{M}$ be a countable $\mathcal{L}$-structure. Then the following are equivalent.
\begin{enumerate}
 \item $\mathcal{M}$ is recursively saturated.
 \item $\mathcal{M}$ is resplendent, \textit{i.e.}, for any recursive set of sentences $T$ of a finite language $\mathcal{L}'\supseteq\mathcal{L}$ such that $\mathrm{Th}(\mathcal{M})\cup T$ is consistent, there exists an expansion $\mathcal{M}'$ of $\mathcal{M}$ such that $\mathcal{M}'\models T$.
 \item $\mathcal{M}$ is chronically resplendent, \textit{i.e.}, $\mathcal{M}$ is resplendent with the extra condition that the expansion $\mathcal{M}'$ is resplendent as an $\mathcal{L}'$-structure.
%  \item $\mathcal{M}$ is chronically resplendent, \textit{i.e.}, for any set of sentences $T$ of a finite language $\mathcal{L}'\supseteq\mathcal{L}$ such that $\mathrm{Th}(\mathcal{M})\cup T$ is consistent, there exists an expansion $\mathcal{M}'$ of $\mathcal{M}$ such that $\mathcal{M}'\models T$ and $\mathcal{M}'$ is recursively saturated as an $\mathcal{L}'$-structure.%
%\footnote{The usual definition of chronic resplendency is saying that $\mathcal{M}'$ is resplendent instead of recursively saturated, but in our situation, they are equivalent. See \cite{Kossak-Schmerl}.}
\end{enumerate}
\end{thm}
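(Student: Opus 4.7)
The implication $(3)\Rightarrow(2)$ is immediate from the definition of chronic resplendence.

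For $(2)\Rightarrow(1)$, the plan is to reduce the realization of an arbitrary recursive type $p(x,\bar{a})$ (with finite parameters $\bar{a}$ from $M$, finitely satisfiable in $\mathcal{M}$) to an instance of resplendence. Using the $\mathcal{L}_{\PA}$-pairing function I would code $\bar{a}$ as a single element $a\in M$, expand $\mathcal{L}$ by constants $c_a$ and a fresh $c$, and apply resplendence to the recursive $\mathcal{L}'$-theory $T=\{\varphi(c,c_a):\varphi(x,y)\in p\}$ augmented by a recursive set of conditions tying $c_a$ to $a$ through the partial truth predicates $\mathrm{Tr}_n$ for $\Sigma_n^{\mathcal{L}}$-formulas available in PA. Consistency of $\mathrm{Th}(\mathcal{M})\cup T$ follows from finite satisfiability; the interpretations of $c_a$ and $c$ in the resulting expansion then realize $p(x,\bar{a})$.

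For the main direction $(1)\Rightarrow(3)$, I would carry out a Henkin construction internally to $\mathcal{M}$. Given a recursive $\mathcal{L}'$-theory $T$ consistent with $\mathrm{Th}(\mathcal{M})$, form the type $p(x)$ asserting: ``$x$ codes a complete, consistent, Henkin extension in $\mathcal{L}'$ of $T$, and for every (standard) $n$ the restriction of $x$ to $\Sigma_n^{\mathcal{L}}$-sentences agrees with $\mathrm{Tr}_n$''. Each clause is one $\mathcal{L}_{\PA}$-formula of $x$, indexed recursively in $n$ and in the code of each sentence of $T$, so $p$ is recursive. Finite satisfiability is verified inside $\mathcal{M}$ by invoking the formalized compactness theorem in PA together with the consistency of the relevant standard fragment. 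Recursive saturation produces some $b\in M$ realizing $p$, and reading off the interpretations of the symbols of $\mathcal{L}'\setminus\mathcal{L}$ from the Henkin theory coded by $b$ yields the required expansion $\mathcal{M}'\models T$.

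To upgrade this to chronic resplendence I would argue that $\mathcal{M}'$ is itself recursively saturated, hence (by the already-proven $(1)\Rightarrow(2)$ applied to $\mathcal{M}'$) resplendent. The key point is that every $\mathcal{L}'$-formula is evaluated in $\mathcal{M}'$ through the coded theory $b$, so any recursive $\mathcal{L}'$-type $q(y)$ over $\mathcal{M}'$ pulls back uniformly to a recursive $\mathcal{L}$-type $q^{\ast}(y,b)$ over $\mathcal{M}$; realizing $q^{\ast}$ via recursive saturation of $\mathcal{M}$ realizes $q$ in $\mathcal{M}'$. The main obstacle is the writing-down of the recursive type $p(x)$ in $(1)\Rightarrow(3)$ so that ``$x$ extends $\mathrm{Th}(\mathcal{M})$'' becomes expressible without $\mathrm{Th}(\mathcal{M})$ as a non-recursive oracle, and the internal verification of finite satisfiability; both rely essentially on the classical existence of partial truth predicates for $\Sigma_n^{\mathcal{L}}$-formulas in PA, which reduces each standard complexity layer to a single first-order condition on $x$.
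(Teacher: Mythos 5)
The paper offers no proof of this statement---it is imported verbatim from Kossak--Schmerl, Sections 1.8 and 1.9, as a black box---so your attempt has to stand on its own, and while $(3)\Rightarrow(2)$ is indeed trivial, both of your substantive directions have genuine gaps.

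The serious one is $(1)\Rightarrow(3)$. You propose to realize a recursive type $p(x)$ saying ``$x$ codes a complete, consistent Henkin extension of $T$ agreeing with the $\mathrm{Tr}_n$'s'' and to read the expansion off the realizer. First, ``consistent'' in $p(x)$ is necessarily $\mathcal{M}$'s \emph{internal} consistency predicate, and external consistency of $\mathrm{Th}(\mathcal{M})\cup T$ does not imply that $\mathcal{M}$ believes any fragment of $T$ consistent: take $\sigma$ to be the conjunction of the NBG axioms in a fresh binary relation, so that $\mathrm{PA}\vdash \mathrm{Con}(\{\sigma\})\leftrightarrow\mathrm{Con}(\mathrm{ZFC})$, and take a countable recursively saturated $\mathcal{M}\models\mathrm{PA}+\neg\mathrm{Con}(\mathrm{ZFC})$. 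Then $\mathrm{Th}(\mathcal{M})\cup\{\sigma\}$ is consistent (transport a model of $\sigma$ onto the universe of $\mathcal{M}$), yet no element of $M$ codes a theory containing $\sigma$ that $\mathcal{M}$ believes consistent, so your type is not even finitely satisfiable although the theorem's conclusion still holds. Second, even when a realizer $b$ exists, an element of $M$ codes only an $M$-bounded set of sentences, so it cannot carry Henkin names for all of $M$; what arithmetized completeness extracts from such a $b$ is an internally defined model of $T+\mathrm{Th}(\mathcal{M})$ whose $\mathcal{L}$-reduct is in general a proper elementary extension of $\mathcal{M}$, not an expansion of $\mathcal{M}$ itself---and forcing the reduct to be exactly $\mathcal{M}$ is the whole content of resplendence. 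The correct proof is an \emph{external} Henkin construction: enumerate $M$, build a chain of finite sets of $\mathcal{L}'\cup\{c_a\}_{a\in M}$-sentences kept truly consistent with $T+\mathrm{Th}(\mathcal{M},\bar a)$, and use recursive saturation \emph{with parameters} at each stage to pick the element of $M$ naming each Henkin constant; chronicity is obtained by interleaving the saturation requirements for the expanded language into that same construction. Your ``pull-back'' of $\mathcal{L}'$-types over $\mathcal{M}'$ to $\mathcal{L}$-types over $b$ presupposes that $\mathcal{M}'$ is definable in $\mathcal{M}$ from $b$, which it is not.

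The second gap is in $(2)\Rightarrow(1)$: no recursive set of sentences can ``tie $c_a$ to $a$.'' For an undefinable (e.g.\ nonstandard) $a$, pinning a fresh constant to $a$ amounts to writing down the complete type of $a$, which is not recursive; the partial truth predicates only give you $\mathrm{Tr}_n(\ulcorner\psi\urcorner,c_a)\leftrightarrow\psi(c_a)$, not the truth values $\psi(a)$. This direction is immediate only for the \emph{parametrized} form of resplendence (which is how Kossak--Schmerl state it, and how the paper actually uses it in the proof of Theorem~\ref{thm-con-Bn+1-vs-In}, where the parameters $A_0,\dots,A_i,J$ are absorbed into the base language $\mathcal{L}$); with the parameter-free formulation quoted here you need a separate argument, or you should simply restore the parameters to the definition.
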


\begin{proof}[Proof of Theorem~\ref{thm-con-Bn+1-vs-In}.]
Let $\Gamma\equiv\A X \E Y \theta(X,Y)$ where $\theta$ is $\Pi^{0}_{n+1}$ satisfy the condition $(\dag)$, and let $\varphi_{0}\equiv\A X \A x\varphi(X,x)$ be a $\tilde\Pi^{0}_{n+2}$-sentence where $\varphi$ is $\Sigma^{0}_{n+1}$.
We will show that $\IN\not\vdash \varphi_{0}$ implies that $\RCAo+\BN[n+1]+\Gamma\not\vdash \varphi_{0}$.
Assume that $\IN\not\vdash \varphi_{0}$, and take a countable recursively saturated model $(M,S)\models\IN$ such that $(M,S)\models\neg\varphi_{0}$.
Then, there exists $a\in M$ and $A\in S$ such that $(M,\{A\})\models \neg\varphi(A,a)$.
We will construct an ($\omega$-length) sequence of cuts $M=I_{0}\supseteq_{e}I_{1}\supseteq_{e}\dots$ and a sequence of sets $A_{i}\subseteq I_{i}$ such that
\begin{itemize}
 \item $(I_{i},\{A_{0},\dots,A_{i}\}\rest I_{i})\models \IN$ and $(I_{i},\{A_{0},\dots,A_{i}\}\rest I_{i})$ is recursively saturated,
 \item if $i<j$, then $(I_{j},\{A_{0},\dots,A_{i}\}\rest I_{j})$ is a $\Sigma^{0}_{n}$-elementary substructure of $(I_{i},\{A_{0},\dots,A_{i}\}\rest I_{i})$, and,
 \item $(I_{i+1},\{A_{0},\dots,A_{i}, Z_{i},A_{i+1}\}\rest I_{i+1})\models \theta(Z_{i},A_{i+1})$, where $Z_{i}$ is a $k$-th $\Delta^{0}_{1}$-definable set in $(I_{j},\{A_{0},\dots,A_{j}\}\rest I_{j})$ if $i=(j,k)$.
\end{itemize}
Set $I_{0}=M$ and $A_{0}=A\oplus \{a\}$.
Now, given $(I_{i},\{A_{0},\dots,A_{i}\}\rest I_{i})$, we will first find a cut $I_{i+1}\subsetneq_{e}I_{i}$ such that $(I_{i+1},\{A_{0},\dots,A_{i}\}\rest I_{i+1})$ is $\Sigma^{0}_{n}$-elementary substructure of $(I_{i},\{A_{0},\dots,A_{i}\}\rest I_{i})$ and recursively saturated.
Let $J_{0}$ be the set of all $\Sigma^{0}_{n}$-definable elements in $(I_{i},\{A_{0},\dots,A_{i}\}\rest I_{i})$.
Since $(I_{i},\{A_{0},\dots,A_{i}\}\rest I_{i})$ is recursively saturated, $J_{0}$ is not cofinal in $I_{i}$, thus $J=\sup J_{0}$ forms a proper cut of $I_{i}$ and it is a $\Sigma^{0}_{n}$-elementary substructure.
Therefore, a recursive $\mathcal{L}_{\PA}\cup\{A_{0},\dots, A_{i}, J\}$-theory T saying that $(J,\{A_{0},\dots,A_{i}\}\rest J)$ is a $\Sigma^{0}_{n}$-elementary proper cut of $(I_{i},\{A_{0},\dots,A_{i}\}\rest I_{i})$ is consistent with $\Th((I_{i},\{A_{0},\dots,A_{i}\}\rest I_{i}))$. (One can state $\Sigma^{0}_{n}$-elementarity by using the truth predicate.)
Thus, by the chronic resplendency of Theorem~\ref{thm-c-resplendency}, there exists $J'\subseteq I_{i}$ such that $(I_{i},\{A_{0},\dots, A_{i},J'\}\rest I_{i})$ satisfies $T$ and is recursively saturated.
Let $I_{i+1}$ be such $J'$, then $(I_{i+1},\{A_{0},\allowbreak\dots,A_{i}\}\rest I_{i+1})$ is $\Sigma^{0}_{n}$-elementary substructure of $(I_{i},\{A_{0},\dots,A_{i}\}\rest I_{i})$ and recursively saturated.

By Theorem~\ref{thm-elem-cut-vs-Bn}, $(I_{i+1},\{A_{0},\dots,A_{i}\}\rest I_{i+1})\models\BN[n+1]$, and thus $(I_{i+1},\{A_{0},\dots,A_{i}, Z_{i}\}\rest I_{i+1})\models\BN[n+1]$.
($Z_{i}\cap I_{i+1}$ is $\Delta^{0}_{1}$-definable in $(I_{i+1},\{A_{0},\dots,A_{i}\}\rest I_{i+1})$ by $\Sigma^{0}_{n}$-elementarity.)
Thus, by the condition $(\dag)$, there exists $B\subseteq I_{i+1}$ such that $(I_{i+1},\{A_{0},\dots,A_{i}, Z_{i},B\}\rest I_{i+1})\models\theta(Z_{i},B)$.
By using chronic resplendency as above, one can re-choose $B\subseteq I_{i+1}$ so that $(I_{i+1},\{A_{0},\dots,A_{i}, Z_{i},B\}\rest I_{i+1})$ is recursively saturated.
Then, put $A_{i+1}$ as such $B$.

\begin{claim*}
$\bar{I}=\bigcap_{i\in\omega} I_{i}$ is a cut of $M$, and $(\bar{I},\{A_{0},\dots,A_{i}\}\rest \bar{I})$ is a $\Sigma^{0}_{n}$-elementary substructure of $(I_{i},\{A_{0},\dots,A_{i}\}\rest I_{i})$.
\end{claim*}
\begin{proof}[Proof of the claim.]
Clearly $\bar{I}$ forms a cut, and thus it is always a $\Sigma^{0}_{0}$-elementary substructure.
We show $\Sigma^{0}_{n}$-elementarity by induction on the complexity of formulas.
Let $0<k\le n$ and assume that $(I_{i},\{A_{0},\dots,A_{i}\}\rest I_{i})\models \E x\psi(x,c)$ where $c\in\bar{I}$ and $\psi$ is $\Pi^{0}_{k-1}$.
By $\IN$ in $(I_{i},\{A_{0},\dots,A_{i}\}\rest I_{i})$, take the least $d\in I_{i}$ such that $\psi(d,c)$ holds.
If $d\notin \bar{I}$, then there exists $j>i$ such that $d\notin I_{j}$, thus $(I_{j},\{A_{0},\dots,A_{i}\}\rest I_{j})\models \neg\E x\psi(x,c)$.
This contradicts the second condition of the construction of $I_{i}$'s, thus $d\in \bar{I}$.
Hence, $(\bar{I},\{A_{0},\dots,A_{i}\}\rest \bar{I})\models \psi(d,c)$ by the induction hypothesis.
\end{proof}

Put $\bar{S}=\Delta^{0}_{1}\text{-}\mathrm{Def}(\bar{I},\{A_{i}\mid i\in\omega\}\rest \bar{I})$.
Then, by the claim and Theorem~\ref{thm-elem-cut-vs-Bn}, $(\bar{I},\bar{S})\models\RCAo+\BN[n+1]$.
By the third condition of the construction of $A_{i}$'s and the $\Sigma^{0}_{n}$-elementarity, we have $(\bar{I},\bar{S})\models\Gamma+\neg\varphi(A,a)$.
Thus, $\RCAo+\BN[n+1]+\Gamma\not\vdash \varphi_{0}$.
\end{proof}

Now, we apply Theorem~\ref{thm-con-Bn+1-vs-In} to $\SGP$.
For this, we reformulate the low solution construction for $\SGP$ as follows.
\begin{lem}\label{lem:ISig1-preservation}
Let $(M,S_{0})$ be a countable model of $\BII$,
% such that $S_{0}$ is finite and $f:[\N]^{2}\to 2$ and $\LL\subseteq[\N]^{<\N}$ are $\Delta^{0}_{1}$-definable in $(M,S_{0})$.
 and let $f:[\N]^{2}\to 2$ and $\LL\subseteq[\N]^{<\N}$ be members of $S_{0}$.
Then, there exists $G\subseteq M$ such that 
\begin{itemize}
 \item[] $(M,S_{0}\cup\{G\})\models \II+$``$G$ is a witness that $\LL$ is not a largeness notion'', or,
 \item[] $(M,S_{0}\cup\{G\})\models \II+$``$G$ is an infinite $\LL$-grouping for $A_{f}=\{x\in \N\mid \lim_{y\to\infty}f(x,y)=1\}$''.
\end{itemize} 
\end{lem}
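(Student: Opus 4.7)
The statement is the model-theoretic counterpart of Theorem~\ref{thm:grouping-low-solution} (the low-solution construction for $\SGP$), with ``lowness of $G$'' replaced by ``preservation of $\II$ in the expansion''. I would split into two cases depending on whether $(M,S_{0})\models$ ``$\LL$ is a largeness notion''. In the negative case, pick any infinite $G\in S_{0}$ with no finite subset in $\LL$; then $(M,S_{0}\cup\{G\})=(M,S_{0})\models\BII$, which implies $\II$, and $G$ satisfies the first disjunct.

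In the positive case, set $A:=A_{f}$, a $\Sigma^{0}_{2}(S_{0})$-definable set. The plan is to build an infinite $\LL$-grouping $G$ for $A$ such that every $\Sigma^{0}_{1}$-set definable from $G$ with parameters in $S_{0}$ is actually $\Delta^{0}_{2}(S_{0})$-definable; together with $\BII$ in the ground model this lowness of $G$ over $(M,S_{0})$ will transfer $\II$ to the expansion. To construct $G$ I would mimic the forcing of Theorem~\ref{thm:grouping-low-solution} internally to $(M,S_{0})$, with conditions $c=(F_{0},\dots,F_{k},X_{0},\dots,X_{m})$ in which each $F_{i}\in\LL$ lies entirely in $A$ or in $\overline{A}$, $F_{0}<\dots<F_{k}$, and $X_{0}\sqcup\dots\sqcup X_{m}=M$ is a partition coded in $S_{0}$ (playing the role of the ``low partition'' used in the original argument). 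Extension is defined analogously, and countability of $M$ yields an external $\omega$-sequence of extending conditions $c_{0}\geq c_{1}\geq\cdots$ that (a) lengthens the $F$-sequence at each stage and (b) decides the $s$-th $\Sigma^{0}_{1}$-formula about the generic by stage $s$; the union $\bigcup_{i}F_{i}$ is the required $G$.

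The extension step reduces to showing that for any infinite $X_{i}\in S_{0}$ there is some $F\in\LL$ with $F\subseteq X_{i}\cap A$ or $F\subseteq X_{i}\cap\overline{A}$. I would use $\BII$ twice here---first to guarantee that one of $X_{i}\cap A$, $X_{i}\cap\overline{A}$ is $\Sigma^{0}_{2}$-infinite, then to extract a finite $\LL$-set inside it---together with the assumption that $\LL$ is a largeness notion on the $S_{0}$-sets. The main obstacle is the forcing step, specifically the subcase requiring a refinement of $\vec{X}$ lying in a certain $\Pi^{0,\vec X}_{1}$-class. The $\omega$-model proof uses the low basis theorem here, but $(M,S_{0})$ need not satisfy $\WKLo$. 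I would handle this either by applying Harrington's $\Pi^{1}_{1}$-conservation of $\WKLo$ over $\BII$ to pass to an auxiliary extension $(M,S_{0}')\models\BII+\WKLo$, perform the low-basis selection of the refined partition there, and continue the construction with the new partition coming from $S_{0}'$; or, in the recursively saturated setting in which this lemma is actually applied through Theorem~\ref{thm-con-Bn+1-vs-In}, by invoking the chronic resplendency of Theorem~\ref{thm-c-resplendency} to add the required partition directly to $S_{0}$. Either way, the generic $G$ ends up low over $(M,S_{0})$, which gives the required $\II$-preservation in the expansion.
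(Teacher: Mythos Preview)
Your overall plan—extend $(M,S_0)$ to a model of $\WKLo+\BII$ and then force—matches the paper (the relevant $\omega$-extension result is H\'ajek's theorem for $\BII$, not Harrington's). Two small points: the paper performs the case split on whether $\LL$ is a largeness notion \emph{after} extending to $(M,S)\models\WKLo+\BII$, since the forcing needs largeness relative to the richer second-order part; and your extension step (finding $F\in\LL$ inside $X_i\cap A$ or $X_i\cap\overline{A}$) also needs $\WKLo$ via the compactness of Lemma~\ref{lem-largeness}, not just $\BII$—the sets $X_i\cap A$ and $X_i\cap\overline{A}$ are only $\Delta^0_2$ and not in $S_0$, so you cannot apply ``$\LL$ is a largeness notion on $S_0$-sets'' to them directly.

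The substantive gap is your $\II$-preservation argument. You propose to make $G$ ``low over $(M,S_0)$'' by deciding the $s$-th $\Sigma^0_1$-formula at stage $s$, following Theorem~\ref{thm:grouping-low-solution} literally. In the $\omega$-model that works because the sequence of decisions is $\emptyset'$-effective and hence yields $G'\le_T\emptyset'$ internally; but here the construction is an \emph{external} $\omega$-sequence not coded in $M$, so no internal $\Delta^0_2(S_0)$-definition of the jump of $G$ drops out, and your lowness claim is unjustified. The paper does something genuinely different: its dense sets $\mathcal{D}^1_{\theta,b}$ (one for each $\Sigma^0_0$-formula $\theta$ with parameters from $S$ and each $b\in M$) directly force the single instance ``if $\exists n\,\theta(b,n,G{\rest}n)$ then there is a least $c\le b$ with $\exists n\,\theta(c,n,G{\rest}n)$'' of the $\Sigma^0_1$-least-number principle. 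There are only countably many such pairs $(\theta,b)$, so a generic meets them all and $(M,S\cup\{G\})\models\II$ follows with no appeal to lowness. The density of $\mathcal{D}^1_{\theta,b}$ is precisely where $\WKLo$ is invoked to produce the refining partition, as you correctly anticipated.
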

\begin{proof}
Let $(M,S_{0})$ be a countable model of $\BII$ such that
% $S_{0}$ is finite and $f:[\N]^{2}\to 2$ and $\LL\subseteq[\N]^{<\N}$ are $\Delta^{0}_{1}$-definable in $(M,S_{0})$.
$f,\LL\in S_{0}$.
By H\'ajek\cite{Hajek}, we can always find an $\omega$-extension $S\supseteq S_{0}$ such that $(M,S)\models \BII+\WKLo$ (see also Belanger\cite{Belanger}).
%Then, $f,\LL\in S$.
Thus, we will work on $(M,S)$ instead of $(M,S_{0})$.
If $\LL$ is not a largeness notion in $(M,S)$, take a witness $G\in S$ of not being a largeness notion, then, we have done.
Otherwise, we will construct an infinite $\LL$-grouping for $A_{f}$.

The following construction is a ``model-theoretic interpretation'' of Theorem~\ref{thm:grouping-low-solution}.
To simplify the coding, we will only consider a minimal $\LL$-sequence, 
namely, a sequence of the form $\langle F_{i}\in\LL\mid i<k \rangle$, 
$F_{i}\subseteq X_{0}$, $F_{i}<F_{j}$ if $i<j$ and 
$F_{i}\setminus\{\max{F_{i}}\}\notin \LL$, \textit{i.e.}, each $F_{i}$ is minimal.
A code for a minimal $\LL$-sequence is a binary sequence $\sigma\in 2^{<M}$ 
(which is coded in $M$) such that $\{x<|\sigma|\mid \sigma(x)=1\}=\bigcup_{i<k}F_{i}$ 
for some minimal $\LL$-sequence $\langle F_{i}\mid i<k \rangle$.
(By the minimality, one can effectively decode a binary sequence to obtain the $\LL$-sequence.)
Note that $\sigma$ may code extra 0's after $\max F_{k-1}$. 
Thus, one can identify a code $\sigma$ with a pair $(\langle F_{i}\mid i<k \rangle, d)$ where $d=|\sigma|$.
With this identification, we let $\|\sigma\|_{\LL}=k$.
Given an ($M$-)finite sequence of sets $\vec{Y}=\langle Y_{j}\mid j<l \rangle\in S$ 
and an ($M$-)finite set $F$, $F$ is said to be consistent with $\vec{Y}$ 
if for any $j<l$, $(F\subseteq Y_{j}\vee F\subseteq \overline{Y_{j}})$.
A code $\sigma$ for a minimal $\LL$-sequence $\langle F_{i}\mid i<k \rangle$ 
is said to be consistent with $\vec{Y}$ if for any $i<k$, $F_{i}$ is consistent with $\vec{Y}$.
Given $t\in M$, we let $A_{f,t}=\{x< t\mid x\in X_{0}, \lim_{y\to \infty,y\in X_{0}}f(x,y)=1\}$. 
Note that $A_{f,t}\in S$ since $(M,S)\models\BII$.

Now, we will construct $G\subseteq M$ by arithmetical forcing. 
Let $\mathbb{P}$ be the set of all pairs of the form $(\sigma,\vec{Y})$ such that
\begin{itemize}
 \item $\vec{Y}=\langle Y_{j}\mid j<l \rangle$ is an ($M$-)finite sequence of sets in $S$,
 \item $\sigma$ is a code for a minimal $\LL$-sequence 
	which is consistent with $\vec{Y}$ and $A_{f,|\sigma|}$,
\end{itemize}
and we let $(\sigma,\vec{Y})\preceq(\tau,\vec{Z})$ 
if $\sigma\supseteq \tau$ and $\vec{Y}\supseteq\vec{Z}$.
Take an $(M,S)$-generic filter $\mathcal{G}$ of $(\mathbb{P},\preceq)$ 
and put $G=\bigcup\{\sigma\mid \E \vec{Y}\in S\, (\sigma,\vec{Y})\in \mathcal{G}\}$.
Then, this $G$ is the desired.
It is clear by construction that $G$ is a minimal $\LL$-sequence which is consistent with $A_{f}$.
To see that $G$ preserves $\II$ and $G$ is infinite in $(M,S)$, 
we need to check the following are dense in $(\mathbb{P},\preceq)$:
\begin{align*}
\mathcal{D}^{1}_{\theta,b}:=&\left\{(\sigma,\vec{Y})\,\Bigg|\,
\begin{aligned}
 &\quad \A \tau\supseteq\sigma(\tau\text{ is a minimal $\LL$-sequence 
	consistent with }\vec{Y}\to \A n\le |\tau|\neg\theta(b,n,\tau\rest n))\\
 &\vee \E c\le b(\A d<c\A \tau\supseteq\sigma(\tau\text{ is a minimal 
	$\LL$-sequence consistent with }\vec{Y}\\
 &\quad\quad\quad\quad   \to \A n\le |\tau|\neg\theta(d,n,\tau\rest n))
	\wedge \E n\le |\sigma| \theta(c,n, \sigma\rest n)) 
\end{aligned}
\right\},\\
&\mbox{where $b\in M$ and $\theta(x,n,\sigma)\in \Sigma^{0}_{0}$ with parameters from $(M,S)$},\\
\mathcal{D}^{2}_{e}:=&\left\{(\sigma,\vec{Y})\mid \|\sigma\|_{\LL}\ge e
\right\},
%\\&
\mbox{where $e\in M$}.
\end{align*}
One can easily see that if $(\sigma,\vec{Y})\in \mathcal{D}^{1}_{\theta,b}$, 
then $(\sigma,\vec{Y})$ forces ``if $\E n\theta(b,n,G\rest n)$, 
there exists least $c\le b$ such that $\E n\theta(c,n,G\rest n)$'', 
which guarantees $\Sigma^{0}_{1}$-least number principle, 
and if $(\sigma,\vec{Y})\in \mathcal{D}^{2}_{e}$, then $(\sigma,\vec{Y})$ forces 
``$\|G\|_{\LL}\ge e$'', which means $G$ is an infinite minimal $\LL$-sequence.

To see that $\mathcal{D}^{1}_{\theta,b}$ is dense, let $(\sigma,\vec{Y})$ be given.
Let $\Theta(x)$ be a $\Sigma^{0}_{1}$-formula saying that ``there exists $t$ 
such that for any $\vec{Z}=\langle Z_{j}\subseteq [|\sigma|,t)_{\N}\mid j<2^{x} \rangle$ 
there exists $\tau\supseteq\sigma$ with $|\tau|\le t$ such that $(\tau$ is a minimal 
$\LL$-sequence consistent with $\vec{Y}^{\frown}\vec{Z}\wedge \E n\le |\tau|\theta(x,n,\tau\rest n))$.''
We consider the two cases.
\begin{description}
 \item[Case I] $\Theta(b)$ fails in $(M,S)$.
\end{description}
In this case, by $\WKLo$ in $(M,S)$, there exists $\vec{Z}=\langle Z_{j}\subseteq 
[|\sigma|,\infty)_{\N}\mid j<2^{b} \rangle$ such that $\A \tau\supseteq\sigma(\tau$ 
is a minimal $\LL$-sequence consistent with $\vec{Y}^{\frown}\vec{Z}\to \A n\le |\tau|\neg\theta(b,n,\tau\rest n))$.
Take such $\vec{Z}$. Then, $(\sigma,\vec{Y}^{\frown}\vec{Z})\in \mathcal{D}^{1}_{\theta,b}$.
\begin{description}
 \item[Case II] $\Theta(b)$ holds in $(M,S)$.
\end{description}
In this case, by $\II$ in $(M,S)$, there exists the least $c\le b$ such that $\Theta(c)$ holds.
Then, by $\WKLo$, there exists $\vec{W}^{d}=\langle W^{d}_{j}\subseteq 
[|\sigma|,\infty)_{\N}\mid j<2^{d} \rangle$ such that $\A \tau\supseteq\sigma(\tau$ 
is a minimal $\LL$-sequence consistent with 
$\vec{Y}^{\frown}\vec{W}^{d}\to \A n\le |\tau|\neg\theta(d,n,\tau\rest n))$ for any $d<c$.
Now, take the witness $t\in M$ for $\Theta(c)$, and put 
$\vec{Z}=\langle W^{d}_{j}\cap[|\sigma|,t)_{\N}\mid j<2^{d},d<c \rangle^{\frown}\langle A_{f,t}\cap[|\sigma|,t)_{\N} \rangle$.
Then, by $\Theta(c)$, there exists $\tau\supseteq\sigma$ 
with $|\tau|\le t$ such that $\tau$ is a minimal $\LL$-sequence 
consistent with $\vec{Y}^{\frown}\vec{Z}$ and $\E n\le |\tau|\theta(c,n,\tau\rest n)$.
Then $(\tau,\vec{Y}^{\frown}\langle W^{d}_{j}\mid j<2^{d},d<c \rangle)$ 
is a condition in $\mathbb{P}$ and $(\tau,\vec{Y}^{\frown}\langle 
W^{d}_{j}\mid j<2^{d},d<c \rangle)\in \mathcal{D}^{1}_{\theta,b}$.

To see that $\mathcal{D}^{2}_{e}$ is dense, let $(\sigma,\vec{Y})$ 
be given where $\sigma$ is a code for $\langle F_{i}\mid i<k \rangle$ 
and $\vec{Y}=\langle Y_{j}\mid j<l \rangle$.
By applying Lemma~\ref{lem-largeness} $e$ times in $(M,S)$, 
one can find $\langle t_{0},\dots, t_{e} \rangle$ such that 
$t_{0}=|\sigma|$ and for any $s<e$ and for any $2^{l+1}$ splitting 
$M=\bigsqcup_{p<2^{l+1}}W_{p}$, there exists a finite subset 
of $[t_{s},t_{s+1})_{\N}$ which is $\LL$-large and included in one of $W_{p}$.
($\II$ is enough for this iteration.)
Thus, there exists $E_{s}\subseteq [t_{s},t_{s+1})_{\N}$ which is 
$\LL$-large and consistent with $\vec{Y}$ and $A_{f,t_{e}}$ for any $s<e$.
Let $\tau\supseteq \sigma$ be a code for a sequence 
$\langle F_{i}\mid i<k \rangle^{\frown}\langle E_{s}\mid s<e \rangle$.
Then, $(\tau,\vec{Y})$ is a condition in $\mathbb{P}$ and 
$\|\tau\|_{\LL}\ge e$, thus, $(\tau, \vec{Y})\in \mathcal{D}^{2}_{e}$.
\end{proof}

\begin{thm}\label{thm:SGP-conservation}
$\RCAo+\SGP$ is a $\tilde\Pi^{0}_{3}$-conservative extension of $\II$.
\end{thm}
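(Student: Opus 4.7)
The plan is to apply Theorem~\ref{thm-con-Bn+1-vs-In} with $n=1$ to a suitable reformulation of $\SGP$. Since $\BII$ is a $\tilde{\Pi}{}^{0}_{3}$-conservative extension of $\II$ by Theorem~\ref{thm:bsig2-pi03-conservative-isig1}, the strategy is to find a $\forall X\,\exists Y\,\theta(X,Y)$-statement $\Gamma$ with $\theta\in\Pi^0_2$ that provably implies $\SGP$ over $\RCAo+\BII$, and to verify the hypothesis $(\dag)$ of Theorem~\ref{thm-con-Bn+1-vs-In}. This will yield $\tilde{\Pi}{}^{0}_{3}$-conservation of $\RCAo+\BII+\Gamma$ over $\II$, whence of $\RCAo+\SGP$ over $\II$.

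I will let the reformulation $\Gamma$ take $X=(\LL,f)$ as instance and require $Y$ to satisfy one of three disjuncts: that $Y$ codes a pair $(0,Z)$ with $Z$ infinite and having no finite subset in $\LL$; that $Y$ codes a pair $(2,x)$ with $\forall s\,\exists y,y'\geq s\, f(x,y)\neq f(x,y')$; or that $Y$ codes a pair $(1,G)$ where $G$ enumerates triples $(F_i,s_i,c_i)$ with $F_0<F_1<\dots$, $F_i\in\LL$, and $\forall x\in F_i\,\forall y\geq s_i\, f(x,y)=c_i$. Each disjunct is $\Pi^0_2$: infiniteness and non-stability are $\Pi^0_2$, absence of finite subsets in $\LL$ is $\Pi^0_1$, and once the witnesses $(s_i,c_i)$ are supplied the grouping clause is $\Pi^0_1$. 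Over $\RCAo$ the statement $\Gamma$ implies $\SGP$, since under the hypotheses of $\SGP$ the first two disjuncts are excluded, forcing the third, and then the sequence $(F_i)_i$ extracted from $G$ is the required $\LL$-grouping for $A_f$.

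For the verification of $(\dag)$, let $(M,S)$ be a countable recursively saturated model of $\BII$ and let $X=(\LL,f)\in S$. If $(M,S)$ does not satisfy ``$f$ is stable'', I pick any non-stability witness $x\in M$ and let $Y$ be the $M$-finite code of $(2,x)$, which preserves $\II$ trivially. If $(M,S)$ does not satisfy ``$\LL$ is a largeness notion'', an infinite $Z\in S$ with no finite subset in $\LL$ already exists, and $Y=(0,Z)$ works. Otherwise both hypotheses hold in $(M,S)$; I then adapt the forcing of Lemma~\ref{lem:ISig1-preservation} so that it simultaneously produces a grouping and its stability witnesses: a condition becomes $(\sigma,\vec Y)$ with $\sigma$ coding a finite list $((F_i,s_i,c_i))_{i<k}$ subject to $F_i\in\LL$, $F_i<F_{i+1}$, $s_i<|\sigma|$, and $\forall x\in F_i\,\forall y\in[s_i,|\sigma|)_{\N}\, f(x,y)=c_i$. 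Density of extension follows, as in Lemma~\ref{lem:ISig1-preservation}, by first finding an $\LL$-large $F_k$ via Lemma~\ref{lem-largeness} applied to the partition of $[|\sigma|,\infty)_{\N}$ refined both by $\vec Y$ and by the $\BII$-computable approximation $A_{f,t}$, and then exploiting stability of $f$ to pick $s_k,c_k$; the $\Sigma^0_1$-induction density argument of Lemma~\ref{lem:ISig1-preservation} transfers verbatim. A generic $G$ thus encodes both the grouping and its witnesses, and $Y=(1,G)$ realises the required $(M,S\cup\{G\})\models\II+\theta(X,Y)$.

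The principal obstacle is the third case. The more naive approach — first applying Lemma~\ref{lem:ISig1-preservation} to obtain $G$ and then defining a witness function $W$ after the fact — is delicate because $W$ is only $\Sigma^0_2$-definable from $(f,G)$ whereas $(M,S\cup\{G\})$ satisfies only $\II$, so adding $W$ as a fresh set threatens to break $\Sigma^0_1$-induction. Building $W$ into the forcing conditions from the start sidesteps this obstruction. Once the three-case verification of $(\dag)$ is complete, Theorem~\ref{thm-con-Bn+1-vs-In} delivers the conservation of $\RCAo+\BII+\Gamma$ over $\II$, and the opening reduction yields the statement.
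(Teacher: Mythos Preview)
Your overall strategy---reformulate $\SGP$ as a $\forall X\exists Y\theta$ statement with $\theta\in\Pi^0_2$ and apply Theorem~\ref{thm-con-Bn+1-vs-In}---matches the paper's. The difference is in how you achieve $\theta\in\Pi^0_2$: the paper invokes Lemma~\ref{lem:stable-grouping-characterization} and applies Lemma~\ref{lem:ISig1-preservation} \emph{unchanged}, whereas you package stability witnesses $(s_i,c_i)$ into the generic and modify the forcing.

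There is a real gap in your modified forcing. Your conditions require only $\forall x\in F_i\,\forall y\in[s_i,|\sigma|)\,f(x,y)=c_i$, so a condition with an incorrect pair $(s_i,c_i)$ (correct on $[s_i,|\sigma|)$ but not beyond) is admissible yet is a dead end: $|\sigma|$ cannot grow past the first bad $y$, so no extension meets $\mathcal{D}^2_e$ for large $e$. In Case~II of the $\mathcal{D}^1_{\theta,b}$ argument, the witness $\tau$ produced by $\Theta(c)$ specifies $(s_i,c_i)$ for the new blocks, and nothing forces these to be genuine stability witnesses; consistency with $A_{f,t}$ constrains the $F_i$'s but not the numbers $s_i,c_i$. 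You cannot simply overwrite them afterwards, since $\theta(c,n,\tau\!\rest\! n)$ may depend on them. Adding ``$(s_i,c_i)$ correct'' to the definition of $\Theta$ pushes $\Theta$ up to $\Sigma^0_2$, and then the least-$c$ step fails over $\BII$. So the claim that the $\Sigma^0_1$-induction density ``transfers verbatim'' is not justified.

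The paper sidesteps this entirely. Via Lemma~\ref{lem:stable-grouping-characterization}, it suffices that $\RCAo+\BII+\Gamma'$ be conservative where $\Gamma'$ asserts that every $\Delta^0_2$ set has an $\LL$-grouping. For stable $f$, ``each $F_i$ lies on one side of $A_f$'' is expressible in $\Pi^0_2$ \emph{without} witnesses, e.g.\ as $(\forall i)(\forall x,x'\in F_i)(\forall s)(\exists y\ge s)\,f(x,y)=f(x',y)$. With this $\theta$, Lemma~\ref{lem:ISig1-preservation} verifies $(\dag)$ directly, and no modification of the forcing is needed.
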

\begin{proof}
Straightforward from Lemma~\ref{lem:stable-grouping-characterization}, Theorem~\ref{thm-con-Bn+1-vs-In} and Lemma~\ref{lem:ISig1-preservation}.
\end{proof}

Thus, by the amalgamation theorem, we have the following conservation result.
\begin{thm}\label{thm:GP-conservation}
$\WKLo+\GP+\ADS$ is a $\tilde\Pi^{0}_{3}$-conservative extension of $\II$.
\end{thm}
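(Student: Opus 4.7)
The plan is to combine the previously proved conservation theorems for $\WKLo+\ADS$ (Corollary~\ref{cor-con-ADS}) and $\SGP$ (Theorem~\ref{thm:SGP-conservation}) via the amalgamation theorem (Theorem~\ref{thm:amalgamation}), and then observe that $\GP$ follows from $\ADS+\SGP$ over $\RCAo$ through the intermediate principle $\COH$.

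First, I would set $T=\II$ and take $\Gamma_1=\WKLo\wedge\ADS$ and $\Gamma_2=\SGP$, each viewed as a single sentence. Both can be written in the form $\forall X\exists Y\,\theta(X,Y)$ with $\theta\in\Pi^0_3$: for $\WKLo$ and $\ADS$ this is the observation recorded just after Theorem~\ref{thm:amalgamation} (and conjunction preserves the form by pairing the two witnesses), while $\SGP$ is of the same shape, since a solution to an instance $(\LL,f)$ is either an $\LL$-grouping for $f$ or a witness of non-largeness of $\LL$, and the matrix describing this is easily $\Pi^0_3$. The two hypotheses of Theorem~\ref{thm:amalgamation} (taken with $n=1$) are then precisely Corollary~\ref{cor-con-ADS} for $T+\Gamma_1$ and Theorem~\ref{thm:SGP-conservation} for $T+\Gamma_2$. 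Applying amalgamation therefore gives that $\II+\WKLo+\ADS+\SGP$ is a $\tilde\Pi^0_3$-conservative extension of $\II$.

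Second, I would derive $\GP$ inside $\WKLo+\ADS+\SGP$. Hirschfeldt--Shore~\cite{HS2007} show that $\RCAo\vdash\ADS\to\COH$, and the proof of Cholak--Jockusch--Slaman that $\RCAo\vdash\COH+\SRT^2_2\to\RT^2_2$ adapts verbatim to give $\RCAo\vdash\COH+\SGP\to\GP$, as already noted in the discussion preceding Lemma~\ref{lem:stable-grouping-characterization}. Consequently $\WKLo+\ADS+\SGP\vdash\GP$, so $\WKLo+\GP+\ADS$ is a subtheory of $\WKLo+\ADS+\SGP$. Any $\tilde\Pi^0_3$ sentence provable in $\WKLo+\GP+\ADS$ is thus provable in $\WKLo+\ADS+\SGP$, and by the conservation obtained in the first step is provable in $\II$, which is exactly the desired conclusion.

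There is no real obstacle remaining: the genuinely hard components are already in place, namely the amalgamation theorem (Theorem~\ref{thm:amalgamation}), the $\tilde\Pi^0_3$-conservation of $\SGP$ proved via the recursively-saturated-model transfer of Theorems~\ref{thm-con-Bn+1-vs-In} and Lemma~\ref{lem:ISig1-preservation}, and the conservation of $\ADS$ via the largeness calculation of Lemma~\ref{lem-ADS-largeness}. The only minor point to check is the syntactic form of the axioms and the faithfully-reproduced decomposition $\ADS+\SGP\vdash\GP$ through $\COH$; both are essentially bookkeeping on top of results already established in the paper.
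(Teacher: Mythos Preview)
Your proposal is correct and follows essentially the same route as the paper: amalgamate the conservation results for $\WKLo+\ADS$ (Corollary~\ref{cor-con-ADS}) and $\SGP$ (Theorem~\ref{thm:SGP-conservation}) via Theorem~\ref{thm:amalgamation}, then use $\ADS\to\COH$ from Hirschfeldt--Shore together with $\COH+\SGP\to\GP$ to absorb $\GP$. Your write-up simply makes explicit the choice of $T$, $\Gamma_{1}$, $\Gamma_{2}$ and the syntactic form checks that the paper leaves implicit in the remark following Theorem~\ref{thm:amalgamation}.
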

\begin{proof}
Apply Theorem~\ref{thm:amalgamation} for the conservation results Corollary~\ref{cor-con-ADS} and Theorem~\ref{thm:SGP-conservation}, then we can see that $\WKLo+\ADS+\SGP$ is a $\tilde\Pi^{0}_{3}$ conservative extension of $\II$.
By Hirschfeldt and Shore~\cite{HS2007}, $\ADS$ implies $\COH$.
Thus, $\WKLo+\GP+\ADS$ is a $\tilde\Pi^{0}_{3}$-conservative extension of $\II$.
\end{proof}

\section{Conservation theorem for $\RT^2_2$}\label{section-RT22}

In this section, we will show that $\WKLo+\RT^2_2$ is a $\tilde{\Pi}^0_3$-conservative 
extension of $\II$
by showing that $\WKLo+\EM$ is a $\tilde{\Pi}^0_3$-conservative 
extension of $\II$.
For this, we will bound the size of $\bbomega^{k}$-large($\EM$) sets 
by $\bbomega^{n}$-large sets using the following finite grouping principle.

\begin{definition}[$\II$, finite grouping principle]
Given largeness notions $\LL_{1}, \LL_{2}$ and a coloring 
$f:[X]^{n}\to k$, \textit{$(\LL_{1},\LL_{2})$-grouping for $f$} 
is a finite family of finite sets $\langle F_{i}\subseteq X\mid i<l \rangle$ such that 
\begin{itemize}
 \item $\A i<j<l\, F_{i}<F_{j}$,
 \item $\A i<l\, F_{i}\in \LL_{1}$,
 \item for any $H\subseteq_{\fin}\N$, if $H\cap F_{i}\neq\emptyset$ for any $i<l$, then $H\in \LL_{2}$, and,
 \item $\A i_{1}<\dots< i_{n}\, \E c<k\, \A x_{1}\in F_{i_{1}},\dots,\A x_{n}\in F_{i_{n}}\, f(x_{1},\dots,x_{n})=c$.
\end{itemize}
Note that if $\LL_{2}$ is regular, then the third condition 
can be replaced with $\{\max F_{i}\mid i<l\}\in \LL_{2}$.
Now $\FGPg^{n}_{k}(\LL_{1},\LL_{2})$ (finite grouping principle 
for $(\LL_{1},\LL_{2})$) asserts that for any infinite set $X_{0}\subseteq\N$, 
there exists a finite set $X\subseteq X_{0}$ such that for any 
coloring $f:[X]^{n}\to k$, there exists a $(\LL_{1},\LL_{2})$-grouping for $f$.
%We write $\FGPg^{n}_{k}$ for the statement saying that for any largeness notions 
%$\LL_{1}$, $\LL_{2}$, $\FGPg^{n}_{k}(\LL_{1},\LL_{2})$ holds, $\FGPg^{n}$ 
% for $\A k\FGPg^{n}_{k}$, and $\FGPg$ for $\A n\FGPg^{n}$.
\end{definition}
\begin{thm}\label{thm-FGP}
Let $\LL_{1}$ and $\LL_{2}$ be $\Delta_{0}$-definable 
regular largeness notions provably in $\II$. 
Then, $\II$ proves $\FGP(\LL_{1},\LL_{2})$.
\end{thm}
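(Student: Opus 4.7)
The plan is to show that $\FGP(\LL_{1},\LL_{2})$ is a $\tilde{\Pi}{}^{0}_{3}$-sentence provable in $\WKLo+\GP$, and then conclude by the conservation result Theorem~\ref{thm:GP-conservation}. First I would verify the complexity. Writing $\FGP(\LL_{1},\LL_{2})$ as
$$
\forall X_{0}\bigl(X_{0}\mbox{ infinite}\to\exists X\subseteq_{\fin}X_{0}\,\forall f\colon[X]^{2}\to 2\,\exists g\,(g\mbox{ is an }(\LL_{1},\LL_{2})\mbox{-grouping for }f)\bigr),
$$
the negation of ``$X_{0}$ infinite'' is $\Sigma^{0}_{2}$ and the inner clause ``$\exists X\forall f\exists g$'' is $\Sigma^{0}_{1}$ since $f$ and $g$ are coded by integers bounded by $X$; thus the matrix is $\Sigma^{0}_{2}$, and since $\LL_{1},\LL_{2}$ are $\Delta_{0}$-definable without parameters, $\FGP(\LL_{1},\LL_{2})$ is $\tilde{\Pi}{}^{0}_{3}$.

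Next, working in $\WKLo+\GP$, I would fix an infinite $X_{0}$ and suppose for contradiction that no finite $X\subseteq X_{0}$ satisfies the conclusion of $\FGP$. Let $Y_{n}:=X_{0}\cap[0,n)_{\N}$ and form the $\Delta^{0}_{1}$-tree $T\subseteq 2^{<\N}$ whose nodes at level $n$ code complete $2$-colorings $\sigma$ of $[Y_{n}]^{2}$ having no $(\LL_{1},\LL_{2})$-grouping with groups in $Y_{n}$. The tree is closed under restriction because if $\sigma\rest[Y_{n-1}]^{2}$ had a grouping inside $Y_{n-1}$, the same family would be a grouping for $\sigma$ inside $Y_{n}$. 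By hypothesis $T$ is infinite, so $\WKL$ extracts a path coding a coloring $f\colon[X_{0}]^{2}\to 2$ such that for every finite $X\subseteq X_{0}$, $f\rest[X]^{2}$ admits no $(\LL_{1},\LL_{2})$-grouping.

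The key step is applying $\GP$ to $f$, but since $\GP$ is stated for colorings on $[\N]^{2}$, I would use a shift trick to force the output grouping into $X_{0}$. Writing $X_{0}=\{x_{0}<x_{1}<\cdots\}$, consider the shifted coloring $f'(i,j):=f(x_{i},x_{j})$ on $[\N]^{2}$ and the shifted notion $\LL'_{1}:=\{F\subseteq_{\fin}\N\mid\{x_{k}:k\in F\}\in\LL_{1}\}$. One checks that $\LL'_{1}$ is a largeness notion: for any infinite $Y\subseteq\N$, the infinite set $\{x_{k}:k\in Y\}\subseteq X_{0}$ contains an $\LL_{1}$-large finite $F^{*}$, and pulling back gives $\{k:x_{k}\in F^{*}\}\in\LL'_{1}$ inside $Y$; closure under supersets is immediate. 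Applying $\GP$ to $\langle\LL'_{1},f'\rangle$ (the non-largeness alternative being ruled out) returns an infinite $\LL'_{1}$-grouping $\langle G'_{0}<G'_{1}<\cdots\rangle$. Translating back, $G_{i}:=\{x_{k}:k\in G'_{i}\}\subseteq X_{0}$ is $\LL_{1}$-large, the $G_{i}$ are consecutive in $X_{0}$, and $f$ is constant on each $G_{i}\times G_{j}$. Choosing $m$ so that $\{\max G_{0},\dots,\max G_{m-1}\}$ is $\LL_{2}$-large (using that the infinite set of tops contains an $\LL_{2}$-large subset and $\LL_{2}$ is closed under supersets) produces a $(\LL_{1},\LL_{2})$-grouping for $f\rest[X]^{2}$ with $X:=\bigcup_{i<m}G_{i}\subseteq_{\fin}X_{0}$, contradicting the defining property of $f$.

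Finally, since $\WKLo+\GP\vdash\FGP(\LL_{1},\LL_{2})$ and this is a $\tilde{\Pi}{}^{0}_{3}$-sentence, Theorem~\ref{thm:GP-conservation} (applied via $\WKLo+\GP+\ADS\supseteq\WKLo+\GP$) yields $\II\vdash\FGP(\LL_{1},\LL_{2})$. The principal obstacle is the relativization in the third paragraph: naively extending $f$ to $[\N]^{2}$ and applying $\GP$ need not give groupings inside $X_{0}$, and the shift to $\langle\LL'_{1},f'\rangle$, which exploits the flexibility of $\GP$ with respect to arbitrary largeness notions as instances, is what forces the groupings back into $X_{0}$ while preserving $\LL_{1}$-largeness after translation.
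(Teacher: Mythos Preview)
Your proof is correct and follows essentially the same strategy as the paper: verify that $\FGP(\LL_{1},\LL_{2})$ is $\tilde{\Pi}{}^{0}_{3}$, derive it in $\WKLo+\GP$, and then invoke the conservation result of Theorem~\ref{thm:GP-conservation}. The paper compresses all of this into two sentences; you have supplied the details the paper omits, namely the compactness argument producing a single bad coloring on $X_{0}$ and the shift to the auxiliary largeness notion $\LL'_{1}$ so that the groups returned by $\GP$ land inside $X_{0}$ and remain $\LL_{1}$-large after translation. One small point: your tree $T\subseteq 2^{<\N}$ as literally described does not have colorings of $[Y_{n}]^{2}$ sitting at level $n$ (the bit-lengths do not match), but this is only a matter of encoding---reindexing, or phrasing the argument directly as a nonempty $\Pi^{0,X_{0}}_{1}$ class of colorings of $[X_{0}]^{2}$ with no grouping in $X_{0}$, fixes it immediately. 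Also note that the paper claims $\GP(\LL_{1})$ alone suffices, whereas your shift trick uses $\GP$ applied to the derived notion $\LL'_{1}$; this is harmless for the conclusion since Theorem~\ref{thm:GP-conservation} is stated for the full principle $\GP$.
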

\begin{proof}
One can easily check that $\FGP(\LL_{1},\LL_{2})$ is a $\tilde\Pi^{0}_{3}$-statement which is provable from $\WKLo+\GP(\LL_{1})$.
Thus, $\II$ proves $\FGP(\LL_{1},\LL_{2})$ by Theorem~\ref{thm:GP-conservation}.
\end{proof}

Now we apply the generalized Parsons theorem to the finite grouping principle.
Actually, its upper bound also bounds $\bbomega^{k}$-large($\EM$) sets as follows.
\begin{lem}\label{lem-EM-largeness}
For any $k\in\omega$, there exists $n\in\omega$ such that
\begin{itemize}
 \item[] \rm $\II\vdash \A Z\subseteq_{\fin}(3,\infty)_{\N}(Z$ 
is $\bbomega^{n}$-large $\to Z$ is $\bbomega^{k}$-large($\EM$)$)$.
\end{itemize}
\end{lem}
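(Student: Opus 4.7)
The proof proceeds by external induction on $k$. The base case $k=0$ is immediate, since any non-empty $Z$ contains a singleton that is trivially $\bbomega^{0}$-large and transitive; so $n(0)=0$ works. For the inductive step, assume $n(k-1)$ is chosen so that every $\bbomega^{n(k-1)}$-large set is $\bbomega^{k-1}$-large$(\EM)$ provably in $\II$, and we construct $n(k)$. Given a $\bbomega^{n(k)}$-large $Z$ with $\min Z>3$ and any tournament $f$ on $[0,\max Z]_{\N}$, the plan is to produce an $\bbomega^{k}$-large transitive subtournament of the form
\[
T = \{x_{0}\} \cup G_{i_{0}} \cup G_{i_{1}} \cup \dots \cup G_{i_{x_{0}-1}},
\]
where $x_{0}=\min Z$ and each $G_{i_{t}}$ is an $\bbomega^{k-1}$-large transitive subtournament of $f$ supplied by the induction hypothesis.

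The first step is to peel off the global minimum $x_{0}$: partition $Z\setminus\{x_{0}\}$ according to the color of $f(x_{0},\cdot)$. One of the two classes must be $\bbomega^{m_{1}}$-large for a suitable $m_{1}$; this is a finite analogue of Lemma~\ref{lem-largeness} which, for the $\Delta_{0}$-notion $\LL_{\bbomega^{m_{1}}}$ and $2$-partitions, is provable in $\II$ by Remark~\ref{rem:on-lem-largeness} together with Corollary~\ref{cor-g-Parsons}. Call this class $Z^{*}$ and let $c'\in\{0,1\}$ be the constant value of $f(x_{0},\cdot)$ on it. The second step applies $\FGP(\LL_{\bbomega^{n(k-1)}},\LL_{\bbomega^{6}})$, which is $\II$-provable by Theorem~\ref{thm-FGP}, to the restriction of $f$ to $Z^{*}$; choosing $n(k)$ large enough and invoking Corollary~\ref{cor-g-Parsons} yields a grouping $F_{0}<F_{1}<\dots<F_{l-1}$ inside $Z^{*}$ with each $F_{i}$ being $\bbomega^{n(k-1)}$-large, with $\{\max F_{i}\mid i<l\}$ being $\bbomega^{6}$-large, and with the $f$-color between any two different groups constant. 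Induce a tournament $\bar f$ on $\{\max F_{i}\}_{i<l}$ by declaring $\bar f(\max F_{i},\max F_{j})$ (for $i<j$) to be this constant color. By Theorem~\ref{SK-theorem} applied with $k=2$, the set $\{\max F_{i}\mid i<l\}$ is $\bbomega$-large$(\RT^{2}_{2})$, so it contains a $\bar f$-homogeneous subset $Y$ that is $\bbomega$-large, with constant color $c\in\{0,1\}$.

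Since $|Y|>\min Y\geq \min F_{i_{0}}>x_{0}$, we have $|Y|\geq x_{0}+2$, so we can select the first $x_{0}$ indices $i_{0}<i_{1}<\dots<i_{x_{0}-1}$ witnessed by $Y$. The induction hypothesis applied to each $F_{i_{t}}$ supplies a $G_{i_{t}}\subseteq F_{i_{t}}$ that is $\bbomega^{k-1}$-large and transitive for $f$. Setting $T=\{x_{0}\}\cup\bigcup_{t<x_{0}}G_{i_{t}}$, the set $T\setminus\{x_{0}\}$ is a disjoint ordered union of $x_{0}$ many $\bbomega^{k-1}$-large sets, so by Lemma~\ref{lem-alpha-large-sum} it is $\bbomega^{k-1}\cdot x_{0}$-large; together with $\min T=x_{0}$ this makes $T$ itself $\bbomega^{k}$-large. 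Transitivity of $T$ under $f$ rests on three ingredients: each $G_{i_{t}}$ is transitive by construction; the $f$-color between $G_{i_{s}}$ and $G_{i_{t}}$ (for $s<t$) is the constant $c$ from the $\bar f$-homogeneity of $Y$; and the $f$-color between $x_{0}$ and every element of $\bigcup_{t}G_{i_{t}}$ is the constant $c'$ from Step~1. A short case analysis on $(c,c')\in\{0,1\}^{2}$ shows that in every case the induced tournament relation on $T$ is a strict linear order.

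The main obstacle is the careful bookkeeping of constants: $n(k)$ must be chosen so that after losing one level of $\bbomega$-largeness to the removal of $x_{0}$, losing further largeness to the $2$-partition of Step~1, and leaving headroom for the existentially quantified bound supplied by Corollary~\ref{cor-g-Parsons} applied to $\FGP(\LL_{\bbomega^{n(k-1)}},\LL_{\bbomega^{6}})$, the residual set $Z^{*}$ is still large enough to support the grouping argument. A more conceptual subtlety is that the anchor $x_{0}$ must be handled \emph{before} the FGP step: FGP controls only between-group colors and does not constrain the $f$-color between the global minimum and later groups, so Step~1 is exactly what produces the constant $c'$ required to glue $x_{0}$ onto the transitive union without destroying transitivity.
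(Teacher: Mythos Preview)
Your argument is correct and follows the same core strategy as the paper: apply $\FGP(\LL_{\bbomega^{n(k-1)}},\LL_{\bbomega^{6}})$ via Corollary~\ref{cor-g-Parsons}, use Theorem~\ref{SK-theorem} to extract an $\bbomega$-large homogeneous set among the group representatives, apply the induction hypothesis inside each selected group, and glue. The difference is that your Step~1 (peeling off $x_{0}=\min Z$ and homogenizing the colour $f(x_{0},\cdot)$) is entirely unnecessary, and the ``conceptual subtlety'' you flag does not actually arise.

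In the paper's proof, the final transitive set is simply $H=\bigcup\{H_{i}\mid \max F_{i}\in\tilde H\}$ with no anchor point prepended. The $\bbomega^{k}$-largeness of $H$ comes for free from the $\bbomega$-largeness of $\tilde H$: since $|\tilde H|>\min\tilde H\ge\max F_{i_{0}}\ge\min H$, the union of $|\tilde H|$ many $\bbomega^{k-1}$-large blocks already exceeds $\bbomega^{k-1}\cdot\min H$, hence is $\bbomega^{k}$-large by Lemma~\ref{lem-alpha-large-sum}. There is no need to manufacture a small minimum $x_{0}$ and then glue it on. Consequently the paper avoids your 2-partition step, the extra invocation of Lemma~\ref{lem-largeness}/Corollary~\ref{cor-g-Parsons}, and the case analysis on $(c,c')$; its transitivity check has only the four cases on how $a<b<c$ distribute among groups. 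Your route works, but it adds bookkeeping (an extra layer of largeness loss and one more case in the transitivity verification) for no gain.
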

\begin{proof}
We will prove this by (external) induction.
For the case $k=1$, $n=6$ is enough by Theorem~\ref{SK-theorem}.
Assume now $k>1$ and $\bbomega^{n_{0}}$-largeness implies 
$\bbomega^{k-1}$-large($\EM$)ness in $\II$.
By Corollary~\ref{cor-g-Parsons} and Theorem~\ref{thm-FGP},
take $n\in\omega$ so that 
$\II$ proves $\A Z\subseteq_{\fin}\N(Z$ is $\bbomega^{n}$-large $\to$ 
any coloring $f:[Z]^{2}\to 2$ has an $(\bbomega^{n_{0}},\bbomega^{6})$-grouping$)$.
Within $\II$, given an $\bbomega^{n}$-large set $Z\subseteq (3,\infty)_{\N}$ 
and $f:[Z]^{2}\to 2$, we want to find $H\subseteq Z$ such that 
$f$ is transitive on $[H]^{2}$ and $H$ is $\bbomega^{k}$-large.
By the assumption, there exists an $(\bbomega^{n_{0}},\bbomega^{6})$-grouping 
$\langle F_{i}\mid i<l \rangle\subseteq Z$ for $f$.
Since each $F_{i}$ is $\bbomega^{n_{0}}$-large, it is $\bbomega^{k-1}$-large($\EM$), 
thus, there exists $H_{i}\subseteq F_{i}$ such that $H_{i}$ 
is $\bbomega^{k-1}$-large and $f$ is transitive on $[H_{i}]^{2}$. 
On the other hand, $\{\max F_{i}\mid i<l\}$ is $\bbomega^{6}$-large, 
thus, there exists $\tilde{H}\subseteq\{\max F_{i}\mid i<l\}$ 
such that $\tilde{H}$ is $\bbomega$-large and $f$ is constant on 
$[\tilde{H}]^{2}$ by Theorem~\ref{SK-theorem}.
Put $H=\bigcup\{H_{i}\mid i<l, \max F_{i}\in \tilde{H}\}$.
Then, one can easily check that $H$ is $\bbomega^{k}$-large.
We now show that $f$ is transitive on $[H]^{2}$. Let $a,b,c\in H$ and 
$a<b<c$. If there exists $i<l$ such that $a,b,c\in H_{i}$, 
then $f$ is transitive for $a,b,c$ since $f$ is transitive on $[H_{i}]^{2}$.
If for some $i_{0}<i_{1}<l$, $a,b\in H_{i_{0}}$ and $c\in H_{i_{1}}$, 
then, $f(a,c)=f(b,c)$, so $f$ is transitive for $a,b,c$.
The case $a\in H_{i_{0}}$ and $b,c\in H_{i_{1}}$ for some $i_{0}<i_{1}<l$ is similar.
Finally, if for some $i_{0}<i_{1}<i_{2}<l$, $a\in H_{i_{0}}$, 
$b\in H_{i_{1}}$ and $c\in H_{i_{2}}$, then $f(a,b)=f(\max F_{i_{0}},
\max F_{i_{1}})=f(\max F_{i_{0}},\max F_{i_{2}})=f(a,c)$, 
thus $f$ is transitive for $a,b,c$.
\end{proof}

\begin{thm}\label{thm-con-EM}
$\WKLo+\EM$ is a $\tilde{\Pi}{}^{0}_{3}$-conservative extension of $\II$.
\end{thm}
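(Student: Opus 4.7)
The plan is to invoke Theorem~\ref{thm-provably-large-conservative} with $\Gamma = \EM$ (which is a Ramsey-like-$\Pi^1_2$-statement, as noted after Definition~\ref{def-R-like-and-rest}). This reduces the conservation claim to verifying the hypothesis: that $\bbomega^{k}$-largeness($\EM$) is a largeness notion provably in $\II$, for every $k \in \omega$. All the heavy lifting has already been done in Section~\ref{section-RT22}, so the proof should be a short assembly.

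First I would fix $k \in \omega$ and apply Lemma~\ref{lem-EM-largeness} to pick $n \in \omega$ such that
\[
  \II \vdash \forall Z \subseteq_{\fin}(3,\infty)_{\N}\bigl(Z \text{ is } \bbomega^{n}\text{-large} \to Z \text{ is } \bbomega^{k}\text{-large}(\EM)\bigr).
\]
By Theorem~\ref{thm-omega-largeness}, $\II$ proves that $\LL_{\bbomega^{n}}$ is a regular largeness notion. Hence, within $\II$, any infinite set $X \subseteq \N$ has the property that $X \setminus [0,3]_{\N}$ is still infinite and contains a finite $\bbomega^{n}$-large subset $F$; by the choice of $n$, this $F$ is $\bbomega^{k}$-large($\EM$). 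Upward closure is immediate from Definition~\ref{def-R-like-and-rest} and the paragraph right after the defining clause of $\alpha$-largeness($\Gamma$): if $Z \subseteq Z'$ and $Z$ witnesses $\bbomega^{k}$-largeness($\EM$), then for every coloring $f : [[0,\max Z']_{\N}]^{2} \to 2$ its restriction to $[[0,\max Z]_{\N}]^{2}$ admits an $\bbomega^{k}$-large transitive $Y \subseteq Z \subseteq Z'$, so $Z'$ is also $\bbomega^{k}$-large($\EM$).

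Combining these two observations, $\bbomega^{k}$-largeness($\EM$) is a largeness notion provably in $\II$ for every $k \in \omega$. Applying Theorem~\ref{thm-provably-large-conservative} directly yields that $\WKLo + \EM$ is a $\tilde{\Pi}{}^{0}_{3}$-conservative extension of $\II$, finishing the proof. There is no serious obstacle at this stage: the potentially difficult content---bounding the size of $\bbomega^{k}$-large($\EM$) sets, which ultimately rests on the finite grouping principle $\FGP$ and hence on the conservation theorem for $\GP$ (Theorem~\ref{thm:GP-conservation}) via the generalized Parsons theorem (Corollary~\ref{cor-g-Parsons})---has already been packaged into Lemma~\ref{lem-EM-largeness}.
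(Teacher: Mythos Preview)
Your proposal is correct and follows exactly the paper's own proof, which simply cites Theorems~\ref{thm-omega-largeness}, \ref{thm-provably-large-conservative} and Lemma~\ref{lem-EM-largeness}; you have merely unpacked how these three ingredients combine to verify the hypothesis of Theorem~\ref{thm-provably-large-conservative}.
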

\begin{proof}
By Theorems~\ref{thm-omega-largeness}, 
\ref{thm-provably-large-conservative} and Lemma~\ref{lem-EM-largeness}.
\end{proof}

Now the main theorem follows from the amalgamation theorem.
\begin{thm}\label{thm-con-RT22}
$\WKLo+\RT^{2}_{2}$ is a $\tilde{\Pi}{}^{0}_{3}$-conservative extension of $\II$.
\end{thm}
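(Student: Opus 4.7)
The theorem follows almost immediately by assembling the machinery already in place. The plan is to exploit the decomposition $\RCAo \vdash \RT^{2}_{2} \leftrightarrow \ADS + \EM$ of Hirschfeldt/Shore and Bovykin/Weiermann, together with the amalgamation theorem, to reduce the problem to two conservation results that have already been proved separately.

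First I would verify that the hypotheses of Theorem~\ref{thm:amalgamation} apply with $n=1$, $T = \II$, and $\{\Gamma_{1},\Gamma_{2}\} = \{\ADS, \EM\}$. As the authors remark after the amalgamation theorem, both $\II$ and any Ramsey-like-$\Pi^{1}_{2}$-statement can be written in the form $\A X\,\E Y\,\theta(X,Y)$ with $\theta$ of complexity $\Pi^{0}_{3}$, so the syntactic hypothesis is satisfied. Next I would invoke Corollary~\ref{cor-con-ADS}, which gives that $\WKLo + \ADS$ is a $\tilde{\Pi}^{0}_{3}$-conservative extension of $\II$, and Theorem~\ref{thm-con-EM}, which gives the analogous statement for $\WKLo + \EM$. (Since $\WKLo$ is itself $\tilde{\Pi}^{0}_{3}$-conservative over $\II$, we may regard $\WKLo$ as part of the base theory $T$ when applying the amalgamation, or equivalently treat it as one of the $\Gamma_{i}$.)

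With those conservation results in hand, applying Theorem~\ref{thm:amalgamation} yields that $\WKLo + \ADS + \EM$ is a $\tilde{\Pi}^{0}_{3}$-conservative extension of $\II$. Finally I would invoke the equivalence $\RCAo \vdash \RT^{2}_{2} \leftrightarrow \ADS + \EM$ to conclude that $\WKLo + \RT^{2}_{2}$ and $\WKLo + \ADS + \EM$ prove the same $\tilde{\Pi}^{0}_{3}$-sentences, whence $\WKLo + \RT^{2}_{2}$ is $\tilde{\Pi}^{0}_{3}$-conservative over $\II$.

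There is no real obstacle at this final step, because all the hard work has already been done: the delicate combinatorial bound for $\bbomega^{k}$-largeness($\EM$) via the finite grouping principle (Lemma~\ref{lem-EM-largeness}), the corresponding bound for $\psRT^{2}_{2}$ via the Ketonen/Solovay theorem (Lemma~\ref{lem-ADS-largeness}), and the general conservation-through-largeness machinery (Theorem~\ref{thm-provably-large-conservative}) were the genuinely substantive ingredients. The only thing one must be careful about here is the bookkeeping in the amalgamation: one must make sure $\WKLo$ is absorbed on the correct side (either into the base or into each $\Gamma_{i}$) so that the elementary chain argument in the proof of Theorem~\ref{thm:amalgamation} produces a model of $\WKLo + \RT^{2}_{2}$, not merely of $\RT^{2}_{2}$. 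Since $\WKLo$ is $\Pi^{1}_{1}$-conservative over $\II$ (and in particular $\tilde{\Pi}^{0}_{3}$-conservative), this is harmless.
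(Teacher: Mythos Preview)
Your proposal is correct and follows essentially the same route as the paper: the paper's proof simply cites Theorem~\ref{thm:amalgamation}, Corollary~\ref{cor-con-ADS}, and Theorem~\ref{thm-con-EM}, which is exactly the amalgamation of the $\ADS$ and $\EM$ conservation results you describe. Your discussion of absorbing $\WKLo$ into the base theory is the right way to handle the only bookkeeping issue.
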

\begin{proof}
By Theorem~\ref{thm:amalgamation}, Corollary~\ref{cor-con-ADS} and Theorem~\ref{thm-con-EM}.
\end{proof}

Seetapun and Slaman~\cite{Seetapun1995strength} asked whether $\RCAo + \RT^2_2$ proves the consistency of $\II$,
and Cholak, Jockusch and Slaman~\cite{CJS} asked whether $\RCAo + \RT^2_2$ proves the totality of Ackermann function.
We answer negatively through the main theorem.
\begin{cor}\label{cor-answer1}
$\WKLo+\RT^{2}_{2}$ does not imply the consistency of $\II$ nor the totality of Ackermann function. 
\end{cor}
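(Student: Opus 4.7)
The plan is to derive both non-implications as immediate consequences of the main conservation result (Theorem~\ref{thm-con-RT22}), by observing that each of the two sentences in question is of sufficiently low arithmetical complexity to fall within the scope of $\tilde{\Pi}{}^{0}_{3}$-conservation, and is already unprovable in $\II$ for classical proof-theoretic reasons.

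First, I would handle the consistency statement. The sentence $\mathrm{Con}(\II)$, formalized in the usual way via a primitive recursive proof predicate, is a $\Pi^{0}_{1}$-sentence, and in particular a $\tilde{\Pi}{}^{0}_{3}$-sentence (with a dummy set quantifier). By G\"odel's second incompleteness theorem, $\II \not\vdash \mathrm{Con}(\II)$. Applying Theorem~\ref{thm-con-RT22} then gives $\WKLo + \RT^{2}_{2} \not\vdash \mathrm{Con}(\II)$.

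Next, for the Ackermann function: its totality can be written as a $\Pi^{0}_{2}$-sentence $\forall x\,\forall y\,\exists z\,\varphi(x,y,z)$ where $\varphi$ is a $\Sigma^{0}_{0}$-formula defining the graph of Ackermann. By the (classical) Parsons theorem, which is subsumed by Corollary~\ref{cor-g-Parsons} applied to $\II$, every $\Pi^{0}_{2}$-sentence provable in $\II$ has a primitive recursive bounding function for its existential witness; since the Ackermann function grows faster than every primitive recursive function, $\II$ does not prove its totality. Again a $\Pi^{0}_{2}$-sentence is a $\tilde{\Pi}{}^{0}_{3}$-sentence, so Theorem~\ref{thm-con-RT22} yields $\WKLo + \RT^{2}_{2} \not\vdash$ ``Ackermann is total''.

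There is no real obstacle here beyond Theorem~\ref{thm-con-RT22} itself: the only thing to verify is the complexity classification of the two target sentences and the fact that $\tilde{\Pi}{}^{0}_{3}$-conservation automatically implies $\Pi^{0}_{1}$- and $\Pi^{0}_{2}$-conservation, both of which are immediate from the definitions. Thus the corollary is a direct combination of Theorem~\ref{thm-con-RT22} with G\"odel's second incompleteness theorem and Parsons' theorem.
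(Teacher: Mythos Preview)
Your proposal is correct and matches the paper's approach: the paper simply states that the corollary follows from the main theorem (Theorem~\ref{thm-con-RT22}) without spelling out the details, and your argument is precisely the intended unpacking of that remark. Both claims reduce to observing that the target sentences are $\tilde{\Pi}{}^{0}_{3}$ and unprovable in $\II$ by G\"odel's second incompleteness theorem and Parsons' theorem respectively.
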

Chong and Yang~\cite{CY2015} asked what the proof-theoretic ordinal of $\RCAo+\RT^{2}_{2}$ is.
We again answer this question through the main theorem.
\begin{cor}\label{cor-answer-2}
The proof-theoretic ordinal of $\RCAo+\RT^{2}_{2}$ or $\WKLo+\RT^{2}_{2}$ is $\omega^{\omega}$.
\end{cor}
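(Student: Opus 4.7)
The plan is to deduce this corollary directly from Theorem~\ref{thm-con-RT22} combined with the classical characterization of $\II$ via its provably total $\Sigma^0_1$-definable functions. As recalled in the introduction and in the discussion following Corollary~\ref{cor-g-Parsons}, the paper measures the proof-theoretic ordinal of a theory $T$ by the class of $\Sigma^0_1$-definable functions whose totality is provable in $T$; by the (generalized) Parsons theorem, this class for $\II$ is exactly the primitive recursive functions, which corresponds to proof-theoretic ordinal $\omega^\omega$. Both inclusions of the equality will then fall out of basic manipulations of the conservation statement.

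For the lower bound I would argue that $\RCAo$ contains $\II$ as its first-order part, so $\RCAo + \RT^{2}_{2}$ proves the totality of every primitive recursive function; hence its proof-theoretic ordinal is at least $\omega^\omega$. For the upper bound I would invoke Theorem~\ref{thm-con-RT22}, which gives $\tilde{\Pi}{}^{0}_{3}$-conservation, and therefore $\Pi^{0}_{2}$-conservation, of $\WKLo + \RT^{2}_{2}$ over $\II$. Consequently any $\Sigma^0_1$-definable function whose totality is provable in $\WKLo + \RT^{2}_{2}$ is already provably total in $\II$ and so is primitive recursive. Therefore the proof-theoretic ordinal of $\WKLo + \RT^{2}_{2}$, and a fortiori of its subtheory $\RCAo + \RT^{2}_{2}$, is at most $\omega^\omega$. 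There is no substantive obstacle beyond Theorem~\ref{thm-con-RT22} itself; the corollary amounts to translating the main conservation theorem into the language of proof-theoretic ordinals.
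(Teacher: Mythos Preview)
Your proposal is correct and matches the paper's implicit argument: the paper states this corollary without proof, treating it as immediate from Theorem~\ref{thm-con-RT22} together with the well-known fact (mentioned before Theorem~\ref{thm-omega-largeness}) that the proof-theoretic ordinal of $\II$ is $\omega^{\omega}$. Your unpacking of the upper and lower bounds via provably total $\Sigma^0_1$-definable functions is exactly the intended reading.
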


Note that one can avoid using the amalgamation theorem by directly 
combining the bounds for large($\psRT^{2}_{2}$)ness and large($\EM$)ness
in order to obtain a bound for  $\bbomega^{k}$-large($\RT^{2}_{2}$) sets and reprove the main conservation theorem.
\begin{prop}\label{prop-RT22-largeness}
For any $k\in\omega$, there exists $n\in\omega$ such that
\begin{itemize}
 \item[] \rm $\II\vdash \A Z\subseteq_{\fin}(3,\infty)_{\N}(Z$ 
is $\bbomega^{n}$-large $\to Z$ is $\bbomega^{k}$-large($\RT^{2}_{2}$)$)$.
\end{itemize}
\end{prop}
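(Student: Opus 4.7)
The plan is to combine the bounds for $\bbomega^{k}$-large$(\EM)$ness from Lemma~\ref{lem-EM-largeness} and for $\bbomega^{k}$-large$(\psRT^{2}_{2})$ness from Lemma~\ref{lem-ADS-largeness}, mirroring the decomposition $\RCAo\vdash\RT^{2}_{2}\leftrightarrow\EM+\ADS$ at the level of largeness. Concretely, given $k\in\omega$, apply Lemma~\ref{lem-EM-largeness} to the exponent $2k+6$ to obtain some $n\in\omega$ such that $\II$ proves: every $\bbomega^{n}$-large subset of $(3,\infty)_{\N}$ is $\bbomega^{2k+6}$-large$(\EM)$. We claim this $n$ works for $\bbomega^{k}$-large$(\RT^{2}_{2})$ness as well.

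To verify the claim, we work in $\II$. Let $Z\subseteq_{\fin}(3,\infty)_{\N}$ be $\bbomega^{n}$-large and fix any coloring $f:[[0,\max Z]_{\N}]^{2}\to 2$. By the choice of $n$, there exists $Y\subseteq Z$ which is $\bbomega^{2k+6}$-large and such that $f$ is transitive on $[Y]^{2}$. Since $\min Y\ge\min Z>3$, Lemma~\ref{lem-ADS-largeness} applies to $Y$: $Y$ is $\bbomega^{k}$-large$(\psRT^{2}_{2})$. Applying this largeness to the restriction of $f$ to $[[0,\max Y]_{\N}]^{2}$ produces an $\bbomega^{k}$-large $H\subseteq Y$ which is pseudo-homogeneous for $f$.

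The final observation, already noted in Section~\ref{subsect:ramsey-consequences}, is that a pseudo-homogeneous set for a transitive coloring is automatically homogeneous: given $x<y$ in $H$ and a pseudo-chain $x=x_{0}<x_{1}<\dots<x_{\ell}=y$ inside $Y$ witnessing pseudo-homogeneity with color~$c$, transitivity of $f$ on $[Y]^{2}$ yields $f(x,y)=c$. Hence $H$ is $\bbomega^{k}$-large and $f$-homogeneous, showing that $Z$ is $\bbomega^{k}$-large$(\RT^{2}_{2})$.

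There is no real obstacle beyond bookkeeping: the combinatorial work has already been carried out in Lemmas~\ref{lem-ADS-largeness} and~\ref{lem-EM-largeness}, and the transitive-plus-pseudo-homogeneous implies-homogeneous principle is the same glue used to prove $\RT^{2}_{2}\leftrightarrow\EM+\ADS$. The only thing to be slightly careful about is the minimum-element hypothesis $\min Z>3$ in Lemma~\ref{lem-ADS-largeness}, which is automatically transferred to $Y$ since $Y\subseteq Z$.
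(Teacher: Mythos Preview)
Your overall strategy is exactly the paper's: choose $n$ via Lemma~\ref{lem-EM-largeness} so that any $\bbomega^{n}$-large $Z$ is $\bbomega^{2k+6}$-large($\EM$), pass to a transitive subset, and then invoke Lemma~\ref{lem-ADS-largeness}. However, there is a genuine gap in your last step. When you say ``a pseudo-chain $x=x_{0}<\dots<x_{\ell}=y$ \emph{inside $Y$} witnessing pseudo-homogeneity,'' you are assuming something the definition does not give. Pseudo-homogeneity of $H$ for $f$ only guarantees a chain $x_{0}<\dots<x_{\ell}$ in $[0,\max Y]_{\N}$ with $f(x_{i},x_{i+1})=c$; the intermediate $x_{i}$ need not lie in $Y$. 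Since $f$ is only known to be transitive on $[Y]^{2}$, not on all of $[[0,\max Y]_{\N}]^{2}$, you cannot conclude $f(x,y)=c$ from such a chain.

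The paper's proof repairs precisely this point with one extra line: before applying Lemma~\ref{lem-ADS-largeness}, extend $f\rest[Y]^{2}$ to a coloring $f'$ on all of $[[0,\max Y]_{\N}]^{2}$ that is transitive everywhere (for instance, set $f'(x,y)=0$ if $x\notin Y$, $f'(x,y)=1$ if $x\in Y$ and $y\notin Y$, and $f'(x,y)=f(x,y)$ if $x,y\in Y$). Then pseudo-homogeneity of $H\subseteq Y$ for $f'$ yields homogeneity for $f'$ via a chain anywhere in $[0,\max Y]_{\N}$, and since $f'=f$ on $[Y]^{2}\supseteq[H]^{2}$, $H$ is $f$-homogeneous. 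With this insertion your argument becomes correct and coincides with the paper's.
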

\begin{proof}
Given $k\in\omega$, take $n\in\omega$ so that $\II$ proves 
$\A Z\subseteq_{\fin}(3,\infty)_{\N}(Z$ is $\bbomega^{n}$-large $\to Z$ 
is $\bbomega^{2k+6}$-large($\EM$)$)$ by Lemma~\ref{lem-EM-largeness}.
Then, within $\II$, given $Z\subseteq_{\fin}(3,\infty)_{\N}$ 
which is $\bbomega^{n}$-large and $f:[Z]^{2}\to 2$, there exists $H_{0}\subseteq Z$ 
such that $H_{0}$ is $\bbomega^{2k+6}$-large and $f$ is transitive on $[H_{0}]^{2}$.
%One can easily find a transitive coloring $f':[[0,\max H_{0}]_{\N}]^{2}\to 2$ such that $f'\rest[H_{0}]^{2}=f\rest[H_{0}]^{2}$.
%(Set $f'(x,y)=0$ if $x\notin H_{0}$, $f'(x,y)=1$ if $x\in H_{0}$ and $y\notin H_{0}$, and $f'(x,y)=f(x,y)$ if $x,y\in H_{0}$.)
Then, by Lemma~\ref{lem-ADS-largeness}, there exists $H\subseteq H_{0}$ 
such that $H$ is $\bbomega^{k}$-large and $f$ is constant on $[H]^{2}$.
Thus, $Z$ is $\bbomega^{k}$-large($\RT^{2}_{2}$).
\end{proof}
Then, Theorem~\ref{thm-con-RT22} follows from Theorems~\ref{thm-omega-largeness}, \ref{thm-provably-large-conservative} and Proposition~\ref{prop-RT22-largeness}.

\section{Formalizing the conservation proof}\label{section-consistency-strength}

In this section, we will formalize Theorem~\ref{thm-con-RT22} within $\PRA$.
Actually, most arguments we used are straightforwardly formalizable within $\WKLo$.
We however need to take care of the use of external induction and non-computable construction of models.
We fix a standard provability predicate $\vdash$.

\begin{lem}\label{lem-formalization}
The following are provable within $\WKLo$.
\begin{enumerate}
\renewcommand{\labelenumi}{\rm(\arabic{enumi})}
 \item\label{e-0} $\A \varphi\in \Pi^{1}_{1}((\WKLo\vdash \varphi)\to (\II\vdash \varphi))$ 
		(Theorem~\ref{thm:harrington-conservation-wkl}).
 \item\label{e-2} $\A n\in\N (\II\vdash \mbox{any infinite set has an $\bbomega^{n}$-large subset})$ (Theorem~\ref{thm-omega-largeness}).
 \item\label{e-2-1} $\A k\in\N (\II\vdash \mbox{$X$ is $\bbomega^{k+4}$-large}\wedge \min X>3 \to \mbox{$X$ is $\bbomega$-large$(\RT^{2}_{k})$})$ (Theorem~\ref{SK-theorem}).
 \item\label{e-1-1} The generalized Parsons theorem (Corollary~\ref{cor-g-Parsons}).
 \item\label{e-1-2} The amalgamation theorem (Theorem~\ref{thm:amalgamation}).
 \item\label{e-2-2} $(\A n\in\N(\II\vdash \mbox{any infinite set has an $\bbomega^{n}$-large$(\Gamma)$ subset}))\to (\A \varphi\in \tilde\Pi^{0}_{3}((\WKLo+\Gamma\vdash \varphi)\to (\II\vdash \varphi)))$ for $\Gamma\equiv\psRT^{2}_{2}, \EM$ (Theorem~\ref{thm-provably-large-conservative}).
 \item\label{e-3-1} $\A n\in\N(\II\vdash \mbox{any infinite set has an $\bbomega^{n}$-large$(\psRT^{2}_{2})$ subset})$ (Lemma~\ref{lem-ADS-largeness}).
 \item\label{e-4-0} $\A \varphi\in \tilde\Pi^{0}_{3}((\WKLo+\GP\vdash \varphi)\to (\II\vdash \varphi))$ (Theorem~\ref{thm:GP-conservation}).
% \item\label{e-2-2} The conservation through largeness theorem (Theorem~\ref{thm-provably-large-conservative}).
% \item\label{e-3-1} The conservation theorem for $\ADS$ (Corollary~\ref{cor-con-ADS}).
% \item\label{e-4-0} The conservation theorem for $\GP$ (Theorem~\ref{thm:GP-conservation}).
 \item\label{e-4} $\A k\in \N(\II\vdash \FGP(\LL_{\bbomega^{k}},\LL_{\bbomega^{6}}))$ (Theorem~\ref{thm-FGP}).
 \item\label{e-5-1} $\A n\in\N(\II\vdash \mbox{any infinite set has an $\bbomega^{n}$-large$(\EM)$ subset})$ (Lemma~\ref{lem-EM-largeness}).
% \item\label{e-5-1} The conservation theorem for $\EM$ (Theorem~\ref{thm-con-EM}).
\end{enumerate}
\end{lem}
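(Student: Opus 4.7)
The overall plan is to observe that each of the ten items is a $\Pi^0_2$ statement asserting the existence of a proof code (for every $n$, every $k$, or every $\varphi$ in some recursive class), and that the constructions of those proofs given earlier in the paper are primitive recursive and uniform in their parameters. Fixing the standard provability predicate $\vdash$, the task is thus to check, case by case, that the relevant construction can be written as a primitive recursive function of its parameters and that the verification ``the output is a $\II$-proof of the intended statement'' goes through inside $\WKLo$. Item (1) is classical: its standard Henkin-forcing proof of Harrington's $\Pi^1_1$-conservation is already $\RCAo$-formalizable.

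Items (2), (2-1), (3-1), (4), and (5-1) were originally proved by external induction on $n$ or $k$. The plan is to internalize this induction. In each case the original step is an explicit proof-transformation taking a $\II$-proof of $\varphi(n)$ to a $\II$-proof of $\varphi(n+1)$, so the set of codes of valid proofs is $\Sigma^0_1$-definable in $n$, and $\Sigma^0_1$-induction (available already in $\Ii$, hence in $\WKLo$) closes the induction. The generalized Parsons theorem (1-1) and the conservation-via-largeness theorem (2-2) rest on Lemma~\ref{lem-cut-from-dense-set} and Theorems~\ref{thm-conservation-via-density-general} and~\ref{thm-g-Parsons}, each of which constructs a cut of a countable nonstandard model by a step-by-step enumeration; these formalize in $\WKLo$ once one uses the $\WKLo$-provable completeness theorem together with the standard coding of countable structures. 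The amalgamation theorem (1-2) is an elementary-chain argument whose $\WKLo$-formalization follows the pattern of~\cite{Y2009}, the chain being built internally.

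The main obstacle will be item (4-0), the conservation theorem for $\GP$ over $\II$, because its proof via Theorem~\ref{thm-con-Bn+1-vs-In} invokes chronic resplendency of countable recursively saturated models and, through Lemma~\ref{lem:ISig1-preservation}, a forcing construction over a countable model of $\BII+\WKLo$. The plan is first to verify that $\WKLo$ proves the existence of countable recursively saturated elementary extensions of arbitrary countable structures (the Henkin-style construction realizing each recursive type uses weak K\"onig's lemma in an essential way), from which chronic resplendency is obtained by a second Henkin construction applied to the recursive theory $T$ together with the elementary diagram. Lemma~\ref{lem:ISig1-preservation} itself formalizes in $\WKLo$: its forcing poset is coded in the ground model, every density requirement it uses is arithmetical, and the generic filter is built by an internal recursion along a countable enumeration of dense sets, the preservation of $\II$ going through by the usual meets-density argument.

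With (4-0) in hand, the remaining items slot into the structure already assembled: item (4) is (a $\WKLo$-formalization of) the generalized Parsons theorem applied to the $\tilde{\Pi}^0_3$-consequence $\FGP(\LL_{\bbomega^k},\LL_{\bbomega^6})$ of $\WKLo+\GP+\ADS$, and item (5-1) combines (4), (2-1) and an internalized induction on $k$ exactly as in the proof of Lemma~\ref{lem-EM-largeness}. Composing (1-2), (3-1) and (5-1) inside $\WKLo$ then yields a $\WKLo$-proof of Theorem~\ref{thm-con-RT22}; combined with Theorem~\ref{thm-Friedman-conservation}, this suffices for the consistency-strength consequences stated in Section~\ref{section-consistency-strength}.
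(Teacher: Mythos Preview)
Your proposal is correct and follows essentially the same route as the paper: item-by-item formalization inside $\WKLo$, using $\Sigma^0_1$-induction on provability to internalize the external inductions, the $\WKLo$-available completeness theorem for the model-theoretic steps, and the recursively-saturated-model machinery (plus the Baire-category construction of the generic) for the $\GP$-conservation item.

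Two small corrections of emphasis, neither of which affects the validity of your outline. First, items (\ref{e-2-1}) and (\ref{e-3-1}) are not actually proved by external induction in the paper: the Ketonen--Solovay bound and Lemma~\ref{lem-ADS-largeness} are, for each fixed $k$, direct finite combinatorial arguments (the latter via the auxiliary coloring $\bar f$ with $2k+2$ colors), so the paper says simply ``directly formalizable'' rather than ``induction on provability''. Your description still yields a uniform primitive recursive proof-producing function, so nothing breaks. Second, in formalizing Lemma~\ref{lem:ISig1-preservation} you omit one ingredient the paper flags explicitly: before the forcing one must first pass to an $\omega$-extension satisfying $\WKLo+\BII$ (via H\'ajek or Belanger), and this step too must be checked to go through inside $\WKLo$.
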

\newcommand\eref[1]{\textrm{(\ref{#1})}}
\begin{proof}
We reason within $\WKLo$.
For \eref{e-0}, several formalized proofs are known. See, e.g., \cite{Avigad1996,Hajek}.
For \eref{e-2}, the induction used here is on provability, thus it is a $\Sigma^{0}_{1}$-induction.
For \eref{e-2-1}, the original Ketonen and Solovay's proof is directly formalizable (see \cite[Section 6]{KS81}).
For \eref{e-1-1} and \eref{e-1-2}, we can directly formalize our model-theoretic proofs of Corollary~\ref{cor-g-Parsons}
 and Theorem~\ref{thm:amalgamation} by using the completeness theorem which is available within $\WKLo$.
For \eref{e-2-2}, we can formalize the proofs of Lemma~\ref{lem-cut-from-dense-set} and Theorem~\ref{thm-conservation-via-density-general} by using the completeness theorem.
To formalize the proof of Theorem~\ref{thm-provably-large-conservative}, we use the induction on provability.
For \eref{e-3-1}, formalize the proof of Lemma~\ref{lem-ADS-largeness}. Formalization is direct since we only deal with finite objects.
For \eref{e-4-0}, formalize the argument in Section~\ref{section-GP-con}.
To formalize the proof of Theorem~\ref{thm-con-Bn+1-vs-In}, an $\omega$-extension to be a model of $\BII+\WKLo$ is available within $\WKLo$ by formalizing the argument by H\'ajek\cite{Hajek} or Belanger\cite{Belanger}.
The existence of a countable recursively saturated model is provable in $\WKLo$ (see \cite[Section IX]{SOSOA}) and the Theorem~\ref{thm-c-resplendency} can be formalized similarly.
To formalize the proof of Lemma~\ref{lem:ISig1-preservation}, one can take a generic by the Baire category theorem which is available within $\RCAo$.
\eref{e-4} is straightforward from \eref{e-4-0}.
For \eref{e-5-1}, formalize the proof of Lemma~\ref{lem-EM-largeness}. The induction used here is again on provability.
\end{proof}

Thus, we have the following formalized conservation theorem.
\begin{thm}\label{thm:equi-consistency}
 $\PRA$ proves that $\WKLo+\RT^{2}_{2}\vdash\psi$ 
implies $\II\vdash \psi$ for any $\tilde{\Pi}{}^{0}_{3}$-sentences.
\end{thm}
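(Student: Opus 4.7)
The plan is to combine the formalized ingredients assembled in Lemma~\ref{lem-formalization} to carry out the proof of Theorem~\ref{thm-con-RT22} inside $\WKLo$, and then invoke Friedman's $\Pi^{0}_{2}$-conservation theorem (Theorem~\ref{thm-Friedman-conservation}) to descend from $\WKLo$ to $\PRA$.

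First I would reason inside $\WKLo$. By items \eref{e-3-1} and \eref{e-2-2} of Lemma~\ref{lem-formalization} with $\Gamma\equiv\psRT^{2}_{2}$, every $\tilde{\Pi}^{0}_{3}$-sentence provable in $\WKLo+\psRT^{2}_{2}$ is already provable in $\II$; the same combination with items \eref{e-5-1} and \eref{e-2-2} applied to $\Gamma\equiv\EM$ yields the analogous conservation for $\WKLo+\EM$ over $\II$. The formalized amalgamation theorem, item \eref{e-1-2}, then merges these two conservation results into a single one for $\WKLo+\psRT^{2}_{2}+\EM$, which is the same theory as $\WKLo+\RT^{2}_{2}$ by the Hirschfeldt/Shore--Bovykin/Weiermann decomposition (the proof of which is a direct $\RCAo$-argument and therefore straightforwardly formalizable). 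All uses of ``external'' meta-induction are replaced by genuine $\Sigma^{0}_{1}$-induction on provability, following exactly the pattern indicated in the proofs of items \eref{e-2}, \eref{e-2-2} and \eref{e-5-1}.

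The statement thus established inside $\WKLo$ is
\[
 \forall \psi\in\tilde{\Pi}^{0}_{3}\bigl((\WKLo+\RT^{2}_{2}\vdash\psi)\to(\II\vdash\psi)\bigr),
\]
which is $\Pi^{0}_{2}$: $\psi$ ranges over codes of sentences, and an implication between two $\Sigma^{0}_{1}$ provability predicates is $\Pi^{0}_{2}$. Since $\WKLo$ is $\Pi^{0}_{2}$-conservative over $\PRA$ by Theorem~\ref{thm-Friedman-conservation}, the same statement is provable in $\PRA$, which is the desired conclusion.

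The main obstacle is not in the final descent step but in ensuring that Lemma~\ref{lem-formalization} itself is genuinely at our disposal: one must check that every appeal to external ``for all $n\in\omega$'' in the proofs of Sections~\ref{section-density}--\ref{section-RT22} is reorganized as an internal induction on proof length (for \eref{e-2}, \eref{e-2-2}, \eref{e-5-1}) and that the indicator argument of Lemma~\ref{lem-cut-from-dense-set} is executed through the arithmetized completeness theorem available in $\WKLo$. Once this routine bookkeeping is done, the proof of Theorem~\ref{thm:equi-consistency} reduces to the two-line argument above.
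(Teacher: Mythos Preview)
Your proposal is correct and follows essentially the same route as the paper: assemble the items of Lemma~\ref{lem-formalization} inside $\WKLo$ to obtain the formalized conservation statement, then observe that this statement is $\Pi^{0}_{2}$ and invoke Theorem~\ref{thm-Friedman-conservation} to pull it down to $\PRA$. The paper leaves this last descent step implicit (simply writing ``Thus, we have the following formalized conservation theorem''), so your making it explicit, together with the verification that the formalized conservation statement is genuinely $\Pi^{0}_{2}$, is a welcome clarification rather than a deviation.
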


Now the consistency equivalence of $\II$ and $\WKLo+\RT^{2}_{2}$ follows from this formalized conservation theorem.
\begin{cor}
 Over $\PRA$, $\Con(\II)$ is equivalent to $\Con(\WKLo+\RT^{2}_{2})$.
\end{cor}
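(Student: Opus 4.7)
The plan is to deduce the corollary almost immediately from the formalized conservation result in Theorem~\ref{thm:equi-consistency}, treating the two implications of the equivalence separately.

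For the easy direction, $\PRA\vdash\Con(\WKLo+\RT^2_2)\to\Con(\II)$, I would observe that every axiom of $\II$ is an axiom (or a theorem) of $\WKLo+\RT^2_2$, so a $\PRA$-formalizable interpretation of $\II$ into $\WKLo+\RT^2_2$ shows that any $\II$-proof of $0=1$ lifts to a $\WKLo+\RT^2_2$-proof of $0=1$; contraposing this standard syntactic manipulation of proof codes (which $\PRA$ handles via primitive recursive proof transformations) yields the implication.

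For the non-trivial direction, $\PRA\vdash\Con(\II)\to\Con(\WKLo+\RT^2_2)$, I would argue contrapositively inside $\PRA$. Suppose $\neg\Con(\WKLo+\RT^2_2)$, i.e., there is a proof in $\WKLo+\RT^2_2$ of the sentence $0=1$. Since $0=1$ is a (trivially) $\tilde{\Pi}{}^0_3$ sentence, Theorem~\ref{thm:equi-consistency} applied inside $\PRA$ with $\psi\equiv(0=1)$ produces a proof of $0=1$ in $\II$, witnessing $\neg\Con(\II)$. The crucial point is that Theorem~\ref{thm:equi-consistency} is itself a $\PRA$-provable implication about provability predicates, so this composition of primitive recursive proof transformations remains inside $\PRA$.

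The only subtlety worth flagging is the routine check that the conservation statement, as formalized, is genuinely a $\Pi^0_2$ (or weaker) statement about proof codes—so that its application to the specific $\tilde{\Pi}{}^0_3$ sentence $0=1$ can be instantiated inside $\PRA$—and that $\PRA$ verifies the elementary closure properties of the standard provability predicate $\vdash$ used throughout Section~\ref{section-consistency-strength}. Neither point presents a real obstacle: both are part of the standard $\PRA$-formalization of Hilbert--Bernays-style derivability conditions, and no further combinatorial or model-theoretic work is required beyond what Theorem~\ref{thm:equi-consistency} already supplies.
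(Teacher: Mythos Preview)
Your proposal is correct and follows exactly the approach the paper takes: the corollary is stated as an immediate consequence of the formalized conservation theorem (Theorem~\ref{thm:equi-consistency}), with the nontrivial direction obtained by instantiating $\psi$ as the $\tilde{\Pi}{}^{0}_{3}$ sentence $0=1$. Your write-up is simply more explicit about the derivability-condition bookkeeping than the paper, which leaves the argument as a one-line remark.
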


\section{Open questions}

In their paper~\cite{CSY201X}, Chong, Slaman and Yang asked whether $\RT^2_2$ is a $\Pi^1_1$-conservative extension of $\RCAo + \BII$.
This question remains open, and a positive answer would strengthen our main conservation result since
$\BII$ is $\Pi^0_3$-conservative over~$\II$ (Theorem~\ref{thm:bsig2-pi03-conservative-isig1}).

\begin{question}[Chong, Slaman, Yang]\label{quest:rt22-pi11-bsig2}
Is $\RT^2_2$ a $\Pi^1_1$-conservative extension of~$\RCAo + \BII$?
\end{question}

In particular, they proved~\cite{CSY2012} that the chain anti-chain principle is $\Pi^1_1$-conservative over~$\RCAo + \BII$.
Therefore, in order to answer Question~\ref{quest:rt22-pi11-bsig2}, one needs only to prove that 
this is also the case for the Erd\H{o}s-Moser theorem.

For the purposes of our conservation proof, we introduced the grouping principle,
which seems to be interesting to study in its own right. 
%The standard proofs that a principle implies $\BII$ does not seem to be directly
%applicable to the grouping principle. The relation between the grouping
%principle and the $\Sigma^0_2$-bounding principle is currently unknown.
First, what is the first-order strength of $\GP$?
%\begin{question}
%Does $\GP$ imply~$\BII$ over~$\RCAo$?
%\end{question}
Alexander Kreuzer~\cite{Kre-private} gave a partial answer to this question
by proving the following theorem.
%The following proof is due to Kreuzer~\cite{Kre-private}.
\begin{thm}[Kreuzer]\label{thm:sgp-bii}
Over $\RCAo$, $\SGP$ implies $\BII$.
\end{thm}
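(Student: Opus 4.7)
The plan is to establish the contrapositive: in any countable model $M \models \RCAo + \neg\BII$, one exhibits a stable $2$-coloring $f$ and a largeness notion $\LL$ admitting no infinite $\LL$-grouping in $M$, thereby refuting $\SGP$.

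First, I would exploit the failure of $\BII$ in $M$ to extract a nonstandard $u \in M$ together with a $\Delta^0_2$-function $h : [0, u) \to M$ whose range is cofinal in $M$ (this is a standard equivalent of $\neg\BII$: a $\Sigma^0_2$-definable function on a bounded domain with no uniform bound on witnesses, passed through the Shoenfield limit lemma, which is available in $\RCAo$, yields $h(x) = \lim_y g(x, y)$ for some total recursive $g$). The largeness notion would be $\LL = \{F \subseteq_{\fin} \N : |F| \geq u\}$, which is $\Delta_0$-definable with $u$ as a parameter, closed under supersets, and provably a regular largeness notion in $\Ii$, since any infinite set has subsets of every $M$-cardinality, including $u$.

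Next, I would design the stable coloring $f$ so that its pointwise stabilization-time function
\[
	t(x) := \min\{y : (\forall y' \geq y)\ f(x, y') = f(x, y)\}
\]
inherits the cofinal behaviour of $h$ in a way visible to every $\LL$-large set. A natural first attempt is $f(x, y) = g(x \bmod u, y) \bmod 2$, so that $f$ is stable and $t(x)$ essentially coincides with $h(x \bmod u)$. A putative $\LL$-grouping $\langle G_0 < G_1 < \cdots \rangle$ for $f$ would yield the contradiction as follows: pair-monochromaticity of $f$ on $G_j \times G_{j+1}$ forces $\min G_{j+1}$ to strictly exceed the stabilization time of $f(x, \cdot)$ for every $x \in G_j$, hence $\min G_{j+1} > \max_{x \in G_j} t(x)$; if the design of $f$ ensures that this maximum is cofinal in $M$ whenever $|G_j| \geq u$, then no such $\min G_{j+1} \in M$ can exist.

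The main obstacle, and where the combinatorial heart of the argument lies, is precisely in guaranteeing that $\max_{x \in G} t(x)$ is cofinal in $M$ for every $G$ with $|G| \geq u$. The naive modular encoding fails because $G$ may concentrate in a single residue class modulo $u$ on which $h$ takes small values, or may live entirely in $[u, \infty)$ where a secondary definition of $f$ was stable early; in either case the grouping escapes the coded cofinal divergence. I would expect the actual construction to replace the simple modular scrambling with a more intricate arithmetical spread of $h$ across $\N$ — perhaps by interleaving $u$ shifted copies of the approximation $g$, or by coupling $f$ to a second largeness condition forged from the failure witness itself — so that \emph{every} $M$-finite set of size $\geq u$ is forced to meet a position where $h$ has not yet stabilized. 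Verifying this uniform cofinality property is where the genuine interaction between $\neg\BII$ and the stable grouping principle must be exploited.
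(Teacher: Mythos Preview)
Your proposal has a genuine gap, and you diagnose it yourself: with the largeness notion $\LL=\{F:|F|\ge u\}$ and the modular encoding $f(x,y)=g(x\bmod u,y)\bmod 2$, nothing prevents an $\LL$-large block $G$ from sitting in a single residue class, so $\max_{x\in G}t(x)$ need not be cofinal. Your suggested repairs (interleaved copies, a secondary largeness condition) are not carried out, and in fact there is a structural obstruction to the whole ``cofinal stabilization time'' strategy with this $\LL$. If every set of size $\ge u$ met an element with stabilization time beyond any prescribed bound, then for each $b$ the set $\{x:t(x)\le b\}$ would have size $<u$; arranging this uniformly while keeping $f$ stable and $\Delta^0_1$ is exactly the kind of bookkeeping that tends to require the very $\BII$ you are assuming fails. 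So the missing lemma is not a detail but the crux.

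The paper's proof avoids the issue by choosing the largeness notion differently. From $\neg\BII$ one extracts a partition $\N=X_0\sqcup\dots\sqcup X_{k-1}$ into $k$ (nonstandardly) many finite pieces, and takes
\[
\LL=\{F:(\forall i<k)\,F\not\subseteq X_i\},
\]
i.e.\ $F$ is $\LL$-large iff it meets at least two pieces. The stable coloring is simply the same-block relation $f(x,y)=1\leftrightarrow(\exists i<k)(x,y\in X_i)$. Now an $\LL$-grouping $\langle F_j\rangle$ must have cross-color $0$ between distinct groups (since each $F_j$ straddles pieces, the cross-color cannot be uniformly $1$), which forces $\min F_0,\dots,\min F_k$ to lie in pairwise distinct pieces --- impossible, as there are only $k$ pieces. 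The point you missed is that the largeness notion itself should carry the $\BII$-failure: rather than demanding large cardinality and then struggling to spread the cofinal data across all large sets, one lets $\LL$ directly say ``not confined to one piece,'' and the contradiction becomes a one-line pigeonhole.
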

\begin{proof}
Assume that $\BII$ fails.
As in Remark~\ref{rem:on-lem-largeness}, there exist a partition $X=X_{0}\sqcup\dots\sqcup X_{k-1}$ such that each of the $X_{i}$'s is finite, and then $\LL=\{F\in [\N]^{<\N}\mid \A i<k(F\not\subseteq X_{i})\}$ is a largeness notion.
Define $f:[\N]^{2}\to 2$ as $f(x,y)=1\leftrightarrow \E i<k(x\in X_{i}\wedge y\in X_{i})$.
Since $X_{i}$'s are all finite, $f$ is a stable coloring.
By $\SGP$, there exists an infinite $\LL$-grouping $\langle F_{j}\mid j\in\N \rangle$.
Since each of $F_{j}$ is not included in any of $X_{i}$'s, for any $j<j'$, the color between $F_{j}$ and $F_{j'}$ is $0$.
Thus, $\min F_{0},\dots,\min F_{k}$ are in different $X_{i}$'s, which is a contradiction.
\end{proof}

%A proof of $\Pi^1_1$-conservation of the grouping principle over~$\BII$
%would significantly simplify our conservation argument since the finite grouping
%principle can be easily derived from its infinite version.

Still, the following questions are remained open.
\begin{question}
For some $k\in\omega$, does $\GP(\LL_{\bbomega^{k}})$ imply~$\BII$ over~$\RCAo$?
\end{question}
\begin{question}
Is $\GP$ a $\Pi^1_1$-conservative extension of $\RCAo + \BII$?
\end{question}

The grouping principle has been used to establish a density bound for the Erd\H{o}s-Moser theorem.
The stable grouping principle does not imply the stable version of the Erd\H{o}s-Moser theorem
since the former admits low solutions whereas the latter does not. It is however unknown
whether the full version of the two principles coincide.

\begin{question}
Does~$\EM$ imply $\GP$ over~$\RCAo$?
\end{question}

Our conservation proof contains almost no information about the size of the proof, but it is interesting to know whether $\RT^{2}_{2}$ gives shorter proofs for $\tilde{\Pi}{}^{0}_{3}$-consequences of $\II$ or not.
\begin{question}
 Does $\WKLo+\RT^{2}_{2}$ or $\WKLo+\psRT^{2}_{2}$ have a significant speed-up over $\II$?
\end{question}
Note that there is no significant speed-up between $\WKLo$ and $\II$ (see Avigad\cite{Avigad1996}).
A killer example for this question is an existence of $m$-dense sets, \textit{i.e.}, $m\mbox{-}\PHt(\RT^{2}_{2})$ or $m\mbox{-}\PHt(\psRT^{2}_{2})$.
It is not hard to see that $m\mbox{-}\PHt(\RT^{2}_{2})$ can be proved from $\WKLo+\RT^{2}_{2}$ by using $\RT^{2}_{2}$ $m$-times, and the case of $\psRT^{2}_{2}$ is similar.
On the other hand, to prove $m\mbox{-}\PHt(\psRT^{2}_{2})$ from $\II$, what we know is the following.
\begin{prop}\label{thm-ADS-density}
For any $m\in\omega$, the following is provable within $\II$. 
If a finite set $X\subseteq\N$ is $\bbomega^{3^{m+1}}$-large and 
$\min X>3$, then $X$ is $m$-dense($\psRT^{2}_{2}$).
\end{prop}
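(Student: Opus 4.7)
The plan is to proceed by external induction on $m$, checking separately the two clauses in the definition of $(m+1)$-dense$(\psRT^{2}_{2})$. The base case $m=0$ is immediate: $\bbomega^{3^{1}}=\bbomega^{3}$-largeness trivially implies $\bbomega$-largeness, and $\min X > 3 > 1$, so $X$ is $0$-dense$(\psRT^{2}_{2})$ directly from the definition.

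For the inductive step, assume the claim holds for $m$ and let $X$ be $\bbomega^{3^{m+2}}$-large with $\min X > 3$. For the partition clause, given $X=X_{0}\sqcup\dots\sqcup X_{\ell-1}$ with $\ell\le X_{0}<\dots<X_{\ell-1}$, I would apply Lemma~\ref{lem-alpha-small-sum}(ii) contrapositively: since $3^{m+2}\ge 3^{m+1}+1$, $X$ is $\bbomega^{3^{m+1}+1}$-large, so $X\setminus\{\min X\}$ is $\bbomega^{3^{m+1}}\cdot\min X$-large, hence $\bbomega^{3^{m+1}}\cdot\ell$-large (using $\ell\le\min X$). This forces some $X_{i}$ to be $\bbomega^{3^{m+1}}$-large, and since $\min X_{i}\ge\min X>3$, the induction hypothesis gives that $X_{i}$ is $m$-dense$(\psRT^{2}_{2})$.

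For the coloring clause, given $f:[[0,\max X]_{\N}]^{2}\to 2$, I need a pseudo-homogeneous $m$-dense subset $Y\subseteq X$. The plan is to invoke Lemma~\ref{lem-ADS-largeness}, which guarantees that any $\bbomega^{2k+6}$-large set with $\min>3$ contains a pseudo-homogeneous $\bbomega^{k}$-large subset. Taking $k=3^{m+1}$ requires $3^{m+2}\ge 2\cdot 3^{m+1}+6$, i.e., $3^{m+1}\ge 6$, which holds for $m\ge 1$; the induction hypothesis then upgrades the resulting $Y$ (with $\min Y>3$) to $m$-density. The transition $m=0\to m+1=1$ must be treated separately, because $3^{1}=3<6$: here $0$-density only means ``$\bbomega$-large with $\min>1$'', so it suffices to apply Lemma~\ref{lem-ADS-largeness} with $k=1$, which needs only $\bbomega^{8}$-largeness, amply supplied by $\bbomega^{9}$.

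The only real subtlety is the ordinal bookkeeping to confirm that the chosen exponent $3^{m+1}$ is just large enough to absorb the $2k+6$ overhead from Lemma~\ref{lem-ADS-largeness}; the main (and very mild) obstacle is the single special case $m=0$ of the inductive step, where the general numerical threshold fails and the weakness of $0$-density is precisely what rescues the argument. No genuinely new combinatorial input is expected beyond Lemma~\ref{lem-ADS-largeness}, Lemma~\ref{lem-alpha-small-sum}, and the definition of density$(\psRT^{2}_{2})$ itself.
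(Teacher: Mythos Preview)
Your proposal is correct and follows essentially the same approach as the paper, which simply records the proof as ``Easy induction by using Lemma~\ref{lem-ADS-largeness}.'' You have spelled out precisely that induction. One small remark: you verify the partition clause directly via Lemma~\ref{lem-alpha-small-sum}, whereas the paper notes (just after Definition~\ref{def-density-with-indicator}) that for $\psRT^{2}_{2}$ the second condition of density is already implied by the first, so one could skip that verification entirely; your direct argument is nonetheless valid and self-contained, and your special handling of the step $m=0\to 1$ is exactly the right patch for the numerical shortfall $3^{1}<6$.
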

\begin{proof}
Easy induction by using Lemma~\ref{lem-ADS-largeness}.
\end{proof}
\noindent
Thus, within $\II$, we can obtain an $m$-dense$(\psRT^{2}_{2})$-set by using $\Sigma^{0}_{1}$-induction $3^{m+1}$-times.
This might indicate that there is at most exponential speed-up for the case of $\psRT^{2}_{2}$.
For the case of $\RT^{2}_{2}$, the situation is more difficult.
An only upper bound for $m$-dense$(\RT^{2}_{2})$ sets obtained from our argument is the following.
\begin{prop}
 There exists a primitive recursive function $h:\omega\to\omega$ such that for any $m\in\omega$, the following is provable within $\II$. If a finite set $X\subseteq\omega$ is $\bbomega^{h(m)}$-large then $X$ is $m$-dense$(\RT^{2}_{2})$.
\end{prop}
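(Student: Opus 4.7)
The plan is to make explicit, and show primitive recursive, the function $h$ tacitly constructed in the proof of Theorem~\ref{thm-con-RT22} via Theorem~\ref{thm-provably-large-conservative}. Recall that the conservation argument selects for each $k$ an $n_{k}$ satisfying Proposition~\ref{prop-RT22-largeness}, and then sets $h(0)=1$ and $h(m+1)=\max\{n_{h(m)},h(m)+1\}$; external induction on $m$ then shows that $\II$ proves every $\bbomega^{h(m)}$-large set is $m$-dense$(\RT^{2}_{2})$. Since the recursion from $k\mapsto n_{k}$ to $h$ is manifestly primitive recursive, it suffices to exhibit a primitive recursive witness for $k\mapsto n_{k}$.

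Tracing through Proposition~\ref{prop-RT22-largeness}, the required $n_{k}$ is any $n$ that works in Lemma~\ref{lem-EM-largeness} at parameter $2k+6$, so the task reduces to extracting a primitive recursive bound from Lemma~\ref{lem-EM-largeness}. That lemma is proved by external induction on $k$: the base case is absolute ($n=6$ via Theorem~\ref{SK-theorem}), and the inductive step, given a witness $n_{0}$ for $k-1$, picks $n$ via the generalized Parsons theorem (Corollary~\ref{cor-g-Parsons}) applied to $\FGP(\LL_{\bbomega^{n_{0}}},\LL_{\bbomega^{6}})$ from Theorem~\ref{thm-FGP}. The whole reduction will therefore be primitive recursive provided this Parsons bound is primitive recursive in $n_{0}$.

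To obtain such a bound, I would appeal to the $\PRA$-formalization developed in Section~\ref{section-consistency-strength}. A $\WKLo+\GP$-proof of $\FGP(\LL_{\bbomega^{n_{0}}},\LL_{\bbomega^{6}})$ can be written down uniformly with length primitive recursive in $n_{0}$: apply the grouping principle to $\LL_{\bbomega^{n_{0}}}$ once and finish by the compactness argument available in $\WKLo$. By Lemma~\ref{lem-formalization}, each of the required ingredients---Harrington's conservation, the $\tilde{\Pi}{}^{0}_{3}$-conservation of $\WKLo+\GP$ over $\II$ via density notions, and the generalized Parsons extraction---is $\PRA$-formalizable as a concrete proof transformation. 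Composing these transformations gives a primitive recursive map from $n_{0}$ to a Parsons bound for $\FGP(\LL_{\bbomega^{n_{0}}},\LL_{\bbomega^{6}})$; plugging this into the recursion of Lemma~\ref{lem-EM-largeness} yields $k\mapsto n_{k}$ primitive recursive, and hence $h$ primitive recursive.

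The main obstacle will be the last step: although $\PRA$-provability suffices to formalize each ingredient, one must verify that the corresponding proof transformations have primitive-recursive output-length bounds in their inputs. This is routine for syntactic ingredients like Parsons and Harrington, but requires some care for the density-based conservation (Theorems~\ref{thm-conservation-via-density-general} and~\ref{thm-con-Bn+1-vs-In}), which as presented above relies on model-theoretic tools (semi-regular cuts, resplendent models) that must be replaced by explicit syntactic proof transformations with primitive-recursive complexity.
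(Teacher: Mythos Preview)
Your approach is workable in principle but takes a harder road than the paper. You try to trace the construction of $k\mapsto n_k$ explicitly and argue that each proof-transformation step has primitive recursive output length; you correctly flag the obstacle that the model-theoretic conservation arguments (Theorems~\ref{thm-conservation-via-density-general} and~\ref{thm-con-Bn+1-vs-In}) would need to be replaced by syntactic transformations with explicit primitive-recursive complexity bounds. That is genuine work you have not done.

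The paper sidesteps this entirely with a single meta-level application of the classical Parsons theorem. By Lemma~\ref{lem-formalization}, the whole chain of results (including Proposition~\ref{prop-RT22-largeness} and the passage to density) is provable inside $\WKLo$, hence inside $\II$; together with a $\Sigma^0_1$-induction on $m$ this yields, \emph{internally in $\II$}, the $\Pi_2$ statement
\[
\forall m\,\exists k\ \bigl(\II\vdash\text{``$X$ is $\bbomega^{k}$-large}\to X\text{ is $m$-dense}(\RT^{2}_{2})\text{''}\bigr).
\]
Now apply the ordinary Parsons theorem once to this $\Pi_2$ sentence: there is a primitive recursive $h$ with $\II\vdash\forall m\,\exists k\le h(m)\,\II\vdash_{h(m)}\cdots$. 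For standard $m$ the witness $k$ and the bounded proof are standard, so the displayed $\II$-provability is genuine. Thus the paper never needs to establish primitive-recursive proof-transformation bounds for any individual ingredient; the primitive recursiveness of $h$ falls out for free from Parsons applied to the single formalized $\Pi_2$ totality statement. Your approach would ultimately succeed by applying this same Parsons trick to each ingredient separately, but doing it once at the end is both shorter and avoids the obstacle you identify.
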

\begin{proof}
By formalizing the proof of Proposition~\ref{prop-RT22-largeness} and applying $\Sigma^{0}_{1}$-induction, we obtain
\[\A m\in\N \E k\in\N (\II\vdash \mbox{$X$ is $\bbomega^{k}$-large}\to\mbox{$X$ is $m$-dense$(\RT^{2}_{2})$}).\]
Then, by the Parsons theorem, there exists a primitive recursive function $h:\omega\to \omega$ such that $\II$ proves
\[\A m\in\N \E k\le h(m)  (\II\vdash_{h(m)} \mbox{$X$ is $\bbomega^{k}$-large}\to\mbox{$X$ is $m$-dense$(\RT^{2}_{2})$}),\]
where $\vdash_{x}$ means that there exists a proof whose G\"odel number is smaller than $x$.
For $m\in\omega$, $k$ and $h(m)$ in the above are standard, thus, $\II$ truly proves``$\mbox{$X$ is $\bbomega^{k}$-large}\to\mbox{$X$ is $m$-dense$(\RT^{2}_{2})$}$''.
\end{proof}

\bibliographystyle{plain}
\bibliography{bib}

\begin{thebibliography}{10}

\bibitem{Aczel1981}
Peter Aczel.
\newblock Two notes on the {P}aris independence result.
\newblock In {\em Model theory and arithmetic ({P}aris, 1979--1980)}, volume
  890 of {\em Lecture Notes in Math.}, pages 21--31. Springer, Berlin-New York,
  1981.

\bibitem{book-Ramsey-in-analysis}
Spiros~A. Argyros and Stevo Todorcevic.
\newblock {\em Ramsey methods in analysis}.
\newblock Advanced Courses in Mathematics. CRM Barcelona. Birkh\"auser Verlag,
  Basel, 2005.

\bibitem{Avigad1996}
Jeremy Avigad.
\newblock Formalizing forcing arguments in subsystems of second-order
  arithmetic.
\newblock {\em Annals of Pure and Applied Logic}, 82:165--191, 1996.

\bibitem{Barwise-Schlipf}
John Barwise and John Schlipf.
\newblock An introduction to recursively saturated and resplendent models.
\newblock {\em Journal of Symbolic Logic}, 41(2):531--536, 1976.

\bibitem{Belanger}
David~A. Belanger.
\newblock Conservation theorems for the cohesiveness principle, 2015.
\newblock To appear. Available at
  http://www.math.nus.edu.sg/~imsdrb/papers/coh-2015-09-30.pdf.

\bibitem{philosophy-of-math}
Paul Benacerraf and Hilary Putnam, editors.
\newblock {\em Philosophy of mathematics. Selected readings}.
\newblock Cambridge University Press, second edition, 1983.
\newblock viii+600 pages.

\bibitem{BK99}
Teresa Bigorajska and Henryk Kotlarski.
\newblock A partition theorem for $\alpha$-large sets.
\newblock {\em Fund.~Math.}, 160(1):27--37, 1999.

\bibitem{BK02}
Teresa Bigorajska and Henryk Kotlarski.
\newblock Some combinatorics involving $\xi$-large sets.
\newblock {\em Fund.~Math.}, 175(2):119--125, 2002.

\bibitem{BK06}
Teresa Bigorajska and Henryk Kotlarski.
\newblock Partitioning $\alpha$-large sets: some lower bounds.
\newblock {\em Trans.~Amer.~Math.~Soc.}, 358(11):4981--5001, 2006.

\bibitem{BW}
Andrey Bovykin and Andreas Weiermann.
\newblock The strength of infinitary {R}amseyan principles can be accessed by
  their densities.
\newblock accepted for publication in Ann. Pure Appl. Logic,
  \url{http://logic.pdmi.ras.ru/~andrey/research.html}, 2005.

\bibitem{Buchholz}
Wilfried Buchholz.
\newblock Proof-theoretic analysis of termination proofs.
\newblock {\em Annals of Pure and Applied Logic}, 75:57--65, 1995.

\bibitem{handbook-of-proof}
Samuel~R. Buss, editor.
\newblock {\em Handbook of Proof Theory}, volume 137 of {\em Studies in Logic
  and the Foundations of Mathematics}.
\newblock North-Holland, 1998.

\bibitem{CJS}
Peter~A. Cholak, Carl~G. Jockusch, and Theodore~A. Slaman.
\newblock On the strength of \text{Ramsey's} theorem for pairs.
\newblock {\em The Journal of Symbolic Logic}, 66(1):1--15, 2001.

\bibitem{CSY201X}
C.~T. Chong, Theodore~A. Slaman, and Yue Yang.
\newblock The inductive strength of {R}amsey's theorem for pairs, 2014.
\newblock Preprint.

\bibitem{CSY2014}
C.~T. Chong, Theodore~A. Slaman, and Yue Yang.
\newblock The metamathematics of {S}table {R}amsey's {T}heorem for {P}airs.
\newblock {\em J. Amer. Math. Soc.}, 27(3):863--892, 2014.

\bibitem{CSY2012}
Chi-Tat Chong, Theodore~A. Slaman, and Yue Yang.
\newblock {$\Pi^1_1$}-conservation of combinatorial principles weaker than
  {R}amsey's {T}heorem for pairs.
\newblock {\em Advances in Matheamtics}, 230:1060--1077, 2012.

\bibitem{CY2015}
Chi~Tat Chong and Yue Yang.
\newblock Ramsey's theorem, reverse mathematics and nonstandard models.
\newblock {\em Asia Pac. Math. Newsl.}, 5(2):1--4, 2015.

\bibitem{Clote1985}
P.~Clote.
\newblock Partition relations in arithmetic.
\newblock In {\em Methods in mathematical logic ({C}aracas, 1983)}, volume 1130
  of {\em Lecture Notes in Math.}, pages 32--68. Springer, Berlin, 1985.

\bibitem{Conidis2013Random}
Chris~J. Conidis and Theodore~A. Slaman.
\newblock {Random reals, the rainbow {R}amsey theorem, and arithmetic
  conservation}.
\newblock {\em Journal of Symbolic Logic}, 78(1):195--206, 2013.

\bibitem{Downey200102}
Rod Downey, Denis~R. Hirschfeldt, Steffen Lempp, and Reed Solomon.
\newblock A {$\Delta^0_2$} set with no infinite low subset in either it or its
  complement.
\newblock {\em Journal of Symbolic Logic}, pages 1371--1381, 2001.

\bibitem{Erdos1947}
P.~Erd{\"o}s.
\newblock Some remarks on the theory of graphs.
\newblock {\em Bull. Amer. Math. Soc.}, 53:292--294, 1947.

\bibitem{Flood2012Reverse}
Stephen Flood.
\newblock Reverse mathematics and a {R}amsey-type {K\"o}nig's lemma.
\newblock {\em Journal of Symbolic Logic}, 77(4):1272--1280, 2012.

\bibitem{friedmancom}
H.~Friedman.
\newblock Personal communication to L. Harrington, 1977.

\bibitem{MR2260656}
Kurt G{\"o}del.
\newblock \"{U}ber formal unentscheidbare {S}\"atze der {\it {p}rincipia
  {m}athematica} und verwandter {S}ysteme. {I}.
\newblock {\em Monatsh. Math.}, 149(1):1--30, 2006.
\newblock Reprinted from Monatsh. Math. Phys. {{\bf{3}}8} (1931), 173--198
  [MR1549910], With an introduction by Sy-David Friedman.

\bibitem{book-Ramsey-theory}
Ronald~L. Graham, Bruce~L. Rothschild, and Joel~H. Spencer.
\newblock {\em Ramsey theory}.
\newblock Wiley Series in Discrete Mathematics and Optimization. John Wiley \&
  Sons, Inc., Hoboken, NJ, 2013.
\newblock Paperback edition of the second (1990) edition.

\bibitem{Hajek}
Petr H\'ajek.
\newblock Interpretability and fragments of arithmetic.
\newblock In P.~Clote and J.~Kraj\'i\v{c}ek, editors, {\em Arithmetic, Proof
  Theory and Computational Complexity}, pages 185--196. Oxford, Clarendon
  Press, 1993.

\bibitem{Ha-Pu}
Petr H\'ajek and Pavel Pudl\'ak.
\newblock {\em Metamathematics of First-Order Arithmetic}.
\newblock Springer-Verlag, Berlin, 1993.
\newblock XIV+460 pages.

\bibitem{Comb-and-Graph}
John~M. Harris, Jeffry~L. Hirst, and Michael~J. Mossinghoff.
\newblock {\em Combinatorics and graph theory}.
\newblock Undergraduate Texts in Mathematics. Springer, New York, second
  edition, 2008.

\bibitem{hilbert1926}
David Hilbert.
\newblock {\"Uber das Unendliche}.
\newblock {\em Mathematische Annalen}, 95:161--190, 1926.
\newblock English translation in \cite{Heijenoort, philosophy-of-math}.

\bibitem{MR3027390}
David Hilbert and Paul Bernays.
\newblock {\em Grundlagen der {M}athematik. {I}/{F}oundations of mathematics.
  {I}. {P}art {A}. {P}refaces and \S\S 1--2}.
\newblock College Publications, London, 2011.
\newblock Edited and with a preface by Dov Gabbay, Michael Gabbay, J{\"o}rg
  Siekmann and Claus-Peter Wirth, Commented translation by Claus-Peter Wirth of
  the second German edition of 1968, including the annotation and translation
  of all deleted parts of the first German edition of 1934, With a chapter
  ``Hilbert's proof theory'' by Wilfried Sieg [MR2668182], Dual German-English
  text.

\bibitem{Hirschfeldt2014Slicing}
Denis~R Hirschfeldt.
\newblock Slicing the truth.
\newblock {\em Lecture Notes Series, Institute for Mathematical Sciences,
  National University of Singapore}, 28, 2014.

\bibitem{HS2007}
Denis~R. Hirschfeldt and Richard~A. Shore.
\newblock Combinatorial principles weaker than {R}amsey{'}s theorem for pairs.
\newblock {\em Journal of Symbolic Logic}, 72:171--206, 2007.

\bibitem{Hirst-PhD}
Jeffry~Lynn Hirst.
\newblock {\em Combinatorics in Subsystems of Second Order Arithmetic}.
\newblock PhD thesis, The Pennsylvania State University, August 1987.

\bibitem{Jockusch1972Ramseys}
Carl~G Jockusch.
\newblock Ramsey's theorem and recursion theory.
\newblock {\em Journal of Symbolic Logic}, 37(2):268--280, 1972.

\bibitem{Jockusch197201}
Carl~G Jockusch and Robert~I Soare.
\newblock {$\Pi^0_1$} classes and degrees of theories.
\newblock {\em Transactions of the American Mathematical Society}, 173:33--56,
  1972.

\bibitem{Kaye}
R.~Kaye.
\newblock {\em Models of Peano Arithmetic}.
\newblock Oxford University Press, 1991.

\bibitem{KS81}
Jussi Ketonen and Robert Solovay.
\newblock Rapidly growing {R}amsey functions.
\newblock {\em Ann. of Math. (2)}, 113(2):267--314, 1981.

\bibitem{Kossak-Schmerl}
Roman Kossak and James~H. Schmerl.
\newblock {\em The structure of models of Peano arithmetic}.
\newblock Oxford Logic Guides, 50. Oxford University Press, Oxford, 2006.
\newblock XIV+311 pages.

\bibitem{KPW}
Henryk Kotlarskia, Bo\.{z}ena Piekartb, and Andreas Weiermann.
\newblock More on lower bounds for partitioning $\alpha$-large sets.
\newblock {\em Annals of Pure and Applied Logic}, 147:113--126, 2007.

\bibitem{Kre-private}
Alexander~P. Kreuzer.
\newblock Private communication, January 2016.

\bibitem{Kreuzer2012}
Alexander~P. Kreuzer.
\newblock Primitive recursion and the chain antichain principle.
\newblock {\em Notre Dame Journal of Formal Logic}, 53(2):245--265, 2012.

\bibitem{KK2009}
Alexander~P. Kreuzer and Ulrich Kohlenbach.
\newblock Ramsey's theorem for pairs and provably recursive functions.
\newblock {\em Notre Dame Journal of Formal Logic}, 50:427--444, 2009.

\bibitem{Lerman2013Separating}
Manuel Lerman, Reed Solomon, and Henry Towsner.
\newblock {Separating principles below {R}amsey's theorem for pairs}.
\newblock {\em Journal of Mathematical Logic}, 13(02):1350007, 2013.

\bibitem{Liu2012RT22}
Jiayi Liu.
\newblock {RT$^2_2$ does not imply WKL$_0$}.
\newblock {\em Journal of Symbolic Logic}, 77(2):609--620, 2012.

\bibitem{Mileti2004Partition}
Joseph~Roy Mileti.
\newblock {\em Partition theorems and computability theory}.
\newblock PhD thesis, Carnegie Mellon University, 2004.

\bibitem{MillerAssorted}
Joseph~S. Miller.
\newblock Assorted results in and about effective randomness.
\newblock In preparation.

\bibitem{Murakami2014Ramseyan}
Shota Murakami, Takeshi Yamazaki, and Keita Yokoyama.
\newblock On the ramseyan factorization theorem.
\newblock In {\em Language, Life, Limits}, pages 324--332. Springer, 2014.

\bibitem{Paris1978}
J.~B. Paris.
\newblock Some independence results for \text{Peano Arithmetic}.
\newblock {\em Journal of Symbolic Logic}, 43(4):725--731, 1978.

\bibitem{Paris-Kirby}
J.~B. Paris and L.~A.~S. Kirby.
\newblock {$\Sigma_n$}-collection schemas in arithmetic.
\newblock In {\em Logic Colloquium '77 (Proc.~Conf., Wroclaw, 1977)}, volume~96
  of {\em Stud.~Logic Foundations Math.}, pages 199--209, 1978.

\bibitem{Patey2015Iterative}
Ludovic Patey.
\newblock Iterative forcing and hyperimmunity in reverse mathematics.
\newblock In Arnold Beckmann, Victor Mitrana, and Mariya Soskova, editors, {\em
  Evolving Computability}, volume 9136 of {\em Lecture Notes in Computer
  Science}, pages 291--301. Springer International Publishing, 2015.

\bibitem{Patey2015Somewhere}
Ludovic Patey.
\newblock {Somewhere over the rainbow {R}amsey theorem for pairs}.
\newblock Submitted. Available at \url{http://arxiv.org/abs/1501.07424}, 2015.

\bibitem{Patey2016reverse}
Ludovic Patey.
\newblock {\em The reverse mathematics of Ramsey-type theorems}.
\newblock PhD thesis, Universit\'e Paris Diderot, 2016.

\bibitem{book-automata}
Dominique Perrin and Jean-\'Eric Pin.
\newblock {\em Infinite Words: Automata, Semigroups, Logic and Games}, volume
  141 of {\em Pure and Applied Mathematics}.
\newblock Elsevier, 2004.

\bibitem{PR2004}
Andreas Podelski and Andrey Rybalchenko.
\newblock Transition invariants.
\newblock In {\em Logic in Computer Science: Modelling and Reasoning about
  Systems}, pages 32--41, 2004.

\bibitem{Seetapun1995strength}
David Seetapun and Theodore~A. Slaman.
\newblock {On the strength of {R}amsey's theorem}.
\newblock {\em Notre Dame Journal of Formal Logic}, 36(4):570--582, 1995.

\bibitem{partial}
Stephen~G. Simpson.
\newblock Partial realizations of {H}ilbert{'}s program.
\newblock {\em Journal of Symbolic Logic}, 53:349--363, 1988.

\bibitem{SOSOA}
Stephen~G. Simpson.
\newblock {\em Subsystems of {S}econd {O}rder {A}rithmetic}.
\newblock Perspectives in Mathematical Logic. Springer-Verlag, 1999.
\newblock XIV + 445 pages; Second Edition, Perspectives in Logic, Association
  for Symbolic Logic, Cambridge University Press, 2009, XVI+ 444 pages.

\bibitem{DW}
Michiel~De Smet and Andreas Weiermann.
\newblock Partitioning $\alpha$-large sets for $\alpha<\varepsilon_{\omega}$.
\newblock Available at \url{http://arxiv.org/abs/1001.2437}.

\bibitem{SY201X}
Silvia Steila and Keita Yokoyama.
\newblock Reverse mathematical bounds for the termination theorem.
\newblock Submitted. Available at \url{http://arxiv.org/abs/1512.08622}.

\bibitem{tait1981finitism}
William~W Tait.
\newblock Finitism.
\newblock {\em The Journal of Philosophy}, pages 524--546, 1981.

\bibitem{Heijenoort}
Jean van Heijenoort.
\newblock {\em From Frege to {G\"{o}del}. A source book in mathematical logic,
  1879--1931}.
\newblock Harvard University Press, 1967.
\newblock xi+660 pages.

\bibitem{Wang2014Definability}
Wei Wang.
\newblock The definability strength of combinatorial principles, 2014.
\newblock To appear. Available at http://arxiv.org/abs/1408.1465.

\bibitem{Weiermann2004}
Andreas Weiermann.
\newblock A classification of rapidly growing ramsey functions.
\newblock {\em Proc. Amer. Math. Soc.}, 132(2):553--561, 2004.

\bibitem{Y2009}
Keita Yokoyama.
\newblock On {$\Pi^1_1$} conservativity of {$\Pi^1_2$} theories in second order
  arithmetic.
\newblock In C.~T.~Chong {et al.}, editor, {\em Proceedings of the 10th Asian
  Logic Conference}, pages 375--386. World Scientific, 2009.

\bibitem{Y-RIMS2013}
Keita Yokoyama.
\newblock Notes on the first-order part of {R}amsey's theorem for pairs.
\newblock In Toshio Suzuki, editor, {\em RIMS Kokyuroku}, volume 1832, pages
  127--134, 2013.

\bibitem{Y-MLQ13}
Keita Yokoyama.
\newblock On the strength of {R}amsey's theorem without {$\Sigma_1$}-induction.
\newblock {\em Math. Log. Q.}, 59(1-2):108--111, 2013.

\end{thebibliography}

\end{document}